\documentclass[12pt,reqno]{amsart}

\usepackage[dvipsnames, svgnames]{xcolor}
\usepackage{amsmath, amssymb,graphicx,amsthm,latexsym, amsfonts, enumitem, mathtools, tensor, MnSymbol}
\usepackage{hyperref}
\usepackage[all, color]{xy}
\usepackage{color}
\usepackage{amssymb, wasysym}
\usepackage{xcolor,cancel}
\usepackage{float}

\newcommand\hcancel[2][black]{\setbox0=\hbox{$#2$}
\rlap{\raisebox{.45\ht0}{\textcolor{#1}{\rule{\wd0}{1pt}}}}#2}
\usepackage{float}
\usepackage{caption}
\usepackage{subcaption}

\newcommand{\knotlinewidth}{1pt}
\usepackage{tikz}
\usetikzlibrary{arrows,decorations.pathmorphing,decorations.pathreplacing,positioning,shapes.geometric,shapes.misc,decorations.markings,decorations.fractals,calc,patterns,braids,math,arrows.meta}
\usetikzlibrary{calc}
\usetikzlibrary{intersections}

\tikzset{overcross/.style={double, line width=2, white, double=black, double distance=\knotlinewidth}}

\newcommand{\MO}{\mathcal{O}}

\theoremstyle{plain}
\newtheorem{theorem}{Theorem}[section]
\newtheorem*{theorem*}{Theorem}

\theoremstyle{definition}
\newtheorem{defn}[theorem]{Definition}

\newtheorem{remark}[theorem]{Remark}

\newtheorem{lemma}[theorem]{Lemma}
\newtheorem{notation}[theorem]{Notation}

\newtheorem{setup}[theorem]{Setup}
\newtheorem{proposition}[theorem]{Proposition}

\newtheorem{mainthm}{Theorem}

\setlength{\textwidth}{165mm}
\setlength{\textheight}{237mm}
\addtolength{\oddsidemargin}{-1.7cm}
\addtolength{\evensidemargin}{-1.7cm}
\addtolength{\topmargin}{-12mm}

\raggedbottom
\date{}

\subjclass[2020]{%
primary 20F36, 20F55, 57M05;
secondary 05E10, 13F60
}
\thanks{
This work was supported by the Engineering and Physical Sciences Research Council [grant number EP/W007509/1] and by the Centre for Advanced Study (CAS) in Oslo, Norway.}

\begin{document}
\setlength{\parindent}{0pt}
\setlength{\parskip}{7pt}
\author{Francesca Fedele}
\address{School of Mathematics, University of Leeds, Leeds LS2 9JT, U.K.}
\email{f.fedele@leeds.ac.uk}
\author{Bethany Rose Marsh}
\address{School of Mathematics, University of Leeds, Leeds LS2 9JT, U.K.}
\email{B.R.Marsh@leeds.ac.uk}
\title{Presentations of the braid group of the complex reflection group $G(d,d,n)$}
\begin{abstract}
    We show that the braid group associated to the complex reflection group $G(d,d,n)$ is an index $d$ subgroup of the braid group of the orbifold quotient of the complex numbers by a cyclic group of order $d$. We also give a compatible presentation of $G(d,d,n)$ and its braid group for each tagged triangulation of the disk with $n$ marked points on its boundary and an interior marked point (interpreted as a cone point of degree $d$) in such a way that the presentations of Brou\'{e}-Malle-Rouquier correspond to a special tagged triangulation.
\end{abstract}
\maketitle
\tableofcontents

\section{Introduction}
\newcommand{\DD}{{\mathcal{D}}} 
\newcommand{\AAA}{{\mathcal{A}}} 

Our main aim is to give a family of presentations of the braid group $B(d,d,n)$ of the complex reflection group $G(d,d,n)$, for positive integers $d,n$ (see~\cite[\S B]{BMR98}), with one presentation associated to each tagged triangulation (in the sense of~\cite[\S 7]{FST08}) of an orbifold given by a disk with a single cone point of degree $d$. In addition, we show that $B(d,d,n)$ can be embedded in the $n$-strand braid group of the orbifold as a subgroup of index $d$ (a result obtained independently in~\cite{F23B}; see the comment after Theorem A below), generalising a result of Allock~\cite[Theorem 1.1]{A}. This allows us to give a geometric interpretation of the generators in each presentation in the family. This generalises a family of presentations of the Artin braid group of type $D_n$ given in~\cite{GM}, which can be regarded as the case $d=2$.

Recently, there have been a number of articles giving presentations of braid groups using the theory of cluster algebras, and these form part of the motivation for this paper. In~\cite{BM}, a family of presentations of finite Weyl groups was given, one for each seed in the corresponding cluster algebra; the subsequent article~\cite{GM} gave alternative presentations in the simply-laced case, which lifted to the corresponding braid groups (see also~\cite{QZ20}). An independent proof of this was found by Alastair King and Qiu Yu (see~\cite[Prop. 10.3]{Qiu16}). Braid group presentations (for all finite cases) were also given in~\cite{HHLP}. Presentations for types $H$ and $I$ were given in~\cite[Thm.\ 3.5]{HHQ24}. Presentations for affine Coxeter groups were given in~\cite{FT16} and groups of a similar kind were associated to surfaces in~\cite{FLST21}. This article can be regarded as providing presentations similar in style to these cluster algebra-theoretic presentations, but we note that there is no cluster algebra associated to a complex braid group, and that the mutation considered here (see Section~\ref{subsection_mutation_q_triang}), although related, is not the same as Fomin-Zelevinsky
mutation~\cite[Defn.\ 4.2]{FZ02} (or the corresponding diagram mutation~\cite[\S8]{FZ}); in particular, the quivers considered here have additional decorations that do not appear in the theory of cluster algebras.

It is also interesting to note the article~\cite{KingQiu20}, which associates a groupoid, known as the cluster exchange groupoid, to a cluster algebra of Dynkin type, showing that the fundamental group is isomorphic to the corresponding Artin braid group~\cite[Thm.\ 2.16]{KingQiu20}, giving an alternative construction of the Artin braid group in these cases. There are also strong relationships with mapping class groups and groups generated by spherical twists; see, for example~\cite{qiuyusummary19} and the references therein.

In the remainder of Section $1$ we recall the relevant theory and background for real and complex reflection groups and braid groups, and state our main results in more detail.
In Section $2$ we give an orbifold realisation of the braid group of the complex reflection group $G(d,d,n)$. In Section $3$ we give the promised family of presentations of $G(d,d,n)$ and the corresponding braid group, and in Section $4$, we give a geometric interpretation of the generators in these presentations in terms of the geometric description in Section $2$.

\textbf{Acknowledgement}: We would like to thank Paul P. Martin for several useful discussions related to this work.

\subsection{(Real) reflection and braid groups.}

In (a special case of) \cite[Theorem~5.4]{BM}, Barot and Marsh proved that if $Q$ is a \textit{mutation-Dynkin quiver}, i.e. a quiver that can be obtained by mutating a Dynkin quiver $\Delta$ of type $ADE$ in the sense of~\cite[Defn.\ 4.2]{FZ02} finitely many times, then the associated group $W(Q)$ is isomorphic to the Weyl group $W(\Delta)$.
Let $n$ be the number of vertices in $\Delta$. As shown in \cite{brieskorn1971fundamentalgruppe} (and recalled in \cite[Section~2]{A}), the Artin braid group $\mathcal{A}(\Delta)$ of type $\Delta$ is isomorphic to the fundamental group 
\begin{align*}
\pi_1 \left(\left(V-\bigcup_{s\in\Sigma}H_s\right)/W(\Delta)\right),
\end{align*}
where $\Sigma$ is the set of reflections in $W(\Delta)$, $V$ is the complexification of $\mathbb{R}^n$ and $H_s$ the complexification of the set of fixed points of $s$ in $\mathbb{R}^n$. 

More abstractly, the Artin braid group $\mathcal{A}(\Delta)$ can be defined in terms of generators and relations associated to the corresponding graph of type $\Delta$. For example, the Coxeter graph of type $D_n$:
\begin{align}\label{eqn_DynkinDn}
    \xymatrix@R=5pt{
\stackrel{h_1}{\bullet}\ar@{-}[dr]^{}
     \\ 
     & \stackrel{h_3}{\bullet} \ar@{-}[r] & \stackrel{h_4}{\bullet} \ar@{--}[rr]&& \stackrel{h_{n-1}}{\bullet} \ar@{-}[r] &\stackrel{h_n}{\bullet}
     \\
     \stackrel{h_2}{\bullet}\ar@{-}[ur]^{}
     }
\end{align}
gives the standard presentation of $\mathcal{A}(D_n)=\langle h_1,h_2,\dots,h_n\mid R\rangle$, where $R$ is the set of relations $h_ih_jh_i=h_jh_ih_j$ if there is an edge between $h_i$ and $h_j$ and $h_ih_j=h_jh_i$ otherwise. The Weyl group $W(D_n)$ is then the quotient of $\mathcal{A}(D_n)$ obtained by adding the relations $h_i^2=e$ for all $i$, where $e$ is the identity element.

Allcock described the connection between some Artin braid groups and orbifold fundamental groups. In particular, in \cite[Theorem~1.1]{A}, he proved that $\mathcal{A}(D_n)$ is isomorphic to a subgroup of index $2$ of the the orbifold fundamental group
\begin{align*}
    Z_n(\mathcal{O}_2)=\pi_1 ((\mathcal{O}^n_2-\Delta_n)/S_n),
\end{align*}
where $\MO_2$ is the orbifold $\mathbb{C}/C_2$, $\Delta_n=\{ (x_1,x_2,\dots,x_n)\in\mathcal{O}^n_2 : x_i=x_j \text{ for some } i\neq j\}$ and $S_n$ is the symmetric group of degree $n$.

Subsequently, Grant and Marsh studied presentations of Artin braid groups of type $ADE$. In \cite[Theorem A]{GM} they showed that if $Q$ is a mutation-Dynkin quiver, then the associated braid group is isomorphic to the Artin braid group $\mathcal{A}(\Delta)$ of the corresponding Dynkin type. This way one obtains many presentations of the Artin braid groups of type $ADE$.

Moreover, they showed that an orientation of (\ref{eqn_DynkinDn}) coincides with the quiver $Q_{T_0}$ associated with the \textit{initial (tagged) triangulation} $T_0$  of $(X, M)$, where $X$ is the disk $S$ with an interior marked point interpreted as a cone point of degree $2$, and $M$ is a set of $n$ marked points on the boundary of $X$; see \cite[page 91 and Figure 5]{GM}. See~\cite[\S7]{FST08} for the definition of tagged triangulations; see also \cite[Section 3]{GM}. Note that the interior of $X$ is isomorphic as an orbifold to $\mathcal{O}_2$.

Flipping a triangulation corresponds to mutating the quiver associated to it and, using the fact that the graph of flips of (tagged) triangulations of the disk is connected, it was shown in \cite[Theorem~A]{GM} that any (tagged) triangulation $T$ of $(X,M)$ gives a presentation of $\mathcal{A}(D_n)$. 
Moreover, $T$ has an associated braid graph, the edges of which correspond to elements $\sigma_i$ in $Z_n(\mathcal{O}_2)$.
In \cite[Theorem~3.6]{GM}, Grant and Marsh proved that the subgroup $B_T$ of $Z_n(\mathcal{O}_2)$ generated by the elements $\sigma_i$ is isomorphic to the group $G_{Q_T}$ associated to the quiver corresponding to $T$. Hence the group presentation associated to the triangulation $T$ gives a presentation of $\mathcal{A}(D_n)$ as a subgroup of index $2$ of $Z_n(\mathcal{O}_2)$.

\subsection{Complex reflection and braid groups.}

In this paper, we are interested in studying a ``complex'' version of the above.

A \textit{pseudo-reflection $s$} is a non-trivial element in the general linear group GL$(\mathbb{C}^n)$ which fixes a hyperplane $H_s$ pointwise, known as \textit{the reflecting hyperplane of $s$}.
A group generated by pseudo-reflections is known as a \textit{complex reflection group}.
The irreducible finite complex reflection groups were classified by Shepard and Todd in \cite{shephard1954finite}. Brou\'{e}, Malle and Rouquier provided presentations of all such groups using Coxeter-like diagrams; see \cite[Tables 1-4 in Appendix~2]{BMR98}.
Here we focus on the complex reflection groups of the form $G(de,d,n)$ for positive integers $d,n$ and $e$. We use the same notation as in \cite{shi2005}.
For $\sigma\in S_n$, denote by $[(x_1,x_2,\dots,x_n)\mid\sigma]$ the $n\times n$ monomial matrix with non-zero entries $x_i$ in the $i,\sigma(i)$ positions. Then
$$G(de,d,n):=\left\{[(x_1,x_2,\ldots ,x_n)|\sigma]\,:\,
x_i\in \mathbb{C}^*, x_i^{de}=1, \Big(\prod_{j=1}^n x_j\Big)^e=1, \sigma\in S_n\right\}.$$

Note the close relationship to Weyl groups, which can be seen as a special case of the above. In particular, note that $W(A_n)=G(1,1,n)$, $W(B_n)=G(2,1,n)$ and $W(D_n)=G(2,2,n)$.

Similarly to the real case above, one can construct the \textit{braid group}, denoted by $B(de,d,n)$, associated to the complex reflection group $G(de,d,n)$. This is defined as the fundamental group
\begin{align*}
    \pi_1\left(\left(\mathbb{C}^n-\bigcup_{s\in\Sigma}H_s\right)/G(de,d,n)\right),
\end{align*}
where $\Sigma$ is the set of pseudo-reflections in $G(de,d,n)$.
See \cite[Tables 1,2 and 5]{BMR98} for presentations of both $G(de,d,n)$ and $B(de,d,n)$.
Similarly to the real case, $G(de,d,n)$ is a quotient of $B(de,d,n)$, obtained by making all generators of finite order. For general $d$ and $e$ some generators have order larger than $2$, while for $e=1$ they all have order $2$.

Moreover, note that $B(1,1,n)=\mathcal{A}(A_n)$, $B(d,1,n)=\mathcal{A}(B_n)$ for any $d\geq 2$ and $B(2,2,n)=\mathcal{A}(D_n)$. In particular, \cite[Theorem~1.1]{A} states that $B(2,2,n)$ is isomorphic to a subgroup of order $2$ of $Z_n(\mathcal{O}_2)$. Let $d\geq2$ be an integer and $\mathcal{O}_d$ be the orbifold $\mathbb{C}/C_d$.  In our first main result, we generalise Allcock's inclusion of groups to the case of arbitrary $d$, and fit it into a commutative diagram.

\begin{mainthm}\label{mainthmA}
There is a commutative diagram of group homomorphisms
\begin{figure}[H]
\[
\xymatrix@C=6em{
N=\langle st_2s^{-1},t_2,t_3,...,t_n \rangle
\ar@{->}[r]^-{\varphi}_-\cong 
\ar@{^{(}->}[d]^-{index \; d} _-{\alpha}
                    & B(d,d,n) 
                    \ar@{^{(}->}[d]_-{\beta}^-{index \;d\; }
                       \\
{\frac{\AAA(B_n)}{\langle s^d =e\rangle}} 
\ar@{->}[r]^-{\gamma}_-{\cong} 
            & Z_n(\MO_d) 
}
\]
\caption{Commutative diagram of group homomorphisms}
\label{fig:maindiagram}
\end{figure}
where $\gamma$, $\varphi$ are isomorphisms and $\alpha$, $\beta$ are monomorphisms,
and $\AAA(B_n)$ is the Artin braid group of type $B_n$ with presentation by generators:
\begin{align*}
\begin{tikzpicture}[scale=0.8,
    vertex/.style={black},
  db/.style={thick, double, double distance=1.3pt, shorten <=-6pt}
  ]
\node[vertex, label=
{[label distance=-5pt]90:{$s$}}] (w8) at  (-0.5,0) {$\bullet$};
 \node[vertex, label=
{[label distance=-5pt]90:{$t_2$}}] (w6) at  (1.5,0) {$\bullet$};
 \node[vertex, label=
{[label distance=-5pt]90:{$t_3$}}] (w5) at  (3.5,0) {$\bullet$};
 \node[vertex, label=
{[label distance=-5pt]90:{$t_{n-1}$}}] (w4) at  (5,0) {$\bullet$};
 \node[vertex, label=
{[label distance=-5pt]90:{$t_n$}}] (w3) at  (7,0) {$\bullet$};
 \draw[dotted, thick] (w5)--(w4);
 \draw (w4)--(w3);
 \draw (w5)--(w6);
 \draw[db] (0,0)--(w6);
       \end{tikzpicture}
\end{align*}

That is, $\AAA(B_n)=\langle s,t_2,t_3,\dots, t_n \mid R \rangle$, where $R$ is the set of relations $t_it_{i+1}t_i=t_{i+1}t_it_{i+1}$ for $2\leq i\leq n-1$, $t_it_j=t_jt_i$ if $|i-j|>1$ and $st_2st_2=t_2st_2s$.
\end{mainthm}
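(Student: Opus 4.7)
My plan is to construct the four homomorphisms of the diagram, check commutativity on generators, and then extract the remaining isomorphism and index claims from the square itself. I would build the diagram from the right-hand side inwards: first set up $\gamma$ and $\beta$ via orbifold topology, then use them to pin down $\varphi$ and $\alpha$.

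For $\gamma$, I would identify $\AAA(B_n)$ with the $n$-strand surface braid group of a once-punctured disk, so that $s$ is a loop taking one strand around the puncture and $t_2,\ldots,t_n$ are adjacent-strand crossings. The natural map from the punctured disk to the orbifold $\MO_d$, which promotes the puncture to a cone point of degree $d$, induces a surjection of braid groups whose only new relation is that a strand wound $d$ times around the cone point is null-homotopic, i.e.\ $s^d = e$; this yields the isomorphism $\gamma$. For $\beta$, I would use the inclusion $G(d,d,n) \subset G(d,1,n)$ of index $d$, together with the observation that the $G(d,1,n)$-hyperplanes consist of the $G(d,d,n)$-hyperplanes plus the coordinate hyperplanes $x_i=0$; the induced map on orbifold fundamental groups of the hyperplane complements yields an injection $B(d,d,n) \hookrightarrow Z_n(\MO_d)$ whose image is the kernel of the total winding-number homomorphism $Z_n(\MO_d) \to \Z/d$, forcing index $d$.

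To construct $\varphi$, I would use the Brou\'{e}-Malle-Rouquier presentation of $B(d,d,n)$ from \cite{BMR98}, sending the two ``fork'' generators $\tau_1',\tau_1$ to $st_2s^{-1}$ and $t_2$, and the remaining generators to $t_3,\ldots,t_n$. This map clearly surjects onto $N$. The content is in checking that all BMR relations hold in $\AAA(B_n)/\langle s^d\rangle$: commutativity at graph-distance $\geq 2$ is immediate, and the two triangle relations $\tau_1\tau_2\tau_1=\tau_2\tau_1\tau_2$ and $\tau_1'\tau_2\tau_1'=\tau_2\tau_1'\tau_2$ reduce, after conjugating one of them by $s$, to the type $B_n$ relation $(st_2)^2 = (t_2s)^2$ already present in $\AAA(B_n)$.

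The main obstacle is the cyclic $d$-gon relation between the two fork generators: showing
\begin{align*}
\underbrace{(st_2s^{-1})\, t_2\, (st_2s^{-1})\, t_2\, \cdots}_{d \text{ factors}}
\;=\;
\underbrace{t_2\, (st_2s^{-1})\, t_2\, (st_2s^{-1})\, \cdots}_{d \text{ factors}}
\end{align*}
in $\AAA(B_n)/\langle s^d\rangle$. I would attack this by rewriting both sides using $(st_2)^2=(t_2s)^2$, which lets us shuttle conjugating powers of $s$ across $t_2$'s, and then applying $s^d = e$ once the accumulated $s$-powers telescope to $s^{\pm d}$. Once $\varphi$ is established, commutativity of the square is automatic on generators; injectivity of $\varphi$ then follows from injectivity of $\beta$ and $\gamma$, and the index claim for $\alpha$ is inherited from that of $\beta$ via the isomorphism $\gamma$.
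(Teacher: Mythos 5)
Your overall architecture (build the four maps, check them on generators, deduce injectivity of $\varphi$ and the index of $\alpha$ from $\beta$ and $\gamma$) parallels the paper's, and your sketch for the $d$-term relation via the central element $(st_2)^2=(t_2s)^2$ together with $s^d=e$ does work. But there are concrete gaps. The most serious is the claim that ``commutativity of the square is automatic on generators.'' It is not: you have defined $\beta$ abstractly (as the map induced by the index-$d$ inclusion of reflection groups) and $\gamma$ abstractly (puncture-to-cone-point), and $\varphi$ by an independent formula, so you must actually verify that $\beta(\tau_2')=\gamma(st_2s^{-1})$ and $\beta(\tau_i)=\gamma(t_i)$ as elements of $Z_n(\MO_d)$. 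This is exactly where the paper does its main geometric work: it computes the BMR monodromy paths $p_s$ explicitly, pushes them through the coordinatewise $d$-th power map, and identifies the resulting braids $h_1,\dots,h_n$ (Proposition~\ref{prop_paths_drawn} and Figures~\ref{fig:fig_path1}--\ref{fig:fig_pathi_power}). Remark~\ref{rem:choice} shows this matching is genuinely delicate — with the standard BMR basepoint one is forced to replace $t_2'=s(1,2;1)$ by $s(1,2;-1)$, which would break the stated diagram. Without this computation your argument does not establish commutativity, and since your injectivity of $\varphi$ and the index of $\alpha$ are both deduced from commutativity, those conclusions are left hanging as well.

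Two further points. First, the isomorphism $\gamma$ is asserted via the principle that promoting the puncture to a cone point imposes only the relation $s^d=e$; identifying the kernel of $\AAA(B_n)\twoheadrightarrow Z_n(\MO_d)$ as the normal closure of $s^d$ is the hard part, and the paper proves it by combining the order-$d$ computation for the loop $l$ (Lemma~\ref{lem:lorderd}), torsion-freeness of $B(d,d,n)$ (Bessis), and a coset decomposition on both sides of the square (Lemmas~\ref{lemma_Nnormal_indexd} and~\ref{lemma_Zn_description}); note this makes the argument for $\gamma$ partly dependent on $\beta$, not independent of it. Second, your relation check for $\varphi$ is incomplete: the BMR presentation of $B(d,d,n)$ (Proposition~\ref{prop_BMR_Bddn_presentation}) also contains the length-six relation $\tau_2\tau_2'\tau_3\tau_2\tau_2'\tau_3=\tau_3\tau_2\tau_2'\tau_3\tau_2\tau_2'$, which you do not address; and the two braid relations involving $\tau_3$ reduce to $t_2t_3t_2=t_3t_2t_3$ together with $[s,t_3]=e$, not to $(st_2)^2=(t_2s)^2$. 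The paper avoids the relation-by-relation verification entirely by running Reidemeister--Schreier on $N\le \AAA(B_n)/\langle s^d=e\rangle$ with coset representatives $\{e,s,\dots,s^{d-1}\}$ and comparing the output with the BMR presentation, which yields well-definedness and injectivity of $\varphi$ in one step.
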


See Section~\ref{section_commdiagram} for more details of the groups and the morphisms, and the proof of the theorem.

We note that Theorem A also follows from~\cite[Cor.\ 5.7]{F23A} and~\cite[Theorem A(2)]{F23B}. The proof here was obtained independently. It is more direct (for this special case), avoiding use of the mapping class group and giving a construction specifically related to the approach of~\cite{BMR98} (i.e.\ more in the style of~\cite{A}).

Our second main result generalises \cite[Theorem~5.4]{BM} and \cite[Theorem~A]{GM} for the groups $G(d,d,n)$ and $B(d,d,n)$ with $d\geq 2$, where the case $d=2$ recovers the classical results. Consider the marked surface $(X, M)$, where $X$ is the disk $S$ with an interior marked point interpreted as a cone point of degree $d$. Note that the interior of $X$ is isomorphic to $\mathcal{O}_d$ as an orbifold. Let $M$ be a set of $n$ marked points on the boundary of $X$.

In Section~\ref{subsection_associating_quiver}, we associate a decorated quiver $Q_T$ to any tagged triangulation $T$ of $(X,M)$ and a group $G_{Q_T}$ to $Q_T$. In Section~\ref{subsection_mutation_q_triang}, we introduce a mutation rule for such a quiver with respect to a chosen vertex, which corresponds to flipping the associated triangulation.

In particular, the initial triangulation $T_0$, illustrated in Figure~\ref{fig:BMR_embedded}, has associated quiver $Q_{T_0}$, which is an orientation of the presentation of $B(d,d,n)$ from \cite[Table~5]{BMR98}. Proving that at each mutation step we obtain an isomorphic group, and using the fact that the flipping graph of (tagged) triangulations of $(X,M)$ is connected, we obtain the following result, providing a family of new presentations of the groups $B(d,d,n)$ and $G(d,d,n)$.

\begin{mainthm} \textbf{(=Theorems \ref{thm:Bddnpresentation} and \ref{thm:Gddnpresentation}).}
Let $T$ be a tagged triangulation of $(X,M)$ and let $G'_{Q_T}$ be the group defined in the same way as $G_{Q_T}$ with the additional relations that all generators square to the identity element. Then
\begin{itemize}
    \item $G_{Q_T}\cong B(d,d,n)$ and $G_{Q_T}$ gives a presentation of $B(d,d,n)$,
    \item $G'_{Q_T}\cong G(d,d,n)$ and $G'_{Q_T}$ gives a presentation of $G(d,d,n)$.
\end{itemize}
\end{mainthm}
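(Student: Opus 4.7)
The plan is to follow the strategy sketched in the introduction: show that the statement holds for a distinguished initial triangulation $T_0$, then propagate it along the flip graph of tagged triangulations by checking that a single flip preserves the associated group up to isomorphism. Since the flip graph of tagged triangulations of $(X,M)$ is connected (this is the relevant result from \cite{FST08} about flips of tagged triangulations of marked surfaces with boundary), any triangulation $T$ can be reached from $T_0$ by a finite sequence of flips, and the result will follow.

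First, I would fix the initial triangulation $T_0$ of Figure~\ref{fig:BMR_embedded} and verify directly that the decorated quiver $Q_{T_0}$ associated to it in Section~\ref{subsection_associating_quiver} is, up to orientation, the Brou\'e--Malle--Rouquier diagram for $B(d,d,n)$ from \cite[Table~5]{BMR98}. This reduces to matching generators (one per arc of $T_0$) with the BMR generators and checking that the relations packaged into $G_{Q_{T_0}}$ (braid-like relations along arrows of the quiver, commutation for non-adjacent vertices, together with the extra relation encoded by the special decoration at the cone point) are exactly the BMR relations. Hence $G_{Q_{T_0}} \cong B(d,d,n)$.

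The main obstacle is the mutation step: given a tagged triangulation $T$ and an internal edge $k$ that can be flipped, let $T' = \mu_k(T)$ and $Q_{T'} = \mu_k(Q_T)$ as defined in Section~\ref{subsection_mutation_q_triang}. I would construct an explicit isomorphism $G_{Q_T} \to G_{Q_{T'}}$ by keeping the generators $x_i$ for $i \neq k$ fixed and sending the generator at $k$ to a prescribed conjugate $x_k^{\star}$ (the natural candidate being the conjugate of $x_k$ by a product of the generators at the vertices involved in the arrows at $k$, modelled on the analogous formula in \cite[Theorem~A]{GM} for type $D_n$). The work is then to verify that the defining relations of $G_{Q_T}$, rewritten under this substitution, are consequences of the relations of $G_{Q_{T'}}$, and conversely. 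Because the local configurations around a flip are finitely many (the usual flip cases in a disk with boundary marked points, plus the cases involving the cone point and the tagging), this is a finite case analysis, but each case requires careful checking of braid, commutation, and cone-point relations. The inverse of this homomorphism is produced by applying the same construction to the inverse flip $\mu_k(T') = T$, so mutation invariance reduces to verifying the relations in a single direction.

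Combining the initial identification $G_{Q_{T_0}} \cong B(d,d,n)$ with mutation invariance and the connectivity of the flip graph gives the first bullet $G_{Q_T} \cong B(d,d,n)$ for every tagged triangulation $T$, and the explicit generator correspondence shows that the presentation $G_{Q_T}$ is indeed a presentation of $B(d,d,n)$. For the second bullet, I would observe that the quotient of $B(d,d,n)$ by the normal closure of the squares of the BMR generators is $G(d,d,n)$, since the BMR presentation of $G(d,d,n)$ is obtained from that of $B(d,d,n)$ by adding exactly these relations (see \cite[Tables~1 and~5]{BMR98}). The isomorphisms produced by the mutation step send generators to conjugates of generators, so they send squaring relations to squaring relations; therefore adding the relations $x_i^2 = e$ to $G_{Q_T}$ for every vertex $i$ gives a group $G'_{Q_T}$ isomorphic to $G(d,d,n)$, uniformly along the flip graph. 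This yields the second bullet and completes the plan.
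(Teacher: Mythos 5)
Your overall strategy is exactly the paper's: identify $G_{Q_{T_0}}$ with the Brou\'e--Malle--Rouquier presentation for the initial triangulation of Figure~\ref{fig:BMR_embedded}, prove invariance of $G_{Q_T}$ under a single flip/mutation, invoke connectivity of the flip graph, and then pass to $G'_{Q_T}$ by adding the squaring relations (the paper does this via \cite{ariki} and \cite[Prop.\ 3.2]{BMR98}; your observation that the mutation isomorphisms send generators to conjugates of generators, hence respect the squaring relations, is the right justification for why this works uniformly over the flip graph). However, your description of the key mutation isomorphism is inverted, and as stated it would fail. The correct map (as in \cite[Thm.\ A]{GM} and in Theorem~\ref{thm:mutation_invariance} here) \emph{fixes} the generator at the mutated vertex $k$ and conjugates by $t_k$ the generators $s_i$ at vertices with an arrow $i\rightarrow k$; you propose instead to fix all $x_i$ with $i\neq k$ and conjugate only $x_k$. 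Already for a linear quiver $1\rightarrow 2\rightarrow 3$ mutated at $2$ this cannot be well defined: the commutation $x_1x_3=x_3x_1$ in $G_Q$ would have to map to $t_1t_3=t_3t_1$ in $G_{\mu_2(Q)}$, which fails since $1$ and $3$ are joined by an arrow in the mutated quiver.

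There is a second, more substantive omission specific to $d>2$. In the case where exactly two arcs of $T$ are incident with the cone point (so $Q_T$ has an unoriented edge labelled $d$), the isomorphism must \emph{also} conjugate the generator at the other cone-point arc, even though there is no arrow between it and $k$ --- see the extra clause in Theorem~\ref{thm:mutation_invariance} and the discussion of Figures~\ref{fig:flip_braid_case1} and~\ref{fig:flipbraid_case_2}. This has no analogue in the $D_n$ case $d=2$ (where the two choices happen to agree), so a formula ``modelled on'' \cite{GM} will silently give the wrong map here; relatedly, the new relation types (double edges, the $d$-labelled edge, labelled cycles of Definition~\ref{defn_GQ_associated_group}(\ref{item_dreln})--(\ref{item_broken4cycle})) require the new verifications occupying Lemmas~\ref{lem:cornercasen3}--\ref{lem:mutation_untagged_reverse}, not merely a rerun of the type $D$ case analysis. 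Your plan acknowledges a finite case check but does not identify these genuinely new cases, which is where the real work of the theorem lies.
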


Finally, we re-interpret Theorem~B by assigning explicit elements to the abstract generators in the newfound presentations both for the braid and reflection groups.

We combine the two theorems above to give a geometric interpretation of the new presentations of $B(d,d,n)$.
Similarly to the real case above, each tagged triangulation $T$ of $(X,M)$ has an associated braid graph $D_T$ such that the edges of $D_T$, one for each vertex $i$ in $Q_T$, correspond to braids $\sigma_i$ in $Z_n(\mathcal{O}_d)$. Let $B_T$ be the subgroup of $Z_n(\mathcal{O}_d)$ generated by these braids $\sigma_i$. The following result generalises \cite[Theorem~3.6]{GM} to the case $d\geq 2$, concluding that the group presentation associated to the triangulation $T$ gives a presentation of $B(d,d,n)$ as a subgroup of index $d$ of $Z_n(\mathcal{O}_d)$.
\begin{mainthm} \textbf{(=Theorem \ref{thm_braid_interpretation}.)}
     Let $T$ be a tagged triangulation of $(X,M)$. Then there is an isomorphism from $B_T$ to $G_{Q_{T}}$ taking the braid $\sigma_i$ to the generator $s_i$ of $G_{Q_{T}}$ corresponding to the vertex $i$ in $Q_T$. Furthermore, $B_{T}$ is a subgroup of index $d$ of $Z_n(\mathcal{O}_d)$.
\end{mainthm}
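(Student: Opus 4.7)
The strategy is to combine Theorems~A and~B. By Theorem~B there is an isomorphism $\varphi_T\colon G_{Q_T}\xrightarrow{\cong} B(d,d,n)$, and by Theorem~A the embedding $\beta$ realises $B(d,d,n)$ as a subgroup of $Z_n(\mathcal{O}_d)$ of index $d$. The composition $\iota_T = \beta\circ\varphi_T$ is therefore an injection $G_{Q_T}\hookrightarrow Z_n(\mathcal{O}_d)$ whose image has index $d$. It suffices to prove that $\iota_T(s_i)=\sigma_i$ for every vertex $i$ of $Q_T$: then $B_T$, being the subgroup of $Z_n(\mathcal{O}_d)$ generated by the $\sigma_i$, coincides with the image of $\iota_T$, and inverting $\iota_T$ onto its image produces the required isomorphism $B_T\to G_{Q_T}$ sending $\sigma_i\mapsto s_i$; the index-$d$ claim is then immediate.

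I would prove $\iota_T(s_i)=\sigma_i$ by induction on the flip-distance from the initial triangulation $T_0$, exploiting the connectedness of the flip graph of tagged triangulations of $(X,M)$. For the base case, the quiver $Q_{T_0}$ is an orientation of the BMR quiver, so the isomorphism $\varphi_{T_0}$ identifies the generators of $G_{Q_{T_0}}$ with the BMR generators of $B(d,d,n)$. The embedding $\beta$ constructed in Section~\ref{section_commdiagram} sends these BMR generators to explicit braids in $Z_n(\mathcal{O}_d)$, and a direct comparison with the description of the braid graph $D_{T_0}$ shows that these are exactly the braids $\sigma_i$ associated to $T_0$.

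For the inductive step, suppose $T'$ is obtained from $T$ by a flip at the edge corresponding to vertex $k$. The mutation rule of Section~\ref{subsection_mutation_q_triang}, together with the proof of Theorem~B, yields an isomorphism $G_{Q_T}\cong G_{Q_{T'}}$ under which the new generator $s'_k$ is a specific conjugation-type word in the generators of $G_{Q_T}$ while $s'_i=s_i$ for $i\neq k$. Under the inductive hypothesis $\iota_T(s_i)=\sigma_i$, the task reduces to verifying the geometric counterpart of this formula inside $Z_n(\mathcal{O}_d)$: namely, that the braid $\sigma'_k$ associated to the new edge of $D_{T'}$ equals the same word in the braids $\sigma_i$ that defines $s'_k$. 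This is the main obstacle, and is carried out by drawing local pictures of the braids on either side of the flip in the orbifold $\mathcal{O}_d$ and computing the resulting elements of the fundamental group. It directly generalises the $d=2$ analysis of~\cite[Thm.\ 3.6]{GM}, with the chief new feature being flips involving tagged edges meeting the cone point of degree $d$, which produce several local configurations that must be treated individually. Once this identification is established, the inductive step closes and the theorem follows.
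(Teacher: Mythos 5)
Your proposal is correct and follows essentially the same route as the paper: induction over flips starting from the BMR triangulation $T_0$ (where $\beta$ identifies the generators with the braids $h_i=\sigma_i$, as in Proposition~\ref{prop_initial_triang}), with the inductive step reduced to checking that the braid of each edge of $D_{T'}$ equals the conjugation word prescribed by the mutation isomorphism $\varphi^{Q_T}_k$ of Theorem~\ref{thm:mutation_invariance} — which the paper carries out case-by-case using Lemma~\ref{lemma_sergiescu} and local pictures, including the delicate tagged flips at the cone point where (for $d>2$) one must conjugate the extra generator adjacent to the cone point. Your repackaging of the induction as the identity $\beta\circ\varphi_T(s_i)=\sigma_i$ is equivalent to the paper's construction of $\psi_{T'}=\varphi_k\circ\psi_T\circ\phi$, and has the minor advantage of making the index-$d$ claim immediate from Theorem~A.
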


Finally, combining Theorem~B with results from \cite{shi2005}, we assign explicit reflections to the generators of the new presentations of $G(d,d,n)$ as follows.
For the definition of the reflections appearing in the following result, we refer the reader to Section~\ref{s:buildingbeta}.

\begin{mainthm}\textbf{(=Theorem \ref{thm_reflection_interpretation}.)}
    Let $T$ be a tagged triangulation of $(X,M)$ and fix a numbering $1,2,\dots,n$ of the $n$ vertices of $D_T$.
    Associate a reflection $s(e)=s(a,b;c(e))$ to each edge $e$ between vertices $a$ and $b$ in $D_T$, where for the edges appearing in the unique cycle of $D_T$, the integers $c(e)$ have to obey the condition explained in Setup~\ref{setup_reflections}. Then, there is an isomorphism of groups $\nu: G'_{Q_T}\rightarrow G(d,d,n)$ sending the generator of $G'_{Q_T}$ associated to vertex $v$ in $Q_T$ to the reflection associated to the edge in $D_T$ that is the dual of $v$.
\end{mainthm}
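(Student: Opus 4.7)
The strategy is to build $\nu$ by combining Theorem~B (which gives an abstract isomorphism $G'_{Q_T}\cong G(d,d,n)$) with Theorem~C (which realises the generators of $G_{Q_T}$ as explicit braids in $Z_n(\MO_d)$) and then pushing those braids through the canonical quotient $B(d,d,n)\twoheadrightarrow G(d,d,n)$. A first step is to identify $B_T$ with the image of $\beta$ inside $Z_n(\MO_d)$: both are index-$d$ subgroups of $Z_n(\MO_d)$ that are abstractly isomorphic to $B(d,d,n)$ by Theorem~C and Theorem~A respectively, and a direct comparison using the covering $Z_n(\MO_d)\to\AAA(B_n)/\langle s^d=e\rangle$ should force them to coincide as subgroups. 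Consequently each braid $\sigma_v\in B_T$ makes sense as an element of $B(d,d,n)$, and I would define $\nu(s_v)$ to be its image in $G(d,d,n)$ under the canonical projection.

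Next I would verify that $\nu(s_v)$ equals the reflection $s(a,b;c(e))$ attached to the dual edge $e$ of $D_T$. The base case is $T=T_0$: here $Q_{T_0}$ realises the BMR presentation, and the braids $\sigma_v$ are explicit geometric half-twists near the cone point, whose images in $G(d,d,n)$ are identified with Shi's \cite{shi2005} standard reflections by matching generators side by side, with the $c(e)$ parameters read off from the position of the edge relative to the cone. For a general $T$, I would induct along a sequence of flips from $T_0$ to $T$: at each flip the algebraic mutation of a generator of $G_{Q_T}$ (Section~\ref{subsection_mutation_q_triang}) reflects the geometric modification of the corresponding braid, and one checks that the projection of the mutated braid is again a reflection of the form $s(a',b';c')$ attached to the new dual edge of $D_T$. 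Connectedness of the flip graph yields a well-defined homomorphism $\nu$; surjectivity is then immediate since reflections generate $G(d,d,n)$, and injectivity follows from Theorem~B.

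The main obstacle is tracking the parameter $c(e)$ on edges of the unique cycle of $D_T$ through a flip sequence. Off the cycle, the reflections $s(a,b;c(e))$ are conjugate to transpositions in $S_n\subset G(d,d,n)$ and the quiver relations reduce to simple symmetric-group identities. On the cycle, however, $c(e)$ records the winding of the associated braid strand around the cone point, so the product of reflections round the cycle must realise the full monodromy; this is precisely what forces the constraint on the $c(e)$ stated in Setup~\ref{setup_reflections}. Showing that mutation at a vertex of the cyclic part of $Q_T$ adjusts the $c(e)$'s so as to preserve this constraint, and checking the base case $T_0$ by direct computation, is the core of the calculation, and once it is carried out the theorem follows by assembling the inductive step with Theorem~B.
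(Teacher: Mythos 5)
Your overall skeleton (well-defined homomorphism, surjectivity from generation, injectivity from Theorem~B plus finiteness) matches the paper's, but the route you take to define $\nu$ and verify it has genuine gaps. The paper's actual proof is much more direct: Proposition~\ref{prop:reflections_gen_relns} checks by explicit monomial-matrix computation (using \cite{shi2005}) that the reflections $s(e)$ satisfy the defining relations of $G'_{Q_T}$ for \emph{any} admissible choice of the integers $c(e)$; surjectivity then follows from Shi's generation criterion (Theorem~\ref{thm_shi_generating}) combined with Lemma~\ref{lemma_braidgraph_onecycle}, and injectivity from finiteness of $G(d,d,n)\cong G'_{Q_T}$. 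No flip induction and no passage through braids is needed at this stage.

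Two concrete problems with your approach. First, the theorem asserts an isomorphism for an \emph{arbitrary} assignment of integers $c(e)$ subject only to the coprimality condition of Setup~\ref{setup_reflections}; off the unique cycle the $c(e)$ are completely unconstrained. Defining $\nu(s_v)$ as the image of the braid $\sigma_v$ under $B(d,d,n)\twoheadrightarrow G(d,d,n)$ pins down one specific reflection for each $v$, so at best you prove the statement for a single choice of the $c(e)$, and you give no mechanism for recovering the general family. Second, your identification of $B_T$ with $\operatorname{im}\beta$ is not justified as stated: two index-$d$ subgroups of $Z_n(\MO_d)$ that are abstractly isomorphic need not coincide, and there is no covering $Z_n(\MO_d)\to \AAA(B_n)/\langle s^d=e\rangle$ in the paper (the map $\gamma$ goes the other way and is an isomorphism). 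The correct reason the subgroups agree is that the proof of Theorem~\ref{thm_braid_interpretation} exhibits the generators of $B_{T'}$ as words in those of $B_T$, giving $B_{T'}\subseteq B_T$, and the index count forces equality. Finally, the step you yourself identify as ``the core of the calculation'' --- tracking $c(e)$ through a flip sequence --- is left entirely undone, and it is precisely the content that would have to replace the paper's direct verification in Proposition~\ref{prop:reflections_gen_relns}.
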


\section{The complex braid group \texorpdfstring{$B(d,d,n)$}{Bddn} as a subgroup of \texorpdfstring{$Z_n(\MO_d)$}{Zn(Od)}: Proof of Theorem~\ref{mainthmA}.}\label{section_commdiagram}

The aim of this section is to prove Theorem~\ref{mainthmA}, i.e. to construct the commutative diagram of group homomorphisms in Figure~\ref{fig:maindiagram}. Note that the maps $\alpha$ and $\varphi$ are mainly due to \cite{BMR98}. For the remaining two maps, we proceed with a geometric argument. Details of the groups and presentations appearing in the diagram are given in the following sections.

\subsection{Building \texorpdfstring{$\beta$}{beta}}
\label{s:buildingbeta}

We follow~\cite[\S2]{A}, using the notation from~\cite{shi2005}.
We set $V=\mathbb{C}^n$, and denote the fixed hyperplane of a reflection $s:V\rightarrow V$, by $H_s$.

We use the notation from~\cite{shi2005}.
Let $d,n$ be positive integers.
Let $S_n$ denote the symmetric group of degree $n$,
and $\mathbb{C}^*=\mathbb{C}\setminus \{0\}$.
For $\sigma\in S_n$ and $(x_1,x_2,\ldots, x_n)\in (\mathbb{C}^*)^n$, let $[(x_1,x_2,\ldots ,x_n)]$ denote the $n\times n$ monomial matrix with $x_i$ in the $i,\sigma(i)$ position for $i=1,2,\ldots ,n$. The entries in such a matrix are powers of $\omega_d=e^{2\pi i/d}$.

Let $\Gamma(d,n)$ denote the group of all such matrices, and let $G(d,d,n)$ denote the complex reflection group:
$$G(d,d,n)=\left\{[(x_1,x_2,\ldots ,x_n)|\sigma]\,:\,
x_i\in \mathbb{C}^*, x_i^d=1, \prod_{j=1}^n x_j=1, \sigma\in S_n\right\},$$
which is a normal subgroup of $\Gamma(d,n)$ of index $d$.
For $1\leq a<b\leq n$ and $0\leq c\leq d-1$, set
$$s(a,b;c)=[(1,\ldots ,1,\omega_d^{-k},1,\ldots ,1,
\omega_d^k,1,\ldots ,1)|(a,b)],$$
and for $a>b$, set $s(a,b;c)=s(b,a,-c)$.
Thus, for $a<b$,
$$s(a,b;c)(z_1,\ldots, z_n)=
(z_1,\ldots ,z_{i-1},\omega_d^{-k}z_b,\ldots ,\omega_d^k z_a,\ldots ,z_n).$$
These elements all have order two and, as remarked in~\cite[2.1]{shi2005}, they constitute the reflections in $G(d,d,n)$; we denote this set by $\Sigma$. Thus $G(d,d,n)$ is generated by $\Sigma$, the set of reflections it contains.

\begin{remark}\label{hypeplane_line}
    For $i<j$, the reflection $s(a,b;c)$ fixes the \textit{hyperplane}
    \begin{align*}
        H_{s(a,b;c)}=H(a,b;c)=\{(z_1,\ldots ,z_n)\in \mathbb{C}^n\,:\,
z_a=\omega_d^{-k}z_b\}=\ker(s(a,b;c)-id_{\mathbb{C}^n}).
    \end{align*}
    Moreover, $s(a,b;c)$ has associated \textit{hyperline}
    \begin{align*}
        L_{s(a,b;c)}=L(a,b;c)=\text{im}(s(a,b;c)-id_{\mathbb{C}^n}).
    \end{align*}
    Note that $\mathbb{C}^n=H_{s(a,b;c)}\oplus L_{s(a,b;c)}$, and so each element $x\in\mathbb{C}^n$ can be written uniquely as $x=x_H + x_L$ with $x_H\in H_{s(a,b;c)}$ and $x_L\in L_{s(a,b;c)}$.
\end{remark}

Let
$$V_0=V\setminus \cup_{s\in \Sigma} H_s.$$
As noted in~\cite[2.1]{A} for the $D_n$ case, $V_0$ is connected since each $H_s$ has real codimension $2$ in $V$. 

It is well-known that $G(d,d,n)$ acts freely on $V_0$ (see e.g.~\cite{garnier23}). Since $G(d,d,n)$ is finite, it acts properly discontinuously on $V_0$, and it is clear the action is smooth, so we can form the quotient manifold $V_0/G(d,d,n)$ (by e.g.~\cite[7.10]{hatcher02}).
Let $p:V_0\rightarrow V_0/G(d,d,n)$ be the canonical surjection, which is a manifold covering map~\cite[Thm.\ 21.13]{lee13}. Choose $x_0\in V_0$. Then the fundamental group $\pi_1(V_0/G(d,d,n),p(x_0))$ is known as the \textbf{braid group} of $G(d,d,n)$ and denoted $B(d,d,n)$ (see~\cite[2B]{BMR98}).

The cyclic group $C_d$ acts on $\mathbb{C}$, with a generator sending $z$ to $\omega_d z$. Let $\MO_d$ be the orbifold $\mathbb{C}/C_d$. The underlying space of $\mathbb{C}/C_d$ is $\mathbb{C}$, and it has a single cone point of degree $d$ at the origin.
The $n$-strand \textbf{pure braid space} of $\MO_d$ is
$\MO_d^n-\Delta_n$, where
$$\Delta_n=\{(z_1,z_2,\ldots ,z_n)\in \MO_d^n\,:\,z_i=z_j\text{ for some }i\not=j\}.$$
The symmetric group $S_n$ of degree $n$ acts freely on $\MO_d^n-\Delta_n$ and we can form the quotient 
$$X_n=(\MO_d^n-\Delta_n)/S_n,$$
which is the $n$-strand \textbf{braid space} of $\MO_d$. Then the $n$-strand \textbf{braid group} $Z_n(\MO_d)$ of $\MO_d$ is the orbifold fundamental group (in the sense of~\cite[Defn.\ 13.2.5]{thurston22}) of $X_n$ with respect to a choice of basepoint $b=(b_1,\ldots ,b_n)\in X_n$ which does not lie on the orbifold locus. The $n$-strand \textbf{pure} braid group $P_n(\MO_d)$ of $\MO_d$ is the orbifold fundamental group of $\Delta_n$.

\begin{proposition}\label{prop_d-foldcovering}
There is an isomorphism of orbifolds $V/C_d^n\cong \MO_d^n$
given by $(x_1,x_2,\ldots ,x_n)\mapsto (x_1^d,x_2^d,\ldots ,x_n^d)$.
This induces an isomorphism $\varphi:V_0/\Gamma(d,n)\cong X_n$ and hence a $d$-fold orbifold covering map $V_0/G(d,d,n)\rightarrow X_n$ and an embedding $\beta$ of $B(d,d,n)=\pi_1(V_0/G(d,d,n))$ as a subgroup of index $d$ in $Z_n(\MO_d)=\pi_1(X_n)$.
\end{proposition}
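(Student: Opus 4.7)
The plan is to work in stages. First, I would verify that the coordinate-wise $d$-th power map induces an orbifold isomorphism $V/C_d^n \cong \MO_d^n$. Here $C_d^n$ acts on $V = \C^n$ by independent multiplication on each coordinate by $d$-th roots of unity, so the quotient splits as a product of $n$ copies of $\C/C_d = \MO_d$, and the coordinate-wise map $x_i \mapsto x_i^d$ is exactly the canonical quotient on each factor.

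Next, I would identify the image of $V_0$ under this map. Using Remark~\ref{hypeplane_line}, for each pair $a<b$ the union $\bigcup_{c=0}^{d-1} H(a,b;c)$ equals $\{z \in \C^n : z_a^d = z_b^d\}$, so the complement $V_0$ of the union of all reflection hyperplanes is precisely the preimage of $\MO_d^n \setminus \Delta_n$. Hence the first isomorphism restricts to an orbifold isomorphism $V_0/C_d^n \cong \MO_d^n \setminus \Delta_n$. Using the short exact sequence $1 \to C_d^n \to \Gamma(d,n) \to S_n \to 1$, which realises $\Gamma(d,n)$ as a wreath product $C_d \wr S_n$, the residual $S_n$-action on the quotient is the free permutation action on the coordinates, producing the claimed isomorphism $\varphi: V_0/\Gamma(d,n) \cong (\MO_d^n \setminus \Delta_n)/S_n = X_n$.

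Now $G(d,d,n)$ is a normal subgroup of $\Gamma(d,n)$ of index $d$, with cyclic quotient $C_d$ (e.g.\ generated by the coset of a diagonal matrix $[(\omega_d, 1, \ldots, 1) \mid \mathrm{id}]$). Since $G(d,d,n)$ acts freely on $V_0$, the induced map $p: V_0/G(d,d,n) \to V_0/\Gamma(d,n) \cong X_n$ is a regular $d$-fold orbifold covering, and the embedding $\beta$ is the standard map on orbifold fundamental groups induced by $p$, whose image is a subgroup of index equal to the covering degree.

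The main technical care is in the orbifold bookkeeping: the $C_d^n$-action on $V_0$ fails to be free along the coordinate hyperplanes $\{z_i = 0\}$, so $V_0/\Gamma(d,n)$ genuinely acquires cone points (precisely the locus of the cone point of $\MO_d$ in each factor), and $p$ is only an orbifold covering, not a manifold covering. I expect the bulk of the work to lie in verifying that the orbifold structure induced on $V_0/C_d^n$ by the $C_d^n$-quotient agrees with the intrinsic orbifold structure of $\MO_d^n \setminus \Delta_n$, so that the standard orbifold covering space theory (of the form used implicitly in \cite{A}) applies to give the index-$d$ embedding of orbifold fundamental groups.
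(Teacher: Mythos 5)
Your proposal is correct and follows essentially the same route as the paper: the paper's proof identifies $G(d,d,n)=C_d^{n-1}\rtimes S_n$ as an index-$d$ subgroup of $\Gamma(d,n)=C_d^n\rtimes S_n$ and then defers to Allcock's argument, which is precisely the chain of identifications you spell out (coordinatewise $d$-th power map, $V_0$ as the preimage of $\MO_d^n\setminus\Delta_n$, residual $S_n$-action, and orbifold covering space theory for the index-$d$ normal subgroup). Your explicit attention to the non-free locus $\{z_i=0\}$ and the resulting orbifold (rather than manifold) covering is a point the paper leaves implicit, but it is the same argument.
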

\begin{proof}
Recall that
$$V_0=\{(z_1,z_2,\ldots,z_n)\in \mathbb{C}^n\,:\,z_i\not=\omega_d^k z_j,\ \text{ for all }i\not=j\text{ and }0\leq k\leq d-1\}.$$
The group $G(d,d,n)$ has a normal subgroup isomorphic to $C_d^{n-1}$ consisting of the elements where the permutation is the identity, that is, using the notation $\omega_d:=e^{2\pi i/d}$, the elements of the form
\begin{align*}
[(\omega_d^{k_1},\omega_d^{k_2},\dots, \omega_d^{k_n})\mid id]
\end{align*}
where $k_1,k_2,\dots,k_n\in\{0,1,\dots, d-1\}$ satisfy $k_1+k_2+\dots +k_n=0 \text{ modulo } d$. Note that $k_1,k_2,\dots,k_{n-1}$ can be chosen freely and they determine $k_n$. It is easy to see that
$G(d,d,n)=C_d^{n-1}\rtimes S_n$.
The rest of the argument goes through as in~\cite[Proof of Thm.\ 1.1]{A}.
\end{proof}

We use the same generating set of reflections for $G(d,d,n)$ as \cite[pp~151-152]{BMR98} with the following notation. Note that these give a presentation of $G(d,d,n)$ by \cite[Proposition~3.2]{BMR98}.
\begin{notation}\label{notation_generators}
   We set
     \begin{align*}
     t'_2&:=s(1,2;1): (z_1,z_2,z_3, z_4\dots, z_n)\mapsto (e^{-2\pi i/d}z_2,e^{2\pi i/d} z_1,z_3,z_4\dots , z_{n-2}, z_{n-1}, z_n), \\
     t_2&:=s(1,2;0): (z_1,z_2,z_3, z_4\dots, z_n)\mapsto (z_2,z_1,z_3,z_4\dots , z_{n-2}, z_{n-1}, z_n), \\
     t_3&:=s(2,3;0): (z_1,z_2,z_3, z_4\dots, z_n)\mapsto (z_1,z_3,z_2,z_4\dots, z_{n-2}, z_{n-1}, z_n), \\
     &\vdots\\
     t_n&:=s(n-1,n;0): (z_1,z_2,z_3, z_4\dots, z_n)\mapsto (z_1,z_2,z_3,z_4\dots, z_{n-2}, z_{n}, z_{n-1}).
     \end{align*}
     In other words, we are taking $n$ reflections: $s(a-1,a;0)$ for $a\in\{1,2,\dots,n\}$ and $s(1,2;1)$ and renaming them as above. As pointed out in \cite{shi2005}, these are all reflections of type I and hence they have order $2$ and they lie in $G(d,d,n)$.
\end{notation}

We next need some paths in $V_0$ as defined in~\cite{BMR98}.

\begin{defn} \label{def:perturbedpath}
\cite[\S B, Eq.\ (2.13)]{BMR98}.
Let $s\in G(d,d,n)$ be a reflection. Note that $s$ has order $2$. Let $x\in V_0$, with decomposition $x=x_H+x_L$ with $x_H\in H_s$ and $x_L\in L_s$, as in Remark~\ref{hypeplane_line}. Then $y=s(x)=x_H-x_L$. Then the straight path $\tilde{p}_s:[0,1]\rightarrow V$ sending $t$ to $x_H+(1-2t)x_L$ does not lie in $V_0$, since $\tilde{p}_s(1/2)\in H$. So we take instead a version of the path which is perturbed close to $H$:
$$p_s(t)=\begin{cases} x+t(y-x)=x_H+(1-2t)x_L, & 0\leq t\leq \frac{1}{3}; \\
x+(\frac{1}{2}-\frac{1}{6}e^{3i\pi(t-\frac{1}{3})})(y-x)=
x_H+\frac{1}{3}e^{3i\pi(t-1/3)}x_L, & \frac{1}{3}\leq t\leq \frac{2}{3}; \\
x+t(y-x)=x_H+(1-2t)x_L, & \frac{2}{3}\leq t\leq 1,
\end{cases}
$$
which is a special case of the construction
in~\cite[\S B, Eq.\ (2.13)]{BMR98}.
\end{defn}

\begin{remark}
\label{rem:perturbedentries}
In Definition~\ref{def:perturbedpath}, the $i$th entry of $p_s(t)$ is either constant (if $x_i=y_i$) or a path from $x_i$ to $y_i$ which is the first third of the straight path in $\mathbb{C}$ from $x_i$ to $y_i$ ending at $x_i+\frac{1}{3}(y_i-x_i)=2$, followed by an anticlockwise semicircle of radius $\frac{1}{6}|y_i-x_i|$ centred at the mid-point between $x_i$ and $y_i$ and ending at $x_i+\frac{2}{3}(y_i-x_i)$, followed by the last third of the straight path from $x_i$ to $y_i$.
\end{remark}

The following result is an instance of  \cite[Proposition~3.2 and Theorem~2.27]{BMR98}.

\begin{proposition}\label{prop_BMR_Bddn_presentation}
    The set $\{t_2',t_2,\dots,t_n\}$ together with the relations described in \cite[Appendix~2 and Table~2]{BMR98} give a presentation by generators and relations of $G(d,d,n)$.
    Moreover, for $s$ equal to, respectively, $t_2',t_2,t_3\dots,t_n$,
    the paths $p_s$, regarded as paths in $V_0/G(d,d,n)$,
    are $s$-generators of the monodromy, 
    denoted respectively by $\tau_2',\tau_2,\tau_3,\dots,\tau_n$, giving the presentation by generators and relations of $B(d,d,n)$ illustrated in Figure~\ref{fig:BMR_presentation_Bddn}, with relations:
    \begin{align*}
    &\tau_i\tau_{i+1}\tau_i=\tau_{i+1}\tau_i\tau_{i+1},\quad \tau_2'\tau_3\tau_2'=\tau_3\tau_2'\tau_3, \quad
    \tau_i\tau_j=\tau_j\tau_i \text{ for } |i-j|>1,\quad
    \tau_2'\tau_i=\tau_i\tau_2' \text{ for } i>3\\
    &\tau_2\tau_2'\tau_3\tau_2\tau_2'\tau_3=\tau_3\tau_2\tau_2'\tau_3\tau_2\tau_2', \quad
    \underbrace{\tau_2\tau_2'\cdots }_{\text{$d$ terms}} =\underbrace{\tau_2'\tau_2\cdots }_{\text{$d$ terms}}.
    \end{align*}
   \begin{figure}[H]
       \centering
       \begin{tikzpicture}[scale=0.8,
  quiverarrow/.style={black, -latex},
  iquiverarrow/.style={black, -latex, <-},
  mutationarc/.style={dashed, red, very thick},
  arc/.style={dashed, black},
  point/.style={gray},
  vertex/.style={black},
  conepoint/.style={gray, circle, draw=gray!100, fill=white!100, thick, inner sep=1.5pt},
  db/.style={thick, double, double distance=1.3pt, shorten <=-6pt}
  ]
           \node[vertex, label=
{[label distance=-5pt]90:{$\tau_2$}}] (w8) at  (0,1) {$\bullet$};
 \node[vertex, label=
{[label distance=-5pt]-90:{$\tau_2'$}}] (w7) at  (0,-1) {$\bullet$};
 \node[vertex, label=
{[label distance=-5pt]90:{$\tau_3$}}] (w6) at  (1.5,0) {$\bullet$};
 \node[vertex, label=
{[label distance=-5pt]90:{$\tau_4$}}] (w5) at  (3.5,0) {$\bullet$};
 \node[vertex, label=
{[label distance=-5pt]90:{$\tau_{n-1}$}}] (w4) at  (5,0) {$\bullet$};
 \node[vertex, label=
{[label distance=-5pt]90:{$\tau_n$}}] (w3) at  (7,0) {$\bullet$};
 \draw[dotted, thick] (w5)--(w4);
 \draw (w4)--(w3);
 \draw (w5)--(w6);
 \draw (w7)--(w6);
 \draw (w8)--(w6);
 \draw[-] (w8)--(w7);
 \draw[db] (1.1,0)--(0.7,0);
 \node[left] at  ($(w8)! 0.5!(w7)$) {\tiny $d$};
       \end{tikzpicture}
       \caption{The presentation of $B(d,d,n)$ from \cite{BMR98}.}
       \label{fig:BMR_presentation_Bddn}
   \end{figure}
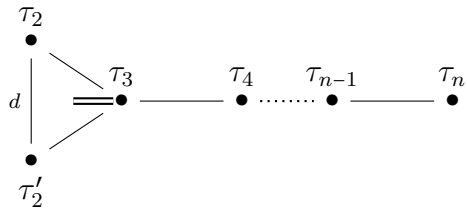
\end{proposition}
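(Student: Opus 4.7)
The plan is to deduce both statements directly from results of Brou\'e--Malle--Rouquier, since the proposition is advertised as an instance of~\cite[Proposition~3.2 and Theorem~2.27]{BMR98}. The task is essentially to line up our notation with theirs and verify the hypotheses.

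First I would check that the chosen reflections $t_2' = s(1,2;1)$, $t_2 = s(1,2;0)$, $t_3 = s(2,3;0)$, \ldots, $t_n = s(n-1,n;0)$ are precisely the distinguished generating reflections used for $G(d,d,n)$ in~\cite[Table~2, Appendix~2]{BMR98}. A direct inspection of the formulas in Notation~\ref{notation_generators} against the diagram and relations in BMR's table makes this immediate: for $i\geq 3$ each $t_i$ is the transposition of coordinates $i-1$ and $i$, while $t_2'$ and $t_2$ together implement the ``split'' node at the left of the BMR diagram via $s(1,2;1)$ and $s(1,2;0)$. Applying~\cite[Proposition~3.2]{BMR98} then yields the presentation of $G(d,d,n)$ by these generators, subject to the braid, commutation, cycle, and length-$d$ relations of the table (together with the order-two relations $t^2=e$ which hold because every $s(a,b;c)$ has order two, as noted after Remark~\ref{hypeplane_line}).

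For the braid group part, the key input is~\cite[Theorem~2.27]{BMR98}, which lifts such a presentation to $B(W)$ provided one replaces each generator $s$ by an $s$-generator of the monodromy around $H_s$ in $V_0/W$, and drops the order relations on the generators. What must be verified is that the perturbed path $p_s$ from Definition~\ref{def:perturbedpath} is indeed an $s$-generator of the monodromy in the sense of~\cite[\S B, Eq.~(2.13)]{BMR98}. This is true by construction: for an order-two reflection $s$, the BMR prescription, stripped of the powers needed for higher-order reflections, is exactly the formula in Definition~\ref{def:perturbedpath}. Once this identification is in place, the lifted relations on the $\tau$'s are the homogenised versions of the relations on the $t$'s, giving precisely the presentation displayed in Figure~\ref{fig:BMR_presentation_Bddn} and in the statement.

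The main obstacle, such as it is, is bookkeeping: matching Shi's notation~\cite{shi2005} for the reflections $s(a,b;c)$ used here against BMR's graphical conventions, and confirming that the cycle relation $\tau_2\tau_2'\tau_3\tau_2\tau_2'\tau_3 = \tau_3\tau_2\tau_2'\tau_3\tau_2\tau_2'$ and the $d$-fold braid relation $\underbrace{\tau_2\tau_2'\cdots}_{d} = \underbrace{\tau_2'\tau_2\cdots}_{d}$ are exactly the relations attached respectively to the triangular subdiagram and to the doubled edge labelled $d$ in the BMR diagram of type $G(d,d,n)$. No conceptual difficulty arises beyond careful transcription.
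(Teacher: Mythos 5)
Your proposal matches the paper's treatment: the paper gives no separate proof of this proposition, simply declaring it an instance of \cite[Prop.\ 3.2 and Thm.\ 2.27]{BMR98} and recalling the relevant Reidemeister--Schreier argument in Remark~\ref{rem:RS}. The only point where the paper is more careful is Remark~\ref{rem:BMRbraidgroup}, which records that the basepoint and the $(2\ 3)$-generator of the monodromy are modified relative to \cite{BMR98} and asserts that the arguments there go through unchanged with these choices --- a detail your ``by construction'' identification of $p_s$ with BMR's monodromy generators quietly absorbs.
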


\begin{remark}
\label{rem:RS}
We recall part of the proof of Proposition~\ref{prop_BMR_Bddn_presentation} from~\cite[Section 3]{BMR98}.
Recall the standard presentation of the Artin braid group of type $A_{n+1}$, that is $\mathcal{A}(A_{n+1})$ or in \cite{BMR98} notation $B(n+1)$, is:
\begin{align*}
\begin{tikzpicture}[scale=0.7,
  vertex/.style={black},
  db/.style={thick, double, double distance=1.3pt, shorten <=-6pt}
  ]
 \node[vertex, label=
{[label distance=-5pt]90:{$\xi_1$}}] (w6) at  (1.5,0) {$\bullet$};
 \node[vertex, label=
{[label distance=-5pt]90:{$\xi_2$}}] (w5) at  (3.5,0) {$\bullet$};
 \node[vertex, label=
{[label distance=-5pt]90:{$\xi_{n-1}$}}] (w4) at  (5,0) {$\bullet$};
 \node[vertex, label=
{[label distance=-5pt]90:{$\xi_n$}}] (w3) at  (7,0) {$\bullet$};
 \draw[dotted, thick] (w5)--(w4);
 \draw (w4)--(w3);
 \draw (w5)--(w6);
       \end{tikzpicture}
\end{align*}

By~\cite[Thm.\ 3.6]{BMR98}, the braid group $B(d,1,n)$ associated to the complex reflection group $G(d,1,n)$ is, for any $d>1$, isomorphic to the subgroup of $\mathcal{A}(A_{n+1})$ generated by $\{\xi_1^2, \xi_2,\xi_3,\dots,\xi_n\}$.
By the discussion following the proof of~\cite[Thm.\ 3.6]{BMR98}, an application of the Reidemeister-Schreier method shows that this subgroup is isomorphic to the Artin braid group $\mathcal{A}(B_n)$, with generators associated to the vertices of its Dynkin diagram as follows:
\begin{align*}
\begin{tikzpicture}[scale=0.8,
    vertex/.style={black},
  db/.style={thick, double, double distance=1.3pt, shorten <=-6pt}
  ]
\node[vertex, label=
{[label distance=-5pt]90:{$\xi_1^2$}}] (w8) at  (-0.5,0) {$\bullet$};
 \node[vertex, label=
{[label distance=-5pt]90:{$\xi_2$}}] (w6) at  (1.5,0) {$\bullet$};
 \node[vertex, label=
{[label distance=-5pt]90:{$\xi_3$}}] (w5) at  (3.5,0) {$\bullet$};
 \node[vertex, label=
{[label distance=-5pt]90:{$\xi_{n-1}$}}] (w4) at  (5,0) {$\bullet$};
 \node[vertex, label=
{[label distance=-5pt]90:{$\xi_n$}}] (w3) at  (7,0) {$\bullet$};
 \draw[dotted, thick] (w5)--(w4);
 \draw (w4)--(w3);
 \draw (w5)--(w6);
 \draw[db] (0,0)--(w6);
       \end{tikzpicture}
\end{align*}
Recall that, if $m>1$, by \cite[Lemma~3.3]{BMR98}, the complement in $\mathbb{C}^n$ of the union of the reflecting hyperplanes of $G(md,d,n)$ is
\begin{align*}
    \mathcal{M}^\#(md,n)=\{ (z_1,z_2,\dots,z_n)\mid(\forall j, k, 1\leq j\neq k\leq n)(\forall a\in \mathbb{Z})(z_j\neq 0)(z_j\neq e^{\frac{i\pi a}{md}}z_k) \},
\end{align*}
while if $m=1$, it is
\begin{align*}
    \mathcal{M}(d,n)=\{ (z_1,z_2,\dots,z_n)\mid(\forall j, k, 1\leq j\neq k\leq n)(\forall a\in \mathbb{Z})(z_j\neq e^{\frac{i\pi a}{d}}z_k) \}.
\end{align*}

As remarked in~\cite[Section 3C]{BMR98},
\cite[Proposition 3.8]{BMR98} could be stated in a more general way, obtaining, by an application of the Reidemester-Schreier algorithm, an injective group homomorphism
\begin{align*}
    \phi: \pi_1(\mathcal{M}^\#(md,n)/G(md,d,n))\hookrightarrow \mathcal{A}(B_{n}), 
\end{align*}
where, letting $\xi_2':=\xi_1^2\xi_2\xi_1^{-2}$, the left hand side group has presentation
\begin{align}\label{eqn_presentationBdeer}
    \begin{tikzpicture}[scale=0.7,
  quiverarrow/.style={black, -latex},
  iquiverarrow/.style={black, -latex, <-},
  mutationarc/.style={dashed, red, very thick},
  arc/.style={dashed, black},
  point/.style={gray},
  vertex/.style={black},
  conepoint/.style={gray, circle, draw=gray!100, fill=white!100, thick, inner sep=1.5pt},
  db/.style={thick, double, double distance=1.3pt, shorten <=-6pt}
  ]
           \node[vertex, label=
{[label distance=-5pt]90:{$\xi_2$}}] (w8) at  (0,1) {$\bullet$};
\node[vertex, label=
{[label distance=-5pt]180:{$\xi_1^{2d}$}}] (w9) at  (-1,0) {$\bullet$};
 \node[vertex, label=
{[label distance=-5pt]-90:{$\xi_2'$}}] (w7) at  (0,-1) {$\bullet$};
 \node[vertex, label=
{[label distance=-5pt]90:{$\xi_3$}}] (w6) at  (1.5,0) {$\bullet$};
 \node[vertex, label=
{[label distance=-5pt]90:{$\xi_4$}}] (w5) at  (3.5,0) {$\bullet$};
\node[vertex, label=
{[label distance=-5pt]90:{$\xi_{n-1}$}}] (w4) at  (5,0) {$\bullet$};
 \node[vertex, label=
{[label distance=-5pt]90:{$\xi_n$}}] (w3) at  (7,0) {$\bullet$};
 \draw[dotted, thick] (w5)--(w4);
 \draw (w4)--(w3);
 \draw (w5)--(w6);
 \draw (w7)--(w6);
 \draw (w8)--(w6);
 \draw[db] (1.1,0)--(0.7,0);
 \node at  (-1,-1) {\tiny $d+1$};
 \draw([shift=(90:1cm)]0,0) arc (90:270:1cm); 
       \end{tikzpicture}
\end{align}
In particular, if $m>1$, the left hand side is isomorphic to $B(md,d,n)$, but here we are interested in the case $m=1$. By \cite[Section 3C]{BMR98}, in this case there is an isomorphism of groups
\begin{align*}
    B(d,d,n)\cong  {\frac{\pi_1(\mathcal{M}^\#(d,n)/G(d,d,n))}{\langle \xi_1^{2d} =e\rangle}}
\end{align*}
and the presentation of $B(d,d,n)$ is obtained from (\ref{eqn_presentationBdeer}) by suppressing the node corresponding to $\xi_1^{2d}$ and adding an edge labelled $d$ between $\xi_2$ and $\xi_2'$.
\end{remark}

\begin{remark} \label{rem:BMRbraidgroup}
Note that \cite[Section~3]{BMR98} uses
the base point $(x_1,x_2,x_3\dots,x_{n+1})$ with $x_1<x_2<x_3<\dots<x_{n+1}$ real numbers. Instead, we choose basepoint $(0,-1,1,2,\dots, n-1)$.
We also modify
the $(2\ 3)$-generator of the monodromy $\xi_2$ (where $(2\ 3)$ is a generating transposition of $S_{n+1}$), taking the path as in Figure~\ref{fig:fig_path2_power} rather than as in~\cite[Section 3]{BMR98}: the arguments of~\cite{BMR98} go through unchanged with these choices. See Remark~\ref{rem:choice} for an explanation of this choice.
\end{remark}

\begin{proposition}\label{prop_paths_drawn}
The injective group homomorphism $\beta$ from $B(d,d,n)$ to $Z_n(\MO_d)$ from Proposition is given by:
    \begin{align*}
        \beta: B(d,d,n)\rightarrow Z_n(\MO_d): \tau_2'\mapsto h_1, \tau_i\mapsto h_i \text{ for } 2\leq i\leq n,
    \end{align*}
    where $\tau_2'$, $\tau_2$, $\tau_3,\dots$, and $\tau_n$ are as in the presentation in Proposition~\ref{prop_BMR_Bddn_presentation}
    and the braids $h_1,h_2,\dots,h_n$ are as illustrated in Figure~\ref{fig:braids_hi}.
    \end{proposition}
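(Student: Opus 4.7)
The plan is to explicitly compute $\beta(\tau_s)$ for each of the generators $\tau_2',\tau_2,\dots,\tau_n$ and verify the resulting braid matches the one drawn in Figure~\ref{fig:braids_hi}. By Proposition~\ref{prop_d-foldcovering}, $\beta$ is the map on fundamental groups induced by the $d$-fold orbifold covering $V_0/G(d,d,n)\to X_n$, which lifts (via the identification $V_0/\Gamma(d,n)\cong X_n$) to the coordinatewise $d$-th power map $(z_1,\dots,z_n)\mapsto(z_1^d,\dots,z_n^d)$. By Proposition~\ref{prop_BMR_Bddn_presentation}, each generator $\tau_s$ is represented by the perturbed path $p_s$ of Definition~\ref{def:perturbedpath}, so it suffices to compute the path $t\mapsto (p_s(t)_1^d,\dots,p_s(t)_n^d)$ and interpret the resulting loop as a braid in $Z_n(\MO_d)$.

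I would first handle the straightforward case of $t_i=s(i-1,i;0)$ for $3\leq i\leq n$. Here only the $(i-1)$th and $i$th coordinates of $p_{t_i}$ move, swapping the positive real basepoint values $i-2$ and $i-1$ via the two perturbed straight segments and anticlockwise semicircle of Remark~\ref{rem:perturbedentries}. Since $z\mapsto z^d$ is a local diffeomorphism away from $0$, the projected coordinates stay in the positive real region of $\MO_d$, well away from the cone point, and together realize the standard adjacent-transposition braid $h_i$.

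The less trivial cases are $\tau_2$ and $\tau_2'$, for which the first coordinate of the basepoint projects to the cone point $0\in\MO_d$. For $t_2=s(1,2;0)$, the first two coordinates of $p_{t_2}$ interpolate via perturbed straight segments between $0$ and $-1$; after $d$-th powering, the first strand leaves the origin along the ray of argument $\pi d\pmod{2\pi}$, makes the prescribed anticlockwise semicircular detour, and lands at $(-1)^d$, with the second strand performing the reverse. Reducing modulo $C_d$, this yields the braid $h_2$. For $t_2'=s(1,2;1)$, the reflection sends the basepoint to $(-e^{-2\pi i/d},0,1,\dots,n-1)$, so the first coordinate of $p_{t_2'}$ runs from $0$ to $-e^{-2\pi i/d}$. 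Raising to the $d$-th power yields a path landing at the same endpoint $(-1)^d$ as before, but the extra phase factor $e^{-2\pi i/d}$ along the straight portion of the path becomes, after $d$-th powering, a full additional $2\pi$ anticlockwise winding around the cone point in $\MO_d$. This extra winding is precisely what distinguishes $h_1$ from $h_2$ in Figure~\ref{fig:braids_hi}.

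The hard part will be the careful bookkeeping of how the perturbed semicircular portion of $p_s$ interacts with the $d$-fold branching of $z\mapsto z^d$ near the cone point. Concretely, one must verify that the winding numbers of the first two strands around the origin, and the signs of the crossings between them, agree with those depicted for $h_1$ and $h_2$ in Figure~\ref{fig:braids_hi}; the choice of perturbed (rather than straight) path from~\cite{BMR98} is essential here since the straight path would pass through the cone point after $d$-th powering. Once these visual checks are in place, the identification with the drawn braids is immediate, and the remaining generators $\tau_i$ for $3\leq i\leq n$ follow routinely as described above.
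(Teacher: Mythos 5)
Your overall strategy --- represent each generator by the perturbed path $p_s$ of Definition~\ref{def:perturbedpath}, push it forward along the coordinatewise $d$-th power map of Proposition~\ref{prop_d-foldcovering}, and read off the resulting braid --- is exactly the paper's. The case $3\leq j\leq n$ is also handled as in the paper. But your treatment of the two essential cases $\tau_2,\tau_2'$ rests on an inadmissible basepoint: you place the first coordinate of the basepoint at the origin, so that it ``projects to the cone point''. That cannot happen. The basepoint of $X_n$ must avoid the orbifold locus (this is part of the definition of $Z_n(\MO_d)$), and in Figure~\ref{fig:braids_hi} every strand starts at one of $-1,1,2^d,\dots,(n-1)^d$; the pole is not a strand. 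You appear to have imported the point $(0,-1,1,\dots,n-1)\in\mathbb{C}^{n+1}$ of Remark~\ref{rem:BMRbraidgroup}, which is the basepoint for the type-$A$ configuration space used in the Reidemeister--Schreier step, into $V_0\subset\mathbb{C}^n$. The paper instead takes $b=(e^{-i\pi/d},1,2,\dots,n-1)\in V_0$, whose image downstairs is $(-1,1,2^d,\dots,(n-1)^d)$, matching the endpoints of the strands in Figure~\ref{fig:braids_hi}. With your basepoint the braids you would compute are not even elements of the group in which $h_1,\dots,h_n$ live, so the identification cannot go through as written.

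With the correct basepoint the distinction between $\tau_2$ and $\tau_2'$ is also different from what you describe. The first coordinate of $p_{t_2}$ is a perturbed chord from $e^{-i\pi/d}$ to $1$ (arguments in $[-\pi/d,0]$), while that of $p_{t_2'}$ is a perturbed chord from $e^{-i\pi/d}$ to $e^{-2\pi i/d}$ (arguments in $[-2\pi/d,-\pi/d]$); after raising to the $d$-th power both arcs run from $-1$ to $1$, one staying below the real axis and the other above (Figures~\ref{fig:fig_path2_power} and~\ref{fig:fig_path1_power}). Thus $h_1$ and $h_2$ differ by which side of the cone point the moving strand passes --- equivalently $h_1=lh_2l^{-1}$ for the loop $l$ of Figure~\ref{fig:loop_l} --- not by ``a full additional $2\pi$ anticlockwise winding'': note in any case that $(e^{-2\pi i/d})^d=e^{-2\pi i}$ would contribute a clockwise, not anticlockwise, turn. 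The polar-coordinate bookkeeping you defer to at the end is precisely where this sign and half-plane analysis must be done, and as set up it would produce the wrong braids.
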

\begin{proof}
Let us fix the basepoint in $V_0$ to be $b=(e^{-i\pi/d},1,2,\dots,n-1)$. We compute the paths $p_s$ associated to each of the reflections $s$ from Notation~\ref{notation_generators} following Definition~\ref{def:perturbedpath}.

Note that $t_2(b)=(1, e^{-i\pi/d},2,\dots, n-1)$.
By Remark~\ref{rem:perturbedentries}, the first entry in the path 
$p_{t_2}$ from Definition~\ref{def:perturbedpath} is the first third of the straight path from $e^{-i\pi/d}$ to $1$, followed by a semicircular path centred at the midpoint between $e^{-i\pi/d}$ 
and $1$ of radius $\frac{1}{6}|1-e^{-i\pi/d}|$, followed by the 
last third of the straight path from $e^{-i\pi/d}$ to $1$.
The second entry is a similarly-defined path going in the other 
direction, while all the other entries are constant.
We sketch this, for $d=3$, in Figure~\ref{fig:fig_path1}.

We have $t'_2(b)=(e^{-2\pi i/d},e^{i\pi/d},2,\ldots ,n-1)$, so
the first entry of $p_{t'_2}$ is a path from $e^{-i\pi/d}$ to 
$e^{-2\pi i/d}$, while the second entry is a path from
$1$ to $e^{i\pi/d}$ and the other entries remain constant. We 
sketch this, for $d=3$, in Figure~\ref{fig:fig_path2}.

For $3\leq j\leq n$, we have $t_j(b)=(e^{-i\pi/d},1,\ldots ,j-3,j-1,j-2,j,\ldots ,n-1)$, so the $(j-1)$st entry of $p_{t_j}$ is a path from $j-2$ to $j-1$ and the $j$th entry of $p_{t_j}$ is a path from $j-1$ to $j-2$.
We sketch this, for $d=3$, in Figure~\ref{fig:fig_pathi}.

By Proposition~\ref{prop_BMR_Bddn_presentation}, for $s$ equal to, respectively, $t_2',t_2,t_3\dots,t_n$, the paths $p_s$,
regarded as paths in $V_0/G(d,d,n)$, are $s$-generators of the monodromy, denoted respectively by $\tau_2',\tau_2,\tau_3,\dots,\tau_n$. Their images under
the covering map $V_0/G(d,d,n)\rightarrow X_n$ in Proposition~\ref{prop_d-foldcovering} are given by taking their $d$th powers, entry by entry.

These images are illustrated in Figures~\ref{fig:fig_path1_power},\ref{fig:fig_path2_power} and \ref{fig:fig_pathi_power}, for $d=3$. Note also that the basepoint for $\MO_d^n-\Delta_n$ is $(-1,1,2^d,\dots, (n-1)^d)$.
It is easy to see, by writing the paths in polar coordinates in $\mathbb{C}$, that the paths will be in the same half plane (i.e. above or below the real axis) as shown in these figures, for general $d$.
Drawing these paths as braids using the same conventions as in \cite{A}, that is placing the point of view from below, and recalling that the cone point of order $d$ at $0$ is interpreted as a pole of order $d$, we see that the elements $\tau_2',\tau_2,\tau_3,\dots,\tau_n$
are sent respectively to the braids $h_1,h_2,h_3,\dots,h_n$ illustrated in Figure~\ref{fig:braids_hi}.
Hence, the injective group homomorphism $\beta$ from
Proposition~\ref{prop_d-foldcovering} sends $B(d,d,n)$ to the subgroup $\langle h_1,h_2,h_3,\dots,h_n\rangle$ of $Z_n(\MO_d)$ of index $d$, as claimed.
\end{proof}

\begin{figure}[H]
  \centering
    \begin{tikzpicture}[scale=2.5]\pgfmathsetmacro{\length}{2}
      \draw (0,0) node{$\star$};
      \draw (0,0.15) node{$0$};
      \coordinate (A) at (-60:1cm) ;
\coordinate (B) at (1,0);
\coordinate (stop1) at ($(A)!0.33333!(B)$);
\coordinate (mid) at ($(A)!0.5!(B)$);
\coordinate (stop2) at ($(A)!0.66666!(B)$);
\draw[thick, Green] ([shift=(60:0.17cm)]0.75,-0.4330127) arc (60:240:0.17cm); 
\draw[->, thick, Green] ([shift=(60:0.17cm)]0.75,-0.4330127) arc (60:200:0.17cm); 
\draw[thick, red] ([shift=(240:0.17cm)]0.75,-0.433) arc (240:410:0.17cm); 
\draw[->,thick, red] ([shift=(240:0.17cm)]0.75,-0.433) arc (240:330:0.17cm); 
\path[name path=line,draw=red, thick] (A) node {$\bullet$} -- (stop1)  node {\textcolor{gray}{$\bullet$}};
\path[name path=line,draw=red, thick] (stop2) node {$\bullet$} -- (B)  node {\textcolor{gray}{$\bullet$}};
\path[dashed, name path=line,draw=Green, thick] (A) node {$\bullet$} -- (stop1)  node {\textcolor{gray}{$\bullet$}};
\path[dashed,name path=line,draw=Green, thick] (stop2) node {\textcolor{gray}{$\bullet$}} -- (B)  node {$\bullet$};
    \draw[Green] (1,0.15) node{$1$};
    \draw[red] (-60:1.15cm) node{$e^{-i\pi/3}$};
    \draw (2,0.15) node{$2$};
    \draw (2,-0) node{$\bullet$};
    \draw (3,-0) node{$\dots$};
    \draw (4,0.15) node{$n-1$};
    \draw (4,-0) node{$\bullet$};
    \end{tikzpicture}
    \caption{The path $p_{t_2}$ associated to the reflection $t_2$. Note that only the first two entries of the $n$-tuple are non-constant paths. The drawing is for $d=3$. For larger $d$, the endpoints of the paths are closer.}
    \label{fig:fig_path1}
\end{figure}
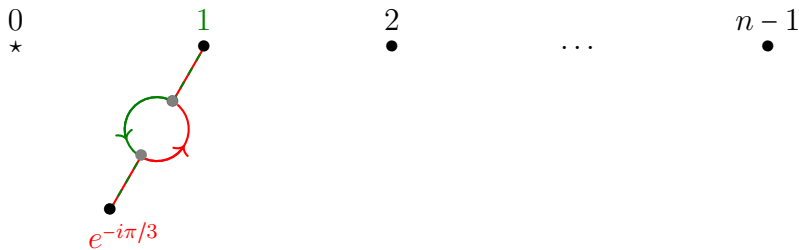

\begin{figure}[H]
  \centering
    \begin{tikzpicture}[scale=2.5]\pgfmathsetmacro{\length}{2}
      \draw (0,0) node{$\star$};
      \draw (0,0.15) node{$0$};
      \coordinate (A) at (60:1cm) ;
\coordinate (B) at (-120:1cm);
\coordinate (C) at (1,0) ;
\coordinate (D) at (-60:1cm);
\coordinate (stop1) at ($(D)!0.33333!(B)$);
\coordinate (mid) at ($(D)!0.5!(B)$);
\coordinate (stop2) at ($(D)!0.66666!(B)$);
\coordinate (stop3) at ($(C)!0.33333!(A)$);
\coordinate (mid2) at ($(C)!0.5!(A)$);
\coordinate (stop4) at ($(C)!0.66666!(A)$);
\draw[thick, Green] ([shift=(-60:0.17cm)]0.75,0.4330127) arc (-60:110:0.17cm); 
\draw[->, thick, Green] ([shift=(-60:0.17cm)]0.75,0.4330127) arc (-60:70:0.17cm); 
\draw[thick, red] ([shift=(0:0.17cm)]0,-0.86602) arc (0:180:0.17cm); 
\draw[->,thick, red] ([shift=(0:0.17cm)]0,-0.86602) arc (0:90:0.17cm); 
\path[name path=line,draw=red, thick] (D) node {$\bullet$} -- (stop1)  node {\textcolor{gray}{$\bullet$}};
\path[name path=line,draw=red, thick] (stop2) node {\textcolor{gray}{$\bullet$}} -- (B)  node {$\bullet$};
\path[name path=line,draw=Green, thick] (C) node {$\bullet$} -- (stop3)  node {\textcolor{gray}{$\bullet$}};
\path[name path=line,draw=Green, thick] (stop4) node {\textcolor{gray}{$\bullet$}} -- (A)  node {$\bullet$};
    \draw[Green] (1.03,0.15) node{$1$};
    \draw (60:1.15cm) node{$e^{i\pi/3}$};
    \draw (-120:1.15cm) node{$e^{-i2\pi/3}$};
    \draw[red] (-60:1.15cm) node{$e^{-i\pi/3}$};
    \draw (2,0.15) node{$2$};
    \draw (2,-0) node{$\bullet$};
    \draw (3,-0) node{$\dots$};
    \draw (4,0.15) node{$n-1$};
    \draw (4,-0) node{$\bullet$};
    \end{tikzpicture}
    \caption{The path $p_{t_2'}$ associated to the reflection $t_2'$. Note that only the first two entries of the $n$-tuple are non-constant paths. The drawing is for $d=3$. For larger $d$, the red path lies entirely within the fourth quadrant of the plane and the endpoints of both paths are closer.}
    \label{fig:fig_path2}
\end{figure}
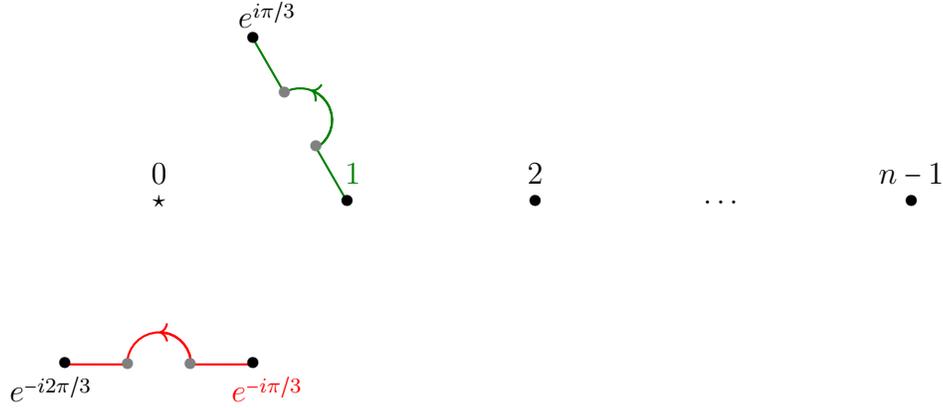

\begin{figure}[H]
  \centering
    \begin{tikzpicture}[scale=2.5]\pgfmathsetmacro{\length}{2}
      \draw (0,0) node{$\star$};
      \draw (1.5,-0) node{$\dots$};
      \draw (0,0.15) node{$0$};
      \draw (1,-0) node{$\bullet$};
      \draw (1,0.15) node{$1$};
      \coordinate (A) at (2,0) ;
\coordinate (B) at (3,0);
\coordinate (stop1) at ($(A)!0.33333!(B)$);
\coordinate (mid) at ($(A)!0.5!(B)$);
\coordinate (stop2) at ($(A)!0.66666!(B)$);
\draw[thick, Green] ([shift=(0:0.17cm)]2.5,0) arc (0:180:0.17cm);
\draw[->,thick, Green] ([shift=(0:0.17cm)]2.5,0) arc (0:90:0.17cm); 
\draw[thick, red] ([shift=(180:0.17cm)]2.5,0) arc (180:360:0.17cm);
\draw[->,thick, red] ([shift=(180:0.17cm)]2.5,0) arc (180:270:0.17cm); 
\path[name path=line,draw=red, thick] (A) node {$\bullet$} -- (stop1)  node {$\bullet$};
\path[name path=line,draw=red, thick] (stop2) node {$\bullet$} -- (B)  node {$\bullet$};
\path[dashed, name path=line,draw=Green, thick] (A) node {$\bullet$} -- (stop1)  node {\textcolor{gray}{$\bullet$}};
\path[dashed,name path=line,draw=Green, thick] (stop2) node {\textcolor{gray}{$\bullet$}} -- (B)  node {$\bullet$};
    \draw[red] (2,0.15) node{$j-2$};
    \draw (-60:1.15cm) node{$e^{-i\pi/3}$};
    \draw (-60:1.cm) node{$\bullet$};
    \draw[Green] (3,0.15) node{$j-1$};
    \draw (2,-0) node{$\bullet$};
    \draw (3.5,-0) node{$\dots$};
    \draw (4,0.15) node{$n-1$};
    \draw (4,-0) node{$\bullet$};
    \end{tikzpicture}
    \caption{The path $p_{t_j}$ associated to the reflection $t_j$ for $3\leq j\leq n$. Note that only the $(j-1)$st and $j${th} entries of the $n$-tuple (with initial values $j-2$ and $j-1$ respectively) are non-constant paths. We draw the case $d=3$.}
    \label{fig:fig_pathi}
\end{figure}
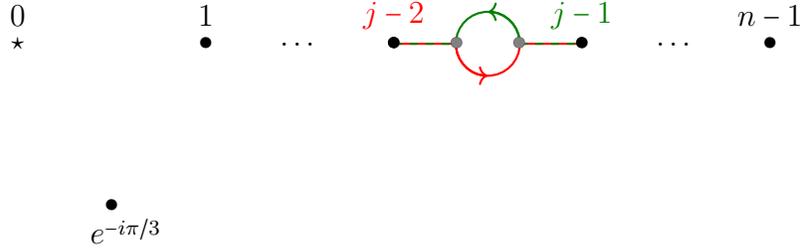

\begin{remark} \label{rem:choice}
Our choice of basepoint, together with the modified choice of $(2\ 3)$-generator for the monodromy for the braid group of type $A$ (see Remark~\ref{rem:BMRbraidgroup}) allows to use the same generating set of reflections for $G(d,d,n)$ as in~\cite[Section 3A]{BMR98} (see Notation~\ref{notation_generators}) while ensuring that the diagram in Figure~\ref{fig:maindiagram} commutes and the images of the generators under $\beta$ in
Proposition~\ref{prop_paths_drawn} are the $h_i$.

If we did not modify the $(2\ 3)$-
generator of the monodromy, we would 
need to replace the reflection 
$t_2'=s(1,2;1)$ with the reflection 
$t_2''=s(1,2;-1)$; by~\cite[Theorem 
2.19]{shi2005}, we would still have a 
generating set of $G(d,d,n)$. We would 
also need to replace the basepoint $b=
(e^{-i\pi/d},1,2,\ldots ,n-1)$ in the 
proof of 
Proposition~\ref{prop_paths_drawn} 
with $(e^{i\pi/d},1,2,\ldots ,n-1)$ 
and the element $\xi_2'$ in 
Remark~\ref{rem:RS} with 
$\xi_1^{-2}\xi_2\xi_1^2$, also 
switching the labels $\xi_2$ and 
$\xi'_2$ on the presentation of 
$B(d,d,n)$ given there. Modifying the 
morphisms $\alpha,\phi$ and $\gamma$ 
in Figure~\ref{fig:maindiagram} 
appropriately (so that 
$\alpha(a_2)=t_2$, 
$\alpha(b_2)=s^{-1}t_2s$, 
$\varphi(a_2)=\tau_2$, 
$\varphi(b_2)=\tau'_2$, 
$\beta(\tau_2)=h_1$, 
$\beta(\tau'_2)=h_2$, $\gamma(t_2)=h_1$ and $\gamma(s^{-1}t_2s)=h_2$),
this would also be a valid construction.
For $d=2$, the paths $p_s$ we consider 
here do not coincide with the 
paths $g_1,\dots g_n$ from \cite[proof 
of Theorem 1.1]{A} (even choosing 
$\epsilon=1$). However, with this 
change, the basepoint would be 
$(i,1,2,\ldots ,n-1)$ as in~\cite{A}, 
and we would recover exactly the same 
paths as in~\cite{A}.
\end{remark}

\begin{figure}[H]
  \centering
    \begin{tikzpicture}[scale=2]\pgfmathsetmacro{\length}{2}
      \draw (0,0) node{$\star$};
      \draw (0,0.2) node{$0$};
      \coordinate (A) at (-1,0) ;
\coordinate (B) at (1,0);
    \draw[red, thick] (A) arc
	[start angle=60,
		end angle=0,
		x radius=1cm,
		y radius =0.7cm
	]  node (stop1) {\textcolor{gray}{$\bullet$}};
    \draw[red, thick] (B) arc
	[start angle=120,
		end angle=180,
		x radius=1cm,
		y radius =0.7cm
	] node (stop2) {\textcolor{gray}{$\bullet$}};
 \draw[Green, thick, dashed] (A) arc
	[start angle=60,
		end angle=0,
		x radius=1cm,
		y radius =0.7cm
	];
    \draw[Green, thick, dashed] (B) arc
	[start angle=120,
		end angle=180,
		x radius=1cm,
		y radius =0.7cm
	];
 \draw[red, thick] (stop1) arc
	[start angle=-120,
		end angle=-60,
		x radius=1cm,
		y radius =6cm
	];
 \draw[red, thick, ->] (stop1) arc
	[start angle=-120,
		end angle=-90,
		x radius=1cm,
		y radius =6cm
	];
  \draw[Green, thick] (stop2) arc
	[start angle=60,
		end angle=120,
		x radius=1cm,
		y radius =3cm
	];
 \draw[Green, thick, ->] (stop2) arc
	[start angle=60,
		end angle=90,
		x radius=1cm,
		y radius =3cm
	];
    \draw (-1,0) node{$\bullet$};
    \draw (1,0) node{$\bullet$};
    \draw[Green] (1.1,-0.2) node{$1$};
    \draw[red] (-1.1,-0.2) node{$-1$};
    \draw (2,-0.2) node{$2^d$};
    \draw (2,-0) node{$\bullet$};
    \draw (3,-0) node{$\dots$};
    \draw (4,-0.2) node{$(n-1)^d$};
    \draw (4,-0) node{$\bullet$};
    \end{tikzpicture}
    \caption{Path $p_{t_2}$ to power $d$.}
    \label{fig:fig_path2_power}
\end{figure}
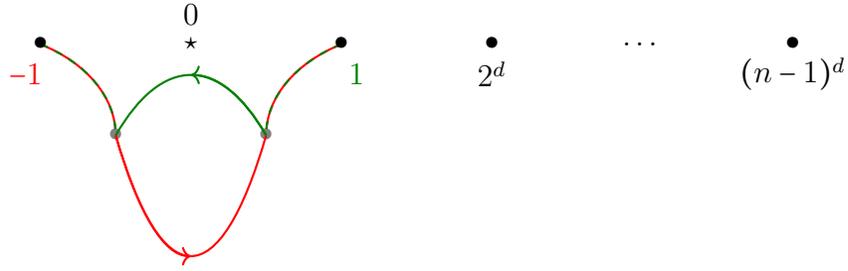

\begin{figure}[H]
  \centering
    \begin{tikzpicture}[scale=2]\pgfmathsetmacro{\length}{2}
      \draw (0,0) node{$\star$};
      \draw (0,-0.2) node{$0$};
      \coordinate (A) at (-1,0) ;
\coordinate (B) at (1,0);
    \draw[red, thick] (A) arc
	[start angle=180,
		end angle=120,
		x radius=1cm,
		y radius =0.7cm
	]  node (stop1) {\textcolor{gray}{$\bullet$}};
    \draw[red, thick] (B) arc
	[start angle=0,
		end angle=60,
		x radius=1cm,
		y radius =0.7cm
	] node (stop2) {\textcolor{gray}{$\bullet$}};
 \draw[Green, thick, dashed] (A) arc
	[start angle=180,
		end angle=120,
		x radius=1cm,
		y radius =0.7cm
	];
    \draw[Green, thick, dashed] (B) arc
	[start angle=0,
		end angle=60,
		x radius=1cm,
		y radius =0.7cm
	];
     \draw[red, thick, ->] (stop1) arc
	[start angle=-120,
		end angle=-80,
		x radius=1cm,
		y radius =3cm
	];
 \draw[red, thick] (stop1) arc
	[start angle=-120,
		end angle=-60,
		x radius=1cm,
		y radius =3cm
	];
  \draw[Green, thick] (stop2) arc
	[start angle=60,
		end angle=120,
		x radius=1cm,
		y radius =6cm
	];
 \draw[Green, thick, ->] (stop2) arc
	[start angle=60,
		end angle=100,
		x radius=1cm,
		y radius =6cm
	];
    \draw (-1,0) node{$\bullet$};
    \draw (1,0) node{$\bullet$};
    \draw[Green] (1,-0.2) node{$1$};
    \draw[red] (-1,-0.2) node{$-1$};
    \draw (2,-0.2) node{$2^d$};
    \draw (2,-0) node{$\bullet$};
    \draw (3,-0) node{$\dots$};
    \draw (4,-0.2) node{$(n-1)^d$};
    \draw (4,-0) node{$\bullet$};
    \end{tikzpicture}
    \caption{Path $p_{t_2'}$ to the power $d$.}
    \label{fig:fig_path1_power}
\end{figure}

\begin{figure}[H]
  \centering
    \begin{tikzpicture}[scale=2]\pgfmathsetmacro{\length}{2}
      \draw (0,0) node{$\star$};
      \draw (1.5,-0) node{$\dots$};
      \draw (0,-0.2) node{$0$};
      \draw (1,-0) node{$\bullet$};
      \draw (1,-0.2) node{$1$};
      \coordinate (A) at (2,0) ;
\coordinate (B) at (3,0);
\draw[red, thick] (A) arc
	[start angle=180,
		end angle=120,
		x radius=0.5cm,
		y radius =0.7cm
	]  node (stop1) {\textcolor{gray}{$\bullet$}};
    \draw[red, thick] (B) arc
	[start angle=0,
		end angle=60,
		x radius=0.5cm,
		y radius =0.7cm
	] node (stop2) {\textcolor{gray}{$\bullet$}};
 \draw[Green, thick, dashed] (A) arc
	[start angle=180,
		end angle=120,
		x radius=0.5cm,
		y radius =0.7cm
	];
    \draw[Green, thick, dashed] (B) arc
	[start angle=0,
		end angle=60,
		x radius=0.5cm,
		y radius =0.7cm
	];
     \draw[red, thick, ->] (stop1) arc
	[start angle=-120,
		end angle=-80,
		x radius=0.5cm,
		y radius =2cm
	];
 \draw[red, thick] (stop1) arc
	[start angle=-120,
		end angle=-60,
		x radius=0.5cm,
		y radius =2cm
	];
  \draw[Green, thick] (stop2) arc
	[start angle=60,
		end angle=120,
		x radius=0.5cm,
		y radius =4cm
	];
 \draw[Green, thick, ->] (stop2) arc
	[start angle=60,
		end angle=100,
		x radius=0.5cm,
		y radius =4cm
	];
    \draw[red] (2,-0.2) node{$(j-2)^d$};
    \draw (3,0) node{$\bullet$};
    \draw (-1,-0.2) node{$-1$};
    \draw (-1,0) node{$\bullet$};
    \draw[Green] (3,-0.2) node{$(j-1)^d$};
    \draw (2,-0) node{$\bullet$};
    \draw (3.5,-0) node{$\dots$};
    \draw (4,-0.2) node{$(n-1)^d$};
    \draw (4,-0) node{$\bullet$};
    \end{tikzpicture}
    \caption{Path $p_{t_j}$ to the power $d$ for $3\leq j\leq n$.}
    \label{fig:fig_pathi_power}
\end{figure}
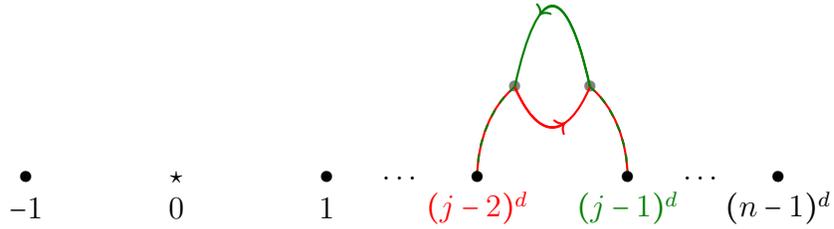

\subsection{Building \texorpdfstring{$\alpha$}{alpha} and \texorpdfstring{$\varphi$}{phi}}

\begin{lemma}\label{lemma_Nnormal_indexd}
The subgroup $N:=\langle st_2s^{-1},t_2,t_3,...,t_n \rangle\subseteq {\frac{\AAA(B_n)}{\langle s^d =e\rangle}}$ is a normal subgroup of index $d$ and
\begin{align*}
        {\frac{\AAA(B_n)}{\langle s^d =e\rangle}}=N\cupdot Ns\cupdot Ns^2\cupdot \dots Ns^{d-1}.
    \end{align*}
\end{lemma}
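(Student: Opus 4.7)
The strategy is to exhibit $N$ as the kernel of a natural surjective homomorphism $\phi\colon G := \AAA(B_n)/\langle s^d = e\rangle \twoheadrightarrow \Z/d\Z$ sending $s \mapsto 1$ and each $t_i \mapsto 0$. A quick check of the relations shows $\phi$ is well defined: every purely-$t$ braid or commutation relation is trivially homogeneous of $s$-degree zero; the type-$B$ relation $st_2st_2 = t_2st_2s$ has $s$-degree two on both sides; and $s^d = e$ becomes $d \equiv 0 \pmod d$. Since every generator of $N$ maps to $0$, we have $N \subseteq \ker \phi$, so the cosets $N, Ns, \ldots, Ns^{d-1}$ are pairwise disjoint (being distinguished by $\phi$). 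It then remains to prove that their union is all of $G$; this simultaneously yields $[G:N] = d$ and the claimed decomposition, and normality of $N$ is automatic because $N = \ker \phi$.

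To show $H := \bigcup_{k=0}^{d-1} Ns^k$ equals $G$, I would verify $H$ is closed under right multiplication by each generator of $G$; since $e \in H$, this forces $H = G$. Closure under right multiplication by $s$ is immediate from $s^d = e$. For $i \geq 3$, the generator $t_i$ commutes with $s$ in $\AAA(B_n)$, so $s^k t_i s^{-k} = t_i \in N$, giving $Ns^k \cdot t_i = N(s^k t_i s^{-k})s^k = Ns^k \in H$. The interesting case is multiplication by $t_2$, which, after writing $Ns^k \cdot t_2 = N(s^k t_2 s^{-k}) s^k$, reduces to showing $u_k := s^k t_2 s^{-k}$ lies in $N$ for every $k$.

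This is the main obstacle. My plan is to derive a recursion on the $u_k$ from the type-$B$ relation. Rearranging $st_2 s t_2 = t_2 s t_2 s$ (conjugate to $t_2 \cdot (st_2 s^{-1}) = (s^{-1} t_2 s) \cdot t_2$) gives $u_0 u_1 = u_{-1} u_0$; conjugating by $s^k$ yields $u_k u_{k+1} = u_{k-1} u_k$ for every $k \in \Z$. Solving this recursion produces $u_{k+1} = u_k^{-1} u_{k-1} u_k$ and symmetrically $u_{k-1} = u_k u_{k+1} u_k^{-1}$. Since $u_0 = t_2$ and $u_1 = s t_2 s^{-1}$ lie in $N$ by the very definition of $N$, a two-sided induction on $k$ shows $u_k \in N$ for all $k \in \Z$ (and hence in particular for a full system of representatives modulo $d$). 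This completes the closure argument and proves the lemma.
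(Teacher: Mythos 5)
Your argument is correct, and it rests on the same two ingredients as the paper's proof, organised in a different order. Your homomorphism $\phi\colon G\to\Z/d\Z$ is a cleaner packaging of the paper's observation that ``the relations preserve the sum of the exponents of $s$ modulo $d$'', and your recursion $u_{k+1}=u_k^{-1}u_{k-1}u_k$ is, at $k=1$, exactly the paper's key computation $s(st_2s^{-1})s^{-1}=(st_2s^{-1})^{-1}t_2(st_2s^{-1})$ extracted from $st_2st_2=t_2st_2s$ (the general step being its conjugate by $s^{k-1}$). The difference is structural: the paper first proves $sNs^{-1}\subseteq N$ on generators, deduces normality, and then obtains $G=N\langle s\rangle$ from $G=\langle N,\langle s\rangle\rangle$ together with $N\cap\langle s\rangle=\{e\}$; you instead prove coverage directly by showing $H=\bigcup_{k=0}^{d-1}Ns^k$ is stable under right multiplication by the generators, and get normality for free from $N=\ker\phi$ (which does follow, since $N\subseteq\ker\phi$ and both then have index $d$). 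Your route avoids having to argue normality separately --- in the paper one should additionally note that $sNs^{-1}\subseteq N$ upgrades to equality because $s$ has finite order --- at the cost of an explicit two-sided induction on $k$. Two small points to make explicit when writing this up: (i) to conclude $H=G$ you also need stability under right multiplication by \emph{inverses} of generators, which is immediate since $s^kt_is^{-k}\in N$ implies $s^kt_i^{-1}s^{-k}\in N$; and (ii) surjectivity of $\phi$ (clear, as $\phi(s)=1$) is what makes the $d$ cosets genuinely distinct.
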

\begin{proof}
We first prove $N$ is a normal subgroup by showing that its normaliser is the whole ambient group. Note that by construction $st_2 s^{-1}\in N$ and for $i=3,\dots,n$ we have $st_i s^{-1}=t_i\in N$. Moreover, using the relation $st_2st_2=t_2st_2s$, or equivalently $st_2s^{-1}=t_2^{-1}s^{-1}t_2st_2$, we have that
    \begin{align*}
        s(st_2 s^{-1}) s^{-1}=st_2^{-1}s^{-1}t_2st_2s^{-1}=(st_2s^{-1})^{-1}(t_2)(st_2s^{-1}),
    \end{align*}
and this is an element in $N$ since it is a product of three elements in $N$. Hence $sNs^{-1}\subseteq N$. Then the normaliser of $N$ contains both $s$ and all the elements of $N$. In particular it contains the generators $s,t_2,\dots,t_n$ of the ambient group. Hence $N$ is a normal subgroup of ${\frac{\AAA(B_n)}{\langle s^d =e\rangle}}$.

We now show that ${\frac{\AAA(B_n)}{\langle s^d =e\rangle}}$ is the semidirect product of $\langle s \rangle$ and $N$. First we show that $s\not\in N$. Recall that $s^{-1}=s^{d-1}$ and the relations in the group preserve the sum of the exponents of copies of $s$ modulo $d$ in the expression of any element of the group. Then, we have that the sum of the exponents of copies of the element $s$ in the expression for any element in $N$ is always a multiple of $d$. Hence $s\not\in N$ and $N\cap \langle s\rangle =\{ e\}$, where $e$ is the identity element.
    Now, we have
    \begin{align*}
        {\frac{\AAA(B_n)}{\langle s^d =e\rangle}}=N\cupdot Ns\cupdot Ns^2\cupdot \dots Ns^{d-1}
    \end{align*}
    and ${\frac{\AAA(B_n)}{\langle s^d =e\rangle}}=\langle N, \langle s\rangle \rangle$ and $N$ is a normal subgroup, so
    \begin{align*}
        {\frac{\AAA(B_n)}{\langle s^d =e\rangle}}= N \langle s\rangle
    \end{align*}
    and $N$ is a subgroup of index $d$.
\end{proof}

We define $\alpha$ to be the embedding of $N$ into ${\frac{\AAA(B_n)}{\langle s^d =e\rangle}}$.

\begin{theorem}
\label{thm_varphi_iso}
There is a group isomorphism
    \begin{align*}
        \varphi: N\longrightarrow B(d,d,n): st_2s^{-1}\mapsto \tau'_2,\, t_i\mapsto \tau_i, \textup{ for } 2\leq i\leq n.
    \end{align*}
\end{theorem}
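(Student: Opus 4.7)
The plan is to use the Reidemeister-Schreier setup from Remark~\ref{rem:RS} to realise $N$ as a quotient of the index-$d$ subgroup $H\leq \mathcal{A}(B_n)$ with the presentation (\ref{eqn_presentationBdeer}), and then to invoke the isomorphism $H/\langle \xi_1^{2d}=e\rangle \cong B(d,d,n)$ from \cite[Section 3C]{BMR98}.

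Under the Reidemeister-Schreier identification of $\mathcal{A}(B_n)$ with the subgroup of $\mathcal{A}(A_{n+1})$ generated by $\xi_1^2,\xi_2,\ldots,\xi_n$ (recalled in Remark~\ref{rem:RS}), I would set $s=\xi_1^2$ and $t_i=\xi_i$ for $2\leq i\leq n$, matching the presentation of $\mathcal{A}(B_n)$ from Theorem~\ref{mainthmA}. With these identifications, the subgroup $H=\pi_1(\mathcal{M}^\#(d,n)/G(d,d,n))$ of Remark~\ref{rem:RS} is generated inside $\mathcal{A}(B_n)$ by $t_2$, $s^d$, $st_2s^{-1}$ (corresponding to the generators $\xi_2$, $\xi_1^{2d}$, $\xi_2'=\xi_1^2\xi_2\xi_1^{-2}$ of (\ref{eqn_presentationBdeer})) together with $t_3,\ldots,t_n$. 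Since the defining relations of $\mathcal{A}(B_n)$ all preserve the total $s$-exponent modulo $d$, there is a well-defined surjection $\mathcal{A}(B_n)\to\Z/d\Z$ sending $s\mapsto 1$ and $t_i\mapsto 0$; as $H$ lies in its kernel and both have index $d$, the two agree, so in particular $H$ is normal in $\mathcal{A}(B_n)$.

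Next, I would observe that every conjugate of $s^d$ in $\mathcal{A}(B_n)$ is already a conjugate by an element of $H$: writing $g=hs^k$ with $h\in H$ and $0\leq k<d$, one has $gs^dg^{-1}=hs^ks^ds^{-k}h^{-1}=hs^dh^{-1}$. Hence the normal closure $K$ of $s^d$ in $\mathcal{A}(B_n)$ coincides with its normal closure in $H$, and in particular $K\subseteq H$. The induced map $H/K\to \mathcal{A}(B_n)/K=\mathcal{A}(B_n)/\langle s^d=e\rangle$ is then injective, and its image is generated by the images of $t_2,s^d,st_2s^{-1},t_3,\ldots,t_n$; as the image of $s^d$ is trivial, this image is precisely $N=\langle st_2s^{-1},t_2,t_3,\ldots,t_n\rangle$. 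Thus $N\cong H/K$.

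Finally, by Remark~\ref{rem:RS}, $H/K=H/\langle \xi_1^{2d}=e\rangle\cong B(d,d,n)$, with BMR presentation obtained from (\ref{eqn_presentationBdeer}) by deleting the $\xi_1^{2d}$ node and adding an edge labelled $d$ between $\xi_2$ and $\xi_2'$. Comparing this decorated graph with the one in Proposition~\ref{prop_BMR_Bddn_presentation}, the identification must send $\xi_2\mapsto\tau_2$, $\xi_2'\mapsto\tau'_2$, $\xi_i\mapsto\tau_i$ for $i\geq 3$. Composing with $N\cong H/K$ yields the isomorphism $\varphi$ as claimed. The main obstacle will be checking carefully that the relations of (\ref{eqn_presentationBdeer}) (after the node suppression and added $d$-labelled edge) do match the BMR relations of Proposition~\ref{prop_BMR_Bddn_presentation} under this generator correspondence, rather than some twisted variant swapping the roles of $\tau_2$ and $\tau'_2$.
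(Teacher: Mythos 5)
Your proposal is correct and rests on the same underlying machinery as the paper's proof, namely the Reidemeister--Schreier computation from \cite{BMR98} recalled in Remark~\ref{rem:RS}; but you execute it in the opposite order, and that difference is worth recording. The paper applies Reidemeister--Schreier directly to $N$ inside the quotient $\AAA(B_n)/\langle s^d=e\rangle$ with coset representatives $\{e,s,\ldots,s^{d-1}\}$ and then matches the resulting presentation against Proposition~\ref{prop_BMR_Bddn_presentation}. You instead reuse the already-known presentation~\eqref{eqn_presentationBdeer} of the index-$d$ subgroup $H=\pi_1(\mathcal{M}^\#(d,n)/G(d,d,n))$ of $\AAA(B_n)$ and descend to the quotient. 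The key extra step that makes this descent work --- and which the paper leaves implicit --- is your observation that since $g=hs^k$ with $h\in H$ gives $gs^dg^{-1}=hs^dh^{-1}$, the normal closure $K$ of $s^d$ in $\AAA(B_n)$ equals its normal closure in $H$, so $K\subseteq H$ and $H/K$ injects into $\AAA(B_n)/\langle s^d=e\rangle$ with image exactly $N$. This buys you a cleaner argument that avoids re-running Reidemeister--Schreier in the quotient group, at the cost of having to verify that $H$ really is the full kernel of the exponent-sum map (which you do correctly via the index count). Your closing caveat about the generator matching ($\xi_2\mapsto\tau_2$, $\xi_2'\mapsto\tau_2'$ rather than the swapped identification) is exactly the point the paper handles through its choice of basepoint and modified $(2\ 3)$-generator in Remarks~\ref{rem:BMRbraidgroup} and~\ref{rem:choice}, so it is a fair flag rather than a gap.
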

\begin{proof}
This can be seen by applying the Reidemester-Schreier algorithm to find a presentation of the subgroup $N$, using the set of coset representatives
$\{e,s,s^2,\dots,s^{d-1}\}$, and noting the presentation of $B(d,d,n)$ in Proposition~\ref{prop_BMR_Bddn_presentation} (from~\cite{BMR98}).
This is very similar to the proof of Proposition~\ref{prop_BMR_Bddn_presentation} in~\cite[Sections 3.7 and 3C]{BMR98}; see Remark~\ref{rem:RS}.
\end{proof}

\subsection{Building \texorpdfstring{$\gamma$}{gamma}}

\begin{figure}[ht]
\begin{tikzpicture}[scale=0.8]
\begin{scope}
    \node at (0,6.4) {\tiny$-1$};
    \node at (2,6.4) {\tiny$1$};
    \node at (1,2.6) {$d$};
    \node at (3,6.4) {\tiny$2^d$};
    \node at (5,6.4) {\tiny$(n-1)^d$};
    \node at (-1,4.5) {$h_1=$};
    \node[font=\huge] at (4,4.5){$\cdots$};
    \draw[overcross](2,4)--(2,3);
    \draw[overcross, looseness=1.3](2,6)to[out=-90,in=90](0,3)--(0,3);
    \draw[overcross, looseness=1.3](2,4)to[out=90,in=-90](0,6);
    \draw[line width=0.25cm, white] (1,3)--(1,6);
    \draw[overcross, thick, black] (1,3)--(1,6);
    \draw[overcross] (3,3)--(3,6);
    \draw[overcross] (5,3)--(5,6);
    \end{scope}
\begin{scope}[xshift=9cm]
    \node at (0,6.4) {\tiny$-1$};
    \node at (2,6.4) {\tiny$1$};
    \node at (1,2.6) {$d$};
    \node at (3,6.4) {\tiny$2^d$};
    \node at (5,6.4) {\tiny$(n-1)^d$};
    \node at (-1,4.5) {$h_2=$};
    \node[font=\huge] at (4,4.5){$\cdots$};
    \draw[overcross, thick, black] (1,3)--(1,6);
    \draw[overcross](2,4)--(2,3);
    \draw[overcross, looseness=1.3](2,6)to[out=-90,in=90](0,3)--(0,3);
    \draw[overcross, looseness=1.3](2,4)to[out=90,in=-90](0,6);
    \draw[overcross] (3,3)--(3,6);
    \draw[overcross] (5,3)--(5,6);
    \end{scope}
\begin{scope}[yshift=-6cm]
    \node at (0,6.4) {\tiny$-1$};
    \node at (2,6.4) {\tiny$1$};
    \node at (1,2.6) {$d$};
    \node at (3,6.4) {\tiny$2^d$};
    \node at (4,6.4) {\tiny$3^d$};
    \node at (6,6.4) {\tiny$(n-1)^d$};
    \node at (-1,4.5) {$h_3=$};
    \node[font=\huge] at (5,4.5){$\cdots$};
    \draw[overcross, thick, black] (1,3)--(1,6);
    \draw[overcross] (0,3)--(0,6);
    \draw[overcross] (3,4)--(3,3);
    \draw[overcross, looseness=1.3](3,6)to[out=-90,in=90](2,4);
    \draw[overcross, looseness=1.3](3,4)to[out=90,in=-90](2,6);
    \draw[overcross] (2,4)--(2,3);
    \draw[overcross] (4,3)--(4,6);
    \draw[overcross] (6,3)--(6,6);
\end{scope}
\begin{scope}[xshift=9cm, yshift=-6cm]
    \node at (0,6.4) {\tiny$-1$};
    \node at (2,6.4) {\tiny$1$};
    \node at (1,2.6) {$d$};
    \node at (3.8,6.4) {\tiny{$(i-2)^d$}};
    \node at (5.2,6.4) {\tiny{$(i-1)^d$}};
    \node at (7,6.4) {\tiny{$(n-1)^d$}};
    \node at (-1,4.5) {$h_i=$};
    \node[font=\huge] at (3,4.5){$\cdots$};
    \node[font=\huge] at (6,4.5){$\cdots$};
    \draw[overcross, thick, black] (1,3)--(1,6);
    \draw[overcross] (0,3)--(0,6);
    \draw[overcross] (5,4)--(5,3);
    \draw[overcross, looseness=1.3](5,6)to[out=-90,in=90](4,4);
    \draw[overcross, looseness=1.3](5,4)to[out=90,in=-90](4,6);
    \draw[overcross] (4,4)--(4,3);
    \draw[overcross] (2,3)--(2,6);
    \draw[overcross] (7,3)--(7,6);
\end{scope}
\end{tikzpicture}
\caption{The braids $h_1,\, h_2,\,\dots,\, h_n$. The thicker line (pole) represents the cone point of degree $d$.}
 \label{fig:braids_hi}
\end{figure}
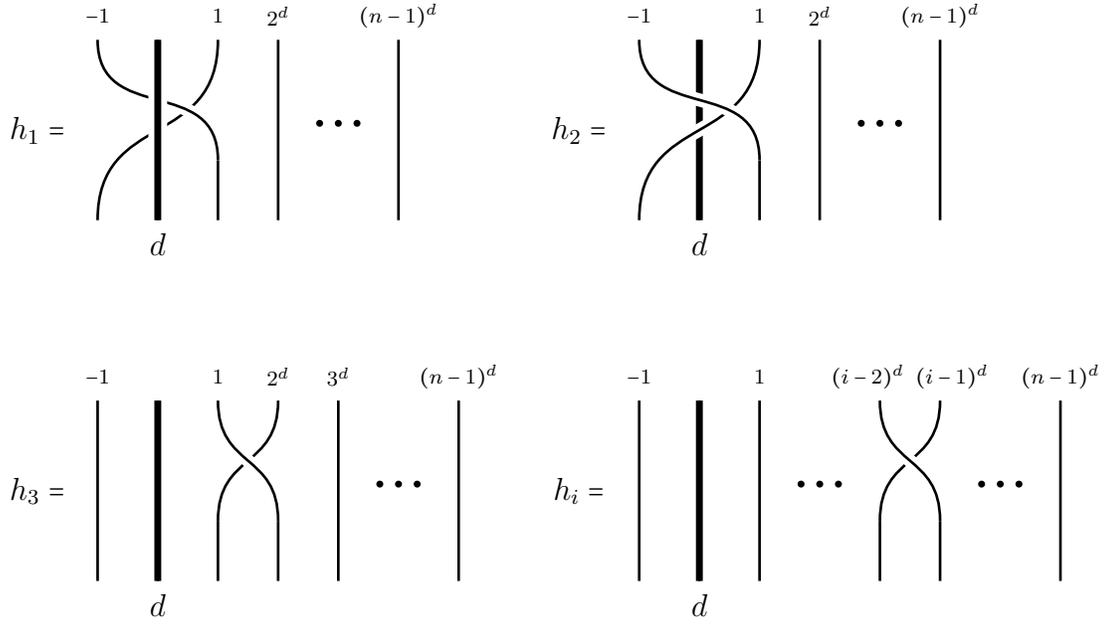

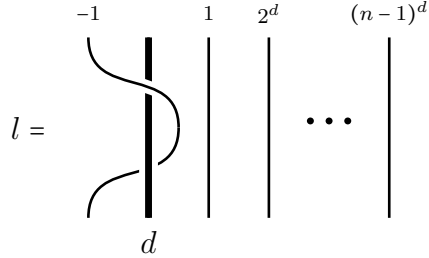
\begin{figure}[ht]
\begin{tikzpicture}[scale=0.8]
   \node at (0,6.4) {\tiny$-1$};
    \node at (2,6.4) {\tiny$1$};
    \node at (1,2.6) {$d$};
    \node at (3,6.4) {\tiny$2^d$};
    \node at (5,6.4) {\tiny$(n-1)^d$};
    \node at (-1,4.5) {$l=$};
    \node[font=\huge] at (4,4.5){$\cdots$};
    \draw[overcross, looseness=1.3](0,3)to[out=90,in=-90](1.5,4.5);
    \draw[line width=0.25cm, white] (1,3)--(1,4.5);
    \draw[overcross, thick, black] (1,3)--(1,6);
   \draw[overcross, looseness=1.3](1.5,4.5)to[out=90,in=-90](0,6);
    \draw[overcross] (3,3)--(3,6);
    \draw[overcross] (2,3)--(2,6);
    \draw[overcross] (5,3)--(5,6);
    \end{tikzpicture}
\caption{The loop $l$.}
 \label{fig:loop_l}
\end{figure}

\begin{lemma}
\label{lem:lorderd}
    The element $l\in Z_n(\MO_d)$ has order $d$.
\end{lemma}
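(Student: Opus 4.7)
The plan is to work in the pure braid subgroup $P_n(\MO_d) \subseteq Z_n(\MO_d)$ and use that $\pi_1(\MO_d,-1) \cong C_d$, which follows from $\MO_d = \mathbb{C}/C_d$ together with simply-connectedness of $\mathbb{C}$.

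First I would observe that $l$ is a \emph{pure} braid, since every strand in Figure~\ref{fig:loop_l} starts and ends at the same position; hence $l \in P_n(\MO_d)$ and it suffices to determine its order there. For the lower bound, I would use the first-coordinate projection $p_1\colon\MO_d^n - \Delta_n \to \MO_d$, $(z_1,\ldots,z_n)\mapsto z_1$. This is a morphism of orbifolds, because the $C_d$-action on $\mathbb{C}^n$ is componentwise, and it sends the basepoint $(-1,1,2^d,\ldots,(n-1)^d)$ to $-1$. It therefore induces a group homomorphism $(p_1)_*\colon P_n(\MO_d) \to \pi_1(\MO_d,-1) \cong C_d$, and by inspection of Figure~\ref{fig:loop_l} the element $(p_1)_*(l)$ is the generator of $C_d$ (the first strand winds once around the cone point). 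Hence $(p_1)_*(l^k) \neq e$ for $1 \leq k \leq d-1$, so $l$ has order at least $d$.

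For the upper bound, I would exhibit a null-homotopy of $l^d$ directly. In $l^d$, every strand except the first is constant, while the first strand winds $d$ times around the cone point inside a small disk $D \subseteq \MO_d$ around $0$ which is disjoint from the other strand positions $1,2^d,\ldots,(n-1)^d$. Because a $d$-fold loop around the cone point represents the trivial element of $C_d \cong \pi_1(\MO_d,-1)$, it is orbifold null-homotopic, and I would choose the null-homotopy to remain inside $D$; performing it in the first coordinate while keeping the other $n-1$ coordinates constant gives an orbifold homotopy in $\MO_d^n - \Delta_n$ from $l^d$ to the constant loop. Thus $l^d = e$, and combined with the lower bound this shows $l$ has order exactly $d$. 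The main point requiring care is the orbifold bookkeeping: verifying in Thurston's sense \cite[Defn.\ 13.2.5]{thurston22} that $(p_1)_*$ is a well defined homomorphism of orbifold fundamental groups and that the contracting homotopy of $l^d$ can be kept inside $D$ so no collisions with the remaining strands are created.
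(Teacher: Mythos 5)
Your proof is correct and follows essentially the same route as the paper: the paper also passes to the pure braid group $P_n(\MO_d)$ and applies the strand-forgetting homomorphism $P_n(\MO_d)\to P_1(\MO_d)=\pi_1(\MO_d)\cong C_d$ (your $(p_1)_*$), citing the literature for the fact that the image of $l^r$ is non-trivial for $1\leq r\leq d-1$. The only difference is that you also spell out the upper bound $l^d=e$ via an explicit null-homotopy near the cone point, which the paper leaves implicit.
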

\begin{proof}
By~\cite[\S4]{roushon21}, $P_n(\MO_d)$ is a subgroup of $Z_n(\MO_d)$, embedded as the subgroup of braids in $Z_n(\MO_d)$ in which each strand starts at ends at the same corresponding point in $\MO_d$ (i.e.\ pure braids). By~\cite[Rk.\ 2.15]{roushon21}, there is a homomorphism $\xi_n:P_n(\MO_d)\rightarrow P_{n-1}(\MO_d)$ obtained by removing the $n$th strand. Iterating this homomorphism gives a homomorphism $\xi$ from $P_n(\MO_d)$ to $P_1(\MO_d)=\pi_1(\MO_d)$ removing all strands except the first. It follows from~\cite[Rk.\ 2.2.2]{caramello} that $\xi(l^r)$ is not equal to the identity for $1\leq r\leq d-1$, from which the result follows.
\end{proof}

\begin{lemma}\label{lemma_Zn_description}
    Let $h_1,\dots,h_n,l$ be the elements of $Z_n(\MO_d)$ shown in Figures  \ref{fig:braids_hi} and \ref{fig:loop_l}.
    We have that 
    $$Z_n(\MO_d)=(\text{im}\beta)\cupdot(\text{im}\beta)l\cupdot(\text{im}\beta)l^2\cupdot\dots \cupdot(\text{im}\beta)l^{d-1}=\langle h_1,\dots, h_n,l\rangle.$$
\end{lemma}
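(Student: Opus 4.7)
The plan is to combine Propositions~\ref{prop_d-foldcovering} and~\ref{prop_paths_drawn} with a direct analysis of the $d$-fold covering. Those propositions give $\mathrm{im}(\beta)=\langle h_1,\dots,h_n\rangle$ as a subgroup of index $d$ in $Z_n(\MO_d)$; moreover, the covering $V_0/G(d,d,n)\to X_n$ is regular (its deck group being $\Gamma(d,n)/G(d,d,n)\cong C_d$), so $\mathrm{im}(\beta)$ is a normal subgroup with cyclic quotient of order $d$. It therefore suffices to show that $l^k\notin \mathrm{im}(\beta)$ for $1\leq k\leq d-1$, for then the $d$ cosets $(\mathrm{im}\,\beta)\, l^k$ with $0\leq k\leq d-1$ are distinct and must exhaust the group.

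By the lifting criterion for covering maps, $l^k\in\mathrm{im}(\beta)$ precisely when $l^k$ lifts to a closed loop in $V_0/G(d,d,n)$ based at the image of the chosen basepoint $b_0=(e^{-i\pi/d},1,2,\dots,n-1)\in V_0$. Since the covering $V_0\to \MO_d^n-\Delta_n$ is induced coordinate-wise by $(x_1,\dots,x_n)\mapsto (x_1^d,\dots,x_n^d)$, lifting $l^k$ amounts to taking $d$th roots of its strands. The only nontrivial strand of $l$ is its first one, looping once anticlockwise around the origin, so the lift $\widetilde{l^k}$ of $l^k$ starting at $b_0$ has first coordinate travelling from $e^{-i\pi/d}$ to $e^{-i\pi/d}\omega_d^k$ while the other coordinates remain constant; its endpoint is $b_k:=(e^{-i\pi/d}\omega_d^k,1,2,\dots,n-1)$.

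The key step is to check that $b_0$ and $b_k$ lie in the same $G(d,d,n)$-orbit if and only if $d\mid k$. Suppose $g=[(x_1,\dots,x_n)\,|\,\sigma]\in G(d,d,n)$ satisfies $g\cdot b_0=b_k$. Each $x_i$ has modulus $1$, and the multisets of absolute values of the coordinates of both $b_0$ and $b_k$ are $(1,1,2,\dots,n-1)$, so $\sigma$ must fix $\{3,\dots,n\}$ pointwise and permute $\{1,2\}$. If $\sigma=(1\,2)$, then the first-coordinate equation gives $x_1=e^{-i\pi/d}\omega_d^k$, whose $d$th power is $-1\neq 1$, contradicting $x_1^d=1$. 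Hence $\sigma=\mathrm{id}$; the equations for $i\geq 2$ force $x_i=1$, while the first gives $x_1=\omega_d^k$. The constraint $\prod_j x_j=1$ then reads $\omega_d^k=1$, i.e.\ $d\mid k$. Consequently $\widetilde{l^k}$ fails to be a loop for $1\leq k\leq d-1$, so $l^k\notin\mathrm{im}(\beta)$ and the coset decomposition (and hence the generating statement $\langle h_1,\dots,h_n,l\rangle=Z_n(\MO_d)$) follows.

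The main obstacle is the orbit analysis of the final paragraph, where care is needed to distinguish $d$th roots of unity from $2d$th roots of unity; this is precisely why the basepoint choice $b_0$ with first coordinate $e^{-i\pi/d}$ (rather than a $d$th root of unity) is essential, as it rules out the troublesome transposition case cleanly.
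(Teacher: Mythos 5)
Your proof is correct, but it takes a genuinely different route from the paper's. The paper deduces $l^q\notin\text{im}\beta$ for $q\not\equiv 0 \bmod d$ by combining two external inputs: the fact that $l$ has order exactly $d$ (Lemma~\ref{lem:lorderd}, proved via strand-forgetting homomorphisms down to $\pi_1(\MO_d)$) and the torsion-freeness of $B(d,d,n)$ (Bessis), so that a nontrivial torsion element cannot lie in the torsion-free subgroup $\text{im}\beta$; the coset decomposition then follows from the index count. You instead apply the lifting criterion to the explicit $d$-fold covering $V_0/G(d,d,n)\rightarrow X_n$ of Proposition~\ref{prop_d-foldcovering}: the lift of $l^k$ to $V_0$ ends at $b_k=(e^{-i\pi/d}\omega_d^k,1,2,\dots,n-1)$, and your monomial-matrix orbit computation (where the modulus argument pins $\sigma$ down to $\{1,2\}$, the transposition case is excluded because $e^{-i\pi/d}\omega_d^k$ is not a $d$th root of unity, and the determinant-type condition $\prod_j x_j=1$ forces $d\mid k$) shows the lift closes up only when $d\mid k$. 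Your version is more elementary and self-contained — it avoids Bessis's theorem entirely and is closer in spirit to Allcock's original argument for $d=2$ — whereas the paper's version is shorter given Lemma~\ref{lem:lorderd}, which it needs anyway elsewhere (e.g.\ for the injectivity of $\gamma$ and for Remark~\ref{rem:tworelationsB}). One small remark: the normality of $\text{im}\beta$ is not actually needed for the coset count — distinctness of the cosets $(\text{im}\beta)l^p$ already follows from $l^{p-q}\notin\text{im}\beta$ together with the index being $d$ — but invoking it does no harm.
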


\begin{proof}
Note that $B(d,d,n)$ is torsion free by~\cite[Thm. 0.4]{bessis15}
(see discussion after the theorem), noting that $G(d,d,n)$ is well-generated (see, for example,~\cite[\S 2.2.2]{lewismorales21}).
Hence, $\text{im}\beta=\beta(B(d,d,n))\subseteq Z_n(\MO_d)$ is torsion-free.
Hence, apart from the identity element, $\text{im}\beta$ has no element of finite order. Since the element $l$ has finite order $d$, we conclude that $l^q\not\in\text{im}\beta$ for any $q\not\in d\mathbb{Z}$.
Suppose now that $bl^q=b'l^p$ for some $b,\,b'\in\text{im}\beta$ and non-negative integers $q$, $p$. Then $l^{q-p}=(b')^{-1}b\in\text{im}\beta$ and so $l^{q-p}=e$. By Lemma~\ref{lem:lorderd}, $q\equiv p$ modulo $d$ and $l^p=l^q$.
In other words, if $l^p\neq l^q$, that is $p\neq q$ modulo $d$, then $(\text{im}\beta)l^p\cap (\text{im}\beta)l^q=\emptyset$ and, since $\text{im}\beta$ is a subgroup of index $d$ in $Z_n(\MO_d)$ by Propositions~\ref{prop_d-foldcovering} and \ref{prop_paths_drawn}, the cosets of $\text{im}\beta$ are $(\text{im}\beta)l^p$ for $0\leq p\leq d-1$. Since $\text{im}\beta=\langle h_1,\dots, h_n \rangle$ by Proposition~\ref{prop_paths_drawn}, we conclude that $Z_n(\MO_d)=\langle h_1,\dots, h_n,l\rangle$.
\end{proof}

\begin{theorem}
    The following is a group isomorphism
    \begin{align*}
        \gamma: {\frac{\AAA(B_n)}{\langle s^d =e\rangle}}\longrightarrow Z_n(\MO_d): s\mapsto l,\, t_2\mapsto h_2,\, t_i\mapsto h_i \textup{ for } 3\leq i\leq n,
    \end{align*}
    satisfying $\gamma(st_2s^{-1})=h_1$.
    See Figures \ref{fig:braids_hi} and \ref{fig:loop_l} for $h_1,\dots.h_n,l$. Furthermore, the diagram in Figure~\ref{fig:maindiagram} commutes.
\end{theorem}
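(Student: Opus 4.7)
The plan is to proceed in four stages: check that $\gamma$ is well-defined, verify $\gamma(st_2s^{-1})=h_1$, establish commutativity of the diagram, and then deduce bijectivity from the coset decompositions in Lemmas~\ref{lemma_Nnormal_indexd} and~\ref{lemma_Zn_description}.

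First, I would show $\gamma$ extends to a homomorphism by checking that the candidate images $l,h_2,\dots,h_n\in Z_n(\MO_d)$ satisfy the defining relations of $\AAA(B_n)/\langle s^d=e\rangle$. The braid relations $h_ih_{i+1}h_i=h_{i+1}h_ih_{i+1}$ for $2\leq i\leq n-1$ and the commutation relations $h_ih_j=h_jh_i$ for $|i-j|>1$ can be checked directly from the braid pictures in Figure~\ref{fig:braids_hi}: for $i\geq 3$, $h_i$ is supported away from the pole and coincides with the standard Artin generator, so these are the usual type $A$ braid/commutation identities. The ``type B'' relation $lh_2lh_2=h_2lh_2l$ involves the pole and is the main braid computation: I would verify it by isotoping the braid diagram obtained by stacking $l$ and $h_2$, using that the half-twist $h_2$ moves past $l$ exactly as in the standard Artin type $B$ presentation. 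Finally, $l^d=e$ is exactly Lemma~\ref{lem:lorderd}, so the relation $s^d=e$ is respected.

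Next, I would compute $\gamma(st_2s^{-1})=lh_2l^{-1}$ directly from the braid diagrams: conjugating the half-twist $h_2$ of strands $-1$ and $1$ by the loop $l$ (which passes the first strand once around the pole) produces the braid that exchanges $-1$ and $1$ by going around the opposite side of the pole, which is precisely $h_1$ in Figure~\ref{fig:braids_hi}. Commutativity of the diagram in Figure~\ref{fig:maindiagram} is then checked on the generators of $N$: for $2\leq i\leq n$, $\gamma(\alpha(t_i))=\gamma(t_i)=h_i=\beta(\tau_i)=\beta(\varphi(t_i))$ by Proposition~\ref{prop_paths_drawn}, and $\gamma(\alpha(st_2s^{-1}))=lh_2l^{-1}=h_1=\beta(\tau_2')=\beta(\varphi(st_2s^{-1}))$.

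Finally, I would deduce that $\gamma$ is an isomorphism. For surjectivity, the image contains $l$ and $h_2,\dots,h_n$ by construction and $h_1=\gamma(st_2s^{-1})$, so by Lemma~\ref{lemma_Zn_description} the image is all of $Z_n(\MO_d)$. For injectivity, Lemma~\ref{lemma_Nnormal_indexd} gives the disjoint decomposition $\AAA(B_n)/\langle s^d=e\rangle = N\cupdot Ns\cupdot\cdots\cupdot Ns^{d-1}$; by commutativity of the diagram, $\gamma(N)=\beta\varphi(N)=\mathrm{im}\,\beta$, hence $\gamma(Ns^k)=(\mathrm{im}\,\beta)l^k$. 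By Lemma~\ref{lemma_Zn_description} these $d$ cosets are pairwise disjoint, so any element of the kernel lies in $N$; but $\gamma|_N=\beta\circ\varphi$ is a composition of two injections (Theorem~\ref{thm_varphi_iso} and Proposition~\ref{prop_d-foldcovering}), giving injectivity.

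The main obstacle is the braid-theoretic verification of the type $B$ relation $lh_2lh_2=h_2lh_2l$ and the conjugation identity $lh_2l^{-1}=h_1$ in $Z_n(\MO_d)$: these must be carried out by an explicit isotopy of orbifold braids around the cone point, and care is needed because the perturbed paths of Definition~\ref{def:perturbedpath} lift to strands that wind differently around the pole depending on orientation conventions (cf.\ Remark~\ref{rem:choice}). Once those two identities are established, the rest of the argument is formal.
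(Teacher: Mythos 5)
Your proposal is correct and follows essentially the same route as the paper's proof: well-definedness is checked on the defining relations (with the type $B$ relation $lh_2lh_2=h_2lh_2l$ and $l^d=e$ as the key braid computations), $\gamma(st_2s^{-1})=lh_2l^{-1}=h_1$ is verified diagrammatically, commutativity is checked on generators, surjectivity follows from Lemma~\ref{lemma_Zn_description}, and injectivity is deduced from the coset decompositions of Lemmas~\ref{lemma_Nnormal_indexd} and~\ref{lemma_Zn_description} together with the injectivity of $\beta\circ\varphi$. The only cosmetic difference is that you phrase injectivity via the kernel landing in $N$ while the paper compares $\gamma(ns^p)$ with $\gamma(n's^q)$ directly; these are equivalent.
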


\begin{proof}
We first show that the map $\gamma$ preserves the relations and it is hence a well-defined group homomorphism. By composing the braids, it is easy to see that $\gamma(st_2s^{-1})=lh_2l^{-1}=h_1$.

Composing the corresponding braids, it is immediate to see that the following relations are preserved by $\gamma$:
\begin{align*}
    &t_it_{i+1}t_i=t_{i+1}t_it_{i+1} \text{ for } 2\leq i\leq n-1, && t_it_j=t_jt_i \text{ for } \mid i-j\mid \geq 2, i,j\geq 3,\\
    &t_2t_i=t_it_2 \text{ for } 4\leq i\leq n, && st_i=t_i s \text{ for } 3\leq i\leq n. 
\end{align*}
Moreover, since the pole has degree $d$, it follows that $l^d$ is the identity and the relation $s^d=e$ is preserved. It only remains to show that the relation $st_2st_2=t_2st_2s$ is preserved. Note that $lh_2lh_2=h_2lh_2l$ as braids, see Figure~\ref{fig:gamma_well_defined} and note that this relation does not depend on the order of the pole.  Since only the first two strands and the pole are involved in this computation, we have omitted all the remaining strands from the pictures. Hence we conclude that $\gamma$ is a well-defined group homomorphism.

Using the description of $\varphi$ in Theorem~\ref{thm_varphi_iso} and the description of $\beta$ in Proposition~\ref{prop_paths_drawn}, we have $\gamma(\alpha(t_i))=\gamma(t_i)=h_i$, and $\beta\varphi(t_i)=\beta(\tau_i)=h_i$.
We also have $\gamma\alpha(st_2s^{-1})=\gamma(st_2s^{-1})=h_1$ (as noted above), while $\beta\varphi(st_2s^{-1})=\beta(\tau'_2)=h_1$.
Hence the diagram in Figure~\ref{fig:maindiagram} commutes.

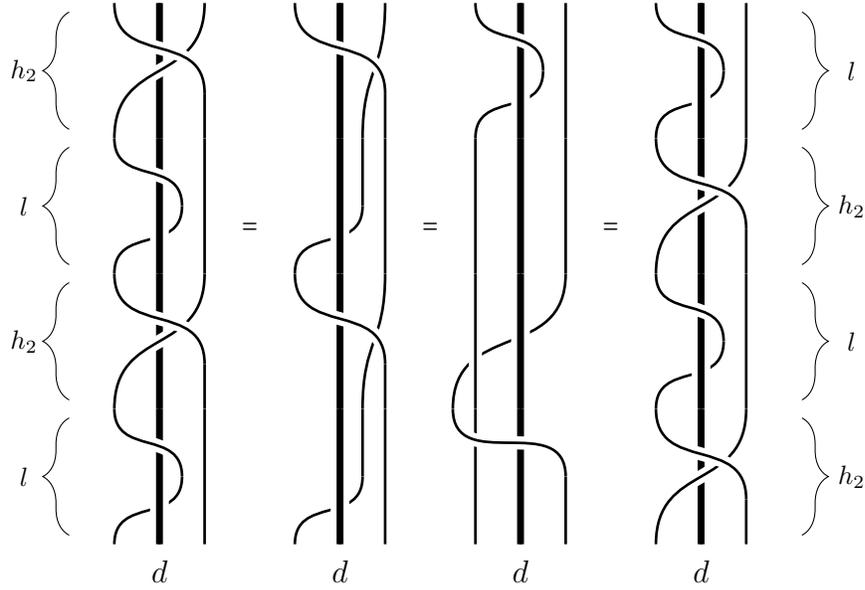
\begin{figure}
   \begin{tikzpicture}[scale=0.6]
       \begin{scope}
    \draw [decorate,decoration={brace,amplitude=10pt},xshift=-0.5cm,yshift=0pt]
(-0.5,3.2) -- (-0.5,5.8) node [black,midway,xshift=-0.6cm]
{\footnotesize $h_2$};
    \draw[overcross, thick, black] (1,3)--(1,6);
    \draw[overcross](2,4)--(2,3);
    \draw[overcross, looseness=1.3](2,6)to[out=-90,in=90](0,3)--(0,3);
    \draw[overcross, looseness=1.3](2,4)to[out=90,in=-90](0,6);
    \end{scope}
    \begin{scope}[yshift=-3cm]
    \draw [decorate,decoration={brace,amplitude=10pt},xshift=-0.5cm,yshift=0pt]
(-0.5,3.2) -- (-0.5,5.8) node [black,midway,xshift=-0.6cm]
{\footnotesize $l$};
    \draw[overcross, looseness=1.3](0,3)to[out=90,in=-90](1.5,4.5);
    \draw[line width=0.25cm, white] (1,3)--(1,4.5);
    \draw[overcross, thick, black] (1,3)--(1,6);
    \draw[overcross, looseness=1.3](1.5,4.5)to[out=90,in=-90](0,6);
    \draw[overcross] (2,3)--(2,6);
    \node at (3,4) {$=$};
   \end{scope}
   \begin{scope}[yshift=-6cm]
     \draw [decorate,decoration={brace,amplitude=10pt},xshift=-0.5cm,yshift=0pt]
(-0.5,3.2) -- (-0.5,5.8) node [black,midway,xshift=-0.6cm]
{\footnotesize $h_2$};
     \draw[overcross, thick, black] (1,3)--(1,6);
    \draw[overcross](2,4)--(2,3);
    \draw[overcross, looseness=1.3](2,6)to[out=-90,in=90](0,3)--(0,3);
    \draw[overcross, looseness=1.3](2,4)to[out=90,in=-90](0,6);
    \end{scope}
    \begin{scope}[yshift=-9cm]
     \draw [decorate,decoration={brace,amplitude=10pt},xshift=-0.5cm,yshift=0pt]
(-0.5,3.2) -- (-0.5,5.8) node [black,midway,xshift=-0.6cm]
{\footnotesize $l$};
    \draw[overcross, looseness=1.3](0,3)to[out=90,in=-90](1.5,4.5);
    \draw[line width=0.25cm, white] (1,3)--(1,4.5);
    \draw[overcross, thick, black] (1,3)--(1,6);
    \draw[overcross, looseness=1.3](1.5,4.5)to[out=90,in=-90](0,6);
    \draw[overcross] (2,3)--(2,6);
    \node at (1,2.4) {$d$};
   \end{scope}
   \begin{scope}[xshift=4cm]
       \begin{scope}
    \draw[overcross](2,4)--(2,3);
    \draw[overcross, looseness=1.3](2,6)to[out=-90,in=90](1.5,3);
    \draw[line width=0.25cm, white] (1,3)--(1,6);
    \draw[overcross, thick, black] (1,3)--(1,6);
    \draw[overcross, looseness=1.3](2,4)to[out=90,in=-90](0,6);
    \end{scope}
    \begin{scope}[yshift=-3cm]
    \draw[overcross, looseness=1.3](0,3)to[out=90,in=-90](1.5,4.5);
    \draw[overcross] (1.5,4.5)--(1.5,6);
    \draw[line width=0.25cm, white] (1,3)--(1,4.5);
    \draw[overcross, thick, black] (1,3)--(1,6);
    \draw[overcross] (2,3)--(2,6);
    \node at (3,4) {$=$};
   \end{scope}
   \begin{scope}[yshift=-6cm]
    \draw[overcross](2,4)--(2,3);
    \draw[overcross, looseness=1.3](2,6)to[out=-90,in=90](1.5,3);
    \draw[line width=0.25cm, white] (1,3)--(1,6);
    \draw[overcross, thick, black] (1,3)--(1,6);
    \draw[overcross, looseness=1.3](2,4)to[out=90,in=-90](0,6);
    \end{scope}
    \begin{scope}[yshift=-9cm]
    \draw[overcross, looseness=1.3](0,3)to[out=90,in=-90](1.5,4.5);
    \draw[overcross] (1.5,4.5)--(1.5,6);
    \draw[line width=0.25cm, white] (1,3)--(1,4.5);
    \draw[overcross, thick, black] (1,3)--(1,6);
    \draw[overcross] (2,3)--(2,6);
    \node at (1,2.4) {$d$};
   \end{scope}
   \end{scope}
   \begin{scope}[xshift=8cm]
       \begin{scope}
    \draw[overcross, looseness=1.3](0,3)to[out=90,in=-90](1.5,4.5);
    \draw[line width=0.25cm, white] (1,3)--(1,4.5);
    \draw[overcross, thick, black] (1,3)--(1,6);
    \draw[overcross, looseness=1.3](1.5,4.5)to[out=90,in=-90](0,6);
    \draw[overcross] (2,3)--(2,6);
    \end{scope}
    \begin{scope}[yshift=-3cm]
    \draw[overcross] (0,3)--(0,6);
    \draw[overcross] (2,3)--(2,6);
    \draw[overcross, thick, black] (1,3)--(1,6);
    \node at (3,4) {$=$};
   \end{scope}
   \begin{scope}[yshift=-6cm]
    \draw[overcross, looseness=1.3](2,6)to[out=-90,in=90](-0.5,3);
    \draw[overcross] (0,6)--(0,3);
    \draw[line width=0.25cm, white] (1,3)--(1,6);
    \draw[overcross, thick, black] (1,3)--(1,6);
    \end{scope}
    \begin{scope}[yshift=-9cm]
    \draw[overcross] (0,3)--(0,6);
    \draw[line width=0.25cm, white] (1,3)--(1,4.5);
    \draw[overcross, thick, black] (1,3)--(1,6);
    \draw[overcross, looseness=1.3](2,4.5)to[out=90,in=-90](-0.5,6);
    \draw[overcross] (2,3)--(2,4.5);
    \node at (1,2.4) {$d$};
   \end{scope}
   \end{scope}
   \begin{scope}[xshift=12cm]
        \begin{scope}[yshift=-3cm]
     \draw [decorate,decoration={brace,amplitude=10pt,mirror,raise=4pt},yshift=0pt]
(3,3.2) -- (3,5.8) node [black,midway,xshift=0.8cm] {\footnotesize
$h_2$};
     \draw[overcross, thick, black] (1,3)--(1,6);
    \draw[overcross](2,4)--(2,3);
    \draw[overcross, looseness=1.3](2,6)to[out=-90,in=90](0,3)--(0,3);
    \draw[overcross, looseness=1.3](2,4)to[out=90,in=-90](0,6);
    \end{scope}
    \begin{scope}[yshift=0cm]
     \draw [decorate,decoration={brace,amplitude=10pt,mirror,raise=4pt},yshift=0pt]
(3,3.2) -- (3,5.8) node [black,midway,xshift=0.8cm] {\footnotesize
$l$};
    \draw[overcross, looseness=1.3](0,3)to[out=90,in=-90](1.5,4.5);
    \draw[line width=0.25cm, white] (1,3)--(1,4.5);
    \draw[overcross, thick, black] (1,3)--(1,6);
    \draw[overcross, looseness=1.3](1.5,4.5)to[out=90,in=-90](0,6);
    \draw[overcross] (2,3)--(2,6);
   \end{scope}
   \begin{scope}[yshift=-9cm]
     \draw [decorate,decoration={brace,amplitude=10pt,mirror,raise=4pt},yshift=0pt]
(3,3.2) -- (3,5.8) node [black,midway,xshift=0.8cm] {\footnotesize
$h_2$};
     \draw[overcross, thick, black] (1,3)--(1,6);
    \draw[overcross](2,4)--(2,3);
    \draw[overcross, looseness=1.3](2,6)to[out=-90,in=90](0,3)--(0,3);
    \draw[overcross, looseness=1.3](2,4)to[out=90,in=-90](0,6);
    \node at (1,2.4) {$d$};
    \end{scope}
    \begin{scope}[yshift=-6cm]
    \draw [decorate,decoration={brace,amplitude=10pt,mirror,raise=4pt},yshift=0pt]
(3,3.2) -- (3,5.8) node [black,midway,xshift=0.8cm] {\footnotesize
$l$};
    \draw[overcross, looseness=1.3](0,3)to[out=90,in=-90](1.5,4.5);
    \draw[line width=0.25cm, white] (1,3)--(1,4.5);
    \draw[overcross, thick, black] (1,3)--(1,6);
    \draw[overcross, looseness=1.3](1.5,4.5)to[out=90,in=-90](0,6);
    \draw[overcross] (2,3)--(2,6);
   \end{scope}
   \end{scope}
   \end{tikzpicture}
   \caption{The braids $lh_2lh_2$ on the left and $h_2lh_2l$ on the right are equal.}
 \label{fig:gamma_well_defined}
\end{figure}

We now prove that $\gamma$ is a group isomorphism.
First note that $\gamma$ is surjective by Lemma~\ref{lemma_Zn_description}, noting that $h_1,h_2,\dots,h_n,l$ are all in the image of $\gamma$.

Recall that by Lemma~\ref{lemma_Nnormal_indexd},
\begin{align*}
        {\frac{\AAA(B_n)}{\langle s^d =e\rangle}}=N\cupdot Ns\cupdot Ns^2\cupdot \dots Ns^{d-1}.
\end{align*}

Then, if $\gamma(ns^p)=\gamma(n's^q)$ for two elements $ns^p$ and $n's^q$ in $\frac{\AAA(B_n)}{\langle s^d =e\rangle}$, where $n,n'\in N$ and $0\leq p,q\leq d-1$, we have that $\gamma(n)l^p=\gamma(n')l_q$.
Since the diagram in Figure~\ref{fig:maindiagram}
commutes, we have
$\beta(\varphi(n))l^p=\beta(\varphi(n'))l^q$.
By Lemma~\ref{lemma_Zn_description}, $\beta(\varphi(n))=\beta(\varphi(n'))$ and $p=q$. Since $\beta$ and $\varphi$ are injective, $n=n'$ and we see that $\gamma$ is injective and therefore an isomorphism as required.
\end{proof}

This completes the proof of Theorem~\ref{mainthmA}.

\section{Presentations of \texorpdfstring{$B(d,d,n)$}{Bddn} and \texorpdfstring{$G(d,d,n)$}{Gddn}}

In this section, we give new presentations of the complex braid groups $B(d,d,n)$ and their corresponding complex reflection groups $G(d,d,n)$.
In Definition~\ref{defn_quiver_on_disc}, we associate a (decorated) quiver $Q_T$ to an arbitrary tagged triangulation $T$ of a disk with $n$ marked points on the boundary and a cone point of degree $d$ in its interior. The quiver may have $2$-cycles, which we consider to be unoriented edges.
In Definition~\ref{defn_GQ_associated_group}, we associate a group $G_Q$ to such a quiver, given by generators and relations. A special case is Figure~\ref{fig:BMR_embedded}.
The corresponding presentation is the known presentation of $B(d,d,n)$ from~\cite[Thm. 2.27]{BMR98}.
The associated quiver in this case is an orientation of the diagram associated to the presentation in~\cite[Table 5]{BMR98} (drawn on the right in Figure~\ref{fig:BMR_embedded}); this formed part of the motivation for the approach we employ here.
In Definition~\ref{defn_mutationrules}, we introduce a mutation rule for such a quiver which is compatible with flipping triangulations.
We complete this section by showing that
the group $G_Q$ is invariant under mutation. Since the mutation graph of tagged triangulations of the disk is connected~\cite[Prop.\ 7.10]{FST}, it follows that all the quivers constructed as above give presentations of the group $B(d,d,n)$.
Adding the relations that the square of each generator is the identity gives a presentation of $G(d,d,n)$ by applying a result from~\cite{ariki}; see Theorem~\ref{thm:Gddnpresentation}.

\begin{figure}[ht]\scalebox{1}{
    \centering
 \begin{tikzpicture}[scale=1,
  quiverarrow/.style={black, -latex},
  iquiverarrow/.style={black, -latex, <-},
  mutationarc/.style={dashed, red, very thick},
  arc/.style={dashed, black},
  point/.style={gray},
  vertex/.style={black},
  conepoint/.style={gray, circle, draw=gray!100, fill=white!100, thick, inner sep=1.5pt},
  db/.style={thick, double, double distance=1.3pt, shorten <=-6pt}
  ]
 \begin{scope}
\draw[dashed](0,0) circle (2.5cm);
\draw[very thick, white] ([shift=(45:2.5cm)]0,0) arc (45:-30:2.5cm);
\draw[thick, loosely dotted] ([shift=(45:2.5cm)]0,0) arc (45:-30:2.5cm);
\node[point] (p0) at (170:2.5cm) {$\bullet$};
\node[point] (p1) at (140:2.5cm) {$\bullet$};
\node[point] (p2) at (110:2.5cm) {$\bullet$};
\node[point] (p3) at (80:2.5cm) {$\bullet$};
\node[point] (p4) at (50:2.5cm) {$\bullet$};
\node[point] (p5) at (-40:2.5cm) {$\bullet$};
\node[point] (p6) at (-70:2.5cm) {$\bullet$};
\node[point] (p7) at (-100:2.5cm) {$\bullet$};
\draw[arc](p0)--(p2);
\draw[arc](p0)--(p3);
\draw[arc](p0)--(p4);
 \draw[dashed] (p7) arc
	[start angle=-40,
		end angle=40,
		x radius=2.5cm,
		y radius =1.55cm
	] ;
 \draw[dashed] (-150:0.7cm) arc
	[start angle=140,
		end angle=220,
		x radius=2.5cm,
		y radius =1.55cm
	] ;
  \draw[thick, double, double distance=1.3pt] (-68:0.5cm)--(210:0.7cm);
  \node[conepoint] (c) at (-150:0.7cm) {\scriptsize $d$};
  \draw[dashed] (p0) arc
	[start angle=90,
		end angle=-45,
		x radius=3cm,
		y radius =1.7cm
	] ;
 \draw[dashed] (p0) arc
	[start angle=110,
		end angle=-30,
		x radius=2.8cm,
		y radius =1.9cm
	] ;
  \draw[dashed] (p0) arc
	[start angle=120,
		end angle=-10,
		x radius=3cm,
		y radius =1.8cm
	] ;
 \node[point] at (-100:2.5cm) {$\bullet$};
 \node[point] at (170:2.5cm) {$\bullet$};
 \node[vertex, label=
{[label distance=-5pt]90:{\tiny $n$}}] (w1) at  ($(p0)! 0.5!(p2)$) {$\bullet$};
 \node[vertex, label=
{[label distance=-5pt]90:{\tiny $n-1$}}] (w2) at  ($(p0)! 0.6!(p3)$) {$\bullet$};
 \node[vertex, label=
{[label distance=-5pt]90:{\tiny $n-2$}}] (w3) at  ($(p0)! 0.7!(p4)$) {$\bullet$};
 \node[vertex, label=
{[label distance=-5pt]90:{\tiny $5$}}] (w4) at  (15:1cm) {$\bullet$};
 \node[vertex, label=
{[label distance=-5pt]-90:{\tiny $4$}}] (w5) at  (-32:1.3cm) {$\bullet$};
 \node[vertex, label=
{[label distance=-5pt]90:{\tiny $3$}}] (w6) at  (-68:0.5cm) {$\bullet$};
 \node[vertex, label=
{[label distance=-8pt]0:{\tiny $1$}}] (w7) at  (-85:1.6cm) {$\bullet$};
 \node[vertex, label=
{[label distance=-8pt]180:{\tiny $2$}}] (w8) at  (-125:2cm) {$\bullet$};
 \draw[quiverarrow] (w1)--(w2);
 \draw[quiverarrow] (w2)--(w3);
 \draw[quiverarrow, dotted, thick] (w3)--(w4);
 \draw[quiverarrow] (w4)--(w5);
 \draw[quiverarrow] (w5)--(w6);
 \draw[quiverarrow] (w7)--(w6);
 \draw[quiverarrow] (w8)--(w6);
 \draw[-] (w8)--(w7);
 \node[below] at  ($(w8)! 0.5!(w7)$) {\tiny $d$};
  \node[rotate=-90] at (-1.2,-0.9) {\rotatebox{-60}{$\smalltriangleright$}};
  \node[rotate=-90] at (-1.06,-0.99) {\rotatebox{-60}{$\smalltriangleleft$}};
 \end{scope}
 \begin{scope}[xshift=5cm]
\node[vertex, label=
{[label distance=-5pt]90:{\tiny $2$}}] (w8) at  (0,1) {$\bullet$};
 \node[vertex, label=
{[label distance=-5pt]-90:{\tiny $1$}}] (w7) at  (0,-1) {$\bullet$};
 \node[vertex, label=
{[label distance=-5pt]90:{\tiny $3$}}] (w6) at  (1.5,0) {$\bullet$};
 \node[vertex, label=
{[label distance=-5pt]90:{\tiny $4$}}] (w5) at  (2.5,0) {$\bullet$};
 \node[vertex, label=
{[label distance=-5pt]90:{\tiny $5$}}] (w4) at  (3.5,0) {$\bullet$};
 \node[vertex, label=
{[label distance=-5pt]90:{\tiny $n-2$}}] (w3) at  (5,0) {$\bullet$};
 \node[vertex, label=
{[label distance=-5pt]90:{\tiny $n-1$}}] (w2) at  (6,0) {$\bullet$};
 \node[vertex, label=
{[label distance=-5pt]90:{\tiny $n$}}] (w1) at  (7,0) {$\bullet$};
\draw[quiverarrow] (w1)--(w2);
 \draw[quiverarrow] (w2)--(w3);
 \draw[quiverarrow, dotted, thick] (w3)--(w4);
 \draw[quiverarrow] (w4)--(w5);
 \draw[quiverarrow] (w5)--(w6);
 \draw[quiverarrow] (w7)--(w6);
 \draw[quiverarrow] (w8)--(w6);
 \draw[-] (w8)--(w7);
 \draw[db] (1.1,0)--(0.7,0);
 \node[left] at  ($(w8)! 0.5!(w7)$) {\tiny $d$};
 \end{scope}
 \end{tikzpicture}}
 \caption{Original presentation from \cite{BMR98} (shown on the right) embedded in a disk with $n$ marked points on the boundary and a cone point of degree $d$ in its interior.}
\label{fig:BMR_embedded}
\end{figure}
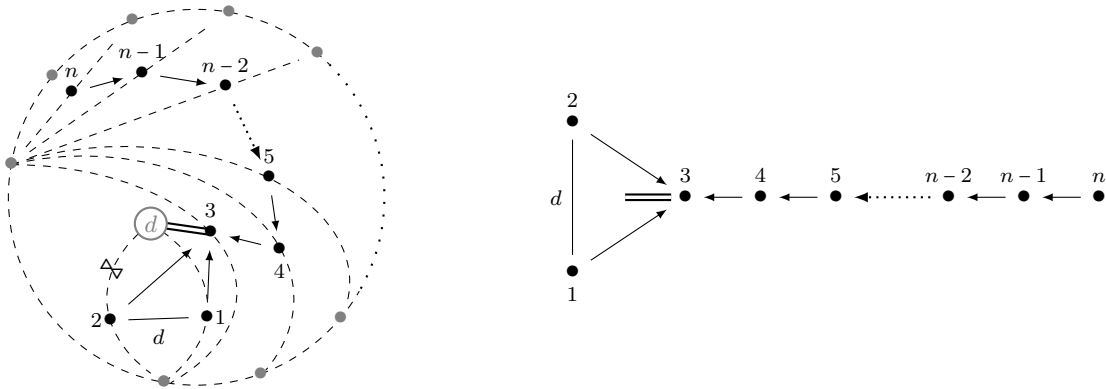

\subsection{Quivers from triangulated surfaces and groups from quivers.}\label{subsection_associating_quiver}
We now fix the surface $(X,M)$ we will be working with: $X$ is the disk $S$ with an interior marked point interpreted a cone point of degree $d\geq 2$ denoted by
$\begin{tikzpicture}[scale=1,
    conepoint/.style={gray, circle, draw=gray!100, fill=white!100, thick, inner sep=1.5pt}]
\node[conepoint]  at (0,-0.2) {\scriptsize $d$};
\end{tikzpicture}$
in figures, and $M$ is a set of $n\geq 2$ marked points on its boundary.

Next, we give a way of associating a decorated quiver $Q_T$ to each tagged triangulation of $(X,M)$. We then give a way of associating a group $G_{Q_T}$ to any such quiver.

Let $T$ be a tagged triangulation of $(X,M)$, regarding $C$ as a marked point, as in~\cite[\S7]{FST}. Note that $C$ is the unique marked point in the interior of $S$.

\begin{remark}
    \label{rem:puzzlepieces}
By~\cite[Rk.\ 4.2]{FST}, the tagged triangulation $T$ can be built up by gluing puzzle pieces of the kind shown in Figure~\ref{fig:puzzlepieces} by matching their boundary arcs (respecting the orientation).
\end{remark}

Let $B_T$ be the skew-symmetric matrix associated to $T$ in~\cite[Rk.\ 4.2]{FST}, and let $\widetilde{Q}_T$ be the corresponding quiver.

\begin{defn}\label{defn_quiver_on_disc}
We associate a quiver $Q_T$ to each tagged triangulation $T$ as considered above by modifying $\widetilde{Q}_T$ in the following way.

If the triangulation $T$ has precisely two arcs incident with the cone point then,
from each of the two vertices in $\widetilde{Q}_T$ that have arrows to or from the corresponding vertices in $\widetilde{Q}$, we draw a double edge pointing towards the conepoint. Thus, we draw such a double edge from each vertex labelling an arc in the triangulation $T$ bounding a region in the complement that has the cone point on its boundary. We also add an unoriented edge, labelled $d$, between these two vertices.

If the triangulation has at least $3$ arcs adjacent to the cone point, there will be an oriented cycle between the corresponding vertices in $\widetilde{Q}_T$.
We put the label
$\begin{tikzpicture}[scale=1,
conepoint/.style={gray, circle, draw=gray!100, fill=white!100, thick, inner sep=1.5pt}]
\node[conepoint]  at (0,-0.2) {\scriptsize $d$};
\end{tikzpicture}$
in the middle of such a cycle. Note that there is at most one such labelled cycle.
\end{defn}

See Figure~\ref{fig:BMR_embedded} for a complete example of Definition~\ref{defn_quiver_on_disc} and Figures~\ref{fig:typeAmutation}, \ref{fig:cornercase}, \ref{fig:mutationnearcone}, \ref{fig:mutation_tagged} and \ref{fig:mutation_untagged} for local portions of a triangulation and the associated quiver.

\begin{figure}[ht]\scalebox{1}{
    \centering
 \begin{tikzpicture}[scale=2,
  quiverarrow/.style={black, -latex},
  mutationarc/.style={dashed, red, very thick},
  arc/.style={dashed, black},
  point/.style={gray},
  vertex/.style={black},
  conepoint/.style={gray, circle, draw=gray!100, fill=white!100, thick, inner sep=1.5pt},
  db/.style={thick, double, double distance=1.3pt, shorten <=-6pt}
  ]
 \begin{scope}
  \node[point] (p0) at (0,0) {$\bullet$};
  \node[point] (p1) at (0.5,0.866) {$\bullet$};
  \node[point] (p2) at (1,0) {$\bullet$};
  \draw[arc](p0) -- (p1);
  \draw[arc](p1) -- (p2);
  \draw[arc](p2) -- (p0); 
\end{scope}
\begin{scope}[xshift=2cm]
  \node[point] (p0) at (0,0) {$\bullet$};
  \node[conepoint] (p1) at (0,0.7) {$d$};
  \node[point] (p2) at (0,1.4) {$\bullet$};
 \draw[arc,shorten <=-2pt, shorten >=-2pt] (p0) .. controls +(45:0.4) and +(315:0.4) .. (p1);
  \draw[arc, shorten <=-2pt, shorten >=-2pt] (p0) .. controls +(135:0.4) and +(225:0.4) .. (p1);
   \draw[arc,shorten <=-2pt, shorten >=-2pt] (p0) .. controls +(0:1) and +(360:1) .. (p2);
  \draw[arc, shorten <=-2pt, shorten >=-2pt] (p0) .. controls +(180:1) and +(180:1) .. (p2);
\end{scope}
 \end{tikzpicture}}
\caption{Puzzle pieces for a disk with a single interior marked point.}
\label{fig:puzzlepieces}
\end{figure}
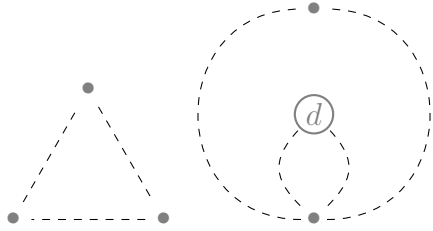

\begin{remark}
\label{rem:conepointportion}
It follows from Remark~\ref{rem:puzzlepieces} that the quiver $Q_T$ is built up from individual portions associated to triangles (as on the left of Figure~\ref{fig:puzzlepieces}) and a portion associated to the union of the puzzle pieces incident with the cone-point as shown in Figure~\ref{fig:puzzlepiecesquiver} (or with versions of the three right hand figures where the tags on all the arcs incident with the cone point have been flipped).
\end{remark}

\begin{figure}[ht]
\scalebox{1.2}{
    \centering
 \begin{tikzpicture}[scale=2,
  quiverarrow/.style={black,-latex,shorten <=-4pt, shorten >=-4pt},
  mutationarc/.style={dashed, red, very thick},
  arc/.style={dashed, black},
  point/.style={gray},
  vertex/.style={black},
  conepoint/.style={gray, circle, draw=gray!100, fill=white!100, thick, inner sep=1.5pt},
  db/.style={thick, double, double distance=1.3pt, shorten <=-6pt}
  ]
 \begin{scope}
  \node[point] (p0) at (0,0) {$\bullet$};
  \node[point] (p1) at (0.5,0.866) {$\bullet$};
  \node[point] (p2) at (1,0) {$\bullet$};
  \draw[arc](p0) -- (p1);
  \draw[arc](p1) -- (p2);
  \draw[arc](p2) -- (p0); 
  \node[vertex] (v0) at (0.25,0.433) {$\bullet$};
  \node[vertex] (v1) at (0.75,0.433) {$\bullet$};
  \node[vertex] (v2) at (0.5,0) {$\bullet$};
  \draw[quiverarrow](v0) -- (v2);
  \draw[quiverarrow](v2) -- (v1);
  \draw[quiverarrow](v1) -- (v0);
\end{scope}
\begin{scope}[xshift=2cm]
  \node[point] (p0) at (0,0) {$\bullet$};
  \node[conepoint] (p1) at (0,0.7) {\tiny $d$};
  \node[point] (p2) at (0,1.4) {$\bullet$};
  \draw[arc, shorten <=-3pt, shorten >=0pt] (p0) .. controls +(135:0.4) and +(225:0.4) ..
  coordinate[midway](m1) (p1);
  \draw[arc,shorten <=-3pt, shorten >=0pt] (p0) .. controls +(45:0.4) and +(315:0.4) .. coordinate[midway](m2) coordinate[pos=0.9](n1) (p1);
\draw[arc, shorten <=-2pt, shorten >=-2pt] (p0) .. controls +(180:1) and +(180:1) .. coordinate[midway](m3) (p2);
\draw[arc,shorten <=-2pt, shorten >=-2pt] (p0) .. controls +(0:1) and +(360:1) .. coordinate[midway](m4) (p2);
\node[vertex,rotate=45](t1) at (n1) {$\bowtie$};
\node[vertex](w1) at (m1) {$\bullet$};
\node[vertex](w2) at (m2) {$\bullet$};
\node[vertex](w3) at (m3) {$\bullet$};
\node[vertex](w4) at (m4) {$\bullet$};
\draw[quiverarrow](w3) -- (w1);
\draw[quiverarrow](w3) -- (w2);
\draw[quiverarrow](w1) -- (w4);
\draw[quiverarrow](w2) -- (w4);
\draw (w1) to node[midway,below=-2pt]{\tiny $d$} (w2);
\draw [db,shorten >=10pt](w3) -- (p1);
\draw [db,shorten >=10pt](w4) -- (p1);
\end{scope}
\begin{scope}[xshift=4cm]
  \node[point] (p0) at (0,0) {$\bullet$};
  \node[conepoint] (p1) at (0,0.7) {\tiny $d$};
  \node[point] (p2) at (0,1.4) {$\bullet$};
  \draw[arc, shorten <=-2pt, shorten >=0pt] (p0) --  coordinate[midway](m1) (p1);
 \draw[arc,shorten <=-2pt, shorten >=0pt] (p2) --coordinate[midway](m2) (p1);
\draw[arc, shorten <=-2pt, shorten >=-2pt] (p0) .. controls +(180:1) and +(180:1) .. coordinate[midway](m3) (p2);
\draw[arc,shorten <=-2pt, shorten >=-2pt] (p0) .. controls +(0:1) and +(360:1) .. coordinate[midway](m4) (p2);
\node[vertex](x1) at (m1) {$\bullet$};
\node[vertex](x2) at (m2) {$\bullet$};
\node[vertex](x3) at (m3) {$\bullet$};
\node[vertex](x4) at (m4) {$\bullet$};
\draw[quiverarrow](x3) -- (x1);
\draw[quiverarrow](x2) -- (x3);
\draw[quiverarrow](x1) -- (x4);
\draw[quiverarrow](x4) -- (x2);
\draw (x1) .. controls +(60:0.35) and +(-60:0.35) .. coordinate[midway,right=3pt](m12)(x2);
\node at (m12){\tiny $d$};
\draw [db,shorten >=10pt](x3) -- (p1);
\draw [db,shorten >=10pt](x4) -- (p1);
\end{scope}
\begin{scope}[xshift=6cm]
\draw[arc] (0,0.7) circle (0.7);
\node[point](q1) at (0,1.4) {$\bullet$};
\node[point](q2) at (0.606,0.35) {$\bullet$};
\node[point](q3) at (-0.606,0.35) {$\bullet$};
\node[conepoint](c) at (0,0.7) {\tiny $d$};
\draw[arc](q1) -- coordinate[midway](m1) (c);
\draw[arc](q2) -- coordinate[midway](m2) (c);
\draw[arc](q3) -- coordinate[midway](m3) (c);
\node[vertex](z1) at (m1) {$\bullet$};
\node[vertex](z2) at (m2) {$\bullet$};
\node[vertex](z3) at (m3) {$\bullet$};
\draw[quiverarrow](z1) ..
controls +(0:0.25) and +(60:0.25) .. (z2);
\draw[quiverarrow](z2) ..
controls +(240:0.25) and +(300:0.25) .. (z3);
\draw[quiverarrow](z3) ..
controls +(120:0.25) and +(180:0.25) .. (z1);
\end{scope}
\end{tikzpicture}}
\scalebox{1.2}{
    \centering
 \begin{tikzpicture}[scale=2,
  quiverarrow/.style={black,-latex,shorten <=-4pt, shorten >=-4pt},
  mutationarc/.style={dashed, red, very thick},
  arc/.style={dashed, black},
  point/.style={gray},
  vertex/.style={black},
  conepoint/.style={gray, circle, draw=gray!100, fill=white!100, thick, inner sep=1.5pt},
  db/.style={thick, double, double distance=1.3pt, shorten <=-6pt}
  ]
 \begin{scope}[yshift=0.5cm]
  \node[point] (p0) at (0,0) {};
  \node[point] (p1) at (0.5,0.866) {};
  \node[point] (p2) at (1,0) {};
  \draw[arc,draw=none](p0) -- (p1);
  \draw[arc,draw=none](p1) -- (p2);
  \draw[arc,draw=none](p2) -- (p0); 
  \node[vertex] (v0) at (0.25,0.433) {$\bullet$};
  \node[vertex] (v1) at (0.75,0.433) {$\bullet$};
  \node[vertex] (v2) at (0.5,0) {$\bullet$};
  \draw[quiverarrow](v0) -- (v2);
  \draw[quiverarrow](v2) -- (v1);
  \draw[quiverarrow](v1) -- (v0);\end{scope}
\begin{scope}[xshift=1.55cm,yshift=0.68cm]
\node[vertex](w1) at (0.5,0.5) {$\bullet$};
\node[vertex](w2) at (0.5,-0.5) {$\bullet$};
\node[vertex](w3) at (0,0) {$\bullet$};
\node[vertex](w4) at (1,0) {$\bullet$};
\draw[quiverarrow](w3) -- (w1);
\draw[quiverarrow](w3) -- (w2);
\draw[quiverarrow](w1) -- (w4);
\draw[quiverarrow](w2) -- (w4);
\draw (w1) to node[pos=0.3, right=-2pt]{\tiny $d$}(w2);
\draw [db,shorten >=10pt](w3) -- (0.5,0);
\draw [db,shorten >=10pt](w4) -- (0.5,0);
\end{scope}
\begin{scope}[xshift=4cm]
  \node[point] (p0) at (0,0) {};
  \node[conepoint, draw=none] (c) at (0,0.7) {};
  \node[point] (p2) at (0,1.4) {};
  \draw[draw=none,arc, shorten <=-2pt, shorten >=-2pt] (p0) --  coordinate[midway](m1) (c);
 \draw[draw=none,arc,shorten <=-2pt, shorten >=-2pt] (p2) --coordinate[midway](m2) (c);
\draw[draw=none,arc, shorten <=-2pt, shorten >=-2pt] (p0) .. controls +(180:1) and +(180:1) .. coordinate[midway](m3) (p2);
\draw[draw=none,arc,shorten <=-2pt, shorten >=-2pt] (p0) .. controls +(0:1) and +(360:1) .. coordinate[midway](m4) (p2);
\node[vertex](x1) at (m1) {$\bullet$};
\node[vertex](x2) at (m2) {$\bullet$};
\node[vertex](x3) at (m3) {$\bullet$};
\node[vertex](x4) at (m4) {$\bullet$};
\draw[quiverarrow](x3) -- (x1);
\draw[quiverarrow](x2) -- (x3);
\draw[quiverarrow](x1) -- (x4);
\draw[quiverarrow](x4) -- (x2);
\draw (x2) to node[pos=0.3, right=-2pt]{\tiny $d$}(x1); 
\draw [db,shorten >=10pt](x3) -- (c);
\draw [db,shorten >=10pt](x4) -- (c);
\end{scope}
\begin{scope}[xshift=6cm]
\node[point](q1) at (0,1.4) {};
\node[point](q2) at (0.606,0.35) {};
\node[point](q3) at (-0.606,0.35) {};
\node[conepoint](c) at (0,0.7) {\tiny $d$};
\draw[arc,draw=none](q1) -- coordinate[midway](m1) (c);
\draw[arc,draw=none](q2) -- coordinate[midway](m2) (c);
\draw[arc,draw=none](q3) -- coordinate[midway](m3) (c);
\node[vertex](z1) at (m1) {$\bullet$};
\node[vertex](z2) at (m2) {$\bullet$};
\node[vertex](z3) at (m3) {$\bullet$};
\draw[quiverarrow](z1) ..
controls +(0:0.25) and +(60:0.25) .. (z2);
\draw[quiverarrow](z2) ..
controls +(240:0.25) and +(300:0.25) .. (z3);
\draw[quiverarrow](z3) ..
controls +(120:0.25) and +(180:0.25) .. (z1);
\end{scope}
\end{tikzpicture}}
\caption{Building up the associated quiver for a disk with cone point. We show the corresponding quivers separately. Note that the cone point itself does not appear in the quiver in the second and third cases.}
\label{fig:puzzlepiecesquiver}
\end{figure}
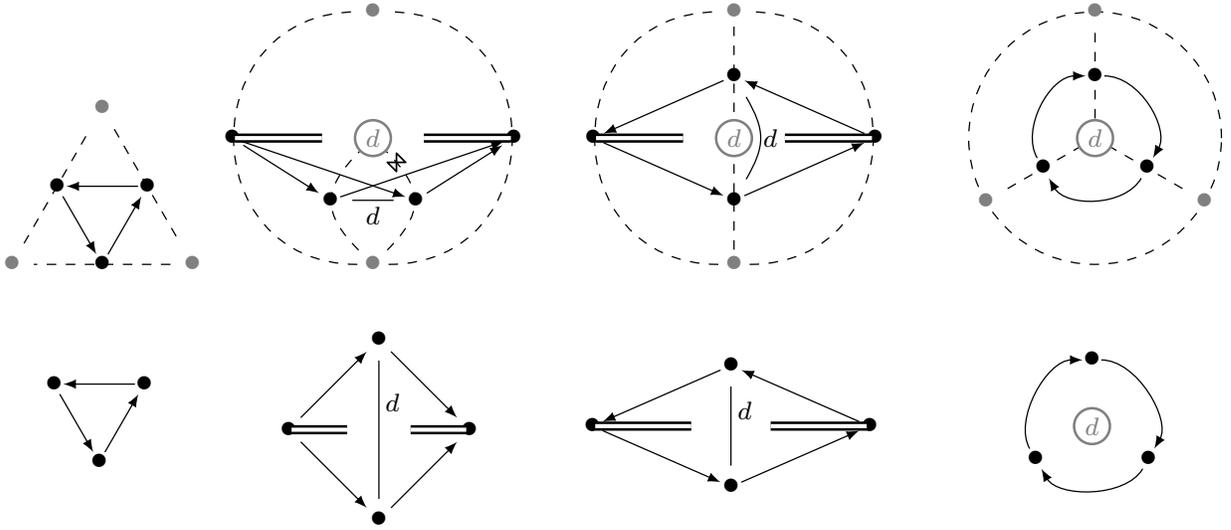

\begin{defn}\label{defn_GQ_associated_group}
  Let $Q$ be a quiver as in Definition~\ref{defn_quiver_on_disc} and $G_Q$ be the group with generators $S_Q=\{s_i\}_{i\in Q_0}$ subject to the following relations:
  \begin{enumerate}
      \item\label{item_commutation} $s_is_j=s_js_i$ if $i$ and $j$ are vertices with no arrows between them,
      \item\label{item_braid} $s_is_js_i=s_js_is_j$ if there is an arrow between $i$ and $j$ (in either direction).
      \item\label{item_cycle}  $s_is_js_ks_i=s_js_ks_is_j=s_ks_is_js_k$ if $Q$ contains an oriented $3$-cycle with no label $\begin{tikzpicture}[scale=1,
conepoint/.style={gray, circle, draw=gray!100, fill=white!100, thick, inner sep=1.5pt}]
\node[conepoint]  at (0,-0.2) {\scriptsize $d$};
\end{tikzpicture}$ in the middle of the form
$$
      \begin{tikzpicture}[scale=1,
  quiverarrow/.style={black,-latex,shorten <=-4pt, shorten >=-4pt},
  mutationarc/.style={dashed, red, very thick, shorten <=-4pt, shorten >=-4pt},
  mutationarccone/.style={dashed, red, very thick, shorten >=-4pt},
  arc/.style={dashed, black, shorten <=-4pt, shorten >=-4pt},
  arccone/.style={dashed, black, shorten >=-4pt},
  point/.style={gray},
  vertex/.style={black},
  conepoint/.style={gray, circle, draw=gray!100, fill=white!100, thick, inner sep=1.5pt},
  db/.style={thick, double, double distance=1.3pt, shorten <=-6pt}
  ]
\node (j2) at (0,0) {$\bullet$};
\node [above] at (j2) {\tiny {$i$}}; 
\node (i2) at (1.2,0) {$\bullet$};
\node [above] at (i2) {\tiny {$j$}};
\node (k2) at (0.6,-0.8) {$\bullet$};
\node[below] at (k2) {\tiny {$k$}};
\draw [quiverarrow, shorten <=-2pt, shorten >=-2pt] (k2) -- (j2);
\draw [quiverarrow, shorten <=-2pt, shorten >=-2pt] (i2) -- (k2);
\draw [quiverarrow, shorten <=-2pt, shorten >=-2pt] (j2) -- (i2);
 \end{tikzpicture}
$$
\item\label{item_dreln} $\underbrace{s_is_j\cdots }_{\text{$d$ terms}} =\underbrace{s_js_i\cdots }_{\text{$d$ terms}}$ if there is an (unoriented) edge labelled $d$ between $i$ and $j$,
\item\label{item_cycled} $\underbrace{s_1s_2\cdots s_rs_1\dots}_{\text{$d(r-1)$ terms}} =\underbrace{s_2\cdots s_rs_1\dots}_{\text{$d(r-1)$ terms}}=\dots=\underbrace{s_rs_1\cdots s_rs_1\dots}_{\text{$d(r-1)$ terms}}$
if $Q$ contains an oriented labelled chordless $r$-cycle, for $r\geq 3$, of the form
$$
      \begin{tikzpicture}[scale=1,
  quiverarrow/.style={black,-latex,shorten <=-4pt, shorten >=-4pt},
  mutationarc/.style={dashed, red, very thick, shorten <=-4pt, shorten >=-4pt},
  mutationarccone/.style={dashed, red, very thick, shorten >=-4pt},
  arc/.style={dashed, black, shorten <=-4pt, shorten >=-4pt},
  arccone/.style={dashed, black, shorten >=-4pt},
  point/.style={gray},
  vertex/.style={black},
  conepoint/.style={gray, circle, draw=gray!100, fill=white!100, thick, inner sep=1.5pt},
  db/.style={thick, double, double distance=1.3pt, shorten <=-6pt}
  ]
 \node[vertex, label=
{[label distance=-10pt]225:{\tiny $r$}}] (vr) at (0,0) {$\bullet$};
 \node[vertex, label={[label distance=-10pt]115:{\tiny $1$}}] (v1) at (0,1) {$\bullet$};
 \node[vertex, label=
{[label distance=-10pt]75:{\tiny $2$}}] (v2) at (1,1) {$\bullet$};
 \node[vertex, label=
{[label distance=-8pt]290:{\tiny $r-1$}}] (v3) at (1,0) {$\bullet$};
 \draw[quiverarrow] (v1) -- (v2);
 \draw[quiverarrow, dashed] (v2) -- (v3);
 \draw[quiverarrow] (v3) -- (vr);
 \draw[quiverarrow] (vr) -- (v1);
 \node[conepoint]  at (0.5,0.5) {\scriptsize $d$};
 \end{tikzpicture}
$$
\item\label{item_doubleedge} $s_ks_is_js_ks_is_j=s_is_js_ks_is_js_k$ if the vertices $i,j,k$ appear in either of the following configurations in $Q$
$$
 \begin{tikzpicture}[scale=0.8,
  quiverarrow/.style={black,-latex,shorten <=-4pt, shorten >=-4pt},
  mutationarc/.style={dashed, red, very thick, shorten <=-4pt, shorten >=-4pt},
  mutationarccone/.style={dashed, red, very thick, shorten >=-4pt},
  arc/.style={dashed, black, shorten <=-4pt, shorten >=-4pt},
  arccone/.style={dashed, black, shorten >=-4pt},
  point/.style={gray},
  vertex/.style={black},
  conepoint/.style={gray, circle, draw=gray!100, fill=white!100, thick, inner sep=1.5pt},
  db/.style={thick, double, double distance=1.3pt, shorten <=-6pt}
  ]
\begin{scope}
\node[vertex, label=
{[label distance=-5pt]90:{\tiny $i$}}] (w8) at  (0,1) {$\bullet$};
 \node[vertex, label=
{[label distance=-5pt]-90:{\tiny $j$}}] (w7) at  (0,-1) {$\bullet$};
 \node[vertex, label=
{[label distance=-5pt]90:{\tiny $k$}}] (w6) at  (1.5,0) {$\bullet$};
 \draw[->] (w7)--(w6);
 \draw[->] (w8)--(w6);
 \draw[-] (w8)--(w7);
 \draw[db] (1.1,0)--(0.7,0);
 \node[left] at  ($(w8)! 0.5!(w7)$) {\tiny $d$};
 \end{scope}
\begin{scope}[xshift=3.5cm]
\node[vertex, label=
{[label distance=-5pt]90:{\tiny $i$}}] (w8) at  (0,1) {$\bullet$};
 \node[vertex, label=
{[label distance=-5pt]-90:{\tiny $j$}}] (w7) at  (0,-1) {$\bullet$};
 \node[vertex, label=
{[label distance=-5pt]90:{\tiny $k$}}] (w6) at  (1.5,0) {$\bullet$};
 \draw[<-] (w7)--(w6);
 \draw[<-] (w8)--(w6);
 \draw[-] (w8)--(w7);
 \draw[db] (1.1,0)--(0.7,0);
 \node[left] at  ($(w8)! 0.5!(w7)$) {\tiny $d$};
 \end{scope}
\end{tikzpicture}
$$
Note that $s_i$ appears before $s_j$ in the relation because $i$ appears to the right of the double edge.
\item\label{item_broken4cycle} $s_is_js_ks_ls_is_j=s_ls_is_js_ks_ls_i$ and $s_js_ks_ls_is_js_k=s_ks_ls_is_js_ks_l$ if the vertices $i,j,k,l$ appear as follows in $Q$
$$
      \begin{tikzpicture}[scale=1.2,
  quiverarrow/.style={black,-latex,shorten <=-4pt, shorten >=-4pt},
  mutationarc/.style={dashed, red, very thick, shorten <=-4pt, shorten >=-4pt},
  mutationarccone/.style={dashed, red, very thick, shorten >=-4pt},
  arc/.style={dashed, black, shorten <=-4pt, shorten >=-4pt},
  arccone/.style={dashed, black, shorten >=-4pt},
  point/.style={gray},
  vertex/.style={black},
  conepoint/.style={gray, circle, draw=gray!100, fill=white!100, thick, inner sep=1.5pt},
  db/.style={thick, double, double distance=1.3pt, shorten <=-6pt}
  ]
 \node[vertex, label=
{[label distance=-10pt]225:{\tiny $k$}}] (vk) at (0,0) {$\bullet$};
 \node[vertex, label={[label distance=-10pt]115:{\tiny $j$}}] (vj) at (0,1) {$\bullet$};
 \node[vertex, label=
{[label distance=-10pt]75:{\tiny $i$}}] (vi) at (1,1) {$\bullet$};
 \node[vertex, label=
{[label distance=-8pt]290:{\tiny $l$}}] (vl) at (1,0) {$\bullet$};
 \draw[quiverarrow] (vi) -- (vj);
 \draw[quiverarrow] (vj) -- (vk);
 \draw[quiverarrow] (vk) -- (vl);
 \draw[quiverarrow] (vl) -- (vi);
 \draw[-] (vi) -- (vk);
 \node[left] at  ($(vi)! 0.4!(vk)$) {\tiny $d$};
 \draw[db] (vj)--(0.3,0.7);
  \draw[db] (vl)--(0.7,0.3);
 \end{tikzpicture}
$$
  \end{enumerate}

Let $H_Q$ be the group defined as above, omitting the cycle relations (5).
\end{defn}

The construction of Grant and Marsh~\cite{GM} becomes a special case of the above construction as follows. Our aim is to generalise their construction for larger values of $d$.

\begin{remark}\label{remark_d2_muchsimpler}
    Note that when $d=2$, the relations from Definition~\ref{defn_GQ_associated_group} simplify. In fact, arrows labelled $2$ give commutation relations, so they can be omitted following rule (\ref{item_commutation}).
    Moreover, we can see that double edges can be omitted as well. In fact, in the situation of relation (\ref{item_doubleedge}), we have $s_is_j=s_js_i$, $s_is_ks_i=s_ks_is_k$ and $s_js_ks_j=s_ks_js_k$ and so
    \begin{align*}
        s_ks_is_js_ks_is_j&= s_ks_js_is_ks_is_j= s_ks_js_ks_is_ks_j = s_js_ks_js_is_ks_j = s_js_ks_is_js_ks_j\\
        &= s_js_ks_is_ks_js_k=s_js_is_ks_is_js_k= s_is_js_ks_is_js_k,
    \end{align*}
    that is relation (\ref{item_doubleedge}) becomes a consequence of the other relations.
    Note also that cycle relations as in (\ref{item_cycled}) and (\ref{item_broken4cycle}) reduce to cycle relations as in~\cite[Defn.\ 2.2]{GM}.
    Hence in this case the defining relations coincide with those in \cite[Defn.\ 2.2]{GM} and $G_Q$ is isomorphic to $\mathcal{A}(D_n)$, that is the Artin braid group of type $D_n$, by \cite[Remark 2.3 and Theorem 2.12]{GM}. Recall also that $\mathcal{A}(D_n)\cong B(2,2,n)$; see for example \cite[pp 188]{BMR98}.
\end{remark}

\begin{remark} \label{rem:tworelations}
    Note that there are only two relations in (\ref{item_broken4cycle}) corresponding to the $4$-cycle $i\rightarrow j\rightarrow k\rightarrow l\rightarrow i$, in contrast to the four relations appearing in~\cite[Defn.\ 2.2]{GM}:
$$s_is_js_ks_ls_is_j=s_js_ks_ls_is_js_k=s_ks_ls_is_js_ks_l=s_ls_is_js_ks_ls_i.$$
When $d=2$, these relations are equivalent to those in (\ref{item_broken4cycle}) (see Remark~\ref{remark_d2_muchsimpler}), but when $d>2$, we can see that the relations in~\cite[Defn.\ 2.2]{GM} would imply the unexpected relation $s_is_k=s_ks_i$.
In fact, using one of the two equalities from (\ref{item_broken4cycle}), together with the other relations, we have
\begin{align*}
s_is_js_ks_ls_is_j=s_ls_is_js_ks_ls_i&\iff s_js_ks_ls_is_j=s_i^{-1}s_ls_is_js_ks_ls_i
    \iff s_js_ks_ls_is_j=s_ls_is_l^{-1}s_js_ks_ls_i\\
    &\iff s_js_ks_ls_is_j=s_ls_is_js_l^{-1}s_ks_ls_i
    \iff s_js_ks_ls_is_j=s_ls_is_js_ks_ls_k^{-1}s_i\\
    &\iff s_js_ks_ls_is_j\underbrace{s_ks_i\cdots }_{\text{$d-2$ terms}}=s_ls_is_js_ks_ls_k^{-1}s_i\underbrace{s_ks_i\cdots }_{\text{$d-2$ terms}}\\
    &\iff s_js_ks_ls_is_j\underbrace{s_ks_i\cdots }_{\text{$d-2$ terms}}=s_ls_is_js_ks_ls_i\underbrace{s_ks_i\cdots s_x^{-1}}_{\text{$d-1$ terms}}\\
    &\iff s_js_ks_ls_is_js_k\underbrace{s_is_k\cdots s_x}_{\text{$d-2$ terms}}=s_ls_is_js_ks_ls_i\underbrace{s_ks_i\cdots }_{\text{$d-2$ terms}},
\end{align*}
where $x=i$ or $k$ depending on whether $d$ is odd or even respectively. If the four relations from \cite[Defn.\ 2.2]{GM} were true for $d>2$, this would imply that $\underbrace{s_is_k\cdots }_{\text{$d-2$ terms}} =\underbrace{s_ks_i\cdots }_{\text{$d-2$ terms}}$ and so by (\ref{item_dreln}), that $s_ks_i=s_is_k$.
However, this statement is false for $d>2$ (see Remark~\ref{rem:tworelationsB}).
\end{remark}

We can now use our construction to embed the presentation of the group $B(d,d,n)$ given by Brou\'{e}, Malle and Rouquier~\cite{BMR98} $B(d,d,n)$ into a triangulation of the surface $S$ as follows.

\begin{remark}
    Consider the triangulated surface in Figure~\ref{fig:BMR_embedded}, where we have drawn the corresponding quiver following the rules in Definition~\ref{defn_quiver_on_disc}. The associated group $G_Q$ with generators and relations as in 
    Definition~\ref{defn_GQ_associated_group} is exactly the presentation of the group $B(d,d,n)$ given by Brou\'{e}, Malle and Rouquier in~\cite[Table 5]{BMR98}.
\end{remark}

\subsection{Mutation of quivers and triangulations.}\label{subsection_mutation_q_triang}
By~\cite[\S7]{FST}, given a tagged triangulation $T$ of the disk $S$, and a choice of tagged or untagged arc, there is a unique tagged triangulation which coincides with $T$ except for this arc, i.e. the flip of $T$ at the given arc.
We give a way of mutating the quivers constructed as in the previous section that agrees with flipping the triangulation. This will coincide with Fomin-Zelevinsky mutation, see \cite[Lemma 8.5]{FZ}, for the portion of the quiver ``far from the cone point'', but we need different rules for the double edges, labelled (unoriented) edges and labelled cycles.

Note that by the construction of $Q$, all $3$-cycles in $Q$ where arrows have no labels are oriented cyclically by~\cite[Lemma 7.5]{FZ}; see also \cite[pp. 1948]{BM}. On the other hand, we sometimes have an unoriented edge labelled $d$ creating ``unoriented'' cycles, see for example the situation of (\ref{item_broken4cycle}) in Definition~\ref{defn_GQ_associated_group}. 
\begin{defn}\label{defn_mutationrules}
    Let $Q$ be a quiver as in Definition~\ref{defn_quiver_on_disc} and $k$ be a vertex in $Q$. We define \textit{the mutation of $Q$ at $k$}, denoted by $\mu_k(Q)$, as the following quiver on the same vertex set. See Figure~\ref{fig:mutations} for a pictorial representation of the following rules.
    \begin{enumerate}
        \item Reverse the orientations of all (oriented) arrows in $Q$ incident with $k$.
        \item\label{item_mutatepath_jki} For any path of the form $j\rightarrow k \rightarrow i$ in $Q$:
        \begin{itemize}
            \item if there is no arrow between $i$ and $j$ in $Q$, then there is an arrow $j\rightarrow i$ in $\mu_k(Q)$,
            \item if there is an arrow $i\rightarrow j$ and $j$, $k$, $i$ do not form a  $3$-cycle labelled $\begin{tikzpicture}[scale=1,
    conepoint/.style={gray, circle, draw=gray!100, fill=white!100, thick, inner sep=1.5pt}]
\node[conepoint]  at (0,-0.2) {\scriptsize $d$};
\end{tikzpicture}$  in $Q$, then there is no arrow between $i$ and $j$ in $\mu_k(Q)$,
            \item if $j$, $k$, $i$ form a $3$-cycle labelled $\begin{tikzpicture}[scale=1,
    conepoint/.style={gray, circle, draw=gray!100, fill=white!100, thick, inner sep=1.5pt}]
\node[conepoint]  at (0,-0.2) {\scriptsize $d$};
\end{tikzpicture}$ in $Q$, then there is an edge labelled $d$ between $i$ and $j$ in $\mu_k(Q)$, the $3$-cycle looses the label and the neighbours of $i$, $j$ in $\mu_k(Q)$ acquire a double edge towards the labelled edge in $\mu_k(Q)$,
            \item if there is an edge labelled $d$ between  $j$ and $i$ in $Q$, then there is an arrow $j\rightarrow i$ in $\mu_k(Q)$, the $3$-cycle $j$, $i$, $k$ acquires the label $\begin{tikzpicture}[scale=1,
    conepoint/.style={gray, circle, draw=gray!100, fill=white!100, thick, inner sep=1.5pt}]
\node[conepoint]  at (0,-0.2) {\scriptsize $d$};
\end{tikzpicture}$ and all double edges are removed in $\mu_k(Q)$,
        \end{itemize} 
        \item If $j\rightarrow k \rightarrow i$  is part of an $r$-cycle labelled $\begin{tikzpicture}[scale=1,
    conepoint/.style={gray, circle, draw=gray!100, fill=white!100, thick, inner sep=1.5pt}]
\node[conepoint]  at (0,-0.2) {\scriptsize $d$};
\end{tikzpicture}$ in $Q$ for $r\geq 4$, then the rules above apply and the label is kept in the $(r-1)$-cycle including $i$, $j$ but not $k$ in $\mu_k(Q)$.
        \item If there are arrows $j\rightarrow k \rightarrow i$ and $i$, $j$, but not $k$, are part of an $r$-cycle labelled $\begin{tikzpicture}[scale=1,
    conepoint/.style={gray, circle, draw=gray!100, fill=white!100, thick, inner sep=1.5pt}]
\node[conepoint]  at (0,-0.2) {\scriptsize $d$};
\end{tikzpicture}$ in $Q$ for $r\geq 3$, then the rules above apply and the label is kept in the $(r+1)$-cycle which includes $i\rightarrow k \rightarrow j$ in $\mu_k(Q)$.
\item
If in $Q$ there is one of the following configurations:
\begin{align*}
    \begin{tikzpicture}[scale=1,
  quiverarrow/.style={black, -latex},
  conepoint/.style={gray, circle, draw=gray!100, fill=white!100, thick, inner sep=1.5pt},
  db/.style={thick, double, double distance=1.3pt, shorten <=-6pt}]
  \begin{scope}[shift={(0,0)}]
\node (j2) at (0,0) {$\circ$};
\node [above] at (j2) {\small {$j$}}; 
\node (i2) at (1.2,0) {$\circ$};
\node [above] at (i2) {\small {$i$}};
\node (k2) at (0.6,-0.8) {$\bullet$};
\node[below] at (k2) {\small {$k$}};
\draw [quiverarrow, shorten <=-2pt, shorten >=-2pt] (j2) -- (k2);
\draw [quiverarrow, shorten <=-2pt, shorten >=-2pt] (i2) -- (k2);
\draw [-, shorten <=-2pt, shorten >=-2pt] (i2) -- (j2);
\draw [db] (0.6,-0.45)--(0.6,-0.3);
\node at (0.6,0.15) {\tiny {$d$}};
\node at (2.2,0) { or };
\begin{scope}[shift={(3.3,0)}]
\node (j2) at (0,0) {$\circ$};
\node [above] at (j2) {\small {$j$}}; 
\node (i2) at (1.2,0) {$\circ$};
\node [above] at (i2) {\small {$i$}};
\node (k2) at (0.6,-0.8) {$\bullet$};
\node[below] at (k2) {\small {$k$}};
\draw [quiverarrow, shorten <=-2pt, shorten >=-2pt] (k2) -- (j2);
\draw [quiverarrow, shorten <=-2pt, shorten >=-2pt] (k2) -- (i2);
\draw [-, shorten <=-2pt, shorten >=-2pt] (i2) -- (j2);
\draw [db] (0.6,-0.45)--(0.6,-0.3);
\node at (0.6,0.15) {\tiny {$d$}};
\end{scope}
\end{scope}
  \end{tikzpicture}
\end{align*}
then  follow the above rules (keeping also the double edge at $k$) and
\begin{itemize}
    \item if there is a vertex $l$ different from $k$ with a double edge  in $Q$, remove the double edge at $l$ in $\mu_k(Q)$,
    \item if there is a vertex $n$ different from $l$ and $k$ such that in $Q$  
    \begin{align*}
    \begin{tikzpicture}[scale=1,
  quiverarrow/.style={black, -latex},
  conepoint/.style={gray, circle, draw=gray!100, fill=white!100, thick, inner sep=1.5pt},
  db/.style={thick, double, double distance=1.3pt, shorten <=-6pt}]
  \begin{scope}[shift={(0,0)}]
\node (j2) at (0,0) {$\circ$};
\node [above] at (j2) {\small {$j$}}; 
\node (i2) at (1.2,0) {$\circ$};
\node [above] at (i2) {\small {$i$}};
\node (k2) at (0.6,-0.8) {$\bullet$};
\node[left] at (k2) {\small {$k$}};
\node (n2) at (0.6,-1.7) {$\bullet$};
\node[below] at (n2) {\small {$n$}};
\draw [quiverarrow, shorten <=-2pt, shorten >=-2pt] (j2) -- (k2);
\draw [quiverarrow, shorten <=-2pt, shorten >=-2pt] (i2) -- (k2);
\draw [quiverarrow, shorten <=-2pt, shorten >=-2pt] (k2) -- (n2);
\draw [-, shorten <=-2pt, shorten >=-2pt] (i2) -- (j2);
\draw [db] (0.6,-0.45)--(0.6,-0.3);
\node at (0.6,0.15) {\tiny {$d$}};
\node at (2.2,0) { or };
\begin{scope}[shift={(3.3,0)}]
\node (j2) at (0,0) {$\circ$};
\node [above] at (j2) {\small {$j$}}; 
\node (i2) at (1.2,0) {$\circ$};
\node [above] at (i2) {\small {$i$}};
\node (k2) at (0.6,-0.8) {$\bullet$};
\node[left] at (k2) {\small {$k$}};
\node (n2) at (0.6,-1.7) {$\bullet$};
\node[below] at (n2) {\small {$n$}};
\draw [quiverarrow, shorten <=-2pt, shorten >=-2pt] (k2) -- (j2);
\draw [quiverarrow, shorten <=-2pt, shorten >=-2pt] (k2) -- (i2);
\draw [quiverarrow, shorten <=-2pt, shorten >=-2pt] (n2) -- (k2);
\draw [-, shorten <=-2pt, shorten >=-2pt] (i2) -- (j2);
\draw [db] (0.6,-0.45)--(0.6,-0.3);
\node at (0.6,0.15) {\tiny {$d$}};
\end{scope}
\end{scope}
  \end{tikzpicture}
\end{align*}
    then add a double edge at $n$.
\end{itemize}
    \end{enumerate}
\end{defn}

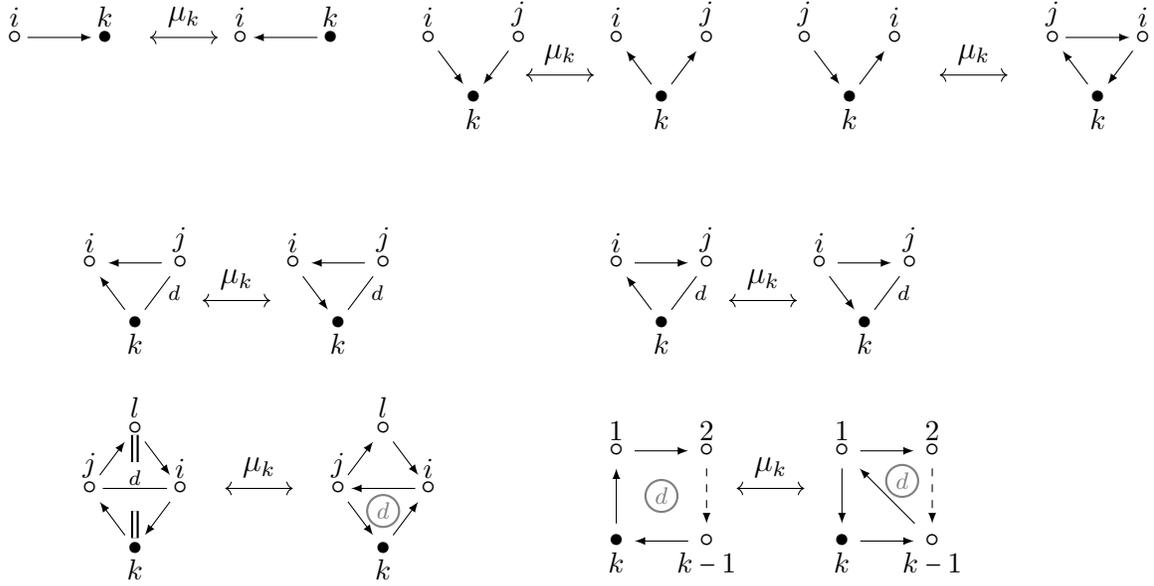
\begin{figure}
    \centering
\begin{tikzpicture}[scale=1,
  quiverarrow/.style={black, -latex},
  conepoint/.style={gray, circle, draw=gray!100, fill=white!100, thick, inner sep=1.5pt},
  db/.style={thick, double, double distance=1.3pt, shorten <=-6pt}] 
\begin{scope}[shift={(-1,0)}]
\node (i1) at (0,0) {$\circ$};
\node [above] at (i1) {\small {$i$}}; 
\node (k1) at (1.2,0) {$\bullet$};
\node[above] at (k1) {\small {$k$}};
\draw [quiverarrow, shorten <=-2pt, shorten >=-2pt] (i1) -- (k1);
\draw [<->] (1.8,0) to node[above] {$\mu_k$} (2.7,0);
\begin{scope}[shift={(3,0)}]
\node (i1) at (0,0) {$\circ$};
\node [above] at (i1) {\small {$i$}}; 
\node (k1) at (1.2,0) {$\bullet$};
\node[above] at (k1) {\small {$k$}};
\draw [quiverarrow, shorten <=-2pt, shorten >=-2pt] (k1) -- (i1);
\end{scope}
\end{scope}

\begin{scope}[shift={(4.5,0)}]
\node (i1) at (0,0) {$\circ$};
\node [above] at (i1) {\small {$i$}}; 
\node (j1) at (1.2,0) {$\circ$};
\node [above] at (j1) {\small {$j$}};
\node (k1) at (0.6,-0.8) {$\bullet$};
\node[below] at (k1) {\small {$k$}};
\draw [quiverarrow, shorten <=-2pt, shorten >=-2pt] (i1) -- (k1);
\draw [quiverarrow, shorten <=-2pt, shorten >=-2pt] (j1) -- (k1);
\draw [<->] (1.3,-0.5) to node[above] {$\mu_k$} (2.2,-0.5);
\begin{scope}[shift={(2.5,0)}]
\node (i2) at (0,0) {$\circ$};
\node [above] at (i2) {\small {$i$}}; 
\node (j2) at (1.2,0) {$\circ$};
\node [above] at (j2) {\small {$j$}};
\node (k2) at (0.6,-0.8) {$\bullet$};
\node[below] at (k2) {\small {$k$}};
\draw [quiverarrow, shorten <=-2pt, shorten >=-2pt] (k2) -- (i2);
\draw [quiverarrow, shorten <=-2pt, shorten >=-2pt] (k2) -- (j2);
\end{scope}
\end{scope}

\begin{scope}[shift={(9.5,0)}]
\node (j1) at (0,0) {$\circ$};
\node [above] at (j1) {\small {$j$}}; 
\node (i1) at (1.2,0) {$\circ$};
\node [above] at (i1) {\small {$i$}};
\node (k1) at (0.6,-0.8) {$\bullet$};
\node[below] at (k1) {\small {$k$}};
\draw [quiverarrow, shorten <=-2pt, shorten >=-2pt] (j1) -- (k1);
\draw [quiverarrow, shorten <=-2pt, shorten >=-2pt] (k1) -- (i1);
\draw [<->] (1.8,-0.5) to node[above] {$\mu_k$} (2.7,-0.5);
\begin{scope}[shift={(3.3,0)}]
\node (j2) at (0,0) {$\circ$};
\node [above] at (j2) {\small {$j$}}; 
\node (i2) at (1.2,0) {$\circ$};
\node [above] at (i2) {\small {$i$}};
\node (k2) at (0.6,-0.8) {$\bullet$};
\node[below] at (k2) {\small {$k$}};
\draw [quiverarrow, shorten <=-2pt, shorten >=-2pt] (k2) -- (j2);
\draw [quiverarrow, shorten <=-2pt, shorten >=-2pt] (i2) -- (k2);
\draw [quiverarrow, shorten <=-2pt, shorten >=-2pt] (j2) -- (i2);
\end{scope}
\end{scope}

\begin{scope}[shift={(0,-3)}]
\node (i1) at (0,0) {$\circ$};
\node [above] at (i1) {\small {$i$}}; 
\node (j1) at (1.2,0) {$\circ$};
\node [above] at (j1) {\small {$j$}};
\node (k1) at (0.6,-0.8) {$\bullet$};
\node[below] at (k1) {\small {$k$}};
\draw [quiverarrow, shorten <=-2pt, shorten >=-2pt] (k1) -- (i1);
\draw [quiverarrow] (j1) -- (i1);
\draw [-, shorten <=-2pt, shorten >=-2pt] (k1) to node[right] {\tiny $d$} (j1);
\draw [<->] (1.5,-0.5) to node[above] {$\mu_k$} (2.4,-0.5);
\begin{scope}[shift={(2.7,0)}]
\node (i2) at (0,0) {$\circ$};
\node [above] at (i2) {\small {$i$}}; 
\node (j2) at (1.2,0) {$\circ$};
\node [above] at (j2) {\small {$j$}};
\node (k2) at (0.6,-0.8) {$\bullet$};
\node[below] at (k2) {\small {$k$}};
\draw [quiverarrow, shorten <=-2pt, shorten >=-2pt] (i2) -- (k2);
\draw [-, shorten <=-2pt, shorten >=-2pt] (j2)  to node[right] {\tiny $d$} (k2);
\draw [quiverarrow] (j2) -- (i2);
\end{scope}
\begin{scope}[shift={(7,0)}]
\node (i1) at (0,0) {$\circ$};
\node [above] at (i1) {\small {$i$}}; 
\node (j1) at (1.2,0) {$\circ$};
\node [above] at (j1) {\small {$j$}};
\node (k1) at (0.6,-0.8) {$\bullet$};
\node[below] at (k1) {\small {$k$}};
\draw [quiverarrow, shorten <=-2pt, shorten >=-2pt] (k1) -- (i1);
\draw [quiverarrow] (i1) -- (j1);
\draw [-, shorten <=-2pt, shorten >=-2pt] (j1) to node[right] {\tiny $d$} (k1);
\draw [<->] (1.5,-0.5) to node[above] {$\mu_k$} (2.4,-0.5);
\begin{scope}[shift={(2.7,0)}]
\node (i2) at (0,0) {$\circ$};
\node [above] at (i2) {\small {$i$}}; 
\node (j2) at (1.2,0) {$\circ$};
\node [above] at (j2) {\small {$j$}};
\node (k2) at (0.6,-0.8) {$\bullet$};
\node[below] at (k2) {\small {$k$}};
\draw [quiverarrow, shorten <=-2pt, shorten >=-2pt] (i2) -- (k2);
\draw [-, shorten <=-2pt, shorten >=-2pt] (k2)  to node[right] {\tiny $d$} (j2);
\draw [quiverarrow] (i2) -- (j2);
\end{scope}
\end{scope}
\end{scope}

\begin{scope}[shift={(0,-6)}]
\node (j2) at (0,0) {$\circ$};
\node [above] at (j2) {\small {$j$}}; 
\node (i2) at (1.2,0) {$\circ$};
\node [above] at (i2) {\small {$i$}};
\node (k2) at (0.6,-0.8) {$\bullet$};
\node[below] at (k2) {\small {$k$}};
\node (l2) at (0.6,0.8) {$\circ$};
\node [above] at (l2) {\small {$l$}};
\draw [quiverarrow, shorten <=-2pt, shorten >=-2pt] (k2) -- (j2);
\draw [quiverarrow, shorten <=-2pt, shorten >=-2pt] (i2) -- (k2);
\draw [-, shorten <=-2pt, shorten >=-2pt] (i2) -- (j2);
\draw [quiverarrow, shorten <=-2pt, shorten >=-2pt] (j2) -- (l2);
\draw [quiverarrow, shorten <=-2pt, shorten >=-2pt] (l2) -- (i2);
\draw [db] (0.6,0.5)--(0.6,0.35);
\draw [db] (0.6,-0.45)--(0.6,-0.3);
\node at (0.6,0.15) {\tiny {$d$}};
\draw [<->] (1.8,0) to node[above] {$\mu_k$} (2.7,0);
\begin{scope}[shift={(3.3,0)}]
\node (j1) at (0,0) {$\circ$};
\node [above] at (j1) {\small {$j$}}; 
\node (i1) at (1.2,0) {$\circ$};
\node [above] at (i1) {\small {$i$}};
\node (k1) at (0.6,-0.8) {$\bullet$};
\node[below] at (k1) {\small {$k$}};
\node (l1) at (0.6,0.8) {$\circ$};
\node [above] at (l1) {\small {$l$}};
\draw [quiverarrow, shorten <=-2pt, shorten >=-2pt] (j1) -- (k1);
\draw [quiverarrow, shorten <=-2pt, shorten >=-2pt] (k1) -- (i1);
\draw [quiverarrow, shorten <=-2pt, shorten >=-2pt] (i1) -- (j1);
\draw [quiverarrow, shorten <=-2pt, shorten >=-2pt] (j1) -- (l1);
\draw [quiverarrow, shorten <=-2pt, shorten >=-2pt] (l1) -- (i1);
\node[conepoint] at (0.6,-0.3) {\scriptsize {$d$}};
\end{scope}
\end{scope}

\begin{scope}[shift={(7,-6.5)}]
\node (k) at (0,-0.2) {$\bullet$};
\node [below] at (k) {\small {$k$}};
\node (k-1) at (1.2,-0.2) {$\circ$};
\node [below] at (k-1) {\small {$k-1$}}; 
\node (v2) at (1.2,1) {$\circ$};
\node [above] at (v2) {\small {$2$}};
\node (v1) at (0,1) {$\circ$};
\node [above] at (v1) {\small {$1$}}; 
 \draw[quiverarrow] (v1) -- (v2);
 \draw[quiverarrow, dashed] (v2) -- (k-1);
 \draw[quiverarrow] (k-1) -- (k);
 \draw[quiverarrow] (k) -- (v1);
 \node[conepoint] at (0.6,0.4) {\scriptsize $d$};
 \draw [<->] (1.6,0.5) to node[above] {$\mu_k$} (2.5,0.5);
 \begin{scope}[shift={(3,0)}]
     \node (k) at (0,-0.2) {$\bullet$};
\node [below] at (k) {\small {$k$}};
\node (k-1) at (1.2,-0.2) {$\circ$};
\node [below] at (k-1) {\small {$k-1$}}; 
\node (v2) at (1.2,1) {$\circ$};
\node [above] at (v2) {\small {$2$}};
\node (v1) at (0,1) {$\circ$};
\node [above] at (v1) {\small {$1$}}; 
 \draw[quiverarrow] (v1) -- (v2);
 \draw[quiverarrow, dashed] (v2) -- (k-1);
 \draw[quiverarrow] (k) -- (k-1);
 \draw[quiverarrow] (k-1) -- (v1);
 \draw[quiverarrow] (v1) -- (k);
 \node[conepoint] at (0.8,0.6) {\scriptsize $d$};
 \end{scope}
\end{scope}
\end{tikzpicture}
 \caption{Local mutations following the rules from Definition~\ref{defn_mutationrules}.
 In the third line, in the first mutation vertex $l$ might not exist (if it corresponds to a boundary segment of the disc); in the second mutation we are assuming $k$ is at least $4$.
 }
    \label{fig:mutations}
\end{figure}

\begin{remark}
    Note that when $d=2$ the mutation rules simplify. In fact, as explained in Remark~\ref{remark_d2_muchsimpler}, arrows labelled $2$, double edges and labels on cycles can be omitted and we simply recover Fomin-Zelevinsky mutation.
\end{remark}

\begin{lemma}
\label{lem:flipmutation}
Let $T$ and $T'$ be tagged triangulations of $S$ such that $T'$ is obtained from $T$ by flipping the arc $\alpha$ as in~\cite[\S7]{FST}. Let $Q_T$ and $Q_{T'}$ be the corresponding quivers as in Definition~\ref{defn_quiver_on_disc}. Then 
\begin{itemize}
\item[(a)] The flip of $\alpha$ is given locally by one of the mutations in Figures~\ref{fig:typeAmutation},
\ref{fig:cornercase},~\ref{fig:mutationnearcone},~\ref{fig:mutation_tagged} or~\ref{fig:mutation_untagged} (from left to right or right to left), or by one of the mutations
from Figure~\ref{fig:cornercase}, \ref{fig:mutation_tagged} or \ref{fig:mutation_untagged} with all of the tags flipped.
\item[(b)] The quiver $Q_{T'}$ can be obtained from the quiver $Q_T$ by applying the mutation rule in Definition~\ref{defn_mutationrules}.
\end{itemize}
\end{lemma}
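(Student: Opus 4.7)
The plan is to classify all local configurations that can occur around an arc $\alpha$ in a tagged triangulation of $(X,M)$, and then verify in each case that flipping $\alpha$ produces precisely the local change in $Q_T$ prescribed by Definition~\ref{defn_mutationrules}. Both parts (a) and (b) will be handled simultaneously by case analysis.

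First I would invoke Remark~\ref{rem:puzzlepieces} and Remark~\ref{rem:conepointportion}: every tagged triangulation $T$ of $(X,M)$ is built from the two puzzle pieces in Figure~\ref{fig:puzzlepieces}, so the local picture near any arc $\alpha$ is controlled. This lets me split into the following cases according to the position of $\alpha$:
\begin{enumerate}
    \item $\alpha$ is an ordinary arc, not incident with the cone point $C$, and the two triangles sharing $\alpha$ do not involve $C$ on their boundaries. This is the pure type $A$ setting, and the flip is the standard quadrilateral flip of Figure~\ref{fig:typeAmutation}.
    \item $\alpha$ is an ordinary arc bounding (together with one other arc) a region containing $C$ — the ``corner'' situation of Figure~\ref{fig:cornercase}.
    \item $\alpha$ is incident with $C$, and $T$ has at least three arcs incident with $C$ (the labelled-cycle situation, handled by Figure~\ref{fig:mutationnearcone}).
    \item $\alpha$ is one of the two tagged/untagged arcs enclosing $C$ when $T$ has exactly two arcs incident with $C$ (the configurations of Figures~\ref{fig:mutation_tagged} and~\ref{fig:mutation_untagged}, together with their versions obtained by flipping all tags).
\end{enumerate}
Case (1) is justified by the fact that away from $C$ the triangulation is an ordinary triangulation of a disk, and Fomin-Zelevinsky mutation on $\widetilde{Q}_T$ coincides with quadrilateral flips by~\cite[Lemma~8.5]{FZ}; since no decoration of $Q_T$ is affected, Definition~\ref{defn_mutationrules} reduces to ordinary quiver mutation here, matching the figure.

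For cases (2)--(4) I would draw each local triangulation, compute the arcs of $T'$ after the flip using the definition of a tagged flip from~\cite[\S7]{FST}, and then compare $Q_{T'}$ (constructed directly via Definition~\ref{defn_quiver_on_disc}) with the output of the rule in Definition~\ref{defn_mutationrules} applied to $Q_T$. The essential points to check are: (i) when $\alpha$ is incident with $C$ and lies in an $r$-cycle decorated by $\begin{tikzpicture}[baseline,scale=0.7,conepoint/.style={gray, circle, draw=gray!100, fill=white!100, thick, inner sep=1pt}]\node[conepoint]  at (0,0) {\tiny $d$};\end{tikzpicture}$, flipping $\alpha$ either shortens the labelled cycle by one (if the flip ``moves $\alpha$ away from $C$'') or leaves the cycle of the same length at a cyclic rotation; (ii) when $T$ passes from having exactly two $C$-incident arcs to having three (and vice versa), the ``double edges plus labelled unoriented $d$-edge'' decoration is replaced by a labelled $3$-cycle decoration; this is exactly the third and fourth bullets of rule~(2) in Definition~\ref{defn_mutationrules}. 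The tagged/untagged distinction in case (4) is symmetric (flipping all tags is a combinatorial symmetry of both the triangulation and the associated quiver, following the conventions of Figure~\ref{fig:puzzlepiecesquiver}), so it suffices to check the configurations as drawn.

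The main obstacle is the bookkeeping in case (4): when the flipped arc is one of the two $C$-incident arcs, the local change simultaneously affects the double edges on the neighbouring vertices, the labelled edge between the two $C$-adjacent vertices, and (via rule~(5) of Definition~\ref{defn_mutationrules}) the double edges at any further vertex $n$ attached to $k$. I would therefore draw the neighbourhood of $C$ completely (including the outer vertices adjacent to the two $C$-incident arcs) and carry out the comparison arc-by-arc and arrow-by-arrow. Once each case is verified by inspection, (a) and (b) follow simultaneously: the list of figures exhausts the local pictures, and in each figure the displayed mutation is precisely the one output by Definition~\ref{defn_mutationrules}.
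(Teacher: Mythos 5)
Your proposal follows essentially the same route as the paper: the paper's proof is exactly this puzzle-piece case analysis (via Remarks~\ref{rem:puzzlepieces} and~\ref{rem:conepointportion}), with part~(b) obtained by computing the quiver before and after the flip in each local configuration. One small correction to your case list: Figure~\ref{fig:mutationnearcone} covers the flip of an arc carrying a double edge (an arc bounding the cone-point region when exactly two arcs are incident with the cone point), not the situation of an arc incident with the cone point in a labelled $r$-cycle with $r\geq 3$ --- the latter is the right-to-left reading of Figure~\ref{fig:cornercase}.
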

\begin{proof}
Part (a) follows from Remarks~\ref{rem:puzzlepieces} and~\ref{rem:conepointportion} on consideration of which vertex is the conepoint when gluing together puzzle pieces.
Part (b) follows from part (a) by computing the quiver in each case before and after mutation (see Figure~\ref{fig:puzzlepiecesquiver}).
\end{proof}

\subsection{Mutation of groups}
In this section we will show that the group associated in Definition~\ref{defn_GQ_associated_group} to a quiver as in Definition~\ref{defn_quiver_on_disc} is invariant under the mutation introduced in Definition~\ref{defn_mutationrules}:

\begin{theorem}
\label{thm:mutation_invariance}
    Let $Q$ be a quiver as in Definition~\ref{defn_quiver_on_disc}, $G_Q$ its associated group as in Definition~\ref{defn_GQ_associated_group} with generators $s_i$, and $k$ a vertex of $Q$. Let $\mu_k(Q)$ be the mutation of $Q$ at $k$ as in Definition~\ref{defn_mutationrules}, and let $t_i$ be the generators of $G_{\mu_k(Q)}$. Then there is a group isomorphism $\varphi^Q_k:G_{Q}\cong G_{\mu_k(Q)}$ given by
$\varphi^Q_k(s_i)=t_kt_it_k^{-1}$
if 
$i\rightarrow k$ in $Q$ or the vertices $i$ and $k$ correspond to the only two arcs incident with the conepoint in $T$
and the arc corresponding to $k$ is rotated anti-clockwise to the flipped arc; and $\varphi(s_i)=t_i$ otherwise.
\end{theorem}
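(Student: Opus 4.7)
The plan is to verify that the assignment $\varphi^Q_k$ on generators extends to a well-defined group homomorphism, and then to exhibit an inverse. Since the mutation rule of Definition~\ref{defn_mutationrules} is designed to be involutive (matching the fact that flipping a tagged arc twice returns the original triangulation), the natural candidate for the inverse is the analogous map $\varphi^{\mu_k(Q)}_k: G_{\mu_k(Q)} \to G_Q$; once both maps are shown to be homomorphisms, checking that the composition is the identity can be done on generators, using only the braid relations and the fact that conjugating back and forth by $s_k$ or $t_k$ cancels out.

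I would first reduce to a local check. Relations involving only generators $s_i, s_j$ with neither $i$ nor $j$ adjacent to $k$ in $Q$ or in $\mu_k(Q)$ are unaffected by mutation, and $\varphi^Q_k$ is the identity on these generators, so such relations are automatically preserved. It therefore suffices to examine each defining relation at least one of whose vertices is $k$ or a neighbour of $k$. I would then enumerate the local configurations following the case list in Definition~\ref{defn_mutationrules} (and visualised in Figure~\ref{fig:mutations}). For each one, I would (a) list the relations of $G_Q$ supported on the local vertex set; (b) list those of $G_{\mu_k(Q)}$; (c) compute $\varphi^Q_k$ on the relations in (a) and reduce, using (b), to an identity. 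The prototype calculation is the standard one for a path $i \to k \to j$ with no arrow $i\to j$ in $Q$: the braid relation $s_i s_k s_i = s_k s_i s_k$ becomes $(t_k t_i t_k^{-1}) t_k (t_k t_i t_k^{-1}) = t_k (t_k t_i t_k^{-1}) t_k$, which simplifies to the braid relation $t_i t_k t_i = t_k t_i t_k$ that holds in $G_{\mu_k(Q)}$ because $i$ and $k$ become connected. The new $3$-cycle relation (\ref{item_cycle}) in $\mu_k(Q)$ is then a short consequence of the braid and commutation relations among $t_i,t_j,t_k$.

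The main obstacle is the interaction between the labelled cycle relation (\ref{item_cycled}), the double-edge braid relation (\ref{item_doubleedge}), and the broken-$4$-cycle relation (\ref{item_broken4cycle}), which are heavily entangled by mutation. Specifically, when mutation at $k$ breaks a labelled $3$-cycle into a double-edge configuration with a labelled edge, the $d$-term cycle relation must translate into the $d$-term edge relation together with the ``halo'' relations on the vertices carrying the induced double edges; conversely, the opposite mutation must recover the cycle relation from the double-edge relations. A similar, more bookkeeping-intensive check is required when $k$ is an internal vertex of a labelled $r$-cycle with $r\geq 4$, producing an $(r\pm 1)$-cycle, or when mutation at $k$ passes a labelled $3$-cycle through the configuration of (\ref{item_broken4cycle}). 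In each such case, the verification is purely a word-manipulation exercise using the braid and commutation relations, but the precise positioning of arrows, double edges, and labels dictated by the mutation rule is essential.

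Once the homomorphism property of $\varphi^Q_k$ is established, the inverse is obtained by applying the same construction to $\mu_k(Q)$; since $\mu_k$ is involutive on quivers and the defining formula $\varphi(s_i) = t_k t_i t_k^{-1}$ is involutive in the sense that $\varphi^{\mu_k(Q)}_k(t_k t_i t_k^{-1}) = s_k (s_k s_i s_k^{-1}) s_k^{-1} = s_i$ (after using the case distinctions to check the direction of each arrow is reversed by mutation), the two compositions can be checked to be the identity on each generator, yielding the isomorphism $\varphi^Q_k$ as required.
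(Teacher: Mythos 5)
Your overall strategy coincides with the paper's: establish that the images of the generators satisfy the defining relations of $G_Q$ by a case-by-case analysis of the local configurations at $k$, obtain a homomorphism in each direction, and use the involutivity of mutation to conclude. However, as written the proposal has a genuine gap: the entire substantive content of the proof is the verification you defer as ``purely a word-manipulation exercise'', and for the labelled cycle relations this is far from routine. The relation attached to a labelled $r$-cycle is an equality of words of length $d(r-1)$, and transporting it through a mutation that changes the cycle length to $r\pm 1$ (so that the target relation has $d\cdot r$ or $d(r-2)$ terms) requires genuine inductive arguments on $d$ with careful bookkeeping of subscripts modulo $n$ versus modulo $n+1$ (in the paper these are Lemmas~\ref{lem:othercasehelp}, \ref{lem:cyclehelp}, \ref{lem:oneall}, \ref{lem:conjugatethrough} and~\ref{lem:cyclehelp2}); the interaction of the double-edge relations with the $d$-labelled unoriented edge in the $n=2$ case (Lemmas~\ref{lem:cornercasen2} and~\ref{lem:cornercasen2back}) likewise occupies several pages of non-obvious rewriting. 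Asserting that these checks can be done does not constitute doing them. In addition, your prototype calculation only treats the $i\to k$ case of the conjugation rule; the second clause of the statement (conjugation when $i$ and $k$ are the only two arcs at the cone point and $k$ rotates anticlockwise to the flipped arc) governs the tagged/untagged flips of Figures~\ref{fig:mutation_tagged} and~\ref{fig:mutation_untagged} and needs its own verification, which your plan does not isolate.

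A smaller inaccuracy: the two maps $\varphi^Q_k$ and $\varphi^{\mu_k(Q)}_k$ are not mutually inverse, and the compositions are not the identity on generators. One computes $\varphi^{Q'}_k\varphi^Q_k(s_i)=s_k^{-1}s_is_k$ for all $i$, i.e.\ the composite is the inner automorphism given by conjugation by $s_k$. This still suffices — each composite being an isomorphism forces both maps to be isomorphisms — but the claim that ``conjugating back and forth by $s_k$ or $t_k$ cancels out'' is false because mutation reverses the arrows at $k$, so the set of generators that get conjugated going back is complementary in the relevant sense, and the conjugations compound rather than cancel. You should state the composite correctly and draw the conclusion from the fact that an inner automorphism is bijective.
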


Note that, in the above theorem, the situation when there are only two arcs incident with the conepoint in $T$ means that there is an unoriented edge between the corresponding vertices, labelled $d$.

We have already seen that the triangulation from Figure~\ref{fig:BMR_embedded} satisfies $G_Q\cong B(d,d,n)$, so this will allow us to conclude that every tagged triangulation gives a presentation of the group $B(d,d,n)$.

\begin{proposition}
\label{prop:GMcase}
\cite[Prop.\ 2.9]{GM}
Let $Q$ be one of the quivers on the left or right of (a)--(f) in Figure~\ref{fig:GMcases}. Let $k$ be a vertex of $Q$. Let $Q'=\mu_k(Q)$ be the quiver obtained from $Q$ by mutating at $k$. Suppose that the $t_i$ are elements of a group satisfying the defining relations (1), (2) and (3) from Definition~\ref{defn_GQ_associated_group} for the quiver $Q'$.
For $i\in Q_0$, let
$$S_i=\begin{cases}
t_kt_it_k^{-1}, & i\rightarrow k \text{ in $Q$}; \\
t_i, & else.
\end{cases}
$$
Then, for each of the cases in Figure~\ref{fig:GMcases}, the elements $S_i$ satisfy the defining relations (1), (2) and (3) of Definition~\ref{defn_GQ_associated_group} for the quiver $Q$.
\end{proposition}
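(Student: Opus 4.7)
The plan is to follow verbatim the proof of \cite[Prop.\ 2.9]{GM}. Relations (1), (2) and (3) from Definition~\ref{defn_GQ_associated_group} are precisely the commutation, braid and oriented $3$-cycle relations of~\cite[Defn.\ 2.2]{GM}, and the quivers in Figure~\ref{fig:GMcases} are exactly the local configurations considered in that paper: none of them contain labelled edges, double edges, or labelled cycles, so only these three relation types are relevant in both $G_Q$ and $G_{Q'}$. Consequently the Grant--Marsh argument transfers without modification, and the proposition is already a quotable fact; the only task is to indicate why the ambient setting of the current paper does not affect the local verification.

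Concretely, for each configuration (a)--(f), I would enumerate the defining relations of $G_Q$ whose supporting vertices all lie in the local configuration, substitute $S_i = t_k t_i t_k^{-1}$ for vertices $i$ with $i\rightarrow k$ in $Q$ and $S_i = t_i$ otherwise, and then check the resulting identity using the defining relations of $G_{Q'}$. The main algebraic tools are the elementary consequences of the braid relation $t_k t_i t_k = t_i t_k t_i$, namely $t_k t_i t_k^{-1} = t_i^{-1} t_k t_i$ and $t_k^{-1} t_i t_k = t_i t_k t_i^{-1}$, which allow conjugation by $t_k$ to be pushed across a braid-related neighbour. For vertices $j$ not adjacent to $k$, one uses the commutation relation $t_j t_k = t_k t_j$ to move $t_k^{\pm 1}$ freely past $t_j$.

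The principal obstacle is verifying the cyclic form of relation (3) in those configurations where mutation at $k$ either creates or destroys an oriented $3$-cycle involving $k$: in such cases all three cyclic rewrites $S_a S_b S_c S_a = S_b S_c S_a S_b = S_c S_a S_b S_c$ must be deduced from the relations satisfied by the $t_i$ in $G_{Q'}$. This is precisely the content of the case-by-case check in~\cite[Prop.\ 2.9]{GM}. Since, as noted above, both our relations (1)--(3) and the configurations in Figure~\ref{fig:GMcases} coincide with those considered there, no new computation is required, and one simply cites the Grant--Marsh verification. The more delicate configurations specific to the present paper---those involving labelled edges, double edges, or labelled cycles, and corresponding to relations (4)--(7)---are treated separately in subsequent propositions, and are deliberately excluded from Figure~\ref{fig:GMcases}.
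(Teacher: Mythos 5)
Your proposal matches the paper exactly: the paper states this proposition with the citation \cite[Prop.\ 2.9]{GM} built into the statement and offers no further proof, relying precisely on the observation you make, namely that relations (1)--(3) and the configurations (a)--(f) are undecorated and coincide with those of~\cite[Defn.\ 2.2]{GM}. Your additional sketch of how the verification would go is consistent with the Grant--Marsh argument but is not needed beyond the citation.
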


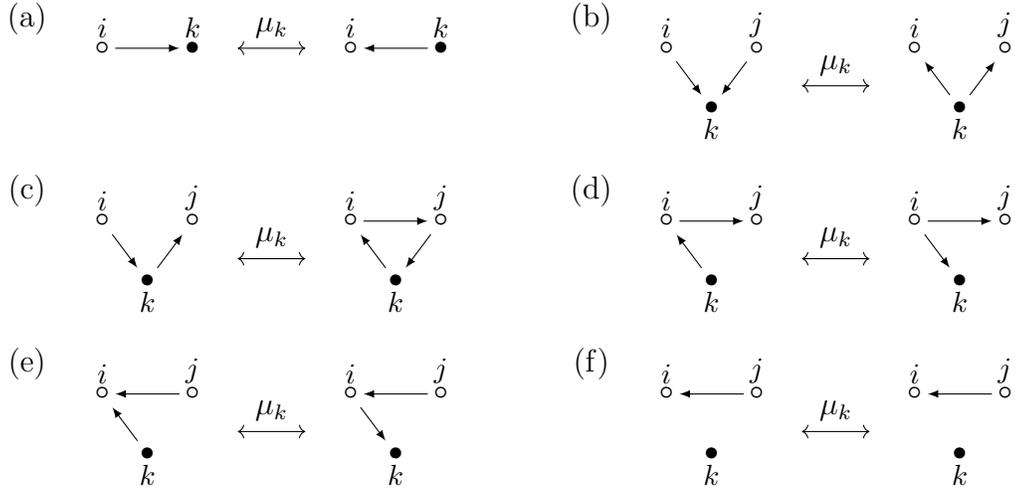
\begin{figure}
$$
\begin{tikzpicture}[scale=1,
  quiverarrow/.style={black, -latex}] 
\begin{scope}[shift={(0,0)}]
\node at (-1,0.4) {(a)};
\node (i1) at (0,0) {$\circ$};
\node [above] at (i1) {\small {$i$}}; 
\node (k1) at (1.2,0) {$\bullet$};
\node[above] at (k1) {\small {$k$}};
\draw [quiverarrow, shorten <=-2pt, shorten >=-2pt] (i1) -- (k1);
\draw [<->] (1.8,0) to node[above] {$\mu_k$} (2.7,0);
\begin{scope}[shift={(3.3,0)}]
\node (i1) at (0,0) {$\circ$};
\node [above] at (i1) {\small {$i$}}; 
\node (k1) at (1.2,0) {$\bullet$};
\node[above] at (k1) {\small {$k$}};
\draw [quiverarrow, shorten <=-2pt, shorten >=-2pt] (k1) -- (i1);
\end{scope}
\end{scope}

\begin{scope}[shift={(7.5,0)}]
\node at (-1,0.4) {(b)};
\node (i1) at (0,0) {$\circ$};
\node [above] at (i1) {\small {$i$}}; 
\node (j1) at (1.2,0) {$\circ$};
\node [above] at (j1) {\small {$j$}};
\node (k1) at (0.6,-0.8) {$\bullet$};
\node[below] at (k1) {\small {$k$}};
\draw [quiverarrow, shorten <=-2pt, shorten >=-2pt] (i1) -- (k1);
\draw [quiverarrow, shorten <=-2pt, shorten >=-2pt] (j1) -- (k1);
\draw [<->] (1.8,-0.5) to node[above] {$\mu_k$} (2.7,-0.5);
\begin{scope}[shift={(3.3,0)}]
\node (i2) at (0,0) {$\circ$};
\node [above] at (i2) {\small {$i$}}; 
\node (j2) at (1.2,0) {$\circ$};
\node [above] at (j2) {\small {$j$}};
\node (k2) at (0.6,-0.8) {$\bullet$};
\node[below] at (k2) {\small {$k$}};
\draw [quiverarrow, shorten <=-2pt, shorten >=-2pt] (k2) -- (i2);
\draw [quiverarrow, shorten <=-2pt, shorten >=-2pt] (k2) -- (j2);
\end{scope}
\end{scope}

\begin{scope}[shift={(0,-2.3)}]
\node at (-1,0.4) {(c)};
\node (i1) at (0,0) {$\circ$};
\node [above] at (i1) {\small {$i$}}; 
\node (j1) at (1.2,0) {$\circ$};
\node [above] at (j1) {\small {$j$}};
\node (k1) at (0.6,-0.8) {$\bullet$};
\node[below] at (k1) {\small {$k$}};
\draw [quiverarrow, shorten <=-2pt, shorten >=-2pt] (i1) -- (k1);
\draw [quiverarrow, shorten <=-2pt, shorten >=-2pt] (k1) -- (j1);
\draw [<->] (1.8,-0.5) to node[above] {$\mu_k$} (2.7,-0.5);
\begin{scope}[shift={(3.3,0)}]
\node (i2) at (0,0) {$\circ$};
\node [above] at (i2) {\small {$i$}}; 
\node (j2) at (1.2,0) {$\circ$};
\node [above] at (j2) {\small {$j$}};
\node (k2) at (0.6,-0.8) {$\bullet$};
\node[below] at (k2) {\small {$k$}};
\draw [quiverarrow, shorten <=-2pt, shorten >=-2pt] (k2) -- (i2);
\draw [quiverarrow, shorten <=-2pt, shorten >=-2pt] (j2) -- (k2);
\draw [quiverarrow, shorten <=-2pt, shorten >=-2pt] (i2) -- (j2);
\end{scope}
\end{scope}

\begin{scope}[shift={(7.5,-2.3)}]
\node at (-1,0.4) {(d)};
\node (i1) at (0,0) {$\circ$};
\node [above] at (i1) {\small {$i$}}; 
\node (j1) at (1.2,0) {$\circ$};
\node [above] at (j1) {\small {$j$}};
\node (k1) at (0.6,-0.8) {$\bullet$};
\node[below] at (k1) {\small {$k$}};
\draw [quiverarrow, shorten <=-2pt, shorten >=-2pt] (k1) -- (i1);
\draw [quiverarrow, shorten <=-2pt, shorten >=-2pt] (i1) -- (j1);
\draw [<->] (1.8,-0.5) to node[above] {$\mu_k$} (2.7,-0.5);
\begin{scope}[shift={(3.3,0)}]
\node (i2) at (0,0) {$\circ$};
\node [above] at (i2) {\small {$i$}}; 
\node (j2) at (1.2,0) {$\circ$};
\node [above] at (j2) {\small {$j$}};
\node (k2) at (0.6,-0.8) {$\bullet$};
\node[below] at (k2) {\small {$k$}};
\draw [quiverarrow, shorten <=-2pt, shorten >=-2pt] (i2) -- (k2);
\draw [quiverarrow, shorten <=-2pt, shorten >=-2pt] (i2) -- (j2);
\end{scope}
\end{scope}

\begin{scope}[shift={(0,-4.6)}]
\node at (-1,0.4) {(e)};
\node (i1) at (0,0) {$\circ$};
\node [above] at (i1) {\small {$i$}}; 
\node (j1) at (1.2,0) {$\circ$};
\node [above] at (j1) {\small {$j$}};
\node (k1) at (0.6,-0.8) {$\bullet$};
\node[below] at (k1) {\small {$k$}};
\draw [quiverarrow, shorten <=-2pt, shorten >=-2pt] (k1) -- (i1);
\draw [quiverarrow, shorten <=-2pt, shorten >=-2pt] (j1) -- (i1);
\draw [<->] (1.8,-0.5) to node[above] {$\mu_k$} (2.7,-0.5);
\begin{scope}[shift={(3.3,0)}]
\node (i2) at (0,0) {$\circ$};
\node [above] at (i2) {\small {$i$}}; 
\node (j2) at (1.2,0) {$\circ$};
\node [above] at (j2) {\small {$j$}};
\node (k2) at (0.6,-0.8) {$\bullet$};
\node[below] at (k2) {\small {$k$}};
\draw [quiverarrow, shorten <=-2pt, shorten >=-2pt] (i2) -- (k2);
\draw [quiverarrow, shorten <=-2pt, shorten >=-2pt] (j2) -- (i2);
\end{scope}
\end{scope}

\begin{scope}[shift={(7.5,-4.6)}]
\node at (-1,0.4) {(f)};
\node (i1) at (0,0) {$\circ$};
\node [above] at (i1) {\small {$i$}}; 
\node (j1) at (1.2,0) {$\circ$};
\node [above] at (j1) {\small {$j$}};
\node (k1) at (0.6,-0.8) {$\bullet$};
\node[below] at (k1) {\small {$k$}};
\draw [quiverarrow, shorten <=-2pt, shorten >=-2pt] (j1) -- (i1);
\draw [<->] (1.8,-0.5) to node[above] {$\mu_k$} (2.7,-0.5);
\begin{scope}[shift={(3.3,0)}]
\node (i2) at (0,0) {$\circ$};
\node [above] at (i2) {\small {$i$}}; 
\node (j2) at (1.2,0) {$\circ$};
\node [above] at (j2) {\small {$j$}};
\node (k2) at (0.6,-0.8) {$\bullet$};
\node[below] at (k2) {\small {$k$}};
\draw [quiverarrow, shorten <=-2pt, shorten >=-2pt] (j2) -- (i2);
\end{scope}
\end{scope}
\end{tikzpicture}
$$
\caption{Cases for Proposition~\ref{prop:GMcase}}
\label{fig:GMcases}
\end{figure}

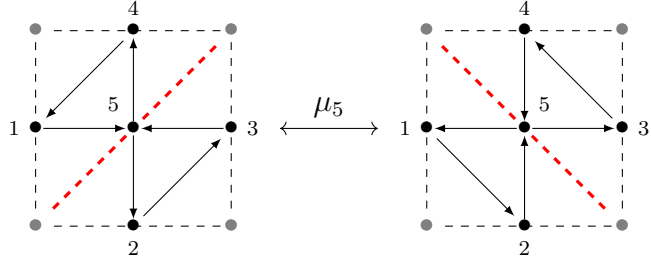
\begin{figure}
\begin{tikzpicture}[scale=1.3,
  quiverarrow/.style={black,-latex,shorten <=-4pt, shorten >=-4pt},
  mutationarc/.style={dashed, red, very thick, shorten <=-4pt, shorten >=-4pt},
  mutationarccone/.style={dashed, red, very thick, shorten >=-4pt},
  arc/.style={dashed, black, shorten <=-4pt, shorten >=-4pt},
  arccone/.style={dashed, black, shorten >=-4pt},
  point/.style={gray},
  vertex/.style={black},
  conepoint/.style={gray, circle, draw=gray!100, fill=white!100, thick, inner sep=1.5pt},
  db/.style={thick, double, double distance=1.3pt, shorten <=-6pt}
  ]
   \begin{scope}
   \node[point] (p1) at (0,0) {$\bullet$};
   \node[point] (p2) at (2,0) {$\bullet$};
   \node[point] (p3) at (0,2) {$\bullet$};
   \node[point] (p4) at (2,2) {$\bullet$};
  \draw[arccone] (p1)--(p2);
  \draw[arccone] (p1)--(p3);
  \draw[arccone] (p2)--(p4);
  \draw[arccone] (p3)--(p4);
  \draw[mutationarccone] (p1)--(p4);
  \draw[gray] (0,0) node{$\bullet$};
  \draw[gray] (2,0) node{$\bullet$};
  \draw[gray] (0,2) node{$\bullet$};
  \draw[gray] (2,2) node{$\bullet$};
  \node[vertex, label=
{[label distance=-5pt]180:{\tiny $1$}}] (v1) at (0,1) {$\bullet$};
\node[vertex, label=
{[label distance=-5pt]-90:{\tiny $2$}}] (v2) at (1,0) {$\bullet$};
\node[vertex, label=
{[label distance=-5pt]0:{\tiny $3$}}] (v3) at (2,1) {$\bullet$};
\node[vertex, label=
{[label distance=-5pt]90:{\tiny $4$}}] (v4) at (1,2) {$\bullet$};
\node[vertex, label=
{[label distance=-5pt]120:{\tiny $5$}}] (v5) at (1,1) {$\bullet$};
  \draw[quiverarrow] (v1)--(v5);
  \draw[quiverarrow] (v5)--(v4);
  \draw[quiverarrow] (v4)--(v1);
  \draw[quiverarrow] (v3)--(v5);
  \draw[quiverarrow] (v5)--(v2);
  \draw[quiverarrow] (v2)--(v3);
\draw [<->] (2.5,1) to node[above] {$\mu_5$} (3.5,1);
 \end{scope}
  \begin{scope}[xshift=4cm]
  \node[point] (p1) at (0,0) {$\bullet$};
   \node[point] (p2) at (2,0) {$\bullet$};
   \node[point] (p3) at (0,2) {$\bullet$};
   \node[point] (p4) at (2,2) {$\bullet$};
  \draw[arccone] (p1)--(p2);
  \draw[arccone] (p1)--(p3);
  \draw[arccone] (p2)--(p4);
  \draw[arccone] (p3)--(p4);
  \draw[mutationarccone] (p2)--(p3);
  \draw[gray] (0,0) node{$\bullet$};
  \draw[gray] (2,0) node{$\bullet$};
  \draw[gray] (0,2) node{$\bullet$};
  \draw[gray] (2,2) node{$\bullet$};
  \node[vertex, label=
{[label distance=-5pt]180:{\tiny $1$}}] (v1) at (0,1) {$\bullet$};
\node[vertex, label=
{[label distance=-5pt]-90:{\tiny $2$}}] (v2) at (1,0) {$\bullet$};
\node[vertex, label=
{[label distance=-5pt]0:{\tiny $3$}}] (v3) at (2,1) {$\bullet$};
\node[vertex, label=
{[label distance=-5pt]90:{\tiny $4$}}] (v4) at (1,2) {$\bullet$};
\node[vertex, label=
{[label distance=-5pt]60:{\tiny $5$}}] (v5) at (1,1) {$\bullet$};
  \draw[quiverarrow] (v5)--(v1);
  \draw[quiverarrow] (v4)--(v5);
  \draw[quiverarrow] (v1)--(v2);
  \draw[quiverarrow] (v5)--(v3);
  \draw[quiverarrow] (v2)--(v5);
  \draw[quiverarrow] (v3)--(v4);
 \end{scope}
  \end{tikzpicture}
  \caption{A mutation far from the conepoint.}
    \label{fig:typeAmutation}
  \end{figure}

\begin{lemma}
\label{lem:mutationtypeA}
Let $Q$ be the quiver on the left of Figure~\ref{fig:typeAmutation} and $Q'$ the quiver on the right. Let $G_Q$ (respectively $G_{Q'}$) be the group with generators $s_i$ (respectively $t_i$) with $1\leq i\leq 5$ satisfying the relations associated with $Q$ (respectively $Q'$). Then, there are group homomorphisms:
\begin{itemize}\setlength\itemsep{0.8em}
    \item $\varphi^Q_5: G_{Q}\rightarrow G_{Q'}$ with
    $\varphi^Q_5(s_1)=S_1=t_5t_1t_5^{-1}$, $\varphi^Q_5(s_3)=S_3=t_5t_3t_5^{-1}$, $\varphi^Q_5(s_i)=S_i=t_i$ for $i=2,4,5$;
    \item $\varphi^{Q'}_5: G_{Q'}\rightarrow G_Q$ with $\varphi_5(t_4)=T_4=s_5s_4s_5^{-1}$, $\varphi_5(t_2)=T_2=s_5s_2s_5^{-1}$, $\varphi_5(t_i)=T_i=s_i$ for $i=1,3,5$.
\end{itemize}
\end{lemma}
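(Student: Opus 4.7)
The plan is to verify that each defining relation of $G_Q$ is preserved under the assignment $s_i \mapsto S_i$, and similarly for $\varphi^{Q'}_5$. Since the mutation at vertex $5$ in Figure~\ref{fig:typeAmutation} takes place far from the cone point, neither $Q$ nor $Q'$ contains a labeled edge, double edge, or labeled cycle, and so only the relations of types (1), (2) and (3) from Definition~\ref{defn_GQ_associated_group} appear in either presentation. These are precisely the commutation, braid, and unlabeled $3$-cycle relations treated in~\cite{GM}, whose preservation under local mutation is the content of Proposition~\ref{prop:GMcase}.

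Each defining relation of $G_Q$ involves at most four generators, and the local subquiver of $Q$ spanned by these vertices together with $k = 5$ matches one of the configurations (a)--(f) in Figure~\ref{fig:GMcases}. Explicitly, the braid relations $s_i s_5 s_i = s_5 s_i s_5$ for $i \in \{1,2,3,4\}$ correspond to case (a); the commutations $s_1 s_3 = s_3 s_1$ and $s_2 s_4 = s_4 s_2$ correspond to case (b); the commutations $s_1 s_2 = s_2 s_1$ and $s_3 s_4 = s_4 s_3$ correspond to case (c) (left side), with triples $(i,j,k) = (1,2,5)$ and $(3,4,5)$; and the two $3$-cycle relations on $1 \to 5 \to 4 \to 1$ and $3 \to 5 \to 2 \to 3$, together with the accompanying braid relations $s_1 s_4 s_1 = s_4 s_1 s_4$ and $s_3 s_2 s_3 = s_2 s_3 s_2$ on the edges of these cycles not incident with $5$, correspond to case (c) (right side), with triples $(i,j,k) = (4,1,5)$ and $(2,3,5)$. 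Invoking Proposition~\ref{prop:GMcase} in each case shows that the elements $S_i$ satisfy the corresponding relation in $G_{Q'}$, and hence $\varphi^Q_5$ is a well-defined group homomorphism. The homomorphism $\varphi^{Q'}_5$ is handled identically: the mutation rule of Definition~\ref{defn_mutationrules} is an involution, so $\mu_5(Q') = Q$, and the same case analysis applies with the roles of $Q$ and $Q'$ (and of $s_i$ and $t_i$) swapped.

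The principal point requiring care is the treatment of the two $3$-cycle relations of $G_Q$: the essential input for Proposition~\ref{prop:GMcase} case (c) is that the parallel arrows between $\{1,4\}$ and between $\{2,3\}$ produced during mutation cancel, so that $Q'$ contains no edges between these pairs. This yields the commutations $t_1 t_4 = t_4 t_1$ and $t_2 t_3 = t_3 t_2$ in $G_{Q'}$, which are exactly the ingredients needed (alongside the braid relations for edges incident with $5$) in order for the elements $S_i = t_5 t_i t_5^{-1}$ (for $i = 1,3$) and $S_i = t_i$ (for $i = 2,4,5$) to satisfy the $3$-cycle relations on $1 \to 5 \to 4 \to 1$ and $3 \to 5 \to 2 \to 3$.
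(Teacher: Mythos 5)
Your proof is correct and follows essentially the same route as the paper, which disposes of this lemma in one line by appealing to Proposition~\ref{prop:GMcase}; you simply make explicit the matching of each defining relation of $G_Q$ (and $G_{Q'}$) to the local configurations (a)--(c) of Figure~\ref{fig:GMcases}, and that matching is accurate. Nothing further is needed.
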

\begin{proof}
For the first statement, it is enough to check that the elements $S_i$ satisfy the defining relations of $G_Q$. This follows directly from Proposition~\ref{prop:GMcase}.
The proof of the second statement is similar.
\end{proof}

\begin{setup}
\label{setup1}
Let $Q$ be the quiver on the left of Figure~\ref{fig:cornercase}(a) (respectively, the quiver on the left of Figure~\ref{fig:cornercase}(b))
for $n\geq 3$ (respectively, for $n=2$) and let $Q'$ be the quiver on the right in each case.
Let $H_Q$ be the group defined in Definition~\ref{defn_GQ_associated_group}, with generators $s_i$, and let $H_{Q'}$ be the group defined in Definition~\ref{defn_GQ_associated_group}, with generators $t_i$.
Let $S_1=t_0t_1t_0^{-1}$, $S_c=t_0t_ct_0^{-1}$,and $S_i=t_i$ for $i\not=n,c$.
We regard the subscripts of the $s_i$ and $S_i$ for $1\leq i\leq n$ to be taken modulo $n$ (with representatives $\{1,2\ldots ,n\}$), and the subscripts of the $t_i$ for $0\leq i\leq n$ to be taken modulo $n+1$ (with representatives $\{0,1,2,\ldots ,n\}$).
\end{setup} 

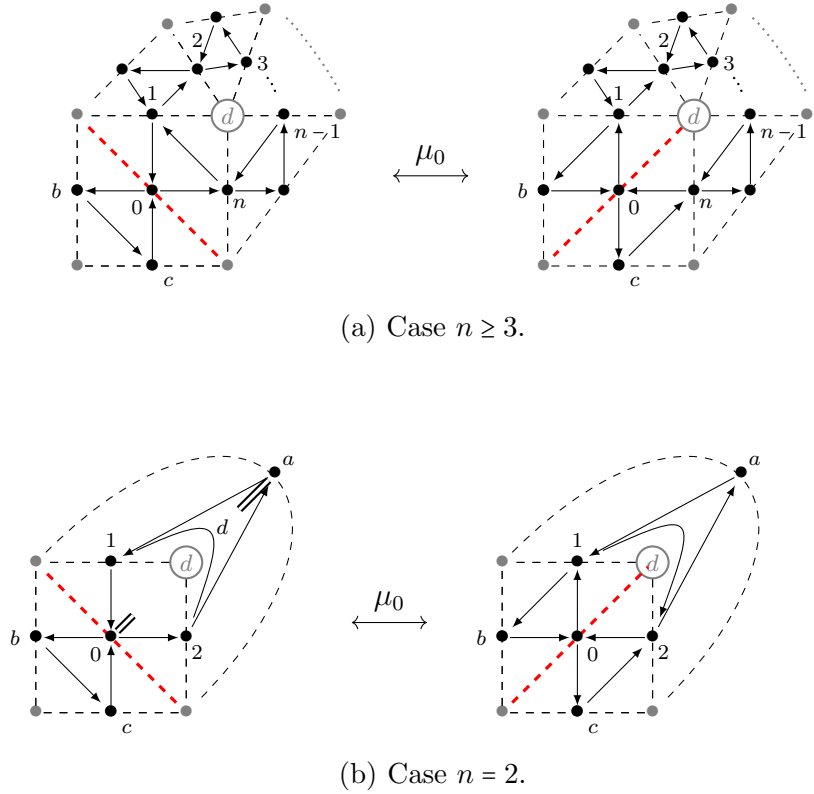
\begin{figure}[ht]
    \centering
\begin{subfigure}{\textwidth}
\renewcommand\captionlabelfont{}
\centering
\begin{tikzpicture}[scale=1,
  quiverarrow/.style={black,-latex,shorten <=-4pt, shorten >=-4pt},
  mutationarc/.style={dashed, red, very thick, shorten <=-4pt, shorten >=-4pt},
  mutationarccone/.style={dashed, red, very thick, shorten >=-4pt},
  arc/.style={dashed, black, shorten <=-4pt, shorten >=-4pt},
  arccone/.style={dashed, black, shorten >=-4pt},
  point/.style={gray},
  vertex/.style={black},
  conepoint/.style={gray, circle, draw=gray!100, fill=white!100, thick, inner sep=1.5pt},
  db/.style={thick, double, double distance=1.3pt, shorten <=-6pt}
  ]
\begin{scope}
 \node[point] (p1) at (1.2,3.2) {$\bullet$};
 \node[point] (p2) at (2.5,3.4) {$\bullet$};
 \node[point] (p3) at (3.5,2) {$\bullet$};
 \node[point] (p4) at (2,0) {$\bullet$};
 \node[point] (p5) at (0,0) {$\bullet$};
 \node[point] (p6) at (0,2) {$\bullet$};
 \node[conepoint] (p7) at (2,2) {\scriptsize $d$};

 \draw[arc] (p4) -- (p5);
 \draw[arc] (p5) -- (p6);
 \draw[arccone] (p7) -- (p4);
 \draw[arccone] (p7) -- (p6);
 \draw[arccone] (p7) -- (p2);
 \draw[arccone] (p7) -- (p1);
 \draw[arccone] (p7) -- (p3);

\draw[arccone] (p1) -- coordinate[midway](m12) (p2);
\draw[arccone] (p3) -- coordinate[midway](m34) (p4);
\draw[arccone] (p6) -- coordinate[midway](m61) (p1);

\node[vertex] (v12) at (m12) {$\bullet$};
\node[vertex] (v34) at (m34) {$\bullet$};
\node[vertex] (v61) at (m61) {$\bullet$};

 \draw[dotted, gray, thick, shorten <=1pt, shorten >=1pt] (p2) .. controls +(-20:0.4) and +(90:0.4) .. (p3);
 \draw[mutationarc] (p4) --(p6);

 \node[vertex] (w1) at  ($(p5)! 0.5!( p6)$) {$\bullet$};
 \node[vertex] (w2) at ($(p4)! 0.5!(p5)$) {$\bullet$};
 \node[vertex, label=
{[label distance=-10pt]335:{\tiny $n$}}] (vn) at ($(p4)! 0.5!(p7)$) {$\bullet$};
 \node[vertex, label={[label distance=-5pt]90:{\tiny $1$}}] (v1) at ($(p6)! 0.5!(p7)$) {$\bullet$};
 \node[vertex, label=
{[label distance=-10pt]225:{\tiny $0$}}] (v0) at ($(p4)! 0.5!(p6)$) {$\bullet$};
 \node[vertex, label=
{[label distance=-8pt]290:{\tiny $n-1$}}] (vn1) at ($(p3)! 0.5!(p7)$) {$\bullet$};
 \node[vertex, label=
{[label distance=-7pt]0:{\tiny $3$}}] (v3) at ($(p2)! 0.5!(p7)$) {$\bullet$};
 \node[vertex, label=
{[label distance=-2pt]90:{\tiny $2$}}] (v2) at ($(p1)! 0.5!(p7)$) {$\bullet$};
\node[vertex, label=
{[label distance=-7pt]-70:{\tiny $c$}}] (vc) at ($(p4)! 0.5!(p5)$) {$\bullet$};
\node[vertex, label=
{[label distance=-5pt]180:{\tiny $b$}}] (vb) at ($(p6)! 0.5!(p5)$) {$\bullet$};

 \draw[arc] (p4) -- (p5);
 \draw[arc] (p5) -- (p6);
 \draw[arccone] (p7) -- (p4);
 \draw[arccone] (p7) -- (p6);
 \draw[arccone] (p7) -- (p2);
 \draw[arccone] (p7) -- (p1);
 \draw[arccone] (p7) -- (p3);

 \draw[quiverarrow] (v0) -- (w1);
 \draw[quiverarrow] (v1) -- (v0);
 \draw[quiverarrow] (w1) -- (w2);
 \draw[quiverarrow] (v0) -- (vn);
 \draw[quiverarrow] (w2) -- (v0);
 \draw[quiverarrow] (vn) -- (v1);
 \draw[quiverarrow] (v2)--(v3);
 \draw[quiverarrow] (vn1)--(vn);
 \draw[quiverarrow] (v1) --(v2);

 \draw[quiverarrow] (v2) -- (v61);
 \draw[quiverarrow] (v61) -- (v1);
 \draw[quiverarrow] (v3) -- (v12);
 \draw[quiverarrow] (v12) -- (v2);
 \draw[quiverarrow] (v34) -- (vn1);
\draw[quiverarrow] (vn) -- (v34);

 \draw[dotted, thick, shorten <=1pt, shorten >=1pt] (v3) .. controls +(-35:0.4) and +(110:0.4) .. (vn1);
 \draw [<->] (4.2,1.2) to node[above] {$\mu_0$} (5.2,1.2);
\end{scope}
\begin{scope}[shift={(6.2,0)}]
 \node[point] (p1) at (1.2,3.2) {$\bullet$};
 \node[point] (p2) at (2.5,3.4) {$\bullet$};
 \node[point] (p3) at (3.5,2) {$\bullet$};
 \node[point] (p4) at (2,0) {$\bullet$};
 \node[point] (p5) at (0,0) {$\bullet$};
 \node[point] (p6) at (0,2) {$\bullet$};
 \node[conepoint] (p7) at (2,2) {\scriptsize $d$};
 
 \draw[arc] (p4) -- (p5);
 \draw[arc] (p5) -- (p6);
 \draw[arccone] (p7) -- (p4);
 \draw[arccone] (p7) -- (p6);
 \draw[arccone] (p7) -- (p2);
 \draw[arccone] (p7) -- (p1);
 \draw[arccone] (p7) -- (p3);

\draw[arccone] (p1) -- coordinate[midway](m12) (p2);
\draw[arccone] (p3) -- coordinate[midway](m34) (p4);
\draw[arccone] (p6) -- coordinate[midway](m61) (p1);

 \draw[dotted, gray, thick, shorten <=1pt, shorten >=1pt] (p2) .. controls +(-20:0.4) and +(90:0.4) .. (p3);
 \draw[mutationarccone] (p7)--(p5);

\node[vertex] (w1) at  ($(p5)! 0.5!( p6)$) {$\bullet$};
 \node[vertex] (w2) at ($(p4)! 0.5!(p5)$) {$\bullet$};
 \node[vertex, label=
{[label distance=-10pt]335:{\tiny $n$}}] (vn) at ($(p4)! 0.5!(p7)$) {$\bullet$};
 \node[vertex, label={[label distance=-5pt]90:{\tiny $1$}}] (v1) at ($(p6)! 0.5!(p7)$) {$\bullet$};
 \node[vertex, label=
{[label distance=-10pt]315:{\tiny $0$}}] (v0) at ($(p7)! 0.5!(p5)$) {$\bullet$};
 \node[vertex, label=
{[label distance=-8pt]290:{\tiny $n-1$}}] (vn1) at ($(p3)! 0.5!(p7)$) {$\bullet$};
 \node[vertex, label=
{[label distance=-7pt]0:{\tiny $3$}}] (v3) at ($(p2)! 0.5!(p7)$) {$\bullet$};
 \node[vertex, label=
{[label distance=-2pt]90:{\tiny $2$}}] (v2) at ($(p1)! 0.5!(p7)$) {$\bullet$};
\node[vertex, label=
{[label distance=-7pt]-70:{\tiny $c$}}] (vc) at ($(p4)! 0.5!(p5)$) {$\bullet$};
\node[vertex, label=
{[label distance=-5pt]180:{\tiny $b$}}] (vb) at ($(p6)! 0.5!(p5)$) {$\bullet$};

\node[vertex] (v12) at (m12) {$\bullet$};
\node[vertex] (v34) at (m34) {$\bullet$};
\node[vertex] (v61) at (m61) {$\bullet$};

 \draw[quiverarrow] (v0) -- (v1);
 \draw[quiverarrow] (v1) -- (w1);
 \draw[quiverarrow] (w1) -- (v0);
 \draw[quiverarrow] (v0) -- (w2);
 \draw[quiverarrow] (w2) -- (vn);
 \draw[quiverarrow] (vn) -- (v0);
 \draw[quiverarrow] (v1) -- (v2);
 \draw[quiverarrow] (v2) -- (v3);
 \draw[quiverarrow] (vn1) -- (vn);

 \draw[quiverarrow] (v2) -- (v61);
 \draw[quiverarrow] (v61) -- (v1);
 \draw[quiverarrow] (v3) -- (v12);
 \draw[quiverarrow] (v12) -- (v2);
 \draw[quiverarrow] (v34) -- (vn1);
\draw[quiverarrow] (vn) -- (v34);

 \draw[dotted, thick, shorten <=1pt, shorten >=1pt] (v3) .. controls +(-35:0.4) and +(110:0.4) .. (vn1);
 
\end{scope}
\end{tikzpicture}
\caption{Case $n\geq 3$.}
\label{fig:cornercasen3}
\end{subfigure}
\begin{subfigure}{\textwidth}
\centering
\renewcommand\captionlabelfont{}
\begin{tikzpicture}[scale=1,
  quiverarrow/.style={black,-latex,shorten <=-4pt, shorten >=-4pt},
  mutationarc/.style={dashed, red, very thick, shorten <=-4pt, shorten >=-4pt},
  mutationarccone/.style={dashed, red, very thick, shorten >=-4pt},
  arc/.style={dashed, black, shorten <=-4pt, shorten >=-4pt},
  arccone/.style={dashed, black, shorten >=-4pt},
  point/.style={gray},
  vertex/.style={black},
  conepoint/.style={gray, circle, draw=gray!100, fill=white!100, thick, inner sep=1.5pt},
  db/.style={thick, double, double distance=1.3pt, shorten <=-6pt}
  ]
\begin{scope}
 \node[point] (p4) at (2,0) {$\bullet$};
 \node[point] (p5) at (0,0) {$\bullet$};
 \node[point] (p6) at (0,2) {$\bullet$};
 \node[conepoint] (p7) at (2,2) {\scriptsize $d$};

 \draw[arc] (p4) -- (p5);
 \draw[arc] (p5) -- (p6);
 \draw[arccone] (p7) -- (p4);
 \draw[arccone] (p7) -- (p6);
 
 \draw[arc] (p6) .. controls +(45:4) and +(45:4) .. coordinate[midway](m46) (p4);

\node[vertex, label={[label distance=-10pt]35:{\tiny $a$}}] (v46) at (m46) {$\bullet$};

\draw[mutationarc] (p4) --(p6);

\node[vertex] (w1) at  ($(p5)! 0.5!( p6)$) {$\bullet$};
\node[vertex] (w2) at ($(p4)! 0.5!(p5)$) {$\bullet$};
\node[vertex, label=
{[label distance=-10pt]335:{\tiny $2$}}] (vn) at ($(p4)! 0.5!(p7)$) {$\bullet$};
\node[vertex, label={[label distance=-5pt]90:{\tiny $1$}}] (v1) at ($(p6)! 0.5!(p7)$) {$\bullet$};
\node[vertex, label=
{[label distance=-10pt]225:{\tiny $0$}}] (v0) at ($(p4)! 0.5!(p6)$) {$\bullet$};
\node[vertex, label=
{[label distance=-7pt]-70:{\tiny $c$}}] (vc) at ($(p4)! 0.5!(p5)$) {$\bullet$};
\node[vertex, label=
{[label distance=-5pt]180:{\tiny $b$}}] (vb) at ($(p6)! 0.5!(p5)$) {$\bullet$};

\draw[arc] (p4) -- (p5);
\draw[arc] (p5) -- (p6);
\draw[arccone] (p7) -- (p4);
\draw[arccone] (p7) -- (p6);

\draw[quiverarrow] (v0) -- (w1);
\draw[quiverarrow] (v1) -- (v0);
\draw[quiverarrow] (w1) -- (w2);
\draw[quiverarrow] (w2) -- (v0);
\draw[quiverarrow] (v0) -- (vn);

\draw[db,shorten >=22pt] (v0) -- (p7);
\draw[db,shorten >=22pt] (v46) -- (p7);

 \draw[-, shorten <=3pt, shorten >=1pt] (v1) .. controls +(25:1.65) and +(70:1.65) .. (vn);

\node at (2.48,2.48) {$\scriptstyle d$};

\draw[quiverarrow, shorten <=-3pt, shorten >=5pt] (vn) -- (m46);
\draw[quiverarrow, shorten <=3pt, shorten >=-3pt] (m46) -- (v1);
\end{scope}
 \draw [<->] (4.2,1.2) to node[above] {$\mu_0$} (5.2,1.2);
\begin{scope}[shift={(6.2,0)}]
 \node[point] (p4) at (2,0) {$\bullet$};
 \node[point] (p5) at (0,0) {$\bullet$};
 \node[point] (p6) at (0,2) {$\bullet$};
 \node[conepoint] (p7) at (2,2) {\scriptsize $d$};

 \draw[arc] (p4) -- (p5);
 \draw[arc] (p5) -- (p6);
 \draw[arccone] (p7) -- (p4);
 \draw[arccone] (p7) -- (p6);
 
 \draw[arc] (p6) .. controls +(45:4) and +(45:4) .. coordinate[midway](m46) (p4);
 
\node[vertex, label={[label distance=-10pt]35:{\tiny $a$}}] (v46) at (m46) {$\bullet$};

\draw[mutationarc] (p5) --(p7);

\node[vertex] (w1) at  ($(p5)! 0.5!( p6)$) {$\bullet$};
\node[vertex] (w2) at ($(p4)! 0.5!(p5)$) {$\bullet$};
\node[vertex, label=
{[label distance=-10pt]335:{\tiny $2$}}] (vn) at ($(p4)! 0.5!(p7)$) {$\bullet$};
\node[vertex, label={[label distance=-5pt]90:{\tiny $1$}}] (v1) at ($(p6)! 0.5!(p7)$) {$\bullet$};
\node[vertex, label=
{[label distance=-10pt]315:{\tiny $0$}}] (v0) at ($(p4)! 0.5!(p6)$) {$\bullet$};
\node[vertex, label=
{[label distance=-7pt]-70:{\tiny $c$}}] (vc) at ($(p4)! 0.5!(p5)$) {$\bullet$};
\node[vertex, label=
{[label distance=-5pt]180:{\tiny $b$}}] (vb) at ($(p6)! 0.5!(p5)$) {$\bullet$};

\draw[arc] (p4) -- (p5);
\draw[arc] (p5) -- (p6);
\draw[arccone] (p7) -- (p4);
\draw[arccone] (p7) -- (p6);

\draw[quiverarrow] (v0) -- (v1);
\draw[quiverarrow] (v1) -- (w1);
\draw[quiverarrow] (w1) -- (v0);
\draw[quiverarrow] (v0) -- (w2);
\draw[quiverarrow] (w2) -- (vn);
\draw[quiverarrow] (vn) -- (v0);

\draw[quiverarrow, shorten <=-3pt, shorten >=5pt] (vn) -- (m46);
\draw[quiverarrow, shorten <=3pt, shorten >=-3pt] (m46) -- (v1);s
 \draw[quiverarrow, shorten <=3pt, shorten >=1pt] (v1) .. controls +(25:1.75) and +(70:1.75) .. (vn);

\end{scope}
\end{tikzpicture}
\caption{Case $n=2$.}
\label{fig:cornercasen2}
\end{subfigure}
\caption{A mutation involving a cycle around the conepoint. In case (b), one of the vertices is labelled $a$ to avoid confusion in arguments involving indices taken modulo $n$.}
\label{fig:cornercase}
\end{figure}

\begin{lemma}
    \label{lem:braidS}
Let $n\geq 2$ be an integer.
In Setup~\ref{setup1}, the elements $S_i$ satisfy the defining relations of $H_Q$.
\end{lemma}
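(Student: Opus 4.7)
The approach is to verify each defining relation of $H_Q$ directly, substituting the $S_i$ and using the defining relations of $H_{Q'}$. The relations of $H_Q$ split naturally by whether or not they involve a neighbourhood of the flipped vertex $0$. Any relation supported on vertices outside $\{0, 1, c\}$ corresponds to a configuration in $Q$ that is unchanged under mutation at $0$, and hence is also a defining relation of $H_{Q'}$; since $S_i = t_i$ at these vertices, such a relation is automatic.

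The relations local to the flip are the interesting ones. The key braid computation is
\[
S_1 S_0 S_1 \,=\, (t_0 t_1 t_0^{-1}) \, t_0 \, (t_0 t_1 t_0^{-1}) \,=\, t_0 \, (t_1 t_0 t_1) \, t_0^{-1} \,=\, t_0 \, (t_0 t_1 t_0) \, t_0^{-1} \,=\, t_0^2 t_1 \,=\, S_0 S_1 S_0,
\]
using the braid relation $t_0 t_1 t_0 = t_1 t_0 t_1$ in $H_{Q'}$; the same argument handles $S_c$. Braid and commutation relations between $S_0$ and vertices unchanged between $Q$ and $Q'$ transfer directly from $H_{Q'}$. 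For the unlabelled $3$-cycle $(0, b, c)$ of $Q$ (case $n \geq 3$), the relation $S_0 S_b S_c S_0 = S_b S_c S_0 S_b = S_c S_0 S_b S_c$ follows from the two braid relations at $0$ together with the commutation $t_b t_c = t_c t_b$ (no arrow between $b, c$ in $Q'$); for example, $S_0 S_b S_c S_0 = (t_0 t_b t_0) t_c = t_b t_0 t_b t_c = t_b t_0 t_c t_b = S_b S_c S_0 S_b$. Commutations and braid relations between the conjugated generators $S_1, S_c$ and vertices newly adjacent to $0$ in $Q'$ (for instance $b$ in the triangle $(0, 1, b)$ of $Q'$) are derived from the corresponding unlabelled $3$-cycle relations of $H_{Q'}$ via the argument of Proposition~\ref{prop:GMcase}, which applies verbatim in this setting.

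In the case $n = 2$, additional relations of $H_Q$ must be checked: the $d$-fold relation (4) between $S_1$ and $S_2$, the double-edge relation (6) at the vertices $0$ and $a$, and the broken $4$-cycle relation (7) on $(1, 0, 2, a)$. Relation (7) follows from the unlabelled $3$-cycle $(2, a, 1)$ in $H_{Q'}$ together with the commutation $t_0 t_a = t_a t_0$. Relation (6) expands to an identity which, after three applications of the pairwise braid relations among $t_0, t_1, t_2$ in $H_{Q'}$, reduces to the single braid $t_1 t_0 t_1 = t_0 t_1 t_0$. The main obstacle I anticipate is relation (4): since the labelled cycle relation for the new cone-$3$-cycle $(0, 1, 2)$ of $Q'$ is excluded from $H_{Q'}$ by the definition of $H_{Q'}$, the $d$-term alternating identity $\underbrace{S_1 S_2 \cdots}_{d} = \underbrace{S_2 S_1 \cdots}_{d}$ must be deduced from only the pairwise braids together with the three unlabelled $3$-cycles $(0, 1, b)$, $(0, c, 2)$, $(2, a, 1)$ of $Q'$ produced by the mutation. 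I expect this to require a careful sequence of conjugations of $t_2$ through the expression $S_1 S_2 \cdots$ and repeated applications of these $3$-cycle relations to cancel contributions of $t_0$, and to be the longest single calculation in the proof.
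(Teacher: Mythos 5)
Your plan is sound for $n\geq 3$: there the defining relations of $H_Q$ are exactly the commutation, braid and unlabelled $3$-cycle relations (the labelled $n$-cycle is omitted from $H_Q$, and the quiver has no $d$-edges or double edges), and your local computations --- e.g.\ $S_1S_0S_1=S_0S_1S_0$ and the verification of the $3$-cycle $(0,b,c)$ --- are correct and amount to an unpacking of Proposition~\ref{prop:GMcase}, which is the entirety of the paper's proof. Your treatment of relations (\ref{item_doubleedge}) and (\ref{item_broken4cycle}) for $n=2$ is also essentially viable: both do follow from relations available in $H_{Q'}$, and the paper carries out precisely these computations (inside the proof of Lemma~\ref{lem:cornercasen2}) using only braid, commutation and unlabelled $3$-cycle relations of $Q'$; your claim that the double-edge relation at $0$ reduces to $t_1t_0t_1=t_0t_1t_0$ is slightly off --- modulo the braid relations at $0$ it is equivalent to $t_1t_2t_1=t_2t_1t_2$ --- but that is a detail.

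The genuine gap is relation (\ref{item_dreln}) for $n=2$, which you correctly flag as the obstacle but wrongly expect to overcome. The relation $\underbrace{S_1S_2\cdots}_{d\text{ terms}}=\underbrace{S_2S_1\cdots}_{d\text{ terms}}$ cannot be derived in $H_{Q'}$: the presentation of $H_{Q'}$ for the quiver on the right of Figure~\ref{fig:cornercase}(b) contains no relation depending on $d$ (the only $d$-dependent relation of that quiver is the labelled $3$-cycle relation (\ref{item_cycled}), which is exactly what $H_{Q'}$ omits), so if the $d$-term alternating relation between $S_1=t_0t_1t_0^{-1}$ and $S_2=t_2$ were a consequence, it would be a consequence for every $d\geq 2$; satisfying the alternating relation for two consecutive lengths forces $S_1=S_2$, which is false because $S_1$ and $S_2$ have distinct images in the quotient $G'_{Q'}\cong G(d,d,m)$ (by Theorem~\ref{thm_reflection_interpretation} they are reflections in distinct hyperplanes, and a rank-$m$ group $G(d,d,m)$ cannot be generated by $m-1$ reflections). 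The paper never attempts this step at the level of $H_{Q'}$: in Lemma~\ref{lem:cornercasen2} the $d$-edge relation is deduced, via Lemma~\ref{lem:cyclehelp}, from the labelled $3$-cycle relation of the full group $G_{Q'}$ --- i.e.\ from precisely the relation excluded from $H_{Q'}$ --- and Lemma~\ref{lem:braidS} is invoked there only for the braid, commutation and unlabelled $3$-cycle relations. (Read literally, the lemma's statement is itself too strong for $n=2$ for the same reason; the proof you should aim for establishes relations (\ref{item_commutation})--(\ref{item_cycle}) via Proposition~\ref{prop:GMcase} and defers relation (\ref{item_dreln}) to the $G$-level argument.)
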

\begin{proof}
    This follows from Proposition~\ref{prop:GMcase}.
\end{proof}

\begin{lemma}
\label{lem:othercasehelp}
Let $n\geq 2$ be an integer.
Suppose we are in Setup~\ref{setup1}, and that $0\leq i,r\leq n$ and $r\not=i-1,i\mod n+1$.
Then we have the following:
\begin{itemize}
    \item[(a)]
    $$t_{r-n+1}t_{r-n+2}\cdots t_{r}t_i^{-1}=t_{i+1}^{-1}t_{r-n+1}t_{r-n+2}\cdots t_r.$$
    \item[(b)]
$$t_{r-dn+1}\cdots t_{r-1}t_rt_i^{-1}=
t_{d+i}^{-1}t_{r-dn+1}\cdots t_{r-1}t_r.$$
\end{itemize}
\end{lemma}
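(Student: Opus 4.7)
The plan for (a) is to use only the braid/commutation relations coming from the $(n+1)$-cycle $0\!-\!1\!-\!\cdots\!-\!n\!-\!0$ through the cone point in $Q'$. Since those relations are cyclically symmetric in the indices $\{0,1,\dots,n\}$, it suffices to prove the representative case $r=n$; the hypothesis then reads $i\in\{1,\dots,n-1\}$ and the claim becomes
\[
t_1 t_2 \cdots t_n \cdot t_i^{-1} \;=\; t_{i+1}^{-1}\, t_1 t_2 \cdots t_n.
\]

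Starting from the right-hand side, I will perform the familiar three-move manoeuvre. First, $t_{i+1}$ has a braid relation (relation (2) of Definition~\ref{defn_GQ_associated_group}) only with $t_i$ and $t_{i+2}$ and commutes (relation (1)) with the remaining cycle generators, so $t_{i+1}^{-1}$ slides past $t_1,\dots,t_{i-1}$. Second, I rewrite the middle factor using the braid identity $t_{i+1}^{-1}t_it_{i+1}=t_it_{i+1}t_i^{-1}$, ejecting a $t_i^{-1}$ to the right. Third, $t_i^{-1}$ commutes past $t_{i+2},\dots,t_n$ to reach the end, producing the desired expression. A routine check confirms that neither sliding step accidentally requires wrapping around the cycle, so the two potentially awkward edge cases $i=1$ and $i=n-1$ reduce harmlessly (one of the sliding steps becomes vacuous). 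Only relations (1) and (2) are invoked, which is important because the cycle relation (5) is suppressed in $H_{Q'}$.

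For (b), I will iterate (a) exactly $d$ times on blocks of $n$ consecutive factors. Peeling off the last $n$ factors of the product transforms $t_{r-dn+1}\cdots t_r\cdot t_i^{-1}$ into
\[
t_{r-dn+1}\cdots t_{r-n}\cdot t_{i+1}^{-1}\cdot t_{r-n+1}\cdots t_r,
\]
and after $k$ iterations the expression has the shape $t_{r-dn+1}\cdots t_{r-kn}\cdot t_{i+k}^{-1}\cdot t_{r-kn+1}\cdots t_r$. The only bookkeeping is to verify that the side hypothesis of (a) is preserved at each step; a one-line check using $-(k-1)n\equiv k-1\pmod{n+1}$ shows that the constraint $i+(k-1)\not\equiv r-(k-1)n-1,r-(k-1)n\pmod{n+1}$ collapses back to the original hypothesis $i\not\equiv r-1,r\pmod{n+1}$. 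After $d$ iterations the left block disappears entirely, leaving $t_{d+i}^{-1}\,t_{r-dn+1}\cdots t_r$ as claimed.

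I do not anticipate a genuine obstacle. Both parts amount to the standard ``commute, braid-flip, commute'' identity around a cyclic Dynkin-type diagram, entirely analogous to manipulations used in~\cite{GM}. The only real care is keeping the mod $n+1$ arithmetic tidy and confirming that the argument stays within the relations actually available in $H_{Q'}$, that is, never silently invoking the cycle relation (5).
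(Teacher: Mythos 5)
Your proposal is correct and matches the paper's own argument: part (a) is the same commute--braid-flip--commute manoeuvre (the paper runs it from the left-hand side for general $r$, whereas you run it from the right-hand side after a harmless cyclic-symmetry reduction to $r=n$), and part (b) is the same induction on $d$, peeling off blocks of $n$ consecutive factors and checking that the hypothesis of (a) is preserved via $-kn\equiv k \pmod{n+1}$. No gaps.
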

\begin{proof}
    For (a), we have
\begin{align*}
t_{r-n+1}t_{r-n+2}\cdots t_rt_i^{-1} &=
t_{r-n+1}\cdots t_it_{i+1}\cdots t_rt_i^{-1} \\
&= t_{r-n+1}\cdots t_it_{i+1}t_i^{-1}t_{i+2}\cdots t_r \\
&= t_{r-n+1}\cdots t_{i+1}^{-1}t_it_{i+1}t_{i+2}\cdots t_r \\
&= t_{i+1}^{-1}t_{r-n+1}t_{r-n+2}\cdots t_r.
\end{align*}
For (b), we argue by induction on $d$.
If $d=1$, the result follows from part (a).
Suppose the result holds for $d$. Then
\begin{align*}
t_{r-(d+1)n+1}\cdots t_{r-1}t_rt_i^{-1} &=
t_{r-(d+1)n+1}\cdots t_{r-dn}t_{r-dn+1}\cdots t_rt_i^{-1} \\
&=
t_{r-(d+1)n+1}\cdots t_{r-dn}t_{i+d}^{-1}t_{r-dn+1}\cdots t_r \\
&=
t_{i+d+1}^{-1}
t_{r-(d+1)n+1}\cdots t_{r-1}t_r,
\end{align*}
using the induction hypothesis and then noting
that
$r-dn\not\equiv i+d,i+d-1\mod n+1,$ since $r-dn\equiv r+d\mod n+1$.
\end{proof}

\begin{lemma}
    \label{lem:cyclehelp}
    Let $n\geq 2$ and $d\geq 1$ be integers.
    Then, in Setup~\ref{setup1}, for any $1\leq r\leq n$, we have:
    $$t_dt_{d-1}\cdots t_1\underbrace{S_{r-d(n-1)+1}\cdots S_{r-1}S_r}_{d(n-1)\text{ terms}}=\underbrace{t_{r-dn+1}\cdots t_{r-1}t_r}_{dn\text{ terms}}.$$
\end{lemma}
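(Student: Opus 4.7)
I would prove this lemma by induction on $d \geq 0$. The case $d = 0$ is trivial (both sides are the empty product). For the base case $d = 1$, the identity $t_1 \cdot S_{r-n+2} \cdots S_r = t_{r-n+1} \cdots t_r$ can be verified directly by analyzing the residues modulo $n$ of the indices $r-n+2, \ldots, r$. When $r = n$, no index is congruent to $1$ modulo $n$, so every $S_j$ equals $t_j$ and both sides reduce to $t_1 t_2 \cdots t_n$. When $r \in \{1, \ldots, n-1\}$, exactly one factor equals $S_1 = t_0 t_1 t_0^{-1}$, and reading off the representatives yields
\[
S_{r-n+2} \cdots S_r = t_{r+2}\, t_{r+3} \cdots t_n \cdot (t_0 t_1 t_0^{-1}) \cdot t_2\, t_3 \cdots t_r.
\]
In the quiver $Q'$, the generator $t_1$ is adjacent only to $t_0$, $t_2$, and $t_b$, so $t_1$ commutes with $t_j$ for $j \in \{3, \ldots, n\}$. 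The prefix $t_1$ therefore slides rightward past $t_{r+2}, \ldots, t_n$ to meet $S_1$, and the braid relation $t_1 t_0 t_1 = t_0 t_1 t_0$ simplifies $t_1 \cdot (t_0 t_1 t_0^{-1})$ to $t_0 t_1$. Reading indices modulo $n+1$ then identifies the result with $t_{r-n+1} \cdots t_r$.

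For the inductive step $d \geq 2$, the plan is to decompose $S_{r-d(n-1)+1} \cdots S_r$ as the concatenation of the $(d-1)(n-1)$-block $S_{r-d(n-1)+1} \cdots S_{r-n+1}$ followed by the $(n-1)$-block $S_{r-n+2} \cdots S_r$, and analogously to write $t_{r-dn+1} \cdots t_r$ as the $(d-1)n$-block $t_{r-dn+1} \cdots t_{r-n}$ followed by the $n$-block $t_{r-n+1} \cdots t_r$. Applying the inductive hypothesis to the first factor together with the $d=1$ identity (in the form $S_{r-n+2} \cdots S_r = t_1^{-1} \cdot t_{r-n+1} \cdots t_r$) to the second reduces the problem to verifying
\[
t_d \cdot (t_{r-dn+2} \cdots t_{r-n+1}) \cdot t_1^{-1} = t_{r-dn+1} \cdots t_{r-n}.
\]
Lemma~\ref{lem:othercasehelp}(b), applied with $i = 1$ to the $(d-1)n$-block, moves $t_1^{-1}$ leftward through the block and converts it into $t_d^{-1}$, which cancels the leading $t_d$; a further application of the lemma or direct braid manipulation then produces the required one-step shift of the block of $t$'s.

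The main obstacle I anticipate is reconciling this one-step shift between the blocks $t_{r-dn+2} \cdots t_{r-n+1}$ and $t_{r-dn+1} \cdots t_{r-n}$, which are distinct as words in $H_{Q'}$ and cannot be exchanged freely. In addition, two boundary cases must be handled separately: when $r \in \{n-1, n\}$, the hypothesis of Lemma~\ref{lem:othercasehelp}(b) is not satisfied, and the identity must instead be established using the $3$-cycle relations in $Q'$ arising from the triangles $0 \to 1 \to b \to 0$ and $0 \to c \to n \to 0$, together with direct braid manipulation. These boundary checks are essential: the identity genuinely fails for $r \equiv 0 \pmod{n+1}$, where it would amount to a labeled cycle relation --- explicitly excluded from $H_{Q'}$ --- so the induction must stay strictly within the range $r \in \{1, \ldots, n\}$.
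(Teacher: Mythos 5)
Your overall architecture is the same as the paper's: induction on $d$, base case $d=1$ split into $r=n$ and $r<n$ using the braid relation $t_1t_0t_1=t_0t_1t_0$ and the commutations of $t_1$ with $t_3,\dots,t_n$, and an inductive step that peels off the last $n-1$ factors $S_{r-n+2}\cdots S_r$ and applies Lemma~\ref{lem:othercasehelp}(b) to absorb the extra leading $t$. The base case is correct. The gap is in the inductive step, and it is exactly the point you flag as your "main obstacle": the one-step shift you propose to reconcile by "direct braid manipulation" cannot be reconciled that way, because the shift identity $t_{r-dn+2}\cdots t_{r-n+1}=t_{r-dn+1}\cdots t_{r-n}$ (two words of $(d-1)n$ consecutive generators around the $(n+1)$-cycle, shifted by one) is not a consequence of the braid, commutation and unlabelled $3$-cycle relations of $H_{Q'}$; shift-invariance of such products is essentially the content of the labelled cycle relations, which are precisely what this lemma is being used to establish and which are excluded from $H_{Q'}$ by definition. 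So as written, your reduction terminates at an identity that is false in $H_{Q'}$.

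The resolution is that the shift is an artifact of incorrect index bookkeeping, not a genuine obstacle. The subscripts of the $S_i$ live modulo $n$ while those of the $t_i$ live modulo $n+1$, so when you apply the inductive hypothesis to the block ending at $S_{r-n+1}$ you must first replace $r-n+1$ by its representative $r'=r+1\in\{1,\dots,n\}$ (for $r\leq n-1$) and only then read off the $t$-indices; the paper flags exactly this ("the subscripts \dots are reduced mod $n$ first \dots and then reduced mod $n+1$"). Doing so, the inductive hypothesis produces the block $C=t_{(r+1)-(d-1)n+1}\cdots t_{r+1}$ of $(d-1)n$ consecutive generators ending at $t_{r+1}$ — and since $r-n\equiv r+1\pmod{n+1}$, this is literally the same word as the first $(d-1)n$ terms $t_{r-dn+1}\cdots t_{r-n}$ of the target. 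Your required identity then collapses to $t_dCt_1^{-1}=C$, which is exactly Lemma~\ref{lem:othercasehelp}(b) (the hypothesis $r+1\not\equiv 0,1$ holds for $1\leq r\leq n-1$), with no residual shift. Relatedly, your boundary analysis is off: under the correct indexing the only exceptional value is $r=n$, and there the right move is to re-split so that the first block ends at $S_1$ (whence the tail $S_2\cdots S_n=t_2\cdots t_n$ needs no conjugation), as the paper does — no $3$-cycle relations of $Q'$ are needed.
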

\begin{proof}
    We prove the result by induction on $d$.
For $d=1$, we have (for the case $r=n$),
$$t_1S_2\cdots S_n=t_1t_2\cdots t_n,$$
as required.
For $r\not=n$, we have
\begin{align*}
    t_1S_{r-(n-1)+1}\cdots S_r &=
    t_1S_{r+2}\cdots S_nS_1\cdots S_r \\
    &= t_1t_{r+2}\cdots t_nt_0t_1t_0^{-1}t_2\cdots t_r \\
    &= t_1t_{r+2}\cdots t_nt_1^{-1}t_0t_1t_2\cdots t_r \\
    &= t_1t_1^{-1} t_{r+2}\cdots t_nt_0t_1\cdots t_r \\
    &= t_{r+2}\cdots t_nt_0t_1\cdots t_r \\
    &= t_{r-(n+1)+2}\cdots t_{r-1}t_r \\
    &= t_{r-n+1}\cdots t_{r-1}t_r, \\
\end{align*}
as required, giving the result for $d=1$.
Assume the result holds for an integer $d\geq 1$.
Then, using the induction hypothesis, we have
(for $r=n$):
\begin{align*}
t_{d+1}\cdots t_1
\underbrace{S_{n-(d+1)(n-1)+1}\cdots S_n}_{(d+1)(n-1) \text{terms}}
&=
t_{d+1}(t_d\cdots t_1)(S_{n-(d+1)(n-1)+1}\cdots S_nS_1)S_2\cdots S_n \\
&=
t_{d+1}(t_d\cdots t_1)(S_{1-d(n-1)+1}\cdots S_nS_1)S_2\cdots S_n \\
&= t_{d+1}(t_{1-dn+1}\cdots t_1)t_2\cdots t_n, \\
\end{align*}
as required, noting that $1-dn+1\equiv d+2\mod n+1$.
For $r\not=n$, we have,
\begin{align*}
    t_{d+1}\cdots t_1\underbrace{S_{r-(d+1)(n-1)+1}\cdots S_r}_{(d+1)(n-1) \text{terms}} &=
        t_{d+1}(t_d\cdots t_1)
    \underbrace{(S_{r-(d+1)(n-1)+1}\cdots S_{r-(n-1)}}_{d(n-1) \text{terms}} \underbrace{S_{r-(n-2)}\cdots S_r}_{n-1\text{ terms}} \\
    &= t_{d+1}(t_d\cdots t_1)
    \underbrace{(S_{r-(d+1)(n-1)+1}\cdots S_{r-(n-1)}}_{d(n-1) \text{terms}} \underbrace{S_{r+2}\cdots S_nS_1\cdots S_r}_{n-1\text{ terms}} \\
    &=
    t_{d+1}(t_d\cdots t_1)\underbrace{(S_{r+1-d(n-1)+1}\cdots S_{r+1})}_{d(n-1) \text{terms}} \underbrace{S_{r+2}\cdots S_nS_1\cdots S_r}_{n-1\text{ terms}} \\
    &= t_{d+1}\underbrace{t_{r+1-dn+1}\cdots t_{r+1}}_{dn \text{ terms} }t_{r+2}\cdots t_nt_0t_1t_0^{-1}t_2\cdots t_r \\
    &=t_{d+1}\underbrace{t_{r+1-dn+1}\cdots t_{r+1}}_{dn \text{ terms} }t_{r+2}\cdots t_nt_1^{-1}t_0t_1t_2\cdots t_r \\
    &=t_{d+1}\underbrace{t_{r+1-dn+1}\cdots t_{r+1}}_{dn \text{ terms} }t_1^{-1}t_{r+2}\cdots t_nt_0t_1t_2\cdots t_r, \\    
\end{align*}
using the induction hypothesis.
Note that the subscripts of the $T$s are reduced mod $n$ first before being applied to the $T$s (and then reduced mod $n+1$!).
Since $1\leq r\leq n-1$, $r+1\not\equiv 0,1\mod n+1$, so by
Lemma~\ref{lem:othercasehelp},
$$\underbrace{t_{r+1-dn+1}\cdots t_{r+1}}_{dn \text{ terms} }t_1^{-1}=
t_{d+1}^{-1}\underbrace{t_{r+1-dn+1}\cdots t_{r+1}}_{dn \text{ terms} }
$$
It follows that
\begin{align*}
    t_{d+1}\cdots t_1\underbrace{S_{r-(d+1)(n+1)+1}\cdots S_r}_{(d+1)(n-1) \text{terms}} &=
    t_{d+1}t_{d+1}^{-1}\underbrace{t_{r+1-dn+1}\cdots t_{r+1}}_{dn \text{ terms} }t_{r+2}\cdots t_nt_0t_1t_2\cdots t_r, \\ 
    &=
    \underbrace{t_{r+1-dn+1}\cdots t_{r+1}}_{dn \text{ terms} }t_{r+2}\cdots t_nt_0t_1t_2\cdots t_r,
\end{align*}
with $(d+1)n$ terms in total, as required, noting that $r+1-dn+1\equiv r-(d+1)n+1\mod n+1$.
\end{proof}

We have the following generalization of~\cite[Lem.\ 2.4]{GM}.

\begin{lemma}
\label{lem:oneall}
Let $n\geq 3$ be an integer. Let $g_0,g_1,\ldots ,g_{n-1}$ be elements of a group $G$, with subscripts taken modulo $n$, satisfying the relations:
\begin{align*}
g_ig_{i+1}g_i=g_{i+1}g_ig_{i+1}, \quad 0\leq i\leq n-1 \\
g_ig_j=g_jg_i, 0\leq i,j\leq n-1,j\not=i,i+1\mod n.
\end{align*}
Then 
\begin{itemize}
    \item[(a)] $g_{i+1}^{-1}g_i\cdots g_{i+n-2}=g_i\cdots g_{i+n-2}g_i^{-1}$, for $1\leq i\leq n$.
    \item[(b)] $g_1^{-1}(g_0g_1\cdots g_{(n-1)d-1})=g_0g_1\cdots g_{(n-1)d-1}g_{(n-1)d+1}^{-1}$
    \item[(c)] If the relation
\begin{equation}
    g_rg_{r+1}\cdots g_{(n-1)d+r-1} = g_{r+1}g_{r+2}\cdots g_{((n-1)d+r}
    \label{eq:cycler}
\end{equation}
holds for some $r\in \{0,1,\ldots ,n-1\}$, then it holds for all $r$.
\end{itemize}
\end{lemma}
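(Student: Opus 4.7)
The plan is to prove (a), (b), (c) in sequence, with each feeding into the next. For (a), I would work entirely inside the given presentation, observing that in the product $g_i g_{i+1} \cdots g_{i+n-2}$, the letter $g_i$ commutes with every $g_{i+k}$ for $2 \le k \le n-2$, since then $k \not\equiv 0, \pm 1 \pmod{n}$ (using $n \ge 3$). Appending a trailing $g_i$ and shuttling it leftwards through these commuting letters yields
\begin{align*}
g_i g_{i+1} g_{i+2} \cdots g_{i+n-2} \cdot g_i
&= g_i g_{i+1} g_i \cdot g_{i+2} \cdots g_{i+n-2} \\
&= g_{i+1} g_i g_{i+1} \cdot g_{i+2} \cdots g_{i+n-2} \\
&= g_{i+1} \cdot (g_i g_{i+1} \cdots g_{i+n-2}),
\end{align*}
using one braid relation. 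Multiplying by $g_{i+1}^{-1}$ on the left and $g_i^{-1}$ on the right gives (a).

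For (b), I would iterate (a). Decompose $g_0 g_1 \cdots g_{(n-1)d-1}$ into $d$ blocks of length $n-1$, the $k$th block (for $0 \le k \le d-1$) being $g_{k(n-1)} \cdots g_{(k+1)(n-1)-1}$. Applying (a) with starting index $i = k(n-1)$ pushes a left-inverse $g_{k(n-1)+1}^{-1}$ across block $k$ and deposits $g_{k(n-1)}^{-1}$ on its right. Since $k(n-1) \equiv -k \pmod{n}$, the right-inverse emerging from block $k$ is $g_{-k}^{-1}$, which is precisely the left-inverse demanded by (a) for block $k+1$. Starting from $g_1^{-1}$ (the input required for $k=0$) and iterating through all $d$ blocks, the surviving inverse on the right is $g_{-(d-1)}^{-1} = g_{1-d}^{-1} = g_{(n-1)d+1}^{-1}$, as required.

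For (c), set $P_r := g_r g_{r+1} \cdots g_{(n-1)d+r-1}$, so that the cycle relation (\ref{eq:cycler}) reads $P_r = P_{r+1}$. The derivation of (b) is insensitive to the starting index (equivalently, the presentation is invariant under the cyclic automorphism $g_i \mapsto g_{i+r}$), yielding the shifted identity $g_{r+1}^{-1} P_r = P_r \, g_{(n-1)d+r+1}^{-1}$ for every $r$. I would then induct on $r$: assuming $P_r = P_{r+1}$, substitution into the shifted identity and rearrangement gives $g_{r+1} P_{r+1} = P_{r+1} \, g_{(n-1)d+r+1}$, whose right-hand side expands manifestly as $g_{r+1} P_{r+2}$. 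Cancelling $g_{r+1}$ yields $P_{r+1} = P_{r+2}$, and since $P_{r+n} = P_r$, iterating from any seed $r_0$ satisfying (\ref{eq:cycler}) wraps around to give the relation at every $r$. The genuine content sits in (a)---identifying the correct commutation range and the single braid move---so that (b) reduces to mod-$n$ bookkeeping and (c) to one cancellation plus the induction; I anticipate no further obstacles.
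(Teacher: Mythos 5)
Your proposal is correct and follows essentially the same route as the paper's proof: part (a) via one braid move plus the commutations of $g_i$ with $g_{i+2},\ldots,g_{i+n-2}$ (you push a trailing $g_i$ leftward where the paper pulls a leading $g_{i+1}^{-1}$ rightward, which is the same computation), part (b) by splitting into $d$ blocks of length $n-1$ and iterating (a) with the same mod-$n$ index bookkeeping, and part (c) by combining the cycle relation with the (cyclically shifted) identity from (b) and cancelling. Your write-up of (c) is if anything slightly more explicit than the paper's about the shift-invariance and the induction step $P_{r+1}=P_{r+2}$, but the argument is the same.
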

\begin{proof}
For (a), we have:
\begin{align*}
    g_{i+1}^{-1}g_ig_{i+1}g_{i+2}\cdots g_{i+n-2} &=
    g_ig_{i+1}g_i^{-1}g_{i+2}\cdots g_{i+n-2} \\
    &= g_ig_{i+1}\cdots g_{i+n-2}g_i^{-1},
\end{align*}
as required, using commutations in the second step.
For part (b), we have:
\begin{align*}
    g_1^{-1}(g_0g_1\cdots g_{(n-1)d-1}) &=
    g_1^{-1}(g_0\cdots g_{n-2})(g_{n-1}\cdots g_{2n-3})\cdots g_{(d-1)(n-1)}\cdots g_{(n-1)d-1}) \\
    &= (g_0\cdots g_{n-2})(g_{n-1}\cdots g_{2n-3})\cdots (g_{(d-1)(n-1)\cdots g_{(n-1)d-1})})g_{1-d}, 
\end{align*}
using part (a) $d$ times. Finally, note that
$1-d\equiv (n-1)d+1\mod n$.
For part (c), assume first that equation~\eqref{eq:cycler} holds for $r=0$, so that
$$g_0g_1\cdots g_{(n-1)d+r-1} = g_1g_2\cdots g_{((n-1)d}.$$
Multiplying this on the left by $g_1^{-1}$ and on the right by $g_{(n-1)d+1}$ gives
$$g_0g_1\cdots g_{(n-1)d-1} =
g_2g_3\cdots g_{(n-1)d+1},$$
by part (b). Repeated application of this argument gives the desired result.
\end{proof}

\begin{lemma}
\label{lem:cornercasen3}
Let $n\geq 3$ be an integer. Let $Q$ be the quiver on the left of Figure~\ref{fig:cornercase}(a) and $Q'$ the quiver on the right. Let $G_Q$ (respectively $G_{Q'}$) be the group with generators $s_{i}$ (respectively $t_i$), with $i$ ranging through the vertices of the quivers, satisfying the relations associated with $Q$ (respectively $Q'$). Then, there is a group homomorphism
\begin{align*}
    \varphi^Q_0: G_Q\rightarrow G_{Q'} \text{ given by } \varphi^Q_0(s_1)=S_1=t_0t_1t_0^{-1},\,  \varphi^Q_0(s_c)=S_c=t_0t_ct_0^{-1},\,\varphi^Q_0(s_i)=S_i=t_i \text{ for } i\neq 1,c.
\end{align*}
\end{lemma}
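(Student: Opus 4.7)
The plan is to verify that the prescribed images $S_i := \varphi^Q_0(s_i) \in G_{Q'}$ satisfy all the defining relations of $G_Q$ from Definition~\ref{defn_GQ_associated_group}, which is precisely what is required for $\varphi^Q_0$ to extend to a well-defined group homomorphism by the universal property of the presentation of $G_Q$.

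First I note that, since $n \geq 3$, both quivers $Q$ and $Q'$ arise from triangulations with at least three arcs incident with the cone point. Consequently neither contains a labelled edge, a double edge, or a decorated $4$-cycle of the form appearing in items~(4) or~(6) of Definition~\ref{defn_GQ_associated_group}, so only the relations of types~(1),~(2),~(3), and~(5) associated with $Q$ need to be verified. The relations of types~(1),~(2), and~(3) for the $S_i$ follow directly from Lemma~\ref{lem:braidS}: every local configuration around the mutation vertex $0$ matches one of the cases (a)--(f) of Proposition~\ref{prop:GMcase}, while relations supported on vertices disjoint from $0$ are inherited trivially from $G_{Q'}$ since the two quivers agree away from $0$ and $S_i = t_i$ for such vertices.

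The heart of the argument is the verification of the labelled $n$-cycle relation around the cone point in $Q$: that the word
$$X_r := \underbrace{S_r S_{r+1} S_{r+2} \cdots}_{d(n-1)\text{ terms}}$$
(with subscripts reduced modulo $n$ with representatives $\{1,\ldots,n\}$) is independent of the starting index $r \in \{1, 2, \ldots, n\}$. I would apply Lemma~\ref{lem:cyclehelp} to rewrite
$$X_r = (t_1 t_2 \cdots t_d)^{-1} Y_r, \qquad Y_r := \underbrace{t_{r - dn + 1} \cdots t_{r-1} t_r}_{dn\text{ terms}},$$
and then invoke the labelled $(n+1)$-cycle relation of $G_{Q'}$ around the cone point (whose vertices are $\{0, 1, \ldots, n\}$), which asserts that $Y_r$ is invariant under $r \mapsto r + 1$ modulo $n+1$. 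As $r$ traverses $\{1, 2, \ldots, n\}$, each step $r \mapsto r+1$ in this range is also a single step modulo $n+1$, and the cyclic wrap-around from $r = n$ back to $r = 1$ corresponds to two consecutive increments modulo $n+1$, passing through $r = 0$. At every stage $Y_r$, and hence $X_r$, is preserved, which establishes relation~(5).

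The main obstacle is reconciling the mismatch between the two cycle-relation index sets --- the $S$-cycle in $G_Q$ has length $n$ whereas the $t$-cycle in $G_{Q'}$ has length $n+1$ --- and this is exactly what Lemma~\ref{lem:cyclehelp} is designed to accomplish through its conjugating prefix $t_d t_{d-1} \cdots t_1$. Once this bookkeeping is in place, the verification reduces to the cyclic invariance of $Y_r$ in $G_{Q'}$, with no further computation needed.
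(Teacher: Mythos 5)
Your proof is correct and follows essentially the same route as the paper's: Proposition~\ref{prop:GMcase} (via Lemma~\ref{lem:braidS}) disposes of all relations except the labelled $n$-cycle, and Lemma~\ref{lem:cyclehelp} converts the $n$-cycle words in the $S_i$ into the $(n+1)$-cycle words in the $t_i$, whose cyclic invariance is a defining relation of $G_{Q'}$. The only (harmless) differences are that the conjugating prefix in Lemma~\ref{lem:cyclehelp} is $t_dt_{d-1}\cdots t_1$ rather than $t_1t_2\cdots t_d$ --- immaterial since it is a fixed element independent of $r$ --- and that you dispatch the wrap-around relation by passing through $Y_0$ in $G_{Q'}$, where the paper instead invokes Lemma~\ref{lem:oneall}(c) for the $r=n$ case.
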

\begin{proof}
The defining relations for $G_Q$, apart from those corresponding to the $n$-cycle $1\rightarrow 2\rightarrow \cdots \rightarrow n\rightarrow 1$, hold by Lemma~\ref{lem:braidS}.

The defining relations for $G_{Q'}$ corresponding to the $n+1$-cycle $0\rightarrow 1\rightarrow \cdots \rightarrow n+1\rightarrow 0$ can be written in the form:
$$\underbrace{t_{r-nd+1}\cdots
t_{r-1}t_r}_{\text{$dn$ terms}}=
\underbrace{t_{r+1-nd+1}\cdots t_rt_{r+1}}_{\text{$dn$ terms}},$$
for $0\leq r\leq n$.

The defining relations for $G_Q$ corresponding to the $n$-cycle
    $1\rightarrow 2\rightarrow \cdots \rightarrow n\rightarrow 1$ can be written in the form:
$$\underbrace{(S_{r-d(n-1)+1} \cdots
S_{r-1}S_r) }_{\text{$d(n-1)$ terms}}=
\underbrace{(S_{r+1-d(n-1)+1} \cdots S_rS_{r+1})}_{\text{$d(n-1)$ terms}},$$
for $1\leq r\leq n$.

The cycle relations for $G_Q$ for $1\leq r\leq n-1$ now follow from Lemma~\ref{lem:cyclehelp}.
The cycle relation for $r=n$ follows from Lemma~\ref{lem:oneall}(c), taking $g_0=S_1$, $g_2=S_2, \ldots,$ $g_{n-1}=S_n$. This gives the required result.
\end{proof}

\begin{lemma}
\label{lem:cornercasen2}
Let $n=2$.
Let $Q$ be the quiver on the left of Figure~\ref{fig:cornercase}(b) and $Q'$ the quiver on the right. Let $G_Q$ (respectively $G_{Q'}$) be the group with generators $s_{i}$ (respectively $t_i$), with $i$ ranging through the vertices of the quivers, satisfying the relations associated with $Q$ (respectively $Q'$). Then, there is a group homomorphism
\begin{align*}
    \varphi^Q_0: G_Q\rightarrow G_{Q'} \text{ given by } \varphi^Q_0(s_1)=S_1=t_0t_1t_0^{-1}, \, 
    \varphi^Q_0(s_c)=S_c=t_0t_ct_0^{-1}, \,
    \varphi^Q_0(s_i)=S_i=t_i \text{ for } i\neq 1,c.
\end{align*}
\end{lemma}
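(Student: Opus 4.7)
My plan is to verify that the images $S_0=t_0,\, S_1=t_0t_1t_0^{-1},\, S_2=t_2,\, S_a=t_a,\, S_b=t_b,\, S_c=t_0t_ct_0^{-1}$ in $G_{Q'}$ satisfy every defining relation of $G_Q$, so that $\varphi^Q_0$ extends to a well-defined group homomorphism. The relations of $G_Q$ split into three families: (i) the braid, commutation, and $3$-cycle relations of types (1), (2), (3) of Definition~\ref{defn_GQ_associated_group} (the only $3$-cycle in $Q$ is $0\to b\to c\to 0$); (ii) the $d$-edge relation of type (4) between $S_1$ and $S_2$; and (iii) the two broken $4$-cycle relations of type (7) among $S_0,S_1,S_2,S_a$.

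Family (i) is handled by direct application of Proposition~\ref{prop:GMcase} (cases (a)--(f)), exactly in the style of Lemma~\ref{lem:braidS}, using the involutivity of mutation so that either side of each figure of Proposition~\ref{prop:GMcase} can play the role of $Q$. For family (iii), substituting the definitions into the two candidate relations and cancelling the pairs $t_0^{-1}t_0$ that arise, then applying the commutation $t_at_0=t_0t_a$ (since $a$ and $0$ are not adjacent in $Q'$) and the braid $t_0t_2t_0=t_2t_0t_2$, the first broken $4$-cycle relation reduces to the identity $t_at_1t_2t_a=t_2t_at_1t_2$, which is one of the $3$-cycle relations (3) for the unlabelled $3$-cycle $a\to 1\to 2\to a$ present in $Q'$. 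The second broken $4$-cycle relation reduces similarly, using additionally the braid $t_0t_1t_0=t_1t_0t_1$ in the form $t_1t_0t_1t_0^{-1}=t_0t_1$, to the other $3$-cycle relation $t_at_1t_2t_a=t_1t_2t_at_1$ for the same $(a,1,2)$ cycle.

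The main obstacle is family (ii): proving $\underbrace{S_1S_2\cdots}_{d\text{ factors}}=\underbrace{S_2S_1\cdots}_{d\text{ factors}}$. The key preliminary identities, all derived from braid relations in $G_{Q'}$, are $S_1=t_0t_1t_0^{-1}=t_1^{-1}t_0t_1$, $t_0^{-1}t_2t_0=t_2t_0t_2^{-1}$, and $t_2^{-1}t_1t_2=t_1t_2t_1^{-1}$. The plan is to expand each alternating product and then repeatedly rewrite every internal $t_0^{-1}t_2t_0$ as $t_2t_0t_2^{-1}$ and every trailing $t_2^{-1}t_1$ as $t_1t_2t_1^{-1}t_2^{-1}$; after multiplying both sides by a suitable compensating word to cancel the residual inverse letters, the identity collapses to the labelled $3$-cycle relation $\underbrace{t_0t_1t_2\cdots}_{2d}=\underbrace{t_2t_0t_1\cdots}_{2d}$ from Definition~\ref{defn_GQ_associated_group}(5), which is precisely the defining relation of the labelled $3$-cycle $0\to 1\to 2\to 0$ in $Q'$. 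I would verify the cases $d=2$ and $d=3$ directly (for $d=2$, a single application of the cycle relation $t_0t_1t_2t_0=t_2t_0t_1t_2$ combined with $t_2t_0t_2^{-1}=t_0^{-1}t_2t_0$ yields $t_2t_0t_1t_0^{-1}=t_0t_1t_0^{-1}t_2$, that is $S_2S_1=S_1S_2$) and then carry out the general case by induction on $d$, observing that augmenting the alternating product by one further factor $S_1S_2$ corresponds to extending the cycle word by three further letters $t_0t_1t_2$, exactly matching the augmentation of the target cycle identity.
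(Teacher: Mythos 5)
Your overall strategy --- verifying that $S_0=t_0$, $S_1=t_0t_1t_0^{-1}$, $S_2=t_2$, $S_a=t_a$, $S_b=t_b$, $S_c=t_0t_ct_0^{-1}$ satisfy the defining relations of $G_Q$ --- is the right one, and two of your three families are handled essentially as in the paper: the braid, commutation and unlabelled $3$-cycle relations do follow from Proposition~\ref{prop:GMcase}, and your reduction of the two relations of Definition~\ref{defn_GQ_associated_group}(\ref{item_broken4cycle}) to the $3$-cycle relations $t_1t_2t_at_1=t_at_1t_2t_a=t_2t_at_1t_2$ of $Q'$ is exactly the computation the paper carries out. Your plan for the $d$-edge relation between $1$ and $2$ is also sound in outline; the paper packages the induction you sketch as Lemma~\ref{lem:cyclehelp} (applied with $r=1,2$), reducing the claim to the labelled $3$-cycle relation of $Q'$ just as you propose.

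The gap is that your list of relations to check is incomplete: you omit the double-edge relations of Definition~\ref{defn_GQ_associated_group}(\ref{item_doubleedge}) attached to the two vertices $0$ and $a$ of $Q$ carrying double edges, namely
\begin{align*}
S_0S_2S_1S_0S_2S_1&=S_2S_1S_0S_2S_1S_0, &
S_aS_1S_2S_aS_1S_2&=S_1S_2S_aS_1S_2S_a.
\end{align*}
These form part of the presentation of $G_Q$ that the paper works with --- the proof of Lemma~\ref{lem:braidS2} in the case $n=2$ takes the first of them as a given relation in the group of precisely this quiver --- so they must be verified for $\varphi^Q_0$ to be well defined. The relation at $0$ follows fairly quickly from the braid relation $t_1t_2t_1=t_2t_1t_2$ in $Q'$. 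The relation at $a$ is where most of the work in this lemma lies and cannot be dispatched by cancelling $t_0^{-1}t_0$: written out it reads $(t_at_0t_1t_0^{-1}t_2)^2=(t_0t_1t_0^{-1}t_2t_a)^2$, and since $t_0$ does not commute with $t_2$ (there is an arrow $2\rightarrow 0$ in $Q'$) the conjugating $t_0$'s cannot simply be moved out of the way. The paper needs a sustained chain of rewritings --- alternating the braid relations at $0$, the commutation $t_0t_a=t_at_0$, and repeated substitutions from the $(a,1,2)$ $3$-cycle relations --- before the identity collapses to $t_2t_at_2=t_at_2t_a$. Your proposal contains no argument for this relation, so as it stands the homomorphism is not shown to be well defined.
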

\begin{proof}
The defining relations for $G_Q$, apart from those corresponding to the $3$-cycle $0\rightarrow 2\rightarrow 1\rightarrow 0$, the $4$-cycle $0\rightarrow 2\rightarrow a\rightarrow 1\rightarrow 0$ and the double edges in $Q$, all hold by Lemma~\ref{lem:braidS}.

We have $t_1t_2t_1=t_2t_1t_2$, which gives:
$$S_0^{-1}S_1S_0S_2S_0^{-1}S_1S_0=S_2S_0^{-1}S_1S_0S_2.$$
Multiplying on the left by $S_0$ gives
$$S_1S_0S_2\underline{S_0^{-1}S_1S_0}=\underline{S_0S_2S_0^{-1}}S_1S_0S_2.$$
Applying the braid relations corresponding to the arrows $1\rightarrow 0$ and $0\rightarrow 2$ in $Q$ we obtain:
$$S_1S_0S_2S_1S_0S_1^{-1}=S_2^{-1}S_0S_2S_1S_0S_2,$$
and hence, multiplying on the left by $S_2$ and on the right by $S_1$:
$$S_2S_1S_0S_2S_1S_0=S_0S_2S_1S_0S_2S_1,$$
which is the relation associated to the double edge incident with $0$ in $Q$.

The relation associated to the double edge incident with the vertex $a$ in $Q$ is:
\begin{equation}
    \label{eq:3double}
    S_aS_1S_2S_aS_1S_2=S_1S_2S_aS_1S_2S_a.
\end{equation}
We apply transformations to~\eqref{eq:3double} to give a series of equivalent versions.
Firstly,~\eqref{eq:3double} is equivalent to
$$\underline{t_at_0}t_1t_0^{-1}t_2t_a\underline{t_0t_1t_0^{-1}}t_2=
t_0t_1t_0^{-1}t_2t_a\underline{t_0t_1t_0^{-1}}t_2t_a$$
Applying the commutation $t_at_0=t_0t_a$ and the braid relation corresponding to the arrow $0\rightarrow 1$ in $Q'$ on both sides gives
$$t_0t_at_1t_0^{-1}t_2t_at_1^{-1}t_0t_1t_2=
t_0t_1t_0^{-1}t_2t_at_1^{-1}t_0t_1t_2t_a.$$
We multiply on the left by $t_1^{-1}t_0$ to obtain:
$$\underline{t_1^{-1}t_at_1}t_0^{-1}t_2t_at_1^{-1}t_0t_1t_2=
t_0^{-1}t_2t_at_1^{-1}t_0\underline{t_1t_2t_a}.$$
Since $t_2t_at_1t_2=t_at_1t_2t_a$, we may substitute in
    $$t_1t_2t_a=t_a^{-1}t_2t_at_1t_2$$
on the right-hand side, while on the left-hand side we apply the braid relation for the arrow $3\rightarrow 1$ in $Q'$ to get:
$$t_at_1t_a^{-1}t_0^{-1}t_2t_at_1^{-1}t_0t_1t_2=
t_0^{-1}t_2t_at_1^{-1}t_0t_a^{-1}t_2t_at_1t_2.$$
Multiplying on the right by $t_2^{-1}t_1^{-1}$ and on the left by $t_0$ gives:
$$\underline{t_0t_a}t_1\underline{t_a^{-1}t_0^{-1}}t_2t_at_1^{-1}t_0=
t_2t_at_1^{-1}t_0t_a^{-1}t_2t_a.$$
Applying the commutation $t_0t_a=t_at_0$ twice on the left hand side, and multiplying on the right by $t_2$ gives:
$$t_at_0t_1t_0^{-1}t_a^{-1}t_2t_at_1^{-1}t_0=t_2t_at_1^{-1}t_0\underline{t_a^{-1}t_2t_a}.$$
We apply the braid relation corresponding to the arrow $2\rightarrow 3$ in $Q'$ to obtain:
$$t_at_0t_1t_0^{-1}t_a^{-1}t_2t_at_1^{-1}t_0=t_2t_at_1^{-1}t_0t_2t_at_2^{-1},$$
and, multiplying on the right by $t_2$ we obtain:
$$t_a\underline{t_0t_1t_0^{-1}}\underline{t_a^{-1}t_2t_a}t_1^{-1}t_0t_2=t_2t_at_1^{-1}t_0t_2t_a.$$
Applying the braid relations corresponding to the arrows $0\rightarrow 1$ and $2\rightarrow a$ in $Q'$ gives:
$$t_at_1^{-1}t_0\underline{t_1t_2t_at_2^{-1}t_1^{-1}}t_0t_2=t_2t_at_1^{-1}t_0t_2t_a.$$
Since $t_2t_at_1t_2=t_at_1t_2t_a$, we may substitute in
    $$t_1t_2t_at_2^{-1}t_1^{-1}=t_a^{-1}t_2t_a$$
    on the left-hand side to obtain:
$$t_at_1^{-1}t_0t_a^{-1}t_2t_at_0t_2=t_2t_at_1^{-1}t_0t_2t_a.$$
Applying the commutation $t_0t_a=t_at_0$ on the left hand side twice, we obtain:
$$\underline{t_at_1^{-1}t_a^{-1}}t_0t_2t_0t_at_2=t_2t_at_1^{-1}t_0t_2t_a.$$
Applying the braid relation corresponding to the arrow $a\rightarrow 1$ in $Q'$ gives:
$$t_1^{-1}t_a^{-1}t_1t_0t_2t_0t_at_2=t_2t_at_1^{-1}t_0t_2t_a.$$
Multiplying on the left by $t_at_1$ gives:
$$t_1t_0t_2t_0t_at_2=\underline{t_at_1t_2t_at_1^{-1}}t_0t_2t_a.$$
Since $t_at_1t_2t_a=t_1t_2t_at_1$, we may substitute in
$t_at_1t_2t_at_1^{-1}=t_1t_2t_a$ on the right to get:
$$t_1t_0t_2t_0t_at_2=t_1t_2t_at_0t_2t_a.$$
Applying the braid relation corresponding to the arrow $2\rightarrow 0$ in $Q'$ on the left-hand side, we obtain:
$$t_1t_2t_0t_2t_at_2=t_1t_2t_at_0t_2t_a.$$
Multiplying both sides on the left by $t_2^{-1}t_1^{-1}$ gives
$$t_0t_2t_at_2=t_at_0t_2t_a.$$
Since this final equivalent version holds, we see that~\eqref{eq:3double} holds, as desired.

Next we check the two relations associated to the $4$-cycle $0\rightarrow 2\rightarrow a\rightarrow 1\rightarrow 0$ in $Q$.
Since $t_at_1t_2t_a=t_1t_2t_at_1$, we have:
$$S_aS_0^{-1}S_1S_0S_2S_a=S_0^{-1}S_1S_0S_2S_aS_0^{-1}S_1S_0.$$
Multiplying on the left by $S_0$ and applying the commutation $S_0S_a=S_aS_0$ gives:
$$S_aS_1S_0S_2S_a=S_1S_0S_2S_a\underline{S_0^{-1}S_1S_0}.$$
Applying the braid relation associated to the arrow $1\rightarrow 0$ in $Q$ gives:
$$S_aS_1S_0S_2S_a=S_1S_0S_2S_aS_1S_0S_1^{-1}.$$
Then, multiplying on the right by $S_1$ gives:
$$S_aS_1S_0S_2S_aS_1=S_1S_0S_2S_aS_1S_0,$$
which is one of the relations associated to the $4$-cycle in $Q$.

Since $t_2t_at_1t_2=t_at_1t_2t_a$ we have, multiplying on the left by $t_2t_0$, that:
$$\underline{t_2t_0t_2}t_at_1t_2=t_2t_0t_at_1t_2t_a.$$
Applying the braid relation corresponding to the arrow $2\rightarrow 0$ in $Q'$ gives:
$$t_0t_2t_0t_at_1t_2=t_2\underline{t_0t_a}t_1t_2t_a.$$
Applying the commutation $t_0t_a=t_at_0$ on the right hand side gives:
$$t_0t_2t_at_0t_1t_2=t_2t_at_0t_1t_2t_a.$$
We insert the product $t_0^{-1}t_0$ in two places to obtain:
$$t_0t_2t_at_0t_1t_0^{-1}t_0t_2=t_2t_at_0t_1t_0^{-1}t_0t_2t_a,$$
which can be rewritten as:
$$S_0S_2S_aS_1S_0S_2=S_2S_aS_1S_0S_2S_a,$$
which is the other relation associated to the $4$-cycle in $Q$.

One of the relations corresponding to the $3$-cycle $1\rightarrow 2\rightarrow 0\rightarrow 1$ in $Q'$ is
$$t_1t_2\cdots t_{2d}=t_2t_3\cdots t_{2d+1}.$$
By Lemma~\ref{lem:cyclehelp}, taking $r=1,2$, we have
$$t_dt_{d-1}\cdots t_1S_{1-d+1}\cdots S_0S_1=
t_{1-2d+1}\cdots t_0t_1$$
and
$$t_dt_{d-1}\cdots t_1S_{2-d+1}\cdots S_1S_2=
t_{2-2d+1}\cdots t_1t_2.$$
so
$$S_{1-d+1}\cdots S_0S_1=S_{2-d+1}\cdots S_1S_2,$$
which can be rewritten as
$$S_1S_2\cdots S_d=S_2S_3\cdots S_{d+1}$$
(switching the two sides of the equality if $d$ is even).
This is the remaining required defining relation of $G_Q$. Hence all the defining relations of $G_Q$ hold, and the result is shown.
\end{proof}

We consider the following setup:
\begin{setup}
\label{setup2new}
Let $Q$ be the quiver on the right of Figure~\ref{fig:cornercase}(a) (respectively, the quiver on the right of Figure~\ref{fig:cornercase}(b))
for $n\geq 3$ (respectively, for $n=2$) and let $Q$ be the quiver on the left in each case.
Let $H_Q$ be the group defined in Definition~\ref{defn_GQ_associated_group}, with generators $s_i$, and let $H_{Q'}$ be the group defined in Definition~\ref{defn_GQ_associated_group}, with generators $t_i$.
Let $S_n=t_0t_nt_0^{-1}$, $S_b=t_0t_bt_0^{-1}$, and $S_i=t_i$ for $i\neq n,b$.
We regard the subscripts of the $s_i$ and $S_i$ for $0\leq i\leq n$ to be taken modulo $n+1$ (with representatives $\{0,1,2\ldots ,n\}$), and the subscripts of the $t_i$ for $1\leq i\leq n$ to be taken modulo $n$ (with representatives $\{1,2,\ldots ,n\}$).
\end{setup}

We note the following:

\begin{lemma} 
    \label{lem:braidS2}
Let $n\geq 2$ be an integer.
In Setup~\ref{setup2new}, the elements $S_i$ satisfy the defining relations of $H_Q$.
\end{lemma}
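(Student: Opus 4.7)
The proof will be a direct application of Proposition~\ref{prop:GMcase}, exactly in parallel with the proof of Lemma~\ref{lem:braidS} but with the roles of the two quivers swapped relative to Setup~\ref{setup1}. First I would observe that the prescriptions in Setup~\ref{setup2new} agree with the recipe of Proposition~\ref{prop:GMcase}: since $Q' = \mu_0(Q)$ and the arrows into the vertex $0$ in $Q$ come precisely from the vertices $n$ and $b$ (reading off the right-hand quivers of Figure~\ref{fig:cornercase}), the definitions $S_n = t_0 t_n t_0^{-1}$, $S_b = t_0 t_b t_0^{-1}$, and $S_i = t_i$ otherwise match the rule $S_i = t_0 t_i t_0^{-1}$ if $i \to 0$ in $Q$, and $S_i = t_i$ else.

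Next I would check that the only defining relations of $H_Q$ that need verification are of types (1), (2) and (3) from Definition~\ref{defn_GQ_associated_group}. The right quivers in Figure~\ref{fig:cornercase}(a) and (b) contain no unoriented labeled edges, no double edges, and no broken $4$-cycles, so relations (4), (6), (7) do not appear, while relation (5) is excluded from $H_Q$ by definition. For each relation of type (1), (2) or (3), the corresponding local configuration on the triple of vertices involved either avoids $0$ (in which case the relation is literally the same as one in $H_{Q'}$ and holds trivially, since $S_i = t_i$ for such $i$) or involves $0$ and fits one of the cases (a)--(f) of Figure~\ref{fig:GMcases}, whereupon Proposition~\ref{prop:GMcase} yields the required identity.

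The main subtlety I anticipate is that $Q$ contains two unlabeled oriented $3$-cycles incident with $0$, namely $0 \to 1 \to b \to 0$ and $0 \to c \to n \to 0$, and one must confirm that the corresponding $3$-cycle relations of type (3) are indeed covered. Each such configuration matches case (c) of Figure~\ref{fig:GMcases} with $Q$ on the right side and $Q'$ on the left (so that $Q'$ has a path through $0$ rather than a cycle, with no arrow between the two outer vertices). After substituting the formulas for $S_n, S_b$ and conjugating out the $t_0$'s, the $3$-cycle identity reduces to a short calculation involving the two braid relations at $0$ in $Q'$ together with the commutation of the two other vertices, both of which are defining relations in $H_{Q'}$. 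Beyond this single case-check, no further obstacle is expected, and the proof should be as brief as that of Lemma~\ref{lem:braidS}.
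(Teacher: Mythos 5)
Your argument matches the paper's proof for $n\geq 3$, where everything does reduce to Proposition~\ref{prop:GMcase}: there the only relations of $H_Q$ are of types (1)--(3), and the two unlabelled $3$-cycles through $0$ are instances of case (c) of Figure~\ref{fig:GMcases}, exactly as you say. The genuine gap is the case $n=2$. Your dichotomy --- ``a relation either avoids $0$, hence is literally a relation of $H_{Q'}$, or involves $0$ and is covered by Proposition~\ref{prop:GMcase}'' --- fails there, because mutation at $0$ also changes the edge joining the two neighbours $1$ and $2$ of $0$. In $Q'$ (the left quiver of Figure~\ref{fig:cornercase}(b)) the vertices $1$ and $2$ are joined by an unoriented edge labelled $d$, with double edges at $0$ and $a$; in $Q$ they are joined by an ordinary arrow $1\to 2$, which closes up both the labelled $3$-cycle $0\to 1\to 2\to 0$ and the unlabelled $3$-cycle $1\to 2\to a\to 1$. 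Consequently the braid relation $S_1S_2S_1=S_2S_1S_2$ and the cycle relations $S_2S_aS_1S_2=S_aS_1S_2S_a=S_1S_2S_aS_1$ are defining relations of $H_Q$ with no counterpart among the relations (1)--(3) of $H_{Q'}$ (for $d\neq 3$ the label-$d$ relation $t_1t_2\cdots=t_2t_1\cdots$ is not the braid relation), and Proposition~\ref{prop:GMcase}, which assumes and delivers only relations of types (1)--(3) for plain-arrow configurations, cannot produce them.

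This is precisely where the paper does its work: after disposing of $n\geq 3$ in one line, it derives $S_2S_1S_2=S_1S_2S_1$ from the double-edge relation $t_0t_2t_1t_0t_2t_1=t_2t_1t_0t_2t_1t_0$ at vertex $0$ in $Q'$, and $S_2S_aS_1S_2=S_aS_1S_2S_a$ from one of the broken-$4$-cycle relations of $Q'$, then invokes \cite[Lem.\ 2.4]{GM} for the third equality. So the proof is not ``as brief as that of Lemma~\ref{lem:braidS}''. Your observation that $Q$ itself contains no labelled edges, double edges or broken $4$-cycles is correct, but it addresses which relations must be \emph{verified}, not which relations of $H_{Q'}$ are needed as \emph{hypotheses}: the type (4), (6) and (7) relations of $Q'$ that you set aside are indispensable inputs for the $n=2$ case. (A smaller slip: for a relation avoiding $0$ but involving $n$ or $b$ one has $S_i=t_0t_it_0^{-1}\neq t_i$, so such relations are not ``literally the same''; they are, however, still covered by cases (e)--(f) of Proposition~\ref{prop:GMcase}.)
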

\begin{proof}
    For $n\geq 3$, this follows from Proposition~\ref{prop:GMcase}.

For $n=2$, the braid and commutation relations and the relations for the $3$-cycles all hold by Proposition~\ref{prop:GMcase}, except for the relation for arrow from $1$ to $2$ and the relations for the $3$-cycle $2\rightarrow a\rightarrow 1\rightarrow 2$.

The relation for the double edge incident with $0$ in $Q'$ is:
$$t_0t_2t_1t_0t_2t_1=t_2t_1t_0t_2t_1t_0.$$
Substituting, this gives
$$S_0S_0^{-1}S_2S_0S_1S_0S_0^{-1}S_2S_0S_1=
S_0^{-1}S_2S_0S_1S_0S_0^{-1}S_2S_0S_1S_0,$$
which simplifies to
$$S_0S_2S_0S_1S_2S_0S_1=S_2S_0S_1S_2S_0S_1S_0$$
Using the braid relations for the arrows $0\rightarrow 2$ and $1\rightarrow 0$ in $Q$:
$$\underline{S_0S_2S_0}S_1S_2S_0S_1=S_2S_0S_1S_2\underline{S_0S_1S_0}$$
gives:
$$S_2S_0S_2S_1S_2S_0S_1=S_2S_0S_1S_2S_1S_0S_1,$$
which, after cancelling elements on the left and right, gives
$$S_2S_1S_2=S_1S_2S_1.$$

There are two relations for the $4$-cycle on vertices $2$, $a$, $1$ and $0$ in $Q'$. One of these is:
$$t_0t_2t_at_1t_0t_2=t_2t_at_1t_0t_2t_a,$$
which gives
$$S_0S_0^{-1}S_2S_0S_aS_1S_0S_0^{-1}S_2S_0=
S_0^{-1}S_2S_0S_aS_1S_0S_0^{-1}S_2S_0S_a,$$
and hence
$$\underline{S_0S_2S_0}S_aS_1S_2S_0=S_2S_0S_aS_1S_2\underline{S_0S_a}.$$
Applying the braid relation for the arrow $2\rightarrow 0$ in $Q$ and the commutation $S_0S_a=S_aS_0$ gives
$$S_2S_0S_2S_aS_1S_2S_0=S_2S_0S_aS_1S_2S_aS_0.$$
Applying cancellations on the left and right gives
$$S_2S_aS_1S_2=S_aS_1S_2S_a.$$
By~\cite[Lem.\ 2.4]{GM}, we conclude that
$$S_2S_aS_1S_2=S_aS_1S_2S_a=S_1S_2S_aS_1,$$
giving the result for $n=2$.
\end{proof}

\begin{lemma}
\label{lem:conjugatethrough}
In Setup~\ref{setup2new}, we have:
\begin{itemize}
    \item[(a)] $$S_i^{-1}S_{i+2}\cdots S_{i+n+1}=S_{i+2}\cdots S_{i+n+1}S_{i-1}^{-1};$$
\item[(b)] $$S_n^{-1}(S_1S_2\cdots S_{nd})=(S_1\cdots S_{nd})S_{n-d}^{-1}.$$
\end{itemize}
\end{lemma}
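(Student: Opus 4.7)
The plan is to prove (a) directly from the braid and commutation relations satisfied by the $S_j$ in $H_Q$ (which hold by Lemma~\ref{lem:braidS2}), and to deduce (b) from (a) by induction on $d$. Since we are working in $H_Q$ rather than $G_Q$, the cycle relations never enter the argument.

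For part (a), recall that in $Q$ the vertices $0,1,\dots,n$ form an $(n+1)$-cycle around the cone point, so $S_j$ commutes with $S_k$ whenever $k\not\equiv j\pm 1\pmod{n+1}$ and adjacent generators satisfy a braid relation. Reading the product $S_{i+2}S_{i+3}\cdots S_{i+n+1}$ with subscripts reduced mod $n+1$, it ends in $\cdots S_{i-2}S_{i-1}S_i$, and $S_i$ commutes with every factor except the last two. One first pushes $S_i^{-1}$ rightwards through the commuting factors to obtain $S_{i+2}\cdots S_{i-2}\,S_i^{-1}S_{i-1}S_i$, and then applies the rearrangement $S_i^{-1}S_{i-1}S_i=S_{i-1}S_iS_{i-1}^{-1}$ of the braid relation $S_{i-1}S_iS_{i-1}=S_iS_{i-1}S_i$ to finish. (The case $n=2$ degenerates to a direct application of the braid relation, as there are no commuting factors to move past.)

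For part (b), I would induct on $d$. The base case $d=1$ is exactly (a) applied at $i=n$, once one notes that $S_{n+2}\cdots S_{2n+1}$ reduces mod $n+1$ to $S_1\cdots S_n$ and that $S_{n-1}^{-1}=S_{n-d}^{-1}$. For the inductive step, split the product as
\[
S_n^{-1}\bigl(S_1\cdots S_{nd}\bigr)\bigl(S_{nd+1}\cdots S_{n(d+1)}\bigr),
\]
apply the induction hypothesis to the first bracket to bring out $S_{n-d}^{-1}$, and then recognise the remaining block $S_{nd+1}\cdots S_{n(d+1)}$ as the product appearing in (a) at $i=n-d$. This last identification is the main (though small) bookkeeping step of the proof: one checks the congruence $nd+1\equiv (n-d)+2\pmod{n+1}$, which holds because $nd+1-(n-d+2)=(n+1)(d-1)$, so the two $n$-term cyclic sequences of indices agree. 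Applying (a) at $i=n-d$ then replaces $S_{n-d}^{-1}$ by $S_{n-d-1}^{-1}$ at the right-hand end, completing the induction.
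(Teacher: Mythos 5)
Your proof is correct and follows essentially the same route as the paper: part (a) is obtained by pushing $S_i^{-1}$ through the commuting factors and then applying the braid relation to the final pair $S_{i-1}S_i$, and part (b) is obtained by splitting $S_1\cdots S_{nd}$ into $d$ blocks of $n$ consecutive (mod $n+1$) factors and applying (a) once per block, which your induction on $d$ formalizes. The index check $nd+1\equiv n-d+2\pmod{n+1}$ that you highlight is exactly the bookkeeping implicit in the paper's ``use part (a) $d$ times.''
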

\begin{proof}
We use Lemma~\ref{lem:braidS2} throughout.
For (a), we have:
\begin{align*}
    S_i^{-1}S_{i+2}S_{i+3}\cdots S_{i+n+1}
    &=
    S_{i+2}S_{i+3}\cdots S_{i+n-1}S_i^{-1}S_{i+n}S_{i+n+1} \\
    &=
    S_{i+2}S_{i+3}\cdots S_{i+n-1}S_{i+n}S_{i+n+1}S_{i+n}^{-1},
\end{align*}
as required, using commutations in the first step and
noting that $i+n+1\equiv i$ and $i+n\equiv i-1 \mod n+1$.
For part (b), note that
$$S_1\cdots S_{nd}=(S_1\cdots S_n)(S_{n+1}\cdots S_{2n})\cdots (S_d\cdots S_{d+n-1}),$$
and use part (a) $d$ times.
\end{proof}

\begin{lemma}
\label{lem:cyclehelp2}
In Setup~\ref{setup2new}, let $d\geq 1$ be an integer.
Then, for any $1\leq r\leq n$, we have:
$$
        \underbrace{(t_rt_{r+1} \cdots t_{d(n-1)+r-1}) }_{\text{$d(n-1)$ terms}}\underbrace{(S_nS_{n-1}\cdots S_{n-d+1})}_{\text{$d$ terms}} =
        \underbrace{S_rS_{r+1}\cdots S_{r+nd-1}}_{\text{$dn$ terms}}
$$
\end{lemma}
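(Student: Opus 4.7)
I will proceed by induction on $d \geq 1$, mirroring the structure of the proof of Lemma~\ref{lem:cyclehelp}.

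For the base case $d = 1$, the claim reduces to showing $(t_r t_{r+1} \cdots t_{r+n-2})\, S_n = S_r S_{r+1} \cdots S_{r+n-1}$ for each $1 \leq r \leq n$. When $r = 1$, both sides equal $t_1 \cdots t_{n-1}(t_0 t_n t_0^{-1})$ by direct substitution, since $S_i = t_i$ for $1 \leq i \leq n-1$ and $S_n = t_0 t_n t_0^{-1}$. For $r \geq 2$, I expand the RHS by grouping the wrap-around pair $S_n S_0 = (t_0 t_n t_0^{-1}) t_0 = t_0 t_n$, and then compare with the expanded LHS. The identification uses the braid relation $t_n t_0 t_n = t_0 t_n t_0$ (coming from the adjacencies at the vertex $0$ in $Q'$) together with the commutations of $t_0$ with $t_j$ for $j \in \{2,\ldots,n-1\}$ and of $t_n$ with $t_j$ for $j \not\equiv 0, n-1 \pmod n$.

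For the inductive step, assume the identity at level $d$ for every $r$. I decompose
$$L_{d+1}(r) = \bigl(t_r \cdots t_{r+(d+1)(n-1)-1}\bigr)\,\bigl(S_n S_{n-1} \cdots S_{n-d+1}\bigr)\, S_{n-d},$$
and transport the trailing factor $S_{n-d}$ leftwards past the preceding $S$-block using braid relations along the cone-point cycle of $Q'$, and past a segment of the $t$-block using commutations. This maneuver reshapes the product into $L_d(r')$ multiplied by appropriate remaining factors for a suitably shifted index $r'$. Applying the inductive hypothesis converts $L_d(r')$ into $R_d(r')$, and collecting all factors then yields $R_{d+1}(r) = S_r S_{r+1} \cdots S_{r+n(d+1)-1}$. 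Lemma~\ref{lem:conjugatethrough}(a), which tracks how conjugate factors can be pushed through long products of $S$'s, will be the central tool in the reassembly.

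The main obstacle is the careful bookkeeping of indices — $t$-subscripts run modulo $n$ while $S$-subscripts run modulo $n+1$ — and verifying that each braid or commutation relation invoked is indeed a defining relation of $G_{Q'}$: namely the cyclic pattern $1 \to 2 \to \cdots \to n \to 1$ at the cone point, together with the adjacencies of $0$ to $1, n, b, c$ in $Q'$. The manipulations are analogous to, but somewhat more intricate than, those appearing in the proof of Lemma~\ref{lem:cyclehelp}; in particular, the conjugation by $t_0$ in $S_n$ interacts with both the preceding $t$-factor $t_{n-1}$ and the following $S_0 = t_0$ in nontrivial ways that must be untangled at each wrap-around.
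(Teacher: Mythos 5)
Your overall architecture (induction on $d$, base case by direct substitution, inductive step via the hypothesis at a shifted index together with Lemma~\ref{lem:conjugatethrough}) matches the paper's proof, and your base case is essentially workable. Two corrections there: $t_n$ does \emph{not} commute with $t_1$ (in $Q'$ the cone-point cycle makes $n$ adjacent to $1$), and the identity you need for $r\geq 2$, namely $t_nt_1t_0t_n=t_0t_nt_1t_0$, is the unlabelled $3$-cycle relation for $1\to 0\to n\to 1$ in $Q'$, not a consequence of braid relations and commutations alone. (The paper avoids this by working with the $S_i$ throughout: it rewrites $t_n=S_0^{-1}S_nS_0=S_nS_0S_n^{-1}$ using the braid relation between $S_0$ and $S_n$ supplied by Lemma~\ref{lem:braidS2}, and then commutes $S_n^{-1}$ past $S_1\cdots S_{r-2}$ to cancel the trailing $S_n$.)

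The genuine gap is the inductive step. Your proposed maneuver — transporting the trailing $S_{n-d}$ \emph{leftwards} past the block $S_nS_{n-1}\cdots S_{n-d+1}$ "using braid relations" — does not work as described: consecutive factors in that block are adjacent in the $(n+1)$-cycle of $Q$, so each leftward pass transforms $S_{n-d}$ into a conjugate rather than preserving it, and it also destroys the block $(S_n\cdots S_{n-d+1})$ that the induction hypothesis at level $d$ requires to be intact. The computation that actually closes the induction is the opposite: leave $S_{n-d}$ untouched at the far right, peel off the prefix $(t_rt_{r+1}\cdots t_{r+n-2})$ so that the remaining $d(n-1)$ $t$-factors start at $t_{r+n-1}=t_{r-1}$, apply the induction hypothesis to that block together with $(S_n\cdots S_{n-d+1})$ to get $S_{r-1}S_r\cdots S_{nd+r-2}$, rewrite the $t_n$ occurring in the prefix as $S_nS_0S_n^{-1}$, and then push $S_n^{-1}$ through $S_1S_2\cdots S_{dn}$ using Lemma~\ref{lem:conjugatethrough}(b) — the iterated version, not part (a) as you cite — whereupon it emerges as $S_{n-d}^{-1}$, commutes past $S_{dn+1}\cdots S_{dn+r-2}$, and cancels the trailing $S_{n-d}$. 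Since the entire content of the lemma is this index-sensitive computation, asserting that "the maneuver reshapes the product into $L_d(r')$ multiplied by appropriate remaining factors" without exhibiting it leaves the lemma unproved.
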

\begin{proof}
We prove the result by induction on $d$,
using Lemma~\ref{lem:braidS2} throughout.
For $d=1$, we have $t_1t_2\cdots t_{n-1}S_n=S_1S_2\cdots S_n$. For $r\geq 2$, we have (recalling that subscripts of the $t_i$ are written modulo $n$):
\begin{align*}
    t_rt_{r+1}\cdots t_{r+n-2}S_n
    &=
    t_r\cdots t_{n-1}t_nt_1t_2\cdots t_{r+n-2-n}S_n
    \\
    &=
    (S_r\cdots S_{n-1})(S_0^{-1}S_nS_0)(S_1S_2\cdots S_{r-2})S_n \\
    &= (S_r\cdots S_{n-1})(S_nS_0S_n^{-1})(S_1S_2\cdots S_{r-2})S_n \\
    &= (S_r\cdots S_{n-1})S_nS_0(S_1\cdots S_{r-2})S_n^{-1}S_n,
\end{align*}
and we see that the result holds for $d=1$.

Assume the result holds for an integer $d\geq 1$. Then, using the induction hypothesis,
\begin{align*}
    \underbrace{(t_1t_2\cdots t_{(d+1)(n-1)})}_{\text{$(d+1)(n-1)$ terms}}(\underbrace{S_nS_{n-1}\cdots S_{n-(d+1)-1}}_{\text{$d+1$ terms}}) &=
    (t_1t_2\ldots t_{n-1})
    \underbrace{(t_nt_{n+1}\cdots t_{(d+1)(n-1)})}_{\text{$d(n-1)$ terms}}(\underbrace{(S_nS_{n-1}\cdots S_{n-d+1}}_{\text{$d$ terms}})
    S_{n-d} \\
    &=
    (t_1t_2\cdots t_{n-1}) \underbrace{(S_nS_{n+1}\cdots S_{n+nd-1})}_{\text{$nd$ terms}} S_{n-d} \\
    &=
    S_1S_2\cdots S_{n+nd-1}S_{n+dn},
\end{align*}
noting that $n-d\equiv n+dn\mod n+1$.
We also have, for $2\leq r\leq n$, using the inductive hypothesis in the second step:
\begin{align*}
    &\underbrace{(t_rt_{r+1}\cdots t_{(d+1)(n-1)+r-1})}_{\text{$(d+1)(n-1)$ terms}}(\underbrace{S_nS_{n-1}\cdots S_{n-(d+1)+1}}_{\text{$d+1$ terms}}) \\
    &=
    \underbrace{(t_rt_{r+1}\cdots t_{r+n-2})}_{\text{$n-1$ terms}}
    \underbrace{(t_{r+n-1}t_{r+n}\cdots t_{(d+1)(n-1)+r-1})}_{\text{$d(n-1)$ terms}}(\underbrace{S_nS_{n-1}\cdots S_{n-d+1}}_{\text{$d$ terms}})
    S_{n-d} \\
    &=
    \underbrace{(t_rt_{r+1}\cdots t_{r+n-2})}_{\text{$n-1$ terms}}
    \underbrace{(t_{r-1}t_{r}\cdots t_{r-1+d(n-1)-1})}_{\text{$d(n-1)$ terms}}(\underbrace{S_nS_{n-1}\cdots S_{n-d+1}}_{\text{$d$ terms}})
    S_{n-d} \\
    &=
    \underbrace{(t_rt_{r+1}\cdots t_{r+n-2})}_{\text{$n-1$ terms}}
    \underbrace{(S_{r-1}S_{r}\cdots S_{r-1+nd-1})}_{\text{$dn$ terms}} S_{n-d} \\
    &=
    \underbrace{(S_rS_{r+1}\cdots S_{n-1})}_{\text{$n-r$ terms}}S_0^{-1}S_nS_0
    \underbrace{(S_1S_2\cdots S_{r-2})}_{\text{$r-2$ terms}}
    \underbrace{(S_{r-1}S_{r}
    \cdots S_{nd+r-2})}_{\text{$dn$ terms}}S_{n-d} \\
    &=
    \underbrace{(S_rS_{r+1}\cdots S_{n-1})}_{\text{$n-r$ terms}}S_nS_0S_n^{-1}
    \underbrace{(S_1S_2
    \cdots S_{dn})}_{\text{$dn$ terms}}
    \underbrace{(S_{dn+1}S_{dn+2}\cdots S_{dn+r-2})}_{\text{$r-2$ terms}}
    S_{n-d} \\
    &=
    \underbrace{(S_rS_{r+1}\cdots S_{n-1})}_{\text{$n-r$ terms}}S_nS_0S_n^{-1}
    \underbrace{(S_1S_2
    \cdots S_{dn})}_{\text{$dn$ terms}}
    \underbrace{(S_{dn+1}S_{dn+2}\cdots S_{dn+r-2})}_{\text{$r-2$ terms}}
    S_{n-d} \\
    &=
    \underbrace{(S_rS_{r+1}\cdots S_{n-1})}_{\text{$n-r$ terms}}S_nS_0
    \underbrace{(S_1S_2
    \cdots S_{dn})}_{\text{$dn$ terms}}
    S_{n-d}^{-1}
    \underbrace{(S_{dn+1}S_{dn+2}\cdots S_{dn+r-2})}_{\text{$r-2$ terms}}
    S_{n-d} \\
\end{align*}
as required, using Lemma~\ref{lem:conjugatethrough} in the last but one step.
Note that $dn+i\equiv n-d+i+1\mod n+1$ for $1\leq i\leq r-2$, so $S_{n-d}^{-1}$ commutes with
$S_{nd+1}=S_{n-d+2},S_{nd+2}=S_{n-d+3},\ldots ,S_{nd+r-2}=S_{n-d+r-1}$, giving
the required result for $d+1$, since $2\leq r\leq n$.
The result follows by induction on $d$.
\end{proof}

\begin{lemma}
\label{lem:cornercasen3back}

Let $n\geq 3$ be an integer. Let $Q$ be the quiver on the right of Figure~\ref{fig:cornercase}(a) and $Q'$ the quiver on the left. Let $G_Q$ (respectively $G_{Q'}$) be the group with generators $s_{i}$ (respectively $t_i$)with $i$ ranging through the vertices of the quivers, satisfying the relations associated with $Q$ (respectively $Q'$). Then, there is a group homomorphism, $\varphi^Q_0: G_Q\rightarrow G_{Q'}$ given by:
\begin{align*}
\varphi^Q_0(s_n)=S_n=t_0t_nt_0^{-1}, \, \varphi^Q_0(s_b)=S_b=t_0t_bt_0^{-1}, \,\varphi^Q_0(s_i)=S_i=t_i \text{ for } i\neq n,b.
\end{align*}
\end{lemma}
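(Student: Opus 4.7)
The plan is to check that the elements $S_i$ defined in Setup~\ref{setup2new} satisfy every defining relation of $G_Q$; this suffices to exhibit $\varphi^Q_0$ as a well-defined group homomorphism $G_Q \to G_{Q'}$. By Lemma~\ref{lem:braidS2} the elements $S_i$ already satisfy the commutation, braid, and ordinary three-cycle relations from Definition~\ref{defn_GQ_associated_group}(1)--(3) associated with $Q$. Since the quiver $Q$ (on the right of Figure~\ref{fig:cornercase}(a)) has no labelled edge, no $r$-cycle labelled by the conepoint for $r\geq 3$ other than the single $(n+1)$-cycle through the vertices $0,1,\ldots,n$, and no broken-$4$-cycle configurations of type (6), the only remaining relations are the cycle relations
\begin{equation*}
\underbrace{S_rS_{r+1}\cdots S_{r+nd-1}}_{nd\text{ terms}} \;=\; \underbrace{S_{r+1}S_{r+2}\cdots S_{r+nd}}_{nd\text{ terms}}, \qquad r = 0,1,\ldots,n,
\end{equation*}
where the subscripts of the $S_i$ are taken mod $n+1$.

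First, I will verify this relation for one specific value of $r$ in the range $1\leq r\leq n-1$. Applying Lemma~\ref{lem:cyclehelp2} with parameters $r$ and $r+1$ respectively expresses both sides as
\begin{align*}
\underbrace{S_rS_{r+1}\cdots S_{r+nd-1}}_{nd\text{ terms}} &= \bigl(\underbrace{t_rt_{r+1}\cdots t_{d(n-1)+r-1}}_{d(n-1)\text{ terms}}\bigr)\bigl(\underbrace{S_nS_{n-1}\cdots S_{n-d+1}}_{d\text{ terms}}\bigr), \\
\underbrace{S_{r+1}S_{r+2}\cdots S_{r+nd}}_{nd\text{ terms}} &= \bigl(\underbrace{t_{r+1}t_{r+2}\cdots t_{d(n-1)+r}}_{d(n-1)\text{ terms}}\bigr)\bigl(\underbrace{S_nS_{n-1}\cdots S_{n-d+1}}_{d\text{ terms}}\bigr).
\end{align*}
Cancelling the common tail $S_nS_{n-1}\cdots S_{n-d+1}$ reduces the desired equality to the $n$-cycle relation of $G_{Q'}$, which holds by Definition~\ref{defn_GQ_associated_group}(5). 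This establishes the $(n+1)$-cycle relation for at least one value of $r$.

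Second, I will promote this single cycle relation to all $n+1$ cycle relations by an obvious $(n+1)$-generator analog of Lemma~\ref{lem:oneall}(c). The required hypotheses, namely the braid relations $S_iS_{i+1}S_i = S_{i+1}S_iS_{i+1}$ for consecutive vertices of the $(n+1)$-cycle and the commutations $S_iS_j = S_jS_i$ for non-adjacent pairs, all follow from Lemma~\ref{lem:braidS2}, so the standard cyclic-conjugation argument (parts (a) and (b) of Lemma~\ref{lem:oneall}, adapted with subscripts mod $n+1$) transports the cycle relation from one starting vertex to every other.

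The main obstacle is purely bookkeeping: subscripts of the $t_i$ live modulo $n$ while subscripts of the $S_i$ live modulo $n+1$, and Lemma~\ref{lem:cyclehelp2} is stated only for $1\leq r\leq n$, so one has to be careful that the verification step genuinely covers a case in which both $r$ and $r+1$ lie in the valid range (this is why $1\leq r\leq n-1$ is chosen), and that the extension step correctly closes the cycle modulo $n+1$. This mirrors exactly the way Lemma~\ref{lem:cornercasen3} was proved using Lemma~\ref{lem:cyclehelp} together with Lemma~\ref{lem:oneall}(c), only with the roles of the $n$- and $(n+1)$-cycles reversed.
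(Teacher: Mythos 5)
Your proposal is correct and follows essentially the same route as the paper: Lemma~\ref{lem:braidS2} disposes of everything except the $(n+1)$-cycle relations, Lemma~\ref{lem:cyclehelp2} (applied at consecutive parameters, with the common tail $S_nS_{n-1}\cdots S_{n-d+1}$ cancelled) reduces those to the $n$-cycle relations of $G_{Q'}$, and the $(n+1)$-generator version of Lemma~\ref{lem:oneall} handles the remaining starting points of the cycle. The only cosmetic difference is that you verify one cycle relation and propagate it to all via Lemma~\ref{lem:oneall}(c), whereas the paper obtains all of $1\leq r\leq n-1$ directly from Lemma~\ref{lem:cyclehelp2} and invokes Lemma~\ref{lem:oneall} only for $r=0$ and $r=n$.
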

\begin{proof}
    The defining relations for $G_Q$, apart from the relations corresponding to the $(n+1)$-cycle $0\rightarrow 1\rightarrow \cdots \rightarrow n\rightarrow 0$, hold by
Lemma~\ref{lem:braidS2}.
    The relations for $G_{Q'}$ corresponding to the $n$-cycle
    $1\rightarrow 2\rightarrow \cdots \rightarrow n\rightarrow 1$ are:
$$\underbrace{(t_rt_{r+1} \cdots t_{d(n-1)+r-1}) }_{\text{$d(n-1)$ terms}}=
\underbrace{(t_{r+1}t_{r+2} \cdots t_{d(n-1)+r}) }_{\text{$d(n-1)$ terms}}
,$$
for $1\leq r\leq n$.
The relations for $G_Q$ corresponding to the $(n+1)$-cycle $0\rightarrow 1\rightarrow \cdots \rightarrow n+1\rightarrow 0$ are:
$$\underbrace{S_rS_{r+1}\cdots S_{r+nd-1}}_{\text{$dn$ terms}}=
\underbrace{S_{r+1}S_{r+2}\cdots S_{dn+r}}_{\text{$dn$ terms}},$$
for $0\leq r\leq n$.
For $1\leq r\leq n-1$, these relations now follow from Lemma~\ref{lem:cyclehelp2}. The relations for $r=0$ and $r=n$ follow from Lemma~\ref{lem:oneall} applied to $g_0=S_0$, $g_1=S_1, \ldots,$ $g_n=S_n$.
\end{proof}

\begin{lemma}
\label{lem:cornercasen2back}
Let $n=2$.
Let $Q$ be the quiver on the right of Figure~\ref{fig:cornercase}(b) and $Q'$ the quiver on the left. Let $G_Q$ (respectively $G_{Q'}$) be the group with generators $s_{i}$ (respectively $t_i$), with $i$ ranging through the vertices of the quivers, satisfying the relations associated with $Q$ (respectively $Q'$). Then, there is a group homomorphism, $\varphi^Q_0: G_Q\rightarrow G_{Q'}$ given by:
\begin{align*}
    \varphi^Q_0(s_n)=S_n=t_0t_nt_0^{-1}, \,
    \varphi^Q_0(s_b)=S_b=t_0t_bt_0^{-1}, \,
    \varphi^Q_0(s_i)=S_i=t_i \text{ for } i\neq n,b.
\end{align*}
\end{lemma}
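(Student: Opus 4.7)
The plan is to verify that the elements $S_i$ satisfy the defining relations of $G_Q$, where $Q$ is the quiver on the right of Figure~\ref{fig:cornercase}(b), by mirroring the argument of Lemma~\ref{lem:cornercasen2}. By Lemma~\ref{lem:braidS2}, the braid relations, commutation relations and unlabelled $3$-cycle relations of $G_Q$ are automatic, so it only remains to check the defining relations coming from (a) the $d$-labelled $3$-cycle around the conepoint of $Q$ on vertices $0,1,2$, and (b) the configurations of items~(\ref{item_doubleedge}) and (\ref{item_broken4cycle}) of Definition~\ref{defn_GQ_associated_group} appearing in $Q$ due to the outer arc labelled $a$.

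For (a), I would first apply Lemma~\ref{lem:cyclehelp2} specialised to $n=2$, which together with Lemma~\ref{lem:conjugatethrough} reduces each of the three forms of the cycle relation in item~(\ref{item_cycled}) to the single unoriented $d$-labelled edge relation
$$\underbrace{t_1 t_2 \cdots}_{d \text{ terms}} = \underbrace{t_2 t_1 \cdots}_{d \text{ terms}}$$
valid in $G_{Q'}$ (coming from the edge labelled $d$ between the two arcs incident with the conepoint on the left of Figure~\ref{fig:cornercase}(b)). Lemma~\ref{lem:oneall}(c) then propagates the relation between its cyclic shifts.

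For (b), I would reverse the chains of equivalences that appear in the proof of Lemma~\ref{lem:cornercasen2}. Each step in those chains is either a braid relation, a commutation, a multiplication by $t_0^{\pm 1}$, an insertion of $t_0^{-1} t_0$, or a substitution $S_n = t_0 t_n t_0^{-1}$, $S_b = t_0 t_b t_0^{-1}$; all such moves are reversible, so the same manipulations read backwards derive the required double-edge and $4$-cycle relations in $G_Q$ from relations already known to hold in $G_{Q'}$.

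The main obstacle is the careful bookkeeping forced by the small rank $n=2$: indices collide, several relations involve overlapping generators, and the substitution of conjugates by $t_0$ temporarily produces long words that must be simplified by repeated use of the commutation $t_0 t_a = t_a t_0$ and the braid relations for the arrows incident with vertex $0$ in $Q'$. I expect no new conceptual input beyond what Lemma~\ref{lem:cornercasen2} provides, so the proof will essentially be the mirror image of that one, with the roles of $s_i$ and $t_i$ exchanged and each equivalence read in the opposite direction.
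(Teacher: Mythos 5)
Your part (a) is exactly the paper's proof: after Lemma~\ref{lem:braidS2}, the only relations of $G_Q$ left to check are those of the $d$-labelled $3$-cycle $0\rightarrow 1\rightarrow 2\rightarrow 0$, and these are obtained by applying Lemma~\ref{lem:cyclehelp2} with $r=1,2$ to convert the $d$-labelled edge relation $t_1t_2\cdots t_d=t_2t_3\cdots t_{d+1}$ of $G_{Q'}$ into one cycle relation $S_1S_2\cdots S_{2d}=S_2S_3\cdots S_{2d+1}$, with Lemma~\ref{lem:oneall}(c) supplying the remaining cyclic forms.

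Your part (b), however, rests on a misreading of which quiver is which. In this lemma $Q$ is the \emph{right-hand} quiver of Figure~\ref{fig:cornercase}(b): after the flip the cone point has three incident arcs, so $Q$ has a labelled $3$-cycle but \emph{no} double edges and \emph{no} unoriented $d$-labelled edge; consequently there are no relations of type (\ref{item_doubleedge}) or (\ref{item_broken4cycle}) in $G_Q$ to verify. The outer arc $a$ contributes only the ordinary unlabelled $3$-cycle $1\rightarrow 2\rightarrow a\rightarrow 1$, which is already covered by Lemma~\ref{lem:braidS2} — indeed the $n=2$ case of that lemma is precisely where the double-edge and $4$-cycle relations of $Q'$ (the left-hand quiver) are consumed, to produce the braid relation for the arrow $1\rightarrow 2$ and the relations for the $3$-cycle through $a$ in $Q$. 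Moreover, ``reversing the chains'' of Lemma~\ref{lem:cornercasen2} would not literally work even if it were needed, because the two directions conjugate different generators ($S_1,S_c$ there versus $S_2,S_b$ here), so the computations are not mirror images of one another. None of this creates a gap — part (a) together with Lemma~\ref{lem:braidS2} already completes the proof — but part (b) should be deleted rather than carried out.
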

\begin{proof}
The defining relations for $G_Q$, apart from the relations corresponding to the $3$-cycle $0\rightarrow 1\rightarrow  2\rightarrow 0$, hold by Lemma~\ref{lem:braidS2}.

The relations for the $G_{Q'}$ corresponding to the unoriented edge in $Q'$ between vertices $1$ and $2$ labelled $d$ are
$$t_1t_2\cdots t_d=t_2t_3\cdots t_{d+1}$$
By Lemma~\ref{lem:cyclehelp2}, taking $r=1,2$, we have
$$(t_1t_2\cdots t_d)(S_2S_1\cdots S_{2-d+1})=S_1S_2\cdots S_{2d}$$
and
$$(t_2t_3\cdots t_{d+1})(S_2S_1\cdots S_{2-d+1})=
S_2S_3\cdots S_{2d+1},$$
giving
$$S_1S_2\cdots S_{2d}=S_2S_3\cdots S_{2d+1},$$
which is one of the cycle relations for the
$3$-cycle $0\rightarrow 1\rightarrow 2\rightarrow 0$ in $Q$.
The other cycle relations for this cycle follow from
Lemma~\ref{lem:oneall}, taking $g_0=S_0$, $g_1=S_1$ and $g_2=S_2$. Hence all the defining relations for $G_Q$ hold.
\end{proof}

\begin{figure}[ht]\scalebox{1}{
    \centering
 \begin{tikzpicture}[scale=2,
  quiverarrow/.style={black, -latex},
  mutationarc/.style={dashed, red, very thick},
  arc/.style={dashed, black},
  point/.style={gray},
  vertex/.style={black},
  conepoint/.style={gray, circle, draw=gray!100, fill=white!100, thick, inner sep=1.5pt},
  db/.style={thick, double, double distance=1.3pt, shorten <=-6pt}
  ]
 \begin{scope}
 \draw[dashed, red, thick] (2,2) arc
	[start angle=90,
		end angle=180,
		x radius=2cm,
		y radius =2cm
	] ;
    \draw[dashed] (0,0) arc
	[start angle=-90,
		end angle=0,
		x radius=2cm,
		y radius =2cm
	] ;
    \draw[dashed] (2,2) arc
	[start angle=100,
		end angle=150,
		x radius=1.45cm,
		y radius =2cm
	] ;
  \node[point] (p0) at (0,0) {$\bullet$};
  \node[point] (p1) at (-1,3) {$\bullet$};
  \node[point] (p2) at (2,2) {$\bullet$};
   \node[conepoint] (p3) at (1,1) {\scriptsize $d$};
  \draw[arc] (p0)--(p1);
  \draw[arc] (p1)--(p2);
  \draw[dashed,shorten <=3pt, shorten >=-1pt] (p3) .. controls +(10:0.5) and +(250:0.5) .. coordinate[pos=0.15](t1) coordinate[midway](w2) (p2);
  \node[vertex,rotate=-63](tt1) at (t1) {$\bowtie$};
  
  \node[vertex, label=
{[label distance=-20pt]0:{\tiny $5$}}] (v5) at (-0.42,1.22) {$\bullet$};
\node[vertex, label=
{[label distance=-5pt]90:{\tiny $3$}}] (v3) at (0.9,2.36) {$\bullet$};
\node[vertex, label=
{[label distance=-10pt]95:{\tiny $4$}}] (v4) at (0.55,1.35) {$\bullet$};
\node[vertex, label=
{[label distance=-7pt]90:{\tiny $0$}}] (v0) at (1.38,1.63) {$\bullet$};
\node[vertex, label=
{[label distance=-8pt]-90:{\tiny $1$}}] (v1) at (1.25,0.45) {$\bullet$};
\node[vertex, label=
{[label distance=-12pt]-10:{\tiny $2$}}] (v2) at (1.64,1.32) {$\bullet$};
  \draw[quiverarrow] (v3)--(v5);
  \draw[quiverarrow] (v4)--(v3);
  \draw[quiverarrow] (v5)--(v4);
  \draw[quiverarrow] (v2)--(v4);
  \draw[quiverarrow] (v0)--(v4);
  \draw [-, shorten <=-2pt, shorten >=-2pt] (v2) to node[above right,xshift=-0.1cm] {\tiny $d$} (v0);
  \draw[quiverarrow] (v1)--(v0);
  \draw[quiverarrow] (v1)--(v2);
  \draw[quiverarrow] (0.5,1.25) arc
	[start angle=170,
		end angle=230,
		x radius=1.7cm,
		y radius =0.88cm
	] ;
 \draw[db] (v4)--(0.8,1.1);
 \draw[db] (v1)--(1.1,0.8);
 \node[vertex] at (v1) {$\bullet$};
\draw [<->] (2.7,1.2) to node[above] {$\mu_4$} (3.7,1.2);
 \end{scope}
  \begin{scope}[xshift=5cm]
  \draw[dashed,red,thick](-0.98,3)--(0.018,1.1);
  \draw[dashed, red, thick] (2,2) arc
	[start angle=-15,
		end angle=-140,
		x radius=1.2cm,
		y radius =1.9cm
	] ;
    \draw[dashed] (0,0) arc
	[start angle=-90,
		end angle=0,
		x radius=2cm,
		y radius =2cm
	] ;
    \draw[dashed] (2,2) arc
	[start angle=100,
		end angle=150,
		x radius=1.45cm,
		y radius =2cm
	] ;
  \node[point] (p0) at (0,0) {$\bullet$};
  \node[point] (p1) at (-1,3) {$\bullet$};
  \node[point] (p2) at (2,2) {$\bullet$};
   \node[conepoint] (p3) at (1,1.08) {\scriptsize $d$};
  \draw[dashed,shorten <=3pt, shorten >=-1pt] (p3) .. controls +(10:0.6) and +(250:0.6) .. coordinate[pos=0.13](t1) coordinate[midway](w2) (p2);
  \node[vertex,rotate=-63](tt1) at (t1) {$\bowtie$};

  \draw[arc] (p0)--(p1);
  \draw[arc] (p1)--(p2);
  \node[vertex, label=
{[label distance=-20pt]0:{\tiny $5$}}] (v5) at (-0.42,1.22) {$\bullet$};
\node[vertex, label=
{[label distance=-5pt]90:{\tiny $3$}}] (v3) at (0.3,2.56) {$\bullet$};
\node[vertex, label=
{[label distance=-5pt]340:{\tiny $2$}}] (v2) at (1.64,1.32) {$\bullet$};
\node[vertex, label=
{[label distance=-6pt]90:{\tiny $0$}}] (v0) at (1.38,1.63) {$\bullet$};
\node[vertex, label=
{[label distance=-7pt]-90:{\tiny $1$}}] (v1) at (0.85,0.17) {$\bullet$};
\node[vertex, label=
{[label distance=-7pt]-80:{\tiny $4$}}] (v4) at (1.27,0.68) {$\bullet$};
  \draw[quiverarrow] (v0)--(v3);
  \draw[-, shorten <=-2pt, shorten >=-2pt] (v2) to node[above right,xshift=-0.1cm] {\tiny $d$} (v0);
  \draw[quiverarrow] (v4)--(1.64,1.22);
  \draw[quiverarrow] (v4)--(v0);
  \draw[quiverarrow] (0.9,0.27)--(1.2,0.58);
   \draw[quiverarrow] (1.55,1.35) arc
	[start angle=-120,
		end angle=-166,
		x radius=2.3cm,
		y radius =1.6cm
	] ;
 \draw[quiverarrow] (-0.35,1.12) arc
	[start angle=200,
		end angle=265,
		x radius=1.2cm,
		y radius =1.4cm
	] ;
 \draw[quiverarrow] (0.23,2.48) arc
	[start angle=174,
		end angle=280,
		x radius=0.73cm,
		y radius =1.6cm
	] ;
 \draw[quiverarrow] (1.1,0.57) arc
	[start angle=-70,
		end angle=-136,
		x radius=1.32cm,
		y radius =2.6cm
	] ;
 \draw[thick, double, double distance=1.3pt] (1.22,0.78)--(1.12,0.89);
 \draw[thick, double, double distance=1.3pt] (0.3,2.4)--(0.85,1.25);
 \end{scope}
 \end{tikzpicture}}
\caption{A mutation near the cone point.}
\label{fig:mutationnearcone}
\end{figure}
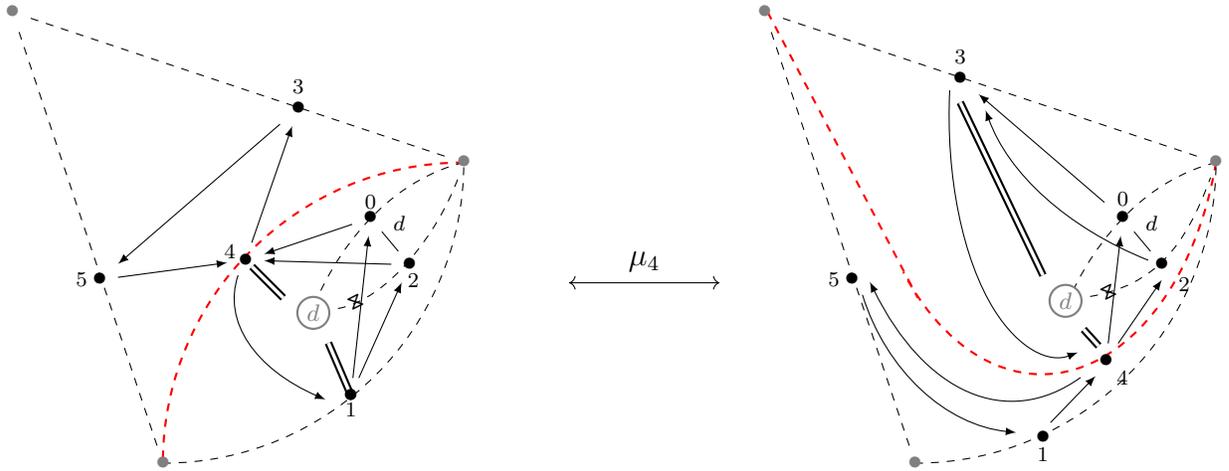

\begin{lemma}
\label{lem:mutationnearcone}
Let $Q$ be the quiver on the left of Figure~\ref{fig:mutationnearcone} and $Q'$ the quiver on the right. Let $G_Q$ (respectively $G_{Q'}$) be the group with generators $s_{i}$ (respectively $t_i$) with $0\leq i\leq 5$ satisfying the relations associated with $Q$ (respectively $Q'$). Then, there is a group homomorphism:
\begin{align*}
    \varphi^Q_4: G_{Q}\rightarrow G_{Q'} \text{ given by } &\varphi^Q_4(s_0)=S_0=t_4t_0t_4^{-1}, \, \varphi^Q_4(s_2)=S_2=t_4t_2t_4^{-1}, \, \\&\varphi^Q_4(s_5)=S_5=t_4t_5t_4^{-1},\, \varphi^Q_4(s_i)=S_i=t_i \text{ for } i= 1,3,4.
\end{align*}
\end{lemma}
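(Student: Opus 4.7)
The plan is to verify that $\varphi^Q_4$ extends to a well-defined group homomorphism by showing that the elements $S_i:=\varphi^Q_4(s_i)\in G_{Q'}$ satisfy all defining relations of $G_Q$ from Definition~\ref{defn_GQ_associated_group}. I would proceed relation by relation, sorted by which decoration of the quiver they involve: first the plain braid/commutation/$3$-cycle relations (items (1)--(3)), and then the cone-point-adjacent relations (items (4), (6), (7)) which are the new content compared to~\cite{GM}.

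For the plain relations, I would read off the full sub-quiver of $Q$ on any pair or triple of vertices together with vertex $4$ (and likewise for $Q'$) and observe that, in every case, both $Q$ and $Q'$ restricted to these vertices fit one of the six patterns (a)--(f) of Figure~\ref{fig:GMcases}. Hence Proposition~\ref{prop:GMcase} immediately yields that $S_i,S_j$ (and $S_i,S_j,S_k$ for a $3$-cycle) satisfy the required braid, commutation or $3$-cycle relation of $G_Q$. This takes care of, for example, the triangle $3\rightarrow 5\rightarrow 4\rightarrow 3$ inside $Q$ and all non-cone-point $3$-cycles, as well as the pairwise braid/commutation relations between every $s_i$ and $s_4$ (where $S_i$ is literally a conjugate of $t_i$ by $t_4$) and the braid relations $s_is_js_i=s_js_is_j$ whose arrow is not incident to $4$.

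The cone-point relations require direct manipulation inside $G_{Q'}$. The labeled edge between $0$ and $2$ is present in both $Q$ and $Q'$, so I would start from the relation $\underbrace{t_0t_2\cdots}_{d}=\underbrace{t_2t_0\cdots}_{d}$ of $G_{Q'}$ and conjugate by $t_4$; since $t_4$ braids with both $t_0$ and $t_2$ in $G_{Q'}$, the conjugation distributes entry by entry, yielding the required relation $\underbrace{S_0S_2\cdots}_{d}=\underbrace{S_2S_0\cdots}_{d}$ in $G_{Q'}$. A similar but longer manipulation, patterned on the double-edge computation inside the proof of Lemma~\ref{lem:cornercasen2}, should establish relation (6) for the configuration $(0,2,4)$ in $Q$: start from the double-edge relation $t_3t_0t_2t_3t_0t_2=t_0t_2t_3t_0t_2t_3$ valid in $G_{Q'}$ (where vertex $3$ plays the role of the double-edge apex in $Q'$), then use the braid relations between $t_4$ and $t_0,t_2,t_3$ to rewrite both sides after inserting the conjugations $t_4(\cdot)t_4^{-1}$ in the correct positions. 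The broken-$4$-cycle relations (7), in which vertices $0,2$ and their double-edge partners participate, are then handled by combining (6) with the braid relations already established, exactly as Remark~\ref{rem:tworelations} explains why only two such identities are independent.

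The main obstacle is the double-edge relation (6), because the mutation actually \emph{moves} which vertices carry the double edges (from $4,1$ in $Q$ to $4,3$ in $Q'$, with corresponding changes on the opposite side), so the required identity in $G_Q$ is not the direct image of any single identity in $G_{Q'}$. Verifying it demands a delicate chain of braid and commutation moves analogous to the long computation in Lemma~\ref{lem:cornercasen2}, and is the step where I would expect most of the technical work to live; I would handle it by introducing $t_4t_4^{-1}$ at the right places to convert conjugated elements back to $t_i$'s, applying the double-edge identity in $G_{Q'}$, and conjugating back. Once this is in place, the remaining identities fall out mechanically.
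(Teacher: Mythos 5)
Your overall skeleton (verify that the $S_i$ satisfy the defining relations of $G_Q$, dispatch the undecorated braid/commutation/$3$-cycle relations via Proposition~\ref{prop:GMcase}, and obtain the $d$-term alternating relation between $S_0$ and $S_2$ by conjugating the corresponding relation of $G_{Q'}$ by $t_4$) matches the paper's proof. But the treatment of the two double-edge relations of $G_Q$ -- which are the actual content of this lemma -- is where the proposal goes astray. First, the relation for the configuration $(0,2,4)$, namely $S_2S_0S_4S_2S_0S_4=S_4S_2S_0S_4S_2S_0$, is the \emph{easy} one: since $S_2S_0S_4=t_4t_2t_4^{-1}\cdot t_4t_0t_4^{-1}\cdot t_4=t_4t_2t_0$, both sides are conjugates by $t_4$ of the two sides of the double-edge relation at vertex $4$ in $Q'$, $t_2t_0t_4t_2t_0t_4=t_4t_2t_0t_4t_2t_0$, and the identity follows immediately. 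You instead propose to derive it from the double-edge relation at vertex $3$ in $Q'$; that is the wrong source relation -- it involves $t_3$, which does not appear in the target identity, and you give no mechanism for eliminating it. (The vertex-$3$ double-edge relation of $Q'$ is in fact never needed in this direction of the mutation.)

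Second, and more seriously, the relation that genuinely requires work is the double-edge relation at vertex $1$ in $Q$, i.e.\ $S_2S_0S_1S_2S_0S_1=S_1S_2S_0S_1S_2S_0$ with $S_1=t_1$ unconjugated. This is the identity that is not the image of any single relation of $G_{Q'}$, and establishing it requires a substantial chain of manipulations: one inserts $t_4^{-1}t_4$, moves $t_1$ past $t_2t_0$ using the commutations $t_1t_0=t_0t_1$ and $t_1t_2=t_2t_1$ that hold in $Q'$ (the arrows between $1$ and $0,2$ cancel under mutation at $4$), applies the braid relation between $t_1$ and $t_4$, invokes the vertex-$4$ double-edge relation of $Q'$ in the middle, and then unwinds. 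Your proposal never identifies this relation as the target; the closing paragraph acknowledges that ``most of the technical work'' lives in the double-edge relations and sketches the generic method, but does not demonstrate that the computation closes, which is the heart of the lemma. Finally, a minor point: there are no relations of type (\ref{item_broken4cycle}) to check here -- the vertices $0,1,2,4$ of $Q$ do not form an oriented $4$-cycle through the $d$-labelled edge, so that part of your plan is vacuous.
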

\begin{proof}
In order to check that $\varphi^Q_4$ is well-defined, it is enough to prove that the elements $S_i$, for $0\leq i\leq 5$, satisfy the defining relations of $G_{Q}$.

All the braid and commutation relations for $G_Q$, as well as the cycle relations:
\begin{align*}
    S_4 S_1 S_0 S_4 &= S_1 S_0 S_4 S_1= S_0 S_4 S_1 S_0, \\
    S_4 S_1 S_2 S_4 &= S_1 S_2 S_4 S_1= S_2 S_4 S_1 S_2, \\
    S_4 S_3 S_5 S_4 &= S_3 S_5 S_4 S_3= S_5 S_4 S_3 S_5,
\end{align*}
hold by Proposition~\ref{prop:GMcase}. Moreover, the relation 
\begin{align*}
\underbrace{t_2t_0t_2t_0\cdots }_{\text{$d$ terms}} &=\underbrace{t_0t_2t_0t_2\cdots }_{\text{$d$ terms}}
\end{align*}
implies that
\begin{align*}
\underbrace{S_4^{-1}S_2S_4 S_4^{-1}S_0S_4 S_4^{-1}S_2S_4 S_4^{-1}S_0S_4\cdots S_4}_{\text{$3d$ terms}} &=\underbrace{S_4^{-1}S_0S_4 S_4^{-1}S_2S_4 S_4^{-1}S_0S_4 S_4^{-1}S_2S_4\cdots S_4}_{\text{$3d$ terms}}
\end{align*}
Since $S_4S_4^{-1}$ is the identity, we can cancel the $d-1$ occurrences of this product on both sides, reducing the number of terms to $3d-(2d-2)=d+2$ on both sides. Finally, since the two sides start and end with the same element, these can be cancelled to obtain 
\begin{align*}
\underbrace{S_2S_0S_2S_0\cdots }_{\text{$d$ terms}} &=\underbrace{S_0S_2S_0S_2\cdots }_{\text{$d$ terms}}.
\end{align*}
The double edge relation at vertex $4$ in $Q'$ is:
$$
t_2t_0t_4t_2t_0t_4=t_4t_2t_0t_4t_2t_0.
$$
This, together with the definition of the $S_i$'s, implies that
\begin{align*}
    S_4^{-1}S_2S_4 S_4^{-1}S_0S_4 S_4 S_4^{-1}S_2S_4 S_4^{-1}S_0S_4 S_4=
    S_4 S_4^{-1}S_2S_4 S_4^{-1}S_0S_4 S_4 S_4^{-1}S_2S_4 S_4^{-1}S_0S_4.
\end{align*}
Multiplying both sides by $S_4$ on the left and by $S_4^{-1}$ on the right and cancelling the occurrences of $S_4S_4^{-1}$, we obtain $S_2S_0S_4S_2S_0S_4=S_4S_2S_0S_4S_2S_0$. It only remains to show the  double edge relation at vertex $1$ holds. Using the definition of the $S_i$'s and the defining relations of $G_{Q'}$, we have 
\begin{align*}
S_2S_0S_1S_2S_0S_1&=t_4t_2\cancel{t_4^{-1}t_4}t_0t_4^{-1}t_1t_4t_2\cancel{t_4^{-1}t_4}t_0t_4^{-1}t_1
=t_4t_2t_0t_1t_4t_1^{-1}t_2t_0t_4^{-1}t_1\\
&=t_4t_1t_2t_0t_4t_2t_0t_1^{-1}t_4^{-1}t_1
=t_4t_1\underline{t_2t_0t_4t_2t_0t_4}t_1^{-1}t_4^{-1}\\
&=t_4t_1\underline{t_4t_2t_0t_4t_2t_0}t_1^{-1}t_4^{-1}
=t_1t_4t_1t_2t_0t_4t_2t_0t_1^{-1}t_4^{-1}\\
&=t_1t_4t_2t_0t_1t_4t_1^{-1}t_2t_0t_4^{-1}
=t_1t_4t_2t_0t_4^{-1}t_1t_4t_2t_0t_4^{-1}\\
&=t_1t_4t_2t_4^{-1}t_4t_0t_4^{-1}t_1t_4t_2t_4^{-1}t_4t_0t_4^{-1}
=S_1S_2S_0S_1S_2S_0,
\end{align*}
where the underlined relation is the relation for the double edge at vertex $4$ in $Q'$, while all the other equalities follow from braid relations, commutations or multiplying by, or simplifying, the identity $t_i^{-1}t_i$. Hence all the defining relations of $G_Q$ hold.
\end{proof}

\begin{lemma}
\label{lem:mutationnearcone_otherway}
Let $Q$ be the quiver on the right of Figure~\ref{fig:mutationnearcone} and $Q'$ the quiver on the left. Let $G_Q$ (respectively $G_{Q'}$) be the group with generators $s_{i}$ (respectively $t_i$) with $0\leq i\leq 5$ satisfying the relations associated with $Q$ (respectively $Q'$). Then, there is a group homomorphism
\begin{align*}
    \varphi^Q_4: G_Q\rightarrow G_{Q'} \text{ given by } \varphi^Q_4(s_1)=S_1=t_4t_1t_4^{-1}, \, \varphi^Q_4(s_3)=S_3=t_4t_3t_4^{-1}, \,\varphi^Q_4(s_i)=S_i=t_i \text{ for } i\neq 1,3.
\end{align*}
\end{lemma}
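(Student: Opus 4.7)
The plan is to verify that the elements $S_i$ defined in the statement satisfy all the defining relations of $G_Q$, where $Q$ is now the right-hand quiver of Figure~\ref{fig:mutationnearcone}. The approach mirrors that of Lemma~\ref{lem:mutationnearcone}: since the mutation rule in Definition~\ref{defn_mutationrules} is involutive, mutating the right quiver at vertex $4$ returns the left quiver, and we are in the symmetric situation with the roles of $Q$ and $Q'$ swapped.

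First, the braid relations, the commutation relations, and the relations for every oriented $3$-cycle of $Q$ that does not carry the $d$-label follow immediately from Proposition~\ref{prop:GMcase}. Each such relation sits in one of the local configurations (a)--(f) of Figure~\ref{fig:GMcases} centred at the mutated vertex $k=4$, and the only generators in conjugated form are $S_1 = t_4 t_1 t_4^{-1}$ and $S_3 = t_4 t_3 t_4^{-1}$. Next, the labelled-edge relation $\underbrace{S_0 S_2 \cdots}_{d \text{ terms}} = \underbrace{S_2 S_0 \cdots}_{d \text{ terms}}$ transfers verbatim from $G_{Q'}$: we have $S_0 = t_0$ and $S_2 = t_2$, and the labelled edge between vertices $0$ and $2$ is present in both quivers.

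The main content is then the verification of the double-edge relations in $Q$ (together with any broken-$4$-cycle relations coming from Definition~\ref{defn_GQ_associated_group}(\ref{item_broken4cycle})). The right quiver has double edges at vertices $4$ and $3$ pointing at the labelled edge between $0$ and $2$. The relation at vertex $4$ involves only $S_0$, $S_2$, $S_4$, none of which are conjugated, so it coincides with the corresponding relation in $G_{Q'}$. The relation at vertex $3$, namely $S_2 S_0 S_3 S_2 S_0 S_3 = S_3 S_2 S_0 S_3 S_2 S_0$, requires substitution of $S_3 = t_4 t_3 t_4^{-1}$, which reduces it to an identity in $G_{Q'}$ to be derived from the braid relation $t_3 t_4 t_3 = t_4 t_3 t_4$, the commutations $t_3 t_0 = t_0 t_3$ and $t_3 t_2 = t_2 t_3$, and the double-edge relation at vertex $4$ in $G_{Q'}$.

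I expect the derivation of the double-edge relation at vertex $3$, and of any broken-$4$-cycle relations built around the labelled edge together with the two double-edge vertices, to be the main technical obstacle. However, it amounts to routine bookkeeping modelled on the chain of manipulations that verifies the double-edge relation at vertex $1$ at the end of the proof of Lemma~\ref{lem:mutationnearcone}: in each case one conjugates the ambient relation of $G_{Q'}$ by appropriate powers of $t_4$ and then eliminates the conjugating factors by repeatedly applying the braid and commutation relations listed above. Once all these checks are in place, the assignment $s_i \mapsto S_i$ extends to a well-defined group homomorphism, which is the content of the lemma.
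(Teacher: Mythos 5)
Your proposal is correct and follows essentially the same route as the paper: braid, commutation and unlabelled $3$-cycle relations via Proposition~\ref{prop:GMcase}, the labelled-edge and vertex-$4$ double-edge relations transferring verbatim since $S_0=t_0$, $S_2=t_2$, $S_4=t_4$, and the vertex-$3$ double-edge relation as the one genuine computation. The "routine bookkeeping" you defer is exactly the chain the paper writes out, using precisely the relations you list ($t_3t_4t_3=t_4t_3t_4$, the commutations of $t_3$ with $t_0$ and $t_2$, and the $Q'$ double-edge relation at vertex $4$); there are no broken-$4$-cycle relations to check in this configuration, so nothing is missing.
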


\begin{proof}
 In order to check that $\varphi_4$ is well-defined, it is enough to prove that the elements $S_i$, for $0\leq i\leq 5$, satisfy the defining relations of $G_Q$.

First note that the relations
\begin{align*}
S_2S_0S_4S_2S_0S_4=S_4S_2S_0S_4S_2S_0, \,\,
    \underbrace{S_0S_2\cdots }_{\text{$d$ terms}} =\underbrace{S_2S_0\cdots }_{\text{$d$ terms}}
\end{align*}
trivially follow from the corresponding relations for $Q'$. Moreover, all the braid and commutation relations for $Q$, as well as the cycle relations
\begin{align*}
    S_1 S_4 S_5 S_1= S_4 S_5 S_1 S_4= S_5 S_1 S_4 S_5, \\
    S_2 S_3 S_4 S_2= S_3 S_4 S_2 S_3= S_4 S_2 S_3 S_4,\\
    S_0 S_3 S_4 S_0= S_3 S_4 S_0 S_1= S_4 S_0 S_1 S_4
\end{align*}
hold by Proposition~\ref{prop:GMcase}.
It only remains to show the second double edge relation holds. Using the definition of the elements $S_i$ and the defining relations of $G_{Q'}$, we have 
\begin{align*}
    S_2 S_0 S_3 S_2 S_0 S_3&= t_2t_0t_4t_3t_4^{-1}t_2t_0t_4t_3t_4^{-1}
    =t_2t_0t_3^{-1}t_4t_3t_2t_0t_4t_3t_4^{-1}
    =t_3^{-1}t_2t_0t_4t_2t_0t_3t_4t_3t_4^{-1}\\
    &=t_3^{-1}\underline{t_2t_0t_4t_2t_0t_4}t_3t_4t_4^{-1}
    =t_3^{-1}\underline{t_4t_2t_0t_4t_2t_0}t_3
    =t_4t_4^{-1}t_3^{-1}t_4t_2t_0t_4t_3t_2t_0\\
    &=t_4t_3t_4^{-1}t_3^{-1}t_2t_0t_4t_3t_2t_0
    =t_4t_3t_4^{-1}t_2t_0t_3^{-1}t_4t_3t_2t_0
    =t_4t_3t_4^{-1}t_2t_0t_4t_3t_4^{-1}t_2t_0\\
    &= S_3 S_2 S_0 S_3 S_2 S_0,
\end{align*}
where the underlined relation is the relation for the double edge at vertex $4$ in $Q'$, while all the other equalities follow from braid relations, commutations or multiplying by, or simplifying, the identity $t_it_i^{-1}$. Hence all the defining relations of $G_Q$ hold as required.
\end{proof}

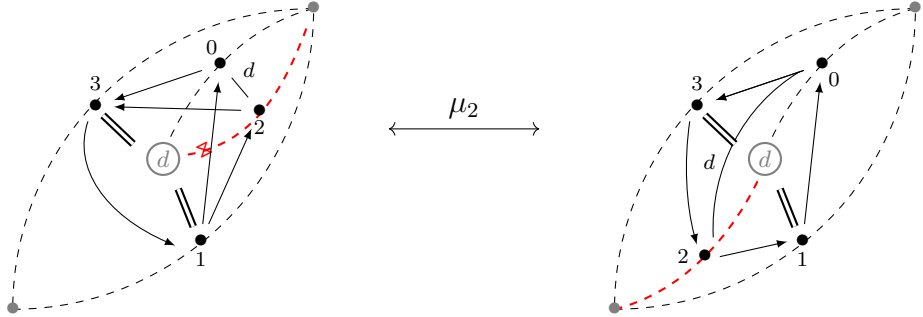
\begin{figure}[ht]\scalebox{1}{
    \centering
 \begin{tikzpicture}[scale=2,
  quiverarrow/.style={black, -latex},
  mutationarc/.style={dashed, red, very thick},
  arc/.style={dashed, black},
  point/.style={gray},
  vertex/.style={black},
  conepoint/.style={gray, circle, draw=gray!100, fill=white!100, thick, inner sep=1.5pt},
  db/.style={thick, double, double distance=1.3pt, shorten <=-6pt}
  ]
 \begin{scope}
 \draw[dashed] (2,2) arc
	[start angle=90,
		end angle=180,
		x radius=2cm,
		y radius =2cm
	] ;
    \draw[dashed] (0,0) arc
	[start angle=-90,
		end angle=0,
		x radius=2cm,
		y radius =2cm
	] ;
    \draw[dashed] (2,2) arc
	[start angle=100,
		end angle=150,
		x radius=1.45cm,
		y radius =2cm
	] ;
 \node[point] (p0) at (0,0) {$\bullet$};
 \node[point] (p1) at (2,2) {$\bullet$};
 \node[conepoint] (p3) at (1,1) {\scriptsize $d$};

\draw[dashed,red, thick, shorten <=3pt, shorten >=-1pt] (p3) .. controls +(10:0.6) and +(250:0.6) .. coordinate[pos=0.12](t1) coordinate[midway](w2) (p1);
\node[vertex,rotate=-63, red](tt1) at (t1) {$\bowtie$};
  
 \node[vertex, label=
{[label distance=-22pt]-90:{\tiny $3$}}] (v3) at (0.55,1.35) {$\bullet$};
 \node[vertex, label=
{[label distance=-20pt]90:{\tiny $1$}}] (v1) at (1.25,0.45) {$\bullet$};
 \node[vertex, label=
{[label distance=-10pt]175:{\tiny $0$}}] (v0) at (1.38,1.63) {$\bullet$};
 \node[vertex, label=
{[label distance=-7pt]270:{\tiny $2$}}] (v2) at (1.64,1.32) {$\bullet$};
  \draw[quiverarrow] (v0)--(v3);
  \draw[quiverarrow] (v2)--(v3);
  \draw[quiverarrow] (v1)--(v0);
  \draw[quiverarrow] (v1)--(v2);
  \draw [-, shorten <=-2pt, shorten >=-2pt] (v2) to node[above right,xshift=-0.1cm] {\tiny $d$} (v0);
  \draw[quiverarrow] (0.5,1.25) arc
	[start angle=170,
		end angle=230,
		x radius=1.7cm,
		y radius =0.88cm
	] ;
 \draw[thick, double, double distance=1.3pt] (0.6,1.29)--(0.8,1.1);
 \draw[thick, double, double distance=1.3pt] (1.2,0.55)--(1.1,0.8);
  \draw [<->] (2.5,1.2) to node[above] {$\mu_2$} (3.5,1.2);
 \end{scope}
  \begin{scope}[xshift=4cm]
  \draw[dashed] (2,2) arc
	[start angle=90,
		end angle=180,
		x radius=2cm,
		y radius =2cm
	] ;
    \draw[dashed] (0,0) arc
	[start angle=-90,
		end angle=0,
		x radius=2cm,
		y radius =2cm
	] ;
    \draw[dashed] (2,2) arc
	[start angle=100,
		end angle=150,
		x radius=1.45cm,
		y radius =2cm
	] ;
 \draw[dashed, red, thick] (0,0) arc
	[start angle=-80,
		end angle=-30,
		x radius=1.45cm,
		y radius =2cm
	] ;
  \node[point] (p0) at (0,0) {$\bullet$};
 \node[point] (p1) at (2,2) {$\bullet$};
 \node[conepoint] (p3) at (1,1) {\scriptsize $d$};
 \node[vertex, label=
{[label distance=-22pt]-90:{\tiny $3$}}] (v3) at (0.55,1.35) {$\bullet$};
 \node[vertex, label=
{[label distance=-20pt]90:{\tiny $1$}}] (v1) at (1.25,0.45) {$\bullet$};
 \node[vertex, label=
{[label distance=-10pt]330:{\tiny $0$}}] (v0) at (1.38,1.63) {$\bullet$};
 \node[vertex, label=
{[label distance=-5pt]180:{\tiny $2$}}] (v2) at (0.6,0.35) {$\bullet$};
  \draw[quiverarrow] (v0)--(v3);
  \draw[quiverarrow] (v1)--(v0);
  \draw[quiverarrow] (0.7,0.35)--(1.15,0.45);
  \draw[quiverarrow] (v0)--(v3);
  \draw (0.63,0.98) node{$\scriptstyle d$};
  \draw[-] (1.23,1.58) arc
	[start angle=120,
		end angle=183,
		x radius=1.15cm,
		y radius =1.2cm
	] ;
 \draw[quiverarrow] (0.5,1.25) arc
	[start angle=170,
		end angle=198,
		x radius=1.4cm,
		y radius =1.7cm
	] ;
 \draw[thick, double, double distance=1.3pt] (0.6,1.29)--(0.8,1.1);
 \draw[thick, double, double distance=1.3pt] (1.2,0.55)--(1.1,0.8);
 \end{scope}
 \end{tikzpicture}}
 \caption{Mutation of a tagged to an untagged arc and viceversa, case $1$.}
\label{fig:mutation_tagged}
\end{figure}

\begin{lemma}
\label{lem:GM_with_d}
Let $d\geq 1$ be an integer and let $G$ be a group containing elements $a,b$ satisfying
\begin{equation}
\underbrace{ab\cdots }_{d \text{ terms}}=
\underbrace{ab\cdots }_{d \text { terms}}.
\label{eq:braidab}
\end{equation}
Then the elements $A=a$ and $B=aba^{-1}$ satisfy the same relation:
$$\underbrace{AB\cdots }_{d \text{ terms}}=
\underbrace{AB\cdots }_{d \text { terms}}.$$
\end{lemma}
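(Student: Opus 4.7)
The statement as displayed contains an obvious typo (both sides are identical); from context the intended hypothesis is the braid/commutation relation of length $d$, namely $\underbrace{ab\cdots}_{d} = \underbrace{ba\cdots}_{d}$, and likewise for the conclusion with $A,B$. I will proceed under that reading.

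My plan is to reduce everything to a single clean identity. Observe that $BA = aba^{-1}\cdot a = ab$, while $AB = a\cdot aba^{-1} = a(ab)a^{-1}$. By an immediate induction (or direct telescoping of the conjugations), this gives $(AB)^m = a(ab)^m a^{-1}$ and $(BA)^m = (ab)^m$ for every $m\ge 0$. A second elementary calculation, $a^{-1}(ab)^m a = b(ab)^{m-1}a = (ba)^m$, will be the only structural fact I need.

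I will then split on the parity of $d$. If $d=2k$ is even, the hypothesis reads $(ab)^k = (ba)^k$, hence $a^{-1}(ab)^k a = (ba)^k = (ab)^k$, so $a$ commutes with $(ab)^k$. This immediately yields $(AB)^k = a(ab)^k a^{-1} = (ab)^k = (BA)^k$, which is exactly the length-$d$ relation for $A,B$.

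If $d=2k+1$ is odd, the hypothesis is $(ab)^k a = (ba)^k b$; rewriting the left side as $a(ba)^k$ gives the equivalent form $a(ba)^k = (ba)^k b$. The length-$d$ relation to establish is $(AB)^k A = (BA)^k B$, i.e.\ $a(ab)^k = (ab)^k\, aba^{-1}$, which rearranges to $a(ab)^k a = (ab)^{k+1}$. Using $(ab)^k a = a(ba)^k$ and the hypothesis twice, the left-hand side becomes $a^2(ba)^k = a(ba)^k b = (ba)^k b^2$; using $(ab)^{k+1} = a(ba)^k\cdot b$ and the hypothesis once, the right-hand side is also $(ba)^k b\cdot b = (ba)^k b^2$. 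Equality follows, completing the odd case.

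There is no serious obstacle: the only minor subtlety is bookkeeping the parity, and the single computational input $a^{-1}(ab)^m a = (ba)^m$ does all of the work. The proof is purely group-theoretic and does not require any of the preceding geometric material.
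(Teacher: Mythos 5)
Your proof is correct, and you were right to flag that the displayed relation \eqref{eq:braidab} contains a typo: the intended hypothesis (and conclusion) is $\underbrace{ab\cdots}_{d}=\underbrace{ba\cdots}_{d}$, which is also what the paper's own proof uses. Both your argument and the paper's rest on the same core observation — since $B=aba^{-1}$, the relation for $A,B$ is the conjugate by $a$ of the relation for $a,b$ — but the executions differ. The paper substitutes $a=A$, $b=A^{-1}BA$ directly into the alternating word of length $d$ and then cancels the resulting $AA^{-1}$ pairs, keeping careful count of how many terms survive on each side (using $\lceil d/2\rceil$ and $\lfloor d/2\rfloor$ but never actually splitting into cases); this is uniform in $d$ but the bookkeeping of term counts is the delicate part. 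You instead package the alternating words as powers via $BA=ab$, $(AB)^m=a(ab)^ma^{-1}$ and the identity $a^{-1}(ab)^ma=(ba)^m$, and then split on the parity of $d$: the even case becomes the statement that $a$ commutes with $(ab)^{d/2}$, and the odd case is a short two-line computation. Your version trades the uniformity of the paper's cancellation count for two transparent, easily verified cases; both are complete and purely group-theoretic.
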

\begin{proof}
By relation (\ref{eq:braidab}), we have
\begin{align*}    \underbrace{A^{-1}BA\cancel{AA^{-1}}BA\cancel{A}\cdots A}_{\text{$3\cdot \lceil d/2\rceil+\lfloor d/2 \rfloor$ terms}} &=\underbrace{\cancel{AA^{-1}}BA\cancel{AA^{-1}}AB\cdots A,}_{\text{$3\cdot \lfloor d/2\rfloor+\lceil d/2 \rceil$ terms}}
    \end{align*}
    where for a real number $r$, $\lfloor r\rfloor$ (respectively $\lceil r\rceil$) is the largest (respectively smallest) integer at most (respectively at least) equal to $r$.
    Cancelling the occurrences of $AA^{-1}$, on the left hand side we cancel $2\cdot(\lceil d/2 \rceil -1)$ terms, while on the right hand side we cancel $2\cdot \lfloor d/2\rfloor $ terms. Hence we have $d+2$ terms on the left hand side and $d$ terms on the right hand side. Since the last term on both sides is $A$, we can cancel it. Moreover, multiplying both sides by $A$ on the left and simplifying the left hand side, we obtain
    \begin{align*}
        \underbrace{BA\cdots }_{\text{$d$ terms}} &=\underbrace{AB\cdots }_{\text{$d$ terms}},
    \end{align*}
as required.
\end{proof}

\begin{lemma}
\label{lem:mutation_tagged}
Let $Q$ be the quiver on the left of Figure~\ref{fig:mutation_tagged} and $Q'$ the quiver on the right. Let $G_Q$ (respectively $G_{Q'}$) be the group with generators $s_i$ (respectively $t_i$) with $0\leq i\leq 3$ satisfying the relations associated with $Q$ (respectively $Q'$). Then there is a group homomorphism as follows:
\begin{align*}
    \varphi^Q_2: G_{Q}\rightarrow G_{Q'} \text{ defined as } \varphi^Q_2(s_1)=S_1=t_2t_1t_2^{-1}, \, \varphi^Q_2(t_i)=S_i=T_i \text{ for } i=0,2,3.
\end{align*}
\end{lemma}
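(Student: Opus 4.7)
The plan is to verify that the elements $S_0 = t_0$, $S_1 = t_2 t_1 t_2^{-1}$, $S_2 = t_2$, $S_3 = t_3$ in $G_{Q'}$ satisfy all the defining relations of $G_Q$. By the universal property of $G_Q$, this will establish that $\varphi^Q_2$ is a well-defined group homomorphism. A key preliminary observation is that the braid relation $t_1 t_2 t_1 = t_2 t_1 t_2$ in $G_{Q'}$ yields the alternative form $S_1 = t_2 t_1 t_2^{-1} = t_1^{-1} t_2 t_1$, which will be repeatedly useful.

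First I would dispose of the straightforward relations: the braid relations $S_0 S_3 S_0 = S_3 S_0 S_3$ and $S_2 S_3 S_2 = S_3 S_2 S_3$, together with the $d$-relation between $S_0$ and $S_2$, become the corresponding defining relations of $G_{Q'}$ verbatim. The braid relation $S_1 S_2 S_1 = S_2 S_1 S_2$ follows from a short direct calculation using only $t_1 t_2 t_1 = t_2 t_1 t_2$, while $S_3 S_1 S_3 = S_1 S_3 S_1$ follows by expanding both sides using the two forms of $S_1$ together with the commutation $t_1 t_3 = t_3 t_1$ and the braid relation $t_2 t_3 t_2 = t_3 t_2 t_3$.

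The substantial work consists of verifying the remaining relations of $G_Q$: the braid relation $S_1 S_0 S_1 = S_0 S_1 S_0$, the two $3$-cycle relations coming from the cycles $3 \to 1 \to 0 \to 3$ and $3 \to 1 \to 2 \to 3$ in $Q$, and the two relations from Definition~\ref{defn_GQ_associated_group}(\ref{item_doubleedge}) (for the configurations with $k=3$ and $k=1$ respectively). The plan is to derive each of these from the defining relations of $G_{Q'}$, crucially invoking the broken $4$-cycle relation Definition~\ref{defn_GQ_associated_group}(\ref{item_broken4cycle}) applied to the $4$-cycle $0\to 3\to 2\to 1\to 0$ in $Q'$, that is,
\begin{align*}
t_0 t_3 t_2 t_1 t_0 t_3 &= t_1 t_0 t_3 t_2 t_1 t_0, \\
t_3 t_2 t_1 t_0 t_3 t_2 &= t_2 t_1 t_0 t_3 t_2 t_1.
\end{align*}
For each target relation, I would substitute the formulas for the $S_i$, switch between the two forms of $S_1$ as needed, and rewrite both sides into a consequence of these broken $4$-cycle relations together with the braid, commutation, and $d$-relations in $G_{Q'}$, cancelling intermediate factors of the form $t_i t_i^{-1}$.

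The main obstacle is the bookkeeping for these five delicate verifications. Of particular note is the relation for $k = 3$, which after substitution becomes the identity $t_3 t_2 t_0 t_3 t_2 t_0 = t_2 t_0 t_3 t_2 t_0 t_3$ among $t_0, t_2, t_3$; this does not follow from the defining relations restricted to the subgroup $\langle t_0, t_2, t_3\rangle$, so the derivation must introduce an auxiliary $t_1$ via the broken $4$-cycle relation and then eliminate it again. The hardest cases are expected to be $S_1 S_0 S_1 = S_0 S_1 S_0$ (which, after the substitution and a conjugation by $t_2$, amounts to showing that $t_1$ and $t_2^{-1} t_0 t_2$ braid in $G_{Q'}$) and the relation from Definition~\ref{defn_GQ_associated_group}(\ref{item_doubleedge}) with $k=1$, in which both the substitution $S_1 = t_2 t_1 t_2^{-1}$ and the broken $4$-cycle relation play essential roles. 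These verifications will proceed in the same spirit as the computations in Lemmas~\ref{lem:mutationnearcone} and~\ref{lem:mutationnearcone_otherway}.
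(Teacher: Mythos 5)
Your overall strategy is the paper's: check that $S_0=t_0$, $S_1=t_2t_1t_2^{-1}$, $S_2=t_2$, $S_3=t_3$ satisfy the defining relations of $G_Q$, and your inventory of those relations (five braid relations, the $d$-relation, the two $3$-cycle relations, the two double-edge relations at $k=3$ and $k=1$, no commutations) is complete and correct. The easy cases you dispose of are indeed easy (the paper delegates $S_2S_3S_2=S_3S_2S_3$, $S_1S_3S_1=S_3S_1S_3$, $S_1S_2S_1=S_2S_1S_2$ and the $3$-cycle relation on $\{1,2,3\}$ to Proposition~\ref{prop:GMcase}), and your reformulation of $S_1S_0S_1=S_0S_1S_0$ as the braiding of $t_1$ with $t_2^{-1}t_0t_2$ matches the computation in the paper.

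The gap is in your treatment of the two double-edge relations. You take the defining relations of $G_{Q'}$ to be only the braid/commutation/$d$-relations together with the two relations of Definition~\ref{defn_GQ_associated_group}(\ref{item_broken4cycle}) for the $4$-cycle $0\to 3\to 2\to 1\to 0$, and you undertake to \emph{derive} the identities $t_2t_0t_3t_2t_0t_3=t_3t_2t_0t_3t_2t_0$ and $t_0t_2t_1t_0t_2t_1=t_1t_0t_2t_1t_0t_2$ from them. The paper does something different: it counts both of these identities among the \emph{defining} relations of $G_{Q'}$ — the double edges at the vertices $3$ and $1$ of $Q'$ are read as contributing relations of type Definition~\ref{defn_GQ_associated_group}(\ref{item_doubleedge}), with the order of $t_0,t_2$ fixed by the ``to the right of the double edge'' convention — exactly as in the proofs of Lemmas~\ref{lem:mutation_tagged_reverse}, \ref{lem:mutation_untagged} and~\ref{lem:mutation_untagged_reverse}, which all invoke ``the relation for the double edge at $1$ (resp.\ $3$) in $Q'$'' for this same quiver. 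With these available, the double-edge relation of $G_Q$ at $3$ is verbatim and the one at $1$ is a one-line conjugation by $t_2$. Under your stricter reading the derivation is owed and is far from routine: the two relations of (\ref{item_broken4cycle}) are equivalent to each other modulo the braid and commutation relations of $Q'$ (a short computation moves the outer letters through), so you are in effect trying to extract two further identities from a single extra relation, and your proposal gives no argument that this succeeds. Either carry out that derivation explicitly (it is the crux, not bookkeeping), or adopt the paper's reading of Definition~\ref{defn_GQ_associated_group}(\ref{item_doubleedge})--(\ref{item_broken4cycle}) for the broken $4$-cycle configuration, after which the rest of your plan goes through essentially as in the paper.
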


\begin{proof}
In order to check that $\varphi^Q_2$ is well-defined, it is enough to prove that the elements $S_i$, for $0\leq i\leq 3$, satisfy the defining relations of $G_{Q}$.

The relations $S_0S_3S_0=S_3S_0S_3$, $S_2S_0S_3S_2S_0S_3= S_3S_2S_0S_3S_2S_0$ and
   \begin{align*}
        \underbrace{S_0S_2\cdots }_{\text{$d$ terms}} &=\underbrace{S_2 S_0\cdots }_{\text{$d$ terms}}
    \end{align*}
trivially follow from the corresponding defining relations of $G_{Q'}$. Moreover, the relations $S_2S_3S_2=S_3S_2S_3$, $S_1S_3S_1=S_3S_1S_3$, $S_2S_1S_2=S_1S_2S_1$ and $S_1S_2S_3S_1=S_2S_3S_1S_2=S_3S_1S_2S_3$ hold by Proposition~\ref{prop:GMcase}.

The relation for the double edge at $1$ in $Q'$:
$$t_0t_2t_1t_0t_2t_1=t_1t_0t_2t_1t_0t_2$$
implies that
\begin{align*}
S_0\cancel{S_2S_2^{-1}}S_1S_2S_0\cancel{S_2S_2^{-1}}S_1S_2=S_2^{-1}S_1S_2S_0\cancel{S_2S_2^{-1}}S_1S_2S_0S_2\iff
    S_2S_0S_1S_2S_0S_1=S_1S_2S_0S_1S_2S_0,
\end{align*}
that is the double edge relation corresponding to vertex $1$ in $Q$.

The braid relation $t_1t_0t_1=t_0t_1t_0$ together with the relations proved above imply that
\begin{align*}
S_2^{-1}S_1S_2S_0S_2^{-1}S_1S_2=S_0S_2^{-1}S_1S_2S_0&\iff
    S_1S_2S_0S_1S_2S_1^{-1}=S_2S_0S_1S_2S_1^{-1}S_0\\
    &\iff \hcancel{S_2S_0S_1S_2}S_0S_1S_0^{-1}S_1^{-1}=\hcancel{S_2S_0S_1S_2}S_1^{-1}S_0\\
    &\iff S_1S_0S_1=S_0S_1S_0.
\end{align*}

The relation $t_2t_1t_0t_3t_2t_1=t_3t_2t_1t_0t_3t_2$ together with the relations proved above imply
\begin{align*}    \cancel{S_2S_2^{-1}}S_1S_2S_0S_3\cancel{S_2S_2^{-1}}S_1S_2=S_3\cancel{S_2S_2^{-1}}S_1S_2S_0S_3S_2
    &\iff S_1S_2S_0S_3S_1=S_3S_1S_2S_0S_3
    \\&\iff  \hcancel{S_1S_2S_3}S_0S_3S_0^{-1}S_1=\hcancel{S_1S_2S_3}S_1S_3^{-1}S_0S_3
    \\&\iff  S_3^{-1}S_0S_3S_1=S_1S_0S_3S_0^{-1}
    \\&\iff  S_0S_3S_1S_0=S_3S_1S_0S_3.
\end{align*}

By \cite[Lem. 2.4]{GM}, we conclude that $S_0S_3S_1S_0=S_3S_1S_0S_3=S_1S_0S_3S_1$. Hence all the defining relations of $G_Q$ hold, as required.
\end{proof}

\begin{lemma}
\label{lem:mutation_tagged_reverse}
Let $Q$ be the quiver on the right of Figure~\ref{fig:mutation_tagged} and $Q'$ the quiver on the left. Let $G_Q$ (respectively $G_{Q'}$) be the group with generators $s_i$ (respectively $t_i$) with $0\leq i\leq 3$ satisfying the relations associated with $Q$ (respectively $Q'$). Then there is a group homomorphism as follows:
\begin{align*}
    \varphi^Q_2: G_Q\rightarrow G_{Q'} \text{ defined as } \varphi^Q_2(s_3)=S_3=t_2t_3t_2^{-1}, \, \varphi^Q_2(s_0)=S_0=t_2t_0t_2^{-1}, \,\varphi^Q_2(s_i)=t_i \text{ for } i=1,2.
\end{align*}
\end{lemma}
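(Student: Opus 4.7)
The plan is to verify, as in the proof of Lemma~\ref{lem:mutation_tagged}, that the elements $S_0, S_1, S_2, S_3 \in G_{Q'}$ satisfy every defining relation of $G_Q$. Since the map is specified on the generators $s_i$, well-definedness of $\varphi^Q_2$ will then follow automatically. Note that $\mu_2$ applied to the right quiver returns the left quiver by Lemma~\ref{lem:flipmutation}(b), so the computation here is formally dual to the one carried out in Lemma~\ref{lem:mutation_tagged}.

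First I would sort the defining relations of $G_Q$ into three groups. The relations supported only on vertices $1$ and $2$ (namely $S_1 S_2 S_1 = S_2 S_1 S_2$) transfer immediately, as do any commutations between $S_1, S_2$ and the other generators coming from unchanged edges. The braid relations across the mutated vertex together with any unlabelled $3$-cycle relation follow from Proposition~\ref{prop:GMcase}, applied locally to the configurations around vertex $2$: this handles $S_3 S_1 = S_1 S_3$ (or the corresponding braid relation), $S_0 S_1 = S_1 S_0$ (ditto), and the $3$-cycle relation inside the labelled triangle of $Q$.

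The remaining relations are the delicate ones: the $d$-relation on the edge of $Q$ labelled $d$, the double-edge relations at vertices $1$ and $3$ of $Q$, and a $6$-term $4$-cycle relation if present. For each of these, I would substitute the definitions $S_3 = t_2 t_3 t_2^{-1}$ and $S_0 = t_2 t_0 t_2^{-1}$, insert cancelling factors $t_2 t_2^{-1}$ to convert the expression to a word in the $t_i$, and then reduce using the relations of $G_{Q'}$. The $d$-relation between the conjugated generators $S_0$ and $S_2 = t_2$ is supplied by Lemma~\ref{lem:GM_with_d} applied to the pair $(t_0, t_2)$, which already satisfies the $d$-relation in $G_{Q'}$. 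The double-edge and $4$-cycle relations are verified by the same chain of manipulations as in the proof of Lemma~\ref{lem:mutation_tagged}, but read in reverse: each step (braid application, commutation, insertion/cancellation of $t_2^{-1} t_2$) is individually invertible, so the same sequence of equalities proves the other direction.

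The main obstacle is purely bookkeeping: one must track which vertex receives conjugation under $\varphi^Q_2$ and align each relation of $G_Q$ with the correct counterpart in $G_{Q'}$. Because the roles of the conjugated generators are swapped relative to Lemma~\ref{lem:mutation_tagged} (here $S_0$ and $S_3$ are conjugated, whereas there $S_1$ was), care is needed when matching the double-edge relation at vertex $1$ of $Q$ with the double-edge relation at vertex $1$ of $Q'$, and similarly at vertex $3$. Once the correspondence is fixed, every verification becomes a routine reduction of the form \emph{substitute, cancel, apply a relation of $G_{Q'}$}, so no new algebraic ideas beyond those appearing in Lemmas~\ref{lem:mutation_tagged} and~\ref{lem:GM_with_d} are required.
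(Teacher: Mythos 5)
Your overall strategy coincides with the paper's: verify that the $S_i$ satisfy every defining relation of $G_Q$, dispatch the relations not involving the decorations via Proposition~\ref{prop:GMcase}, get the $d$-relation between $S_0$ and $S_2$ from Lemma~\ref{lem:GM_with_d}, and handle the double-edge and labelled-cycle relations by substituting, inserting $t_2^{-1}t_2$, and reducing in $G_{Q'}$. Two points in your triage are off, however. First, you place the relation between $S_0$ and $S_1$ (and implicitly the one between $S_0$ and $S_3$) in the bucket handled by Proposition~\ref{prop:GMcase}. That proposition does not apply here: its hypothesis conjugates $t_i$ by $t_k$ only when there is an arrow $i\rightarrow k$, whereas vertex $0$ is joined to the mutated vertex $2$ by an unoriented edge labelled $d$, a configuration absent from Figure~\ref{fig:GMcases}. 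In the paper, $S_3S_0S_3=S_0S_3S_0$ does follow by an easy cancellation from $t_3t_0t_3=t_0t_3t_0$, but $S_1S_0S_1=S_0S_1S_0$ is genuinely delicate: it is derived from $t_1t_0t_1=t_0t_1t_0$ only after the double-edge relation at vertex $1$ of $Q$ has itself been established, and the chain of equivalences uses that relation in an essential way. So one of the relations you declare immediate is in fact among the longest computations in the proof.

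Second, the claim that the verification is the forward proof of Lemma~\ref{lem:mutation_tagged} ``read in reverse'' is not literally available. The two quivers have different relation sets (the left quiver carries two unlabelled $3$-cycles and a braid relation between $1$ and $3$; the right quiver carries a labelled $4$-cycle, a commutation between $1$ and $3$, and the two relations of Definition~\ref{defn_GQ_associated_group}(\ref{item_broken4cycle})), and different generators are conjugated in the two directions ($s_1$ there, $s_0$ and $s_3$ here), so the words to be compared are not the reverses of one another and fresh computations are required — in particular the two $4$-cycle relations of $Q$ must be extracted from the single $3$-cycle relation $t_1t_0t_3t_1=t_0t_3t_1t_0=t_3t_1t_0t_3$ of $Q'$, which is a new (if routine) manipulation, not a reversal. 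Your fallback method of substitute--cancel--apply does suffice for all of these, so the plan is salvageable, but as written it underestimates where the actual work lies.
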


\begin{proof}
    In order to check that $\varphi^Q_2$ is well-defined, it is enough to prove that the elements $S_i$, for $0\leq i\leq 3$, satisfy the defining relations of $G_Q$.
    
    The relations $S_2S_3S_2=S_3S_2S_3$, $S_2S_1S_2=S_1S_2S_1$ and $S_1S_3=S_3S_1$ hold by Proposition~\ref{prop:GMcase}, while
    \begin{align*}
        \underbrace{S_0S_2\cdots }_{\text{$d$ terms}} &=\underbrace{S_2 S_0\cdots }_{\text{$d$ terms}}.
    \end{align*}
holds by Lemma~\ref{lem:GM_with_d}. The relation $t_3t_0t_3=t_0t_3t_0$ implies that
    \begin{align*}
        S_2^{-1}S_3\cancel{ S_2 S_2^{-1}}S_0 \cancel{S_2 S_2^{-1}}S_3S_2=S_2^{-1}S_0\cancel{S_2 S_2^{-1}}S_3\cancel{S_2S_2^{-1}}S_0S_2.
    \end{align*}
    Since the first and last term are equal on the two sides, we can cancel them and obtain $S_3S_0S_3=S_0S_3S_0$.
    The relation $t_2t_0t_1t_2t_0t_1= t_1t_2t_0t_1t_2t_0$ for the double edge at $1$ in $Q'$ implies that
    \begin{align*}
        \cancel{S_2 S_2^{-1}}S_0S_2 S_1 \cancel{S_2 S_2^{-1}}S_0S_2 S_1
        =S_1 \cancel{S_2 S_2^{-1}}S_0S_2 S_1 \cancel{S_2 S_2^{-1}}S_0S_2.
    \end{align*}
    So the relation for the double edge at $1$ in $Q$
    $$S_0S_2S_1S_0S_2S_1=S_1S_0S_2S_1S_0S_2$$
    follows after cancellations.
     The relation $t_2t_0t_3t_2t_0t_3= t_3t_2t_0t_3t_2t_0$ for the double edge at $3$ in $Q'$ implies that
     \begin{align*}
         \cancel{S_2 S_2^{-1}}S_0 \cancel{S_2 S_2^{-1}}S_3 S_2 \cancel{S_2 S_2^{-1}}S_0 \cancel{S_2 S_2^{-1}}S_3S_2
         =S_2^{-1}S_3S_2 \cancel{S_2 S_2^{-1}}S_0 \cancel{S_2 S_2^{-1}}S_3 S_2\cancel{S_2 S_2^{-1}}S_0S_2.
     \end{align*}
    Multiplying both sides by $S_2$ on the left and by $S_2^{-1}$ on the right and simplifying, the relation for the double edge at $3$ in $Q$:   $$S_2S_0S_3S_2S_0S_3=S_3S_2S_0S_3S_2S_0$$
    follows.

    Using one of the braid relations from the left diagram and the relations found above, we have that
    \begin{align*}
        t_1t_0t_1=&t_0t_1t_0\iff S_1S_2^{-1}S_0S_2S_1= S_2^{-1}S_0S_2 S_1 S_2^{-1}S_0S_2
        \iff S_1^{-1}S_2S_1S_0S_2S_1= S_0S_1^{-1}S_2 S_1S_0S_2\\ 
        &\iff \underline{S_2S_1S_0S_2S_1}= S_1S_0S_1^{-1}S_2 S_1S_0S_2
        \iff  \underline{S_0^{-1}S_1S_0\hcancel{S_2S_1S_0S_2}}= S_1S_0S_1^{-1}\hcancel{S_2 S_1S_0S_2}
        \\&\iff S_1S_0S_1=S_0S_1S_0,
    \end{align*}
    where the underlined relation follows from the double edge relation at vertex $1$ in $Q$.

    The last two relations left to prove correspond to the $4$-cycle around the cone point in $Q$.
    The relation $t_1t_0t_3t_1=t_0t_3t_1t_0=t_3t_1t_0t_3$ implies that
    \begin{align*}
    S_1S_2^{-1}S_0\cancel{S_2S_2^{-1}}S_3S_2S_1=S_2^{-1}S_0\cancel{S_2S_2^{-1}}S_3S_2S_1S_2^{-1}S_0S_2=S_2^{-1}S_3S_2S_1S_2^{-1}S_0\cancel{S_2S_2^{-1}}S_3S_2
    \end{align*}
    Using the relations already proved, we see that the equality of the first and third expressions above is true if and only if
    \begin{align*}
    S_2S_1S_2^{-1}S_0S_3S_2S_1=S_3S_1^{-1}S_2S_1S_0S_3S_2 
        &\iff  S_1^{-1}S_2S_1S_0S_3S_2S_1=S_1^{-1}S_3S_2S_1S_0S_3S_2 
        \\&\iff S_2S_1S_0S_3S_2S_1=S_3S_2S_1S_0S_3S_2. 
    \end{align*}

    Similarly, the equality of the second and third expressions above is true if and only if
    \begin{align*}
        S_0S_3S_1^{-1}S_2S_1S_0=S_3S_1^{-1}S_2S_1S_0S_3
        &\iff S_1S_0S_1^{-1}S_3S_2S_1S_0=S_3S_2S_1S_0S_3\\
        &\iff S_0^{-1}S_1S_0S_3S_2S_1S_0=S_3S_2S_1S_0S_3\\
        &\iff S_1S_0S_3S_2S_1S_0=S_0S_3S_2S_1S_0S_3.
    \end{align*}
    Hence all the relations corresponding to the quiver $Q$ hold, as required.
\end{proof}

\begin{figure}[ht]\scalebox{1}{
    \centering
 \begin{tikzpicture}[scale=2,
  quiverarrow/.style={black, -latex},
  iquiverarrow/.style={black, -latex, <-},
  mutationarc/.style={dashed, red, very thick},
  arc/.style={dashed, black},
  point/.style={gray},
  vertex/.style={black},
  conepoint/.style={gray, circle, draw=gray!100, fill=white!100, thick, inner sep=1.5pt},
  db/.style={thick, double, double distance=1.3pt, shorten <=-6pt}
  ]
 \begin{scope}
 \draw[dashed] (2,2) arc
	[start angle=90,
		end angle=180,
		x radius=2cm,
		y radius =2cm
	] ;
    \draw[dashed] (0,0) arc
	[start angle=-90,
		end angle=0,
		x radius=2cm,
		y radius =2cm
	] ;
 \draw[dashed, thick] (2,2) arc
	[start angle=-10,
		end angle=-60,
		x radius=2cm,
		y radius =1.45cm
	] ;
 \node[point] (p0) at (0,0) {$\bullet$};
 \node[point] (p1) at (2,2) {$\bullet$};
 \node[conepoint] (p3) at (1,1) {\scriptsize $d$};

\draw[dashed,red, thick, shorten <=3pt, shorten >=-2pt] (p3) .. controls +(80:0.45) and +(200:0.45) .. coordinate[pos=0.11](t1) coordinate[midway](w2) (p1);
\node[vertex,rotate=-23, red](tt1) at (t1) {$\bowtie$};
 
 \node[vertex, label=
{[label distance=-22pt]-90:{\tiny $3$}}] (v3) at (0.55,1.35) {$\bullet$};
 \node[vertex, label=
{[label distance=-20pt]90:{\tiny $1$}}] (v1) at (1.25,0.45) {$\bullet$};
 \node[vertex, label=
{[label distance=-21pt]-90:{\tiny $2$}}] (v0) at (1.38,1.63) {$\bullet$};
 \node[vertex, label=
{[label distance=-7pt]270:{\tiny $0$}}] (v2) at (1.64,1.32) {$\bullet$};
  \draw[quiverarrow] (v0)--(v3);
  \draw[quiverarrow] (v2)--(v3);
  \draw[quiverarrow] (v1)--(v0);
  \draw[quiverarrow] (v1)--(v2);
  \draw [-, shorten <=-2pt, shorten >=-2pt] (v2) to node[above right,xshift=-0.1cm] {\tiny $d$} (v0);
  \draw[quiverarrow] (0.5,1.25) arc
	[start angle=170,
		end angle=230,
		x radius=1.7cm,
		y radius =0.88cm
	] ;
 \draw[thick, double, double distance=1.3pt] (0.6,1.29)--(0.8,1.1);
 \draw[thick, double, double distance=1.3pt] (1.2,0.55)--(1.1,0.8);
  \draw [<->] (2.5,1.2) to node[above] {$\mu_2$} (3.5,1.2);
 \end{scope}
  \begin{scope}[xshift=4cm]
   \draw[thick, double, double distance=1.3pt] (0.6,1.29)--(0.8,1.1);
 \draw[thick, double, double distance=1.3pt] (1.2,0.55)--(1.1,0.8);
  \draw[dashed] (2,2) arc
	[start angle=90,
		end angle=180,
		x radius=2cm,
		y radius =2cm
	] ;
    \draw[dashed] (0,0) arc
	[start angle=-90,
		end angle=0,
		x radius=2cm,
		y radius =2cm
	] ;
 \draw[dashed] (2,2) arc
	[start angle=-10,
		end angle=-60,
		x radius=2cm,
		y radius =1.45cm
	] ;
  \draw[dashed, red, thick] (0,0) arc
	[start angle=170,
		end angle=120,
		x radius=2cm,
		y radius =1.45cm
	] ;
  \node[point] (p0) at (0,0) {$\bullet$};
 \node[point] (p1) at (2,2) {$\bullet$};
 \node[conepoint] (p3) at (1,1) {\scriptsize $d$};
 \node[vertex, label=
{[label distance=-22pt]-90:{\tiny $3$}}] (v3) at (0.55,1.35) {$\bullet$};
 \node[vertex, label=
{[label distance=-20pt]90:{\tiny $1$}}] (v1) at (1.25,0.45) {$\bullet$};
 \node[vertex, label=
{[label distance=-21pt]-90:{\tiny $0$}}] (v2) at (1.64,1.32) {$\bullet$};
 \node[vertex, label=
{[label distance=-19pt]90:{\tiny $2$}}] (v0) at (0.4,0.65) {$\bullet$};
  \draw[quiverarrow] (0.5,1.24)--(0.38,0.8);
  \draw[quiverarrow] (v0)--(v1);
  \draw[quiverarrow] (v1)--(v2);
  \draw[quiverarrow] (1.52,1.38)--(0.68,1.41);
  \draw (0.63,0.98) node{$\scriptstyle d$};
 \draw[-] (0.45,0.8) arc
	[start angle=175,
		end angle=70,
		x radius=0.75cm,
		y radius =0.55cm
	] ;
 \end{scope}
 \end{tikzpicture}}
 \caption{Mutation of a tagged to an untagged arc and viceversa, case $2$.}
\label{fig:mutation_untagged}
\end{figure}
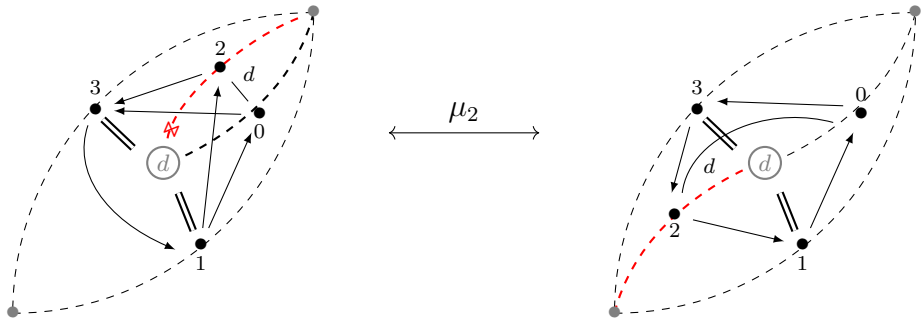

\begin{lemma}
\label{lem:mutation_untagged}
Let $Q$ be the quiver on the left of Figure~\ref{fig:mutation_untagged} and $Q'$ the quiver on the right. Let $G_Q$ (respectively $G_{Q'}$) be the group with generators $s_i$ (respectively $t_i$) with $0\leq i\leq 3$ satisfying the relations associated with $Q$ (respectively $Q'$). Then there is a group homomorphism:
$\varphi^Q_2: G_{Q}\rightarrow G_{Q'}$ defined as:
\begin{align*}
\varphi^Q_2(s_1)=S_1=t_2t_1t_2^{-1}, \, \varphi^Q_2(s_0)=S_0=t_2t_0t_2^{-1}, \, \varphi^Q_2(s_i)=S_i=t_i \text{ for } i=2,3.
\end{align*}
\end{lemma}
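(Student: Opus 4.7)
The plan is to show that the elements $S_0, S_1, S_2, S_3$ of $G_{Q'}$ defined in the statement satisfy every defining relation of $G_Q$; by Definition~\ref{defn_GQ_associated_group} this immediately produces the claimed group homomorphism $\varphi^Q_2$.

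First I would enumerate the defining relations of $G_Q$ from Figure~\ref{fig:mutation_untagged}: the braid relations for each arrow of $Q$, commutations for pairs of non-adjacent vertices, the $d$-term relation for the unoriented edge labelled $d$ between vertices $0$ and $2$, the two double edge relations from Definition~\ref{defn_GQ_associated_group}(\ref{item_doubleedge}), and the two $4$-cycle relations around the cone point of the form appearing in Definition~\ref{defn_GQ_associated_group}(\ref{item_broken4cycle}).

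The braid and commutation relations whose configuration in $Q$ fits one of the patterns (a)--(f) of Figure~\ref{fig:GMcases} (together with the $3$-cycle relations involving only vertices $1$, $2$, $3$ and not the cone point cycle) follow directly from Proposition~\ref{prop:GMcase}. The $d$-term relation
\begin{equation*}
\underbrace{S_0 S_2 \cdots}_{d \text{ terms}} = \underbrace{S_2 S_0 \cdots}_{d \text{ terms}}
\end{equation*}
is obtained from Lemma~\ref{lem:GM_with_d}, applied with $a=t_2$ and $b=t_0$ (using that $\underbrace{t_0 t_2\cdots}_{d} = \underbrace{t_2 t_0\cdots}_{d}$ holds in $G_{Q'}$ via the unoriented $d$-edge between $0$ and $2$).

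The main obstacle will be verifying the double edge relations and the two $4$-cycle relations around the cone point, since these genuinely mix the conjugated generators $S_0 = t_2 t_0 t_2^{-1}$, $S_1 = t_2 t_1 t_2^{-1}$ with the unconjugated $S_2 = t_2$, $S_3 = t_3$. In each case I would substitute the definitions of $S_0$ and $S_1$ into the desired identity, insert and cancel factors $t_2 t_2^{-1}$ to isolate an expression to which a braid relation, commutation, or double edge relation of $G_{Q'}$ can be applied (the latter playing the role of an underlined substitution step), and then recollect the conjugating factors on either side. The pattern of manipulation is entirely analogous to the reductions carried out in Lemma~\ref{lem:mutation_tagged} and particularly Lemma~\ref{lem:mutation_tagged_reverse}, where $4$-cycle identities around the cone point were systematically reduced to the double edge and braid relations of the mutated quiver; I expect the same strategy, applied in the configuration of Figure~\ref{fig:mutation_untagged}, to conclude the proof.
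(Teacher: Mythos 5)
Your overall strategy is the paper's: check that the elements $S_i$ satisfy the defining relations of $G_Q$, using Proposition~\ref{prop:GMcase} for the undecorated configurations, Lemma~\ref{lem:GM_with_d} for the edge labelled $d$ between $0$ and $2$, and explicit insert-and-cancel manipulations with $t_2t_2^{-1}$ for the rest. The gap is in your list of which relations of $G_Q$ actually have to be checked. The left-hand quiver of Figure~\ref{fig:mutation_untagged} contains no oriented $4$-cycle: its arrows are $1\rightarrow 2$, $2\rightarrow 3$, $3\rightarrow 1$, $1\rightarrow 0$ and $0\rightarrow 3$, so there is no configuration of the type in Definition~\ref{defn_GQ_associated_group}(\ref{item_broken4cycle}), and the ``two $4$-cycle relations'' you propose to verify are not defining relations of $G_Q$ (they cannot even be formulated for this quiver). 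That configuration lives in the \emph{right-hand} quiver $Q'$, where its two relations are available as hypotheses, not as targets. What you must verify instead, and what your enumeration omits, is the relation of Definition~\ref{defn_GQ_associated_group}(\ref{item_cycle}) for the oriented $3$-cycle $1\rightarrow 0\rightarrow 3\rightarrow 1$ around the cone point, namely $S_1S_0S_3S_1=S_0S_3S_1S_0=S_3S_1S_0S_3$; in the paper this is deduced from the $4$-cycle relation $t_2t_1t_0t_3t_2t_1=t_3t_2t_1t_0t_3t_2$ of $G_{Q'}$ together with~\cite[Lem.\ 2.4]{GM}.

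A smaller issue of the same kind: the braid relations $S_1S_0S_1=S_0S_1S_0$ and $S_0S_3S_0=S_3S_0S_3$ involve the vertex $0$, which is joined to the mutation vertex $2$ by the $d$-labelled edge, so neither falls under the patterns (a)--(f) of Proposition~\ref{prop:GMcase}. The first is immediate by conjugation, but the second genuinely requires the double-edge relation $t_2t_0t_3t_2t_0t_3=t_3t_2t_0t_3t_2t_0$ of $G_{Q'}$ and is one of the non-trivial computations; your plan neither covers it by Proposition~\ref{prop:GMcase} nor lists it among the relations needing explicit work. Once the target list is corrected, your substitution-and-cancellation method is exactly what the paper carries out.
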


\begin{proof}
In order to check that $\varphi^Q_2$ is well-defined, it is enough to prove that the elements $S_i$, for $0\leq i\leq 3$, satisfy the defining relations of $G_Q$.
The relations $S_2S_3S_2=S_3S_2S_3$, $S_1S_3S_1=S_3S_1S_3$, $S_2S_1S_2=S_1S_2S_1$ and $S_1S_2S_3S_1=S_2S_3S_1S_2=S_3S_1S_2S_3$ hold by Proposition~\ref{prop:GMcase}, while the relation 
    \begin{align*}
        \underbrace{S_2S_0\cdots }_{\text{$d$ terms}} &=\underbrace{S_0 S_2\cdots }_{\text{$d$ terms}}
    \end{align*}
    holds by Lemma~\ref{lem:GM_with_d}.
    The relation $t_1t_0t_1=t_0t_1t_0$  implies that
    \begin{align*}
        S_2^{-1}S_1\cancel{ S_2 S_2^{-1}}S_0 \cancel{S_2 S_2^{-1}}S_1S_2=S_2^{-1}S_0\cancel{S_2 S_2^{-1}}S_1\cancel{S_2S_2^{-1}}S_0S_2.
    \end{align*}
    Since the first and last term are equal on the two sides, we can cancel them and obtain $S_1S_0S_1=S_0S_1S_0$.
    The relation for the double edge at $1$ in $Q'$
$$t_0t_2t_1t_0t_2t_1=t_1t_0t_2t_1t_0t_2$$
    implies that
\begin{align*}
&S_2^{-1}S_0S_2\cancel{S_2S_2^{-1}}S_1\cancel{S_2S_2^{-1}}S_0S_2\cancel{S_2S_2^{-1}}S_1S_2
=S_2^{-1}S_1\cancel{S_2S_2^{-1}}S_0S_2\cancel{S_2S_2^{-1}}S_1\cancel{S_2S_2^{-1}}S_0S_2S_2\\
&\iff S_0S_2S_1S_0S_2S_1
=S_1S_0S_2S_1S_0S_2,
\end{align*}
that is the double edge relation corresponding to vertex $1$ in $Q$. Similarly, the relation for the double edge at $3$ in $Q'$:
$$t_2t_0t_3t_2t_0t_3=t_3t_2t_0t_3t_2t_0$$
implies that
\begin{align*}
\cancel{S_2S_2^{-1}}S_0S_2S_3\cancel{S_2S_2^{-1}}S_0S_2S_3=S_3\cancel{S_2S_2^{-1}}S_0S_2S_3\cancel{S_2S_2^{-1}}S_0S_2
    \iff S_0S_2S_3S_0S_2S_3=S_3S_0S_2S_3S_0S_2,
\end{align*}
that is the double edge relation corresponding to vertex $3$ in $Q$. Using the defining relations of $Q'$, we have that
\begin{align*}
    S_0S_3S_0&=t_2t_0t_2^{-1}t_3t_2t_0t_2^{-1}    =t_2t_0t_3t_2t_0t_3t_3^{-1}t_0^{-1}t_3^{-1}t_0t_2^{-1}\\    &=t_3t_2t_0t_3t_2\cancel{t_0t_0^{-1}}t_3^{-1}\cancel{t_0^{-1}t_0}t_2^{-1}
    =t_3t_2t_0t_2^{-1}t_3\cancel{t_2t_2^{-1}}=S_3S_0S_3.
\end{align*}

The relation $t_2t_1t_0t_3t_2t_1=t_3t_2t_1t_0t_3t_2$, together with the relations proved above, implies that
\begin{align*}
\cancel{S_2S_2^{-1}}S_1\cancel{S_2S_2^{-1}}S_0S_2S_3\cancel{S_2S_2^{-1}}S_1S_2    &=S_3\cancel{S_2S_2^{-1}}S_1\cancel{S_2S_2^{-1}}S_0S_2S_3S_2 \\
    \iff S_1S_0S_2S_3S_1S_2&=S_3S_1S_0S_2S_3S_2\\
    \iff S_1S_0S_3S_1\hcancel{S_2S_3}&=S_3S_1S_0S_3\hcancel{S_2S_3}.\\
\end{align*}
By \cite[Lem. 2.4]{GM}, we conclude that $S_1S_0S_3S_1=S_3S_1S_0S_3=S_0S_3S_1S_0$. Hence all the defining relations of $G_Q$ hold, as required.
\end{proof}

\begin{lemma}
\label{lem:mutation_untagged_reverse}
Let $Q$ be the quiver on the right of Figure~\ref{fig:mutation_untagged} and $Q'$ the quiver on the left. Let $G_Q$ (respectively $G_{Q'}$) be the group with generators $s_i$ (respectively $t_i$) with $0\leq i\leq 3$ satisfying the relations associated with $Q$ (respectively $Q'$). Then there is a group homomorphism as follows:
\begin{align*}
    \varphi^Q_2: G_Q\rightarrow G_{Q'} \text{ defined as } \varphi^Q_2(s_3)=S_3=t_2t_3t_2^{-1}, \,\varphi^Q_2(s_i)=S_i=t_i \text{ for } i=0,1,2.
\end{align*}
\end{lemma}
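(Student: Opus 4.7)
The plan is to verify that the elements $S_0 = t_0,\, S_1 = t_1,\, S_2 = t_2,\, S_3 = t_2 t_3 t_2^{-1}$ in $G_{Q'}$ satisfy every defining relation of $G_Q$, following the same strategy as in Lemma~\ref{lem:mutation_tagged_reverse}. First I would dispose of the relations that involve only $S_0, S_1, S_2$: the braid relations $S_1S_0S_1 = S_0S_1S_0$ and $S_2S_1S_2 = S_1S_2S_1$, together with the labeled-edge relation $\underbrace{S_0S_2\cdots}_d = \underbrace{S_2S_0\cdots}_d$, all reduce immediately to defining relations of $G_{Q'}$. Next, $S_3 S_2 S_3 = S_2 S_3 S_2$ expands to $t_2 t_3 t_2 t_3 t_2^{-1} = t_2^2 t_3$, which is equivalent to the braid $t_3 t_2 t_3 = t_2 t_3 t_2$ in $G_{Q'}$. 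The commutation $S_1 S_3 = S_3 S_1$ becomes $t_1 t_2 t_3 t_2^{-1} = t_2 t_3 t_2^{-1} t_1$; applying the identity $t_2^{-1} t_1 t_2 = t_1 t_2 t_1^{-1}$ (a consequence of the braid between $t_1$ and $t_2$), this rearranges to the $3$-cycle relation $t_1 t_2 t_3 t_1 = t_2 t_3 t_1 t_2$ of $G_{Q'}$.

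For the first broken $4$-cycle relation $s_2 s_1 s_0 s_3 s_2 s_1 = s_3 s_2 s_1 s_0 s_3 s_2$, after substitution and left-cancellation of $t_2$ I would reduce to the claim $t_1 t_0 t_2 t_3 t_1 = t_3 t_1 t_0 t_2 t_3$ in $G_{Q'}$. Using the $3$-cycle relation $t_1 t_0 t_3 t_1 = t_3 t_1 t_0 t_3$ to rewrite $t_3 t_1 t_0 = t_1 t_0 t_3 t_1 t_3^{-1}$, together with the identity $t_3 t_1 t_3^{-1} = t_1^{-1} t_3 t_1$ (from the braid $t_3 t_1 t_3 = t_1 t_3 t_1$), this reduces to the other $3$-cycle relation $t_1 t_2 t_3 t_1 = t_3 t_1 t_2 t_3$ in $G_{Q'}$. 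For the second broken $4$-cycle relation $s_1 s_0 s_3 s_2 s_1 s_0 = s_0 s_3 s_2 s_1 s_0 s_3$, after substitution and multiplication by $t_2$ it becomes $t_1 t_0 t_2 t_3 t_1 t_0 t_2 = t_0 t_2 t_3 t_1 t_0 t_2 t_3$; applying the first broken $4$-cycle relation on the left, this reduces to the statement that $t_3 t_1 t_0 t_2 t_3$ commutes with $t_0 t_2$ in $G_{Q'}$. I would derive this commutation from the double-edge relation at vertex $1$, namely $t_0 t_2 t_1 t_0 t_2 t_1 = t_1 t_0 t_2 t_1 t_0 t_2$, combined with the braid and $3$-cycle relations, by substituting $t_0 t_2 t_1 t_0 t_2 = t_1 t_0 t_2 t_1 t_0 t_2 t_1^{-1}$ and then reducing to the $3$-cycle $t_1 t_2 t_3 t_1 = t_2 t_3 t_1 t_2$.

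The most delicate case is the braid relation $S_0 S_3 S_0 = S_3 S_0 S_3$, which expands to $t_0 t_2 t_3 t_2^{-1} t_0 = t_2 t_3 t_2^{-1} t_0 t_2 t_3 t_2^{-1}$. Writing $u := t_2^{-1} t_0 t_2$, this is equivalent to the braid relation $u t_3 u = t_3 u t_3$ in $G_{Q'}$. I would deduce this from the double-edge relation at vertex $3$, namely $t_3 t_2 t_0 t_3 t_2 t_0 = t_2 t_0 t_3 t_2 t_0 t_3$, by conjugating with $t_2^{-1}$ and applying the identities $t_2^{-1} t_3 t_2 = t_3 t_2 t_3^{-1}$, $t_3 t_0 t_3^{-1} = t_0^{-1} t_3 t_0$ (consequences of the braids $t_2 t_3 t_2 = t_3 t_2 t_3$ and $t_0 t_3 t_0 = t_3 t_0 t_3$) to transport the conjugation across.

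The hard part will be this last verification of $S_0 S_3 S_0 = S_3 S_0 S_3$: in contrast to Lemma~\ref{lem:mutation_tagged_reverse}, where both $S_0$ and $S_3$ are conjugates of $t_0, t_3$ by the same element $t_2$ (so the braid $t_0 t_3 t_0 = t_3 t_0 t_3$ transports directly), here only $S_3$ is conjugated by $t_2$, so the asymmetry must be absorbed by invoking the double-edge relation $(t_2 t_0 t_3)^2 = (t_3 t_2 t_0)^2$ and the labeled-edge relation between $t_0$ and $t_2$; the combinatorial bookkeeping of the conjugations is the main technical obstacle.
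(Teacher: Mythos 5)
Your strategy is the same as the paper's: verify that $S_0=t_0$, $S_1=t_1$, $S_2=t_2$, $S_3=t_2t_3t_2^{-1}$ satisfy every defining relation of $G_Q$, reducing each to relations of $G_{Q'}$. Most of your reductions are correct, sometimes by a different route than the paper's. You derive $S_3S_2S_3=S_2S_3S_2$ and $S_1S_3=S_3S_1$ by hand where the paper cites Proposition~\ref{prop:GMcase}; your reduction of the first $4$-cycle relation to $t_1t_0t_2t_3t_1=t_3t_1t_0t_2t_3$, and its proof from the two $3$-cycle relations of $Q'$, is sound; and your treatment of the second $4$-cycle relation (reduce to the commutation of $t_3t_1t_0t_2t_3$ with $t_0t_2$, then via the double-edge relation at $1$ to the commutation of $t_1^{-1}t_3t_1=t_3t_1t_3^{-1}$ with $t_0t_2$, which follows from the two $3$-cycle relations) does close, whereas the paper derives both $4$-cycle relations directly from $t_1t_0t_3t_1=t_0t_3t_1t_0$ using the already-established braid relations and $S_1S_3=S_3S_1$, without the double-edge relation.

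There is, however, one concrete error where your hardest step lives. The double-edge relation at vertex $3$ in $Q'$ is $t_3t_0t_2t_3t_0t_2=t_0t_2t_3t_0t_2t_3$ --- the same order $t_0t_2$ as in the vertex-$1$ relation, determined by the planar convention in Definition~\ref{defn_GQ_associated_group}(6) --- and not $t_3t_2t_0t_3t_2t_0=t_2t_0t_3t_2t_0t_3$ as you quote. For $d>2$ the generators $t_0$ and $t_2$ do not commute, so these are genuinely different words, and Remark~\ref{rem:tworelations} warns that such relations cannot be freely symmetrised. Your reduction of $S_0S_3S_0=S_3S_0S_3$ to $ut_3u=t_3ut_3$ with $u=t_2^{-1}t_0t_2$ is correct, and after transporting the conjugation as you describe one arrives at $t_3t_0t_2t_3t_0=t_0t_2t_0^{-1}t_3t_0t_2t_3$; this closes using $t_3t_0t_2t_3t_0t_2=t_0t_2t_3t_0t_2t_3$ together with $t_0t_3t_0=t_3t_0t_3$ and $t_2t_3t_2=t_3t_2t_3$, but I do not see how to close it from the $t_2t_0$-ordered version, since the pattern $t_2t_0$ never appears in the target word. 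With the correct form your argument goes through. Separately, the paper also records the double-edge relations of $G_Q$ itself, $S_0S_2S_1S_0S_2S_1=S_1S_0S_2S_1S_0S_2$ (immediate) and $S_2S_0S_3S_2S_0S_3=S_3S_2S_0S_3S_2S_0$ (a one-line conjugation), the latter being the paper's stepping stone to $S_0S_3S_0=S_3S_0S_3$; your direct route makes that stepping stone unnecessary, but you should either verify these two relations or note why they are not required for this quiver.
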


\begin{proof}
    In order to check that $\varphi^Q_2$ is well-defined, it is enough to prove that the elements $S_i$, for $0\leq i\leq 3$, satisfy the defining relations of $G_Q$.

The relation $S_1S_0S_1=S_0S_1S_0$ trivially follows from the relation $t_1t_0t_1=t_0t_1t_0$.
 The relations $S_2S_3S_2=S_3S_2S_3$, $S_2S_1S_2=S_1S_2S_1$ and $S_1S_3=S_3S_1$ hold by Proposition~\ref{prop:GMcase}, while the relation
 \begin{align*}
        \underbrace{S_2S_0\cdots }_{\text{$d$ terms}} &=\underbrace{S_0 S_2\cdots }_{\text{$d$ terms}}.
    \end{align*}
is trivial to check.
The relation  $S_0S_2S_1S_0S_2S_1= S_1S_0S_2S_1S_0S_2$ for the double edge at $1$ follows trivially from the corresponding for $Q'$. Moreover, the relation $t_0t_2t_3t_0t_2t_3= t_3t_0t_2t_3t_0t_2$ for the double edge at $3$ in $Q'$ implies that
   \begin{align*}
         S_0\cancel{S_2 S_2^{-1}}S_3S_2S_0 \cancel{S_2 S_2^{-1}}S_3 S_2 
         =S_2^{-1}S_3S_2S_0 \cancel{S_2 S_2^{-1}}S_3S_2S_0 S_2
         \iff S_2  S_0S_3S_2S_0 S_3 =S_3S_2S_0 S_3S_2S_0,
     \end{align*}
    that is, the relation for the double edge in $Q$ follows.

    Using the relations found above, we have that
    \begin{align*}
        t_0t_3t_0=t_3t_0t_3&\iff S_0S_2^{-1}S_3S_2S_0= S_2^{-1}S_3S_2 S_0 S_2^{-1}S_3S_2
        \iff S_0S_2^{-1}S_3S_2S_0= \underline{S_2^{-1}S_3S_2 S_0 S_3S_2} S_3^{-1}\\
        &\iff S_0S_2^{-1}S_3S_2S_0= \underline{ S_0 S_3S_2S_0S_3S_0^{-1}} S_3^{-1}
        \iff S_3S_2S_0 S_3 S_0=S_2 S_3S_2S_0S_3\\
        &\iff  \hcancel{S_3S_2}S_0 S_3 S_0=\hcancel{S_3S_2} S_3S_0S_3
        \iff S_0 S_3 S_0=S_3S_0S_3,
    \end{align*}
    where the underlined relation follows from the double edge relation at vertex $3$ in $Q$.
The last two relations left to prove correspond to the $4$-cycle around the cone point in $Q$. The relation $t_1t_0t_3t_1=t_0t_3t_1t_0=t_3t_1t_0t_3$ implies that
\begin{align*}
S_1S_0S_2^{-1}S_3S_2S_1=S_0S_2^{-1}S_3S_2S_1S_0=S_2^{-1}S_3S_2S_1S_0S_2^{-1}S_3S_2.
\end{align*}

Using the relations already proved, we see that the equality of the first two expressions above is true if and only if
\begin{align*}
    S_1S_0S_3S_2S_3^{-1}S_1=S_0S_3S_2S_3^{-1}S_1S_0
    &\iff S_1S_0S_3S_2S_1S_3^{-1}=S_0S_3S_2S_1S_3^{-1}S_0\\
    &\iff S_1S_0S_3S_2S_1=S_0S_3S_2S_1S_0S_3S_0^{-1}\\
    &\iff S_1S_0S_3S_2S_1S_0=S_0S_3S_2S_1S_0S_3.
\end{align*}

Similarly, the equality of the first and third expression above is true if and only if
\begin{align*}
    S_2S_1S_0S_3S_2S_3^{-1}S_1=S_3S_2S_1S_0S_3S_2S_3^{-1}
    &\iff S_2S_1S_0S_3S_2S_1S_3^{-1}=S_3S_2S_1S_0S_3S_2S_3^{-1}\\
    &\iff S_2S_1S_0S_3S_2S_1=S_3S_2S_1S_0S_3S_2.
\end{align*}
Hence all the defining relations for $Q$ hold, as required.
\end{proof}

\begin{remark}
    \label{rem:flippedtags}
Flipping all of the tags in Figure~\ref{fig:cornercase}, Figure~\ref{fig:mutation_tagged} or~\ref{fig:mutation_untagged} does not affect the corresponding quivers or groups, so it follows that Lemmas~\ref{lem:cornercasen3}, \ref{lem:cornercasen3back},
\ref{lem:cornercasen2},
\ref{lem:cornercasen2back},
\ref{lem:mutation_tagged}, \ref{lem:mutation_tagged_reverse}, \ref{lem:mutation_untagged} and \ref{lem:mutation_untagged_reverse} hold for these cases too.
\end{remark}

\begin{proof}[Proof of Theorem~\ref{thm:mutation_invariance}]
Let $T$ be a tagged triangulation of $S$, and $Q_T$ the associated quiver as in Definition~\ref{defn_quiver_on_disc}. Let $G_{Q_T}$ be the associated group as in Definition~\ref{defn_GQ_associated_group}, with generators $s_i$.
Let $\alpha$ be an arc in $T$ and $k$ the corresponding vertex of $Q$. Let $T'$ be the tagged triangulation obtained by flipping $T$ at $\alpha$, so that
$Q_{T'}=\mu_k(Q)$ is the mutation of $Q$ at $k$ as in Definition~\ref{defn_mutationrules}, by Lemma~\ref{lem:flipmutation}(b).
By Lemma~\ref{lem:flipmutation}(a), the flip of $\alpha$ is given locally by one of the mutations in Figures~\ref{fig:typeAmutation},
\ref{fig:cornercase},~\ref{fig:mutationnearcone},~\ref{fig:mutation_tagged} or~\ref{fig:mutation_untagged} (from left to right or right to left), or by one of the mutations
from Figure~\ref{fig:cornercase}, \ref{fig:mutation_tagged} or \ref{fig:mutation_untagged} with all of the tags flipped.

It follows from Lemmas~\ref{lem:mutationtypeA}, \ref{lem:cornercasen3}, \ref{lem:cornercasen3back}, \ref{lem:cornercasen2},
\ref{lem:cornercasen2back}, \ref{lem:mutationnearcone}, \ref{lem:mutationnearcone_otherway}, \ref{lem:mutation_tagged}, \ref{lem:mutation_tagged_reverse}, \ref{lem:mutation_untagged}, \ref{lem:mutation_untagged_reverse} and Remark~\ref{rem:flippedtags} that there are group homomorphisms $\varphi_k^Q:G_Q\rightarrow G_{Q'}$ and $\varphi_k^{Q'}:G_{Q'}\rightarrow G_Q$.

Note that arcs appearing on the boundary in each figure could be on the actual boundary of $S$.
Each defining relation in $G_Q$ to be checked involves a certain collection of unmutated vertices, plus possibly the mutated vertex. It is easy to check that, in each case, the proof that this relation holds involves relations involving only the same collection of vertices.
If we consider the same situation where one of the vertices corresponds to an arc on the boundary of $S$, the corresponding relation does not appear and therefore does not need to be shown. It follows that the corresponding results hold in the situation where some or all of the dashed diagonals on the boundary of the figure are on the boundary of the disk.

It is easy to check in each case that $\varphi^{Q'}_k\varphi^Q_k(s_i)=s_k^{-1}s_is_k$ for all $i$, so $\varphi^{Q'}_k\varphi^Q_k$ is an isomorphism. By the same argument,
the other composition
$\varphi^Q_k\varphi^{Q'}_k$ is also an isomorphism,
and hence so is $\varphi^Q_k$.
There is a sequence of flips connecting any two tagged triangulations of $S$ by~\cite[Prop.\ 7.10]{FST}, and the result follows.
\end{proof}

\begin{theorem}
\label{thm:Bddnpresentation}
Let $T$ be any tagged triangulation of $(X,M)$, and $Q_T$ the associated quiver. Then $G_{Q_T}$ is isomorphic to the braid group $B(d,d,n)$, and thus gives a presentation for $B(d,d,n)$.
\end{theorem}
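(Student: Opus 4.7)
The plan is to exhibit a single tagged triangulation whose associated group is manifestly $B(d,d,n)$, and then propagate this identification to every other tagged triangulation via the mutation invariance already proved.

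First I would fix the initial tagged triangulation $T_0$ depicted in Figure~\ref{fig:BMR_embedded}, obtained by drawing all arcs from the first boundary marked point to the other boundary points together with the two tagged/untagged arcs to the cone point (an ``initial fan''). Reading off $Q_{T_0}$ by Definition~\ref{defn_quiver_on_disc}, one sees that, after ignoring the orientation of the arrows, it is precisely the Coxeter-like diagram from \cite[Table 5]{BMR98} displayed on the right of Figure~\ref{fig:BMR_embedded}. Inspecting the defining relations listed in Definition~\ref{defn_GQ_associated_group}, the only non-commutation, non-braid relations that appear for $Q_{T_0}$ are: one relation of type (\ref{item_dreln}) (the length-$d$ relation on the unoriented $d$-edge between the two vertices incident with the cone point), and one relation of type (\ref{item_doubleedge}) (the length-$6$ relation associated with the triangle of two double edges meeting at vertex $3$). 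There is no labelled oriented cycle in $Q_{T_0}$, so (\ref{item_cycled}) and (\ref{item_broken4cycle}) do not contribute. Comparing with the presentation recalled in Proposition~\ref{prop_BMR_Bddn_presentation}, one sees that the generators and relations match exactly, so $G_{Q_{T_0}}\cong B(d,d,n)$.

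Next I would invoke the connectedness of the flip graph of tagged triangulations of $(X,M)$: by \cite[Prop.\ 7.10]{FST}, given any tagged triangulation $T$ there is a finite sequence of flips
$$T=T_r\ \longleftrightarrow\ T_{r-1}\ \longleftrightarrow\ \cdots\ \longleftrightarrow\ T_1\ \longleftrightarrow\ T_0.$$
By Lemma~\ref{lem:flipmutation}, each flip $T_{i+1}\leftrightarrow T_i$ corresponds to a mutation $Q_{T_{i+1}}=\mu_{k_i}(Q_{T_i})$ of the associated decorated quiver at some vertex $k_i$. Applying Theorem~\ref{thm:mutation_invariance} at each step yields a group isomorphism
$$\varphi_{k_i}^{Q_{T_i}}:\, G_{Q_{T_i}}\;\xrightarrow{\;\cong\;}\;G_{Q_{T_{i+1}}}.$$
Composing these isomorphisms gives a group isomorphism $G_{Q_{T_0}}\cong G_{Q_T}$, and combining with Step~1 yields $G_{Q_T}\cong B(d,d,n)$, as required.

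The only subtlety I would anticipate is the verification in Step~1 that $Q_{T_0}$ really does give exactly the BMR diagram (as opposed to a related variant); this is essentially a bookkeeping check using Remark~\ref{rem:conepointportion} and the enumeration of puzzle pieces in Figure~\ref{fig:puzzlepiecesquiver}, so it is not a genuine obstacle. The heavy lifting for the theorem is already contained in Theorem~\ref{thm:mutation_invariance}, and everything else here is just combining it with the connectedness of the flip graph and the single base-case identification with the BMR presentation.
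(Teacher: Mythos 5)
Your proposal is correct and follows essentially the same route as the paper: identify $G_{Q_{T_0}}$ for the triangulation of Figure~\ref{fig:BMR_embedded} with the Brou\'{e}--Malle--Rouquier presentation of $B(d,d,n)$, then transport this along a flip sequence using Theorem~\ref{thm:mutation_invariance} and the connectedness of the flip graph from \cite[Prop.\ 7.10]{FST}. The paper states this more tersely (the connectedness argument is already absorbed into the proof of Theorem~\ref{thm:mutation_invariance}), but the content is identical.
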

\begin{proof}
There is a tagged triangulation of $(X,M)$ for which the corresponding group $G_Q$ is the presentation of $B(d,d,n)$ from~\cite[Thm.\ 2.27]{BMR98} (see Figure~\ref{fig:BMR_embedded}). The result follows from combining this with Theorem~\ref{thm:mutation_invariance}.
\end{proof}

\section{Geometric interpretation of the new presentations}

We work with the same surface $(X,M)$ as in the previous section: $X$ is the disk $S$ with an interior marked point, interpreted as a cone point of degree $d\geq 2$, and $M$ a set of $n\geq 2$ marked points on the boundary of $X$.
In Section~\ref{subsection_associating_quiver}, we defined a way to associate a quiver $Q_T$ to any tagged triangulation $T$ of $(X,M)$ and a group $G_{Q_T}$.
As in \cite[Defn~3.1]{GM}, we associate another graph to $T$ as follows.

\begin{defn} \label{def:braidgraph}
Let $T$ be a tagged triangulation of $(X,M)$. We define the \textit{braid graph} $D_T$ of $T$ to be the geometric dual of $T$ regarded as a graph embedded in the disk.
Thus $D_T$ has a vertex in each connected component of the complement of $T$ and, whenever two connected components share an edge of $T$, there is a corresponding edge in $D_T$ between the two corresponding vertices. Note that, in general, $D_T$ can have multiple edges between vertices.

If we regard $T$ as a graph embedded in the plane, then $D_T$ is the geometric dual in the plane with the vertex corresponding to the external face removed. Note that this geometric dual is isomorphic to the combinatorial dual of $T$ by~\cite[Theorem 30]{whitney32} (see also~\cite[\S11, page 115]{harary69}), since $T$ is a non-separable graph, so $D_T$ also is well-defined as an abstract graph.
\end{defn}

Moreover, note that the interior of $X$ is isomorphic as an orbifold to $\mathcal{O}_d:=\mathbb{C}/C_d$ and hence we identify the two orbifolds in our arguments.

Given a set of $n$ vertices $V$ in $\mathcal{O}_d^\circ$, that is $\mathcal{O}_d$ minus the cone point, one can define the corresponding \textit{braid group} $Z_n(\mathcal{O}_d)$ (denoted $\Gamma (\mathcal{O}_d, V)$ in \cite{GM}). Each element of $Z_n(\mathcal{O}_d)$, also called \textit{braid},  can be regarded as a permutation $g$ of $V$ together with a tuple $\gamma=(\gamma_v)_{v\in V}$ of paths $\gamma_v:[0,1]\rightarrow \mathcal{O}^\circ_d$ with $\gamma_v(0)=v$ and $\gamma_v(1)=g(v)$ for each $v\in V$ and, for each $t\in[0,1]$, the points $\gamma_v(t)$ for $v\in V$ are all distinct for all $v\in V$. See \cite{A} and \cite[Section~3]{GM} for further details. Moreover, each path $\pi$ in $\mathcal{O}^\circ_d$ with endpoints in $V$ determines a braid $\sigma_\pi$ in  $Z_n(\mathcal{O}_d)$ (see e.g.~\cite[Defn~3.3]{GM}).

Let $T$ be a tagged triangulation of $(X,M)$, and note that this is a collection of $n$ (tagged) arcs. Each (tagged) arc $\alpha_i$ in $T$ corresponds to a vertex $i$ in the quiver $Q_T$ and to an edge $\pi_i$ in the braid graph $D_T$. Following the same notation as \cite{GM}, we let $\sigma_i:=\sigma_{\pi_i}$ denote the corresponding braid in $Z_n(\mathcal{O}_d)$ and $B_T$ be the subgroup of $Z_n(\mathcal{O}_d)$ generated by the braids $\sigma_i$, for $i$ vertex in $Q_T$.

\begin{proposition}\label{prop_initial_triang}
    Let $T_0$ be the triangulation of $(X,M)$ shown in Figure~\ref{fig:BMR_embedded}. Then there is an isomorphism from $B_{T_0}$ to $G_{Q_{T_0}}$ taking the braid $\sigma_i$ to the generator $s_i$ of $G_{Q_{T_0}}$. Furthermore, the subgroup $B_{T_0}$ is a subgroup of $Z_n(\mathcal{O}_d)$ of index $d$.
\end{proposition}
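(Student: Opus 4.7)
The plan is to combine Theorem~\ref{thm:Bddnpresentation} with Proposition~\ref{prop_paths_drawn} via an explicit geometric matching. By Theorem~\ref{thm:Bddnpresentation}, there is an isomorphism $\theta\colon G_{Q_{T_0}}\xrightarrow{\cong}B(d,d,n)$ which, under the labelling of Figure~\ref{fig:BMR_embedded}, sends $s_1\mapsto \tau'_2$, $s_2\mapsto \tau_2$ and $s_i\mapsto \tau_i$ for $3\leq i\leq n$. By Proposition~\ref{prop_paths_drawn}, $\beta\colon B(d,d,n)\hookrightarrow Z_n(\MO_d)$ is an embedding of index $d$ with image $\langle h_1,h_2,\ldots,h_n\rangle$ and with $\tau'_2\mapsto h_1$, $\tau_i\mapsto h_i$ for $2\leq i\leq n$. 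So the proof reduces to verifying that, for a suitable realisation of $T_0$ in $X$, the braid $\sigma_i\in Z_n(\MO_d)$ associated to the dual edge of $\alpha_i$ coincides with $h_i$.

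I would realise $T_0$ in $\MO_d\cong X^{\circ}$ by placing the cone point at $0$ and the $n$ boundary marked points in such a way that the chosen basepoint $(-1,1,2^d,\ldots,(n-1)^d)$ for $Z_n(\MO_d)$ lies inside the $n$ regions of the complement of $T_0$, one point per region. Concretely, the two regions adjacent to the cone point in $T_0$ (those bounded by arcs $\alpha_1$ and $\alpha_2$) are placed so that their interior basepoints are $-1$ and $1$; the other regions (each a triangle bordering the arcs $\alpha_3,\ldots,\alpha_n$ of the fan) receive the basepoints $2^d,3^d,\ldots,(n-1)^d$ in order. With this set-up, for $3\leq i\leq n$, the edge of $D_{T_0}$ dual to $\alpha_i$ is a simple arc connecting the points $(i-2)^d$ and $(i-1)^d$ in $\MO_d$ lying in the upper half-plane and missing the cone point, i.e. exactly the half-twist shown in Figure~\ref{fig:braids_hi}, so $\sigma_i=h_i$. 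The edges dual to $\alpha_1$ and $\alpha_2$ both join $-1$ to $1$, one passing above and one below the cone point, and a direct comparison with Figure~\ref{fig:braids_hi} (using the same view-from-below convention as in Proposition~\ref{prop_paths_drawn}) identifies them with $h_1$ and $h_2$ respectively.

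Granted this identification, the composition $\beta\circ\theta\colon G_{Q_{T_0}}\to Z_n(\MO_d)$ sends $s_i$ to $\sigma_i$, and its image is $B_{T_0}=\langle \sigma_1,\ldots,\sigma_n\rangle$. Since $\beta$ and $\theta$ are both injective, the induced surjection $G_{Q_{T_0}}\twoheadrightarrow B_{T_0}$, $s_i\mapsto\sigma_i$, is in fact an isomorphism. The index statement then comes for free: $B_{T_0}=\mathrm{im}(\beta)$, which is of index $d$ in $Z_n(\MO_d)$ by Proposition~\ref{prop_d-foldcovering}.

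The main obstacle is the careful geometric bookkeeping needed to match the dual edges of $D_{T_0}$ with the braids $h_i$ of Figure~\ref{fig:braids_hi}. The identification for $\sigma_i$ with $i\geq 3$ is essentially the same as in~\cite[Prop.~3.5]{GM}, but the two edges at the cone point (producing $\sigma_1$ and $\sigma_2$) require extra care: one must select the isotopy class of each dual edge so that its lift under the $d$-fold branched cover $V_0/G(d,d,n)\to X_n$ (cf.\ Proposition~\ref{prop_d-foldcovering}) agrees with the perturbed paths $p_{t'_2}$ and $p_{t_2}$ used in Proposition~\ref{prop_paths_drawn}. Once the basepoints and the orientation conventions are fixed in accordance with Remark~\ref{rem:choice}, this becomes a direct comparison of diagrams.
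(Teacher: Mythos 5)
Your proposal is correct and follows essentially the same route as the paper: identify $\sigma_i$ with $h_i$ geometrically, use the fact that $G_{Q_{T_0}}$ is (by construction) the Brou\'{e}--Malle--Rouquier presentation of $B(d,d,n)$, and conclude via the index-$d$ embedding $\beta$ of Proposition~\ref{prop_paths_drawn}. The only cosmetic differences are that you invoke Theorem~\ref{thm:Bddnpresentation} where the paper simply observes the identification of presentations directly, and that you spell out the basepoint bookkeeping for matching the dual edges of $D_{T_0}$ with the braids of Figure~\ref{fig:braids_hi}, which the paper delegates to a citation of~\cite[Remark~3.2]{GM}.
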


\begin{proof}
    Note that, via an isomorphism of the kind in \cite[Remark~3.2]{GM}, the element $\sigma_i$ coincides with $h_i$ for $1\leq i\leq n$, for the braids $h_i$ in $Z_n(\mathcal{O}_d)$ illustrated in Figure~\ref{fig:braids_hi}. Since by construction $G_{Q_{T_0}}=B(d,d,n)$ with the presentation from \cite{BMR98}, the monomorphism $\beta$ from Theorem~\ref{mainthmA} (which sends $s_i$ to $h_i$) gives the embedding of $B_{T_0}$ in $Z_n(\mathcal{O}_d)$ as a subgroup of index $d$.
\end{proof}

Before stating and proving our final result, we recall a result from \cite[Théorème, part(iv)]{Sergiescu93}, see also \cite[Lemma 3.5]{GM}.

\begin{lemma}\label{lemma_sergiescu}
    Let $A,\,B,\, C$ be three distinct points in $\mathcal{O}^\circ_d$ and suppose there is a topological disk in $\mathcal{O}^\circ_d$ with $A,\,B,\, C$ lying in order anticlockwise around its boundary. Let $AB$ denote the arc on its boundary between $A$ and $B$, and define $BC$ and $CA$ similarly. Then $\sigma_{AB}\sigma_{BC}=\sigma_{BC}\sigma_{CA}$.
\end{lemma}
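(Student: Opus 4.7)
The plan is to treat this as a local statement about the disk braid group. Since $A, B, C$ lie in a topological disk $D \subset \mathcal{O}_d^\circ$ that avoids the cone point, the three braids $\sigma_{AB}, \sigma_{BC}, \sigma_{CA}$ can be represented by strands supported entirely in $D \times [0,1]$. Within this disk the braid-group theory reduces to the classical one, so I would aim to reduce the claim to the standard identity that half-twists along the three sides of an anticlockwise-oriented triangle satisfy the stated relation.

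First I would check that both sides of the equation $\sigma_{AB}\sigma_{BC}=\sigma_{BC}\sigma_{CA}$ induce the same underlying permutation, namely the $3$-cycle sending $A \mapsto C$, $C \mapsto B$, $B \mapsto A$. The core of the proof is then geometric: I would exhibit an ambient isotopy of $D$, rel $\partial D$, that realizes each composition as the braid induced by a rigid anticlockwise rotation of the triangle $ABC$ by $240^\circ$. Concretely, in $\sigma_{AB}\sigma_{BC}$, the strand starting at $A$ traces the path $A\to B\to C$ along two sides of the triangle while the strands starting at $B$ and $C$ slide into their new positions; straightening these strands out into a constant-angular-velocity rotation shows that the resulting braid is the rotation braid. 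The same procedure applied to $\sigma_{BC}\sigma_{CA}$ produces the very same rotation braid, giving the desired equality.

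The main obstacle is verifying rigorously that the two isotopies land at the same element of $Z_n(\mathcal{O}_d)$; this amounts to checking that the half-twists along $AB$ and $BC$ compose compatibly with the choice of representative paths used to define each $\sigma_{\pi}$. Since the statement is purely local and already well known, I would in practice invoke \cite[Th\'eor\`eme, part (iv)]{Sergiescu93} (or equivalently \cite[Lem.\ 3.5]{GM}) directly, rather than reproving it, as those references establish precisely this identity for half-twists in surface braid groups and therefore apply verbatim to our setting.
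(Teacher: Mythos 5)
Your proposal is correct and takes essentially the same route as the paper: the paper gives no proof of this lemma, but simply recalls it as a known result from \cite[Th\'eor\`eme, part (iv)]{Sergiescu93} (see also \cite[Lemma 3.5]{GM}), which is exactly what you conclude by doing. Your supplementary geometric sketch (both products being isotopic to the rotation braid of the triangle) is the standard argument underlying those references and is consistent with them.
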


\begin{defn}
Let $T$ be a tagged triangulation of $(X,M)$.
Suppose that there is an arrow $i\rightarrow k$ in $Q_T$.
Then there are vertices $X_i$, $Y_i$ and $Z_i$ and edges $X_iY_i$ and $Y_iZ_i$ in $D_T$ such that $\sigma_i=\sigma_{X_iY_i}$ and $\sigma_k=\sigma_{Y_iZ_i}$.
We say that an embedding of $D_T$ into $(X,M)$ is \textit{good} at $k$ if, for every such $i$, the vertices $X_i,Y_i$ and $Z_i$ are in anticlockwise order.
\end{defn}

\begin{remark}
Note that our convention for orienting the arrows of the quiver of a triangulation is opposite to the convention used in \cite{GM}. However, the proof of~\cite[Thm.\ 3.6]{GM} actually requires this opposite convention: then it goes through as stated (provided we regard $k$ as the mutation vertex). For example, in the notation used there (see~\cite[Fig.\ 6]{GM}), we have
$\sigma_{\widetilde{\pi}_2}=\sigma_1^{-1}\sigma_2\sigma_1$, while $\tilde{\tau}_2=\sigma_2$ as there is no arrow from $2$ to $1$ in the quiver used there. Thus the claim that $\sigma_{\widetilde{\pi}_2}=\tilde{\tau}_2$ does not hold, but this is resolved if the opposite convention for orienting the quiver is adopted, as we do here.
\end{remark}

\begin{theorem}\label{thm_braid_interpretation}
    Let $T$ be a tagged triangulation of $(X,M)$. Then there is an isomorphism from $B_T$ to $G_{Q_{T}}$ taking the braid $\sigma_i$ to the generator $s_i$ of $G_{Q_{T}}$. Furthermore, $B_{T}$ is a subgroup of index $d$ of $Z_n(\mathcal{O}_d)$.
\end{theorem}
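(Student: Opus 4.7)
The plan is to induct on the flip distance from the initial triangulation $T_0$, whose corresponding result is Proposition~\ref{prop_initial_triang}. Since the flip graph of tagged triangulations of $(X,M)$ is connected by~\cite[Prop.~7.10]{FST}, it suffices to propagate the statement across a single flip. Suppose that the theorem holds for a tagged triangulation $T$, with isomorphism $\psi_T:G_{Q_T}\xrightarrow{\cong}B_T$ sending $s_i\mapsto\sigma_i$, and let $T'$ be obtained from $T$ by flipping the arc labelled $k$, with $k'$ denoting the new arc.

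By Theorem~\ref{thm:mutation_invariance} there is an isomorphism $\varphi^{Q_T}_k:G_{Q_T}\xrightarrow{\cong}G_{Q_{T'}}$ sending $s_i\mapsto t_kt_it_k^{-1}$ if $i\to k$ in $Q_T$ (or in the cone-point case described in that theorem), and $s_i\mapsto t_i$ otherwise. Composing $\psi_T$ with the inverse of $\varphi^{Q_T}_k$ produces a candidate isomorphism $\psi_{T'}:G_{Q_{T'}}\to B_T$, and the task is to verify that $\psi_{T'}(t_i)=\sigma_i'$ for every vertex $i$ of $Q_{T'}$, where $\sigma_i'\in Z_n(\MO_d)$ is the braid associated with the $i$-th arc of $T'$. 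Once this is established we conclude $B_{T'}=\psi_{T'}(G_{Q_{T'}})=\psi_T(G_{Q_T})=B_T$, and transporting $\psi_{T'}$ across this equality produces the desired isomorphism $G_{Q_{T'}}\cong B_{T'}$ with the correct identification of generators.

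The key geometric input is Lemma~\ref{lemma_sergiescu}. For each arrow $i\to k$ in $Q_T$ we use a good embedding of $D_T$ to select an anticlockwise triple $X_i,Y_i,Z_i$ of vertices of $D_T$ with $\sigma_i=\sigma_{X_iY_i}$ and $\sigma_k=\sigma_{Y_iZ_i}$. Sergiescu then gives $\sigma_{Z_iX_i}=\sigma_k^{-1}\sigma_i\sigma_k$, and this braid is exactly the $T'$-braid $\sigma_i'$ associated to the arc of $T'$ whose dual edge in $D_{T'}$ is $Z_iX_i$. For arcs of $T'$ disjoint from the flipped region, $\sigma_i'$ is identified with $\sigma_i$ by an ambient isotopy. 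For the flipped arc $k'$ itself, a further application of Sergiescu to a suitably chosen triple in $D_T$ expresses the new braid $\sigma_{k'}'$ as a conjugate of $\sigma_k$ (or of a neighbouring $\sigma_j$) and places it in $B_T$, matching $\psi_{T'}(t_{k'})$.

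The main obstacle is the case-by-case verification across the local flip configurations of Figures~\ref{fig:typeAmutation}, \ref{fig:cornercase}, \ref{fig:mutationnearcone}, \ref{fig:mutation_tagged}, and \ref{fig:mutation_untagged} (together with their tag-reversed counterparts). For flips far from the cone point the argument parallels~\cite[Thm.~3.6]{GM}, but for flips involving the cone point one must check that the labelled $d$-cycles, double edges, and tags of Definition~\ref{defn_quiver_on_disc} remain compatible with the anticlockwise orientation convention underlying Sergiescu's lemma, and that the cone-point special case of $\varphi^{Q_T}_k$ (when $k$ and its flipped image are the two arcs incident to the cone point) produces the expected conjugation. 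Once these verifications are complete, we obtain $B_{T'}=B_T$ at each inductive step, so $B_T=B_{T_0}$ has index $d$ in $Z_n(\MO_d)$ by Proposition~\ref{prop_initial_triang}, completing the induction and the proof.
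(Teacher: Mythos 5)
Your proposal is correct and follows essentially the same route as the paper: induction over flips from $T_0$ (base case Proposition~\ref{prop_initial_triang}), using Theorem~\ref{thm:mutation_invariance} for the algebraic side, Lemma~\ref{lemma_sergiescu} applied to a good embedding of $D_T$ for the geometric conjugation formula, and a case-by-case check over the local flip configurations, including the special treatment of the cone-point case. The paper packages the comparison via the intermediate generators $\widetilde{\tau_i}$ and the isomorphism $B_{T'}\to B_T$ of \cite[Remark~3.2]{GM}, but this is the same mechanism you describe as transporting $\psi_{T'}$ across the equality $B_{T'}=B_T$.
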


\begin{proof}
By Proposition~\ref{prop_initial_triang}, the result holds for the triangulation $T=T_0$. Since any tagged triangulation can be obtained by flipping $T_0$ a finite number of times, it is enough to show that if the theorem holds for a tagged triangulation $T$, then it also holds for the flip of $T$ at any of its tagged arcs.

Assume that the result holds for a tagged triangulation $T$, that is there is an isomorphism $\psi_T: B_T\rightarrow G_{Q_{T}}$ sending the braid $\sigma_i$ to the generator $s_i$. Let the triangulation $T'$ be obtained by flipping $T$ at the arc $\alpha_k$ and the generators of $B_{T'}$ and $G_{Q_{T'}}$ be denoted by $\tau_i$ and $t_i$ respectively. 
In the following arguments, we label the paths in $\mathcal{O}^\circ_d$ as the corresponding braids in $B_{T}$ and $B_{T'}$, to avoid heavy notation.

Define 
$\widetilde{\tau_i}=\sigma_k^{-1}\sigma_i\sigma_k$ if
$i\rightarrow k$ in $Q_T$ or the vertices $i$ and $k$ correspond to the only two arcs incident with the conepoint in $T$
and the arc corresponding to $k$ is rotated anti-clockwise to the flipped arc; and $\widetilde{\tau_i}=\sigma_i$ otherwise.
Note that $B_T$ is generated by the $\widetilde{\tau_i}$, for $i$ running through the vertices of $Q_T$.

The possible types of flips that can occur are given locally by one of the mutations in Figures~\ref{fig:typeAmutation},
\ref{fig:cornercase},~\ref{fig:mutationnearcone},~\ref{fig:mutation_tagged} or~\ref{fig:mutation_untagged} (from left to right or right to left), or by one of the mutations
from Figure~\ref{fig:cornercase}, \ref{fig:mutation_tagged} or \ref{fig:mutation_untagged} with all of the tags flipped.

\begin{figure}
\begin{tikzpicture}[scale=1.3,
  quiverarrow/.style={black,-latex,shorten <=-4pt, shorten >=-4pt},
  mutationarc/.style={dashed, red, very thick, shorten <=-4pt, shorten >=-4pt},
  mutationarccone/.style={dashed, red, very thick, shorten >=-4pt},
  arc/.style={dashed, black, shorten <=-4pt, shorten >=-4pt},
  arccone/.style={dashed, black, shorten >=-4pt},
  line/.style={black, thick, shorten <=-8pt, shorten >=-8pt},
  point/.style={gray},
  vertex/.style={black},
  conepoint/.style={gray, circle, draw=gray!100, fill=white!100, thick, inner sep=1.5pt},
  db/.style={thick, double, double distance=1.3pt, shorten <=-6pt}
  ]
   \begin{scope}
   \node[point] (p1) at (0,0) {$\bullet$};
   \node[point] (p2) at (2,0) {$\bullet$};
   \node[point] (p3) at (0,2) {$\bullet$};
   \node[point] (p4) at (2,2) {$\bullet$};
  \draw[arccone] (p1)--(p2);
  \draw[arccone] (p1)--(p3);
  \draw[arccone] (p2)--(p4);
  \draw[arccone] (p3)--(p4);
  \draw[mutationarccone] (p1)--(p4);
  \draw[gray] (0,0) node{$\bullet$};
  \draw[gray] (2,0) node{$\bullet$};
  \draw[gray] (0,2) node{$\bullet$};
  \draw[gray] (2,2) node{$\bullet$};
  \node[vertex, label=
{[label distance=-5pt]-90:{\tiny $A$}}] (A) at (0.5,1) {$\bullet$};
\node[vertex, label=
{[label distance=-10pt]70:{\tiny $B$}}] (B) at (1.6,1.2) {$\bullet$};
\node[vertex, label=
{[label distance=-10pt]60:{\tiny $C$}}] (C) at (1,2.2) {$\bullet$};
\node[vertex, label=
{[label distance=-5pt]90:{\tiny $D$}}] (D) at (-0.2,1.7) {$\bullet$};
\node[vertex, label=
{[label distance=-10pt]-60:{\tiny $E$}}] (E) at (0.8,-0.2) {$\bullet$};
\node[vertex, label=
{[label distance=-5pt]90:{\tiny $F$}}] (F) at (2.2,0.7) {$\bullet$};
\draw[line] (A)--(B);
\draw[line] (A)--(D);
\draw[line] (A)--(C);
\draw[line] (E)--(B);
\draw[line] (F)--(B);
 \node[below] at  ($(A)! 0.5!(D)$) {\tiny $\sigma_1$};
 \node[left] at  ($(A)! 0.5!(C)$) {\tiny $\sigma_4$};
 \node[above] at  ($(A)! 0.5!(B)$) {\tiny $\sigma_5$};
 \node[below] at  ($(B)! 0.5!(F)$) {\tiny $\sigma_3$};
 \node[left] at  ($(B)! 0.5!(E)$) {\tiny $\sigma_2$};
  \begin{scope}[xshift=0.3cm, yshift=-2.3cm, scale=0.7]
  \node[vertex, label=
{[label distance=-5pt]180:{\tiny $1$}}] (v1) at (0,1) {$\bullet$};
\node[vertex, label=
{[label distance=-5pt]-90:{\tiny $2$}}] (v2) at (1,0) {$\bullet$};
\node[vertex, label=
{[label distance=-5pt]0:{\tiny $3$}}] (v3) at (2,1) {$\bullet$};
\node[vertex, label=
{[label distance=-5pt]90:{\tiny $4$}}] (v4) at (1,2) {$\bullet$};
\node[vertex, red, label=
{[label distance=-5pt]120:{\tiny $5$}}] (v5) at (1,1) {$\bullet$};
  \draw[quiverarrow] (v1)--(v5);
  \draw[quiverarrow] (v5)--(v4);
  \draw[quiverarrow] (v4)--(v1);
  \draw[quiverarrow] (v3)--(v5);
  \draw[quiverarrow] (v5)--(v2);
  \draw[quiverarrow] (v2)--(v3);
  \end{scope}
\draw [->] (2.5,1) to (3.5,1);
 \end{scope}
  \begin{scope}[xshift=4cm]
   \node[point] (p1) at (0,0) {$\bullet$};
   \node[point] (p2) at (2,0) {$\bullet$};
   \node[point] (p3) at (0,2) {$\bullet$};
   \node[point] (p4) at (2,2) {$\bullet$};
  \draw[arccone] (p1)--(p2);
  \draw[arccone] (p1)--(p3);
  \draw[arccone] (p2)--(p4);
  \draw[arccone] (p3)--(p4);
  \draw[mutationarccone] (p1)--(p4);
  \draw[gray] (0,0) node{$\bullet$};
  \draw[gray] (2,0) node{$\bullet$};
  \draw[gray] (0,2) node{$\bullet$};
  \draw[gray] (2,2) node{$\bullet$};
\draw [->] (2.5,1) to (3.5,1);
 \node[vertex, label=
{[label distance=-5pt]-90:{\tiny $A$}}] (A) at (0.5,1) {$\bullet$};
\node[vertex, label=
{[label distance=-10pt]70:{\tiny $B$}}] (B) at (1.6,1.2) {$\bullet$};
\node[vertex, label=
{[label distance=-10pt]60:{\tiny $C$}}] (C) at (1,2.2) {$\bullet$};
\node[vertex, label=
{[label distance=-5pt]90:{\tiny $D$}}] (D) at (-0.2,1.7) {$\bullet$};
\node[vertex, label=
{[label distance=-10pt]-60:{\tiny $E$}}] (E) at (0.8,-0.2) {$\bullet$};
\node[vertex, label=
{[label distance=-5pt]90:{\tiny $F$}}] (F) at (2.2,0.7) {$\bullet$};
\draw[line] (A)--(B);
\draw[line] (B)--(D);
\draw[line] (A)--(C);
\draw[line] (E)--(B);
\draw[line] (F)--(A);
 \node[above left] at  ($(B)! 0.6!(D)$) {\tiny $\widetilde{\tau_1}$};
 \node[right] at  ($(A)! 0.6!(C)$) {\tiny $\widetilde{\tau_4}$};
 \node[above] at  ($(A)! 0.35!(B)$) {\tiny $\widetilde{\tau_5}$};
 \node[below right] at  ($(A)! 0.6!(F)$) {\tiny $\widetilde{\tau_3}$};
 \node[left] at  ($(B)! 0.5!(E)$) {\tiny $\widetilde{\tau_2}$};
 \end{scope}
  \begin{scope}[xshift=8cm]
  \node[point] (p1) at (0,0) {$\bullet$};
   \node[point] (p2) at (2,0) {$\bullet$};
   \node[point] (p3) at (0,2) {$\bullet$};
   \node[point] (p4) at (2,2) {$\bullet$};
  \draw[arccone] (p1)--(p2);
  \draw[arccone] (p1)--(p3);
  \draw[arccone] (p2)--(p4);
  \draw[arccone] (p3)--(p4);
  \draw[mutationarccone] (p2)--(p3);
  \draw[gray] (0,0) node{$\bullet$};
  \draw[gray] (2,0) node{$\bullet$};
  \draw[gray] (0,2) node{$\bullet$};
  \draw[gray] (2,2) node{$\bullet$};
    \node[vertex, label=
{[label distance=-5pt]90:{\tiny $A$}}] (A) at (0.6,1.6) {$\bullet$};
\node[vertex, label=
{[label distance=-10pt]-30:{\tiny $B$}}] (B) at (1,0.6) {$\bullet$};
\node[vertex, label=
{[label distance=-10pt]60:{\tiny $C$}}] (C) at (1,2.2) {$\bullet$};
\node[vertex, label=
{[label distance=-5pt]90:{\tiny $D$}}] (D) at (-0.2,1.7) {$\bullet$};
\node[vertex, label=
{[label distance=-10pt]-60:{\tiny $E$}}] (E) at (0.8,-0.2) {$\bullet$};
\node[vertex, label=
{[label distance=-5pt]90:{\tiny $F$}}] (F) at (2.2,0.7) {$\bullet$};
\draw[line] (A)--(B);
\draw[line] (B)--(D);
\draw[line] (A)--(C);
\draw[line] (E)--(B);
\draw[line] (F)--(A);
 \node[below] at  ($(B)! 0.5!(D)$) {\tiny $\tau_1$};
 \node[right] at  ($(A)! 0.5!(C)$) {\tiny $\tau_4$};
 \node[right] at  ($(A)! 0.5!(B)$) {\tiny $\tau_5$};
 \node[above] at  ($(A)! 0.5!(F)$) {\tiny $\tau_3$};
 \node[right] at  ($(B)! 0.5!(E)$) {\tiny $\tau_2$};
  \begin{scope}[xshift=0.3cm, yshift=-2.3cm, scale=0.7]
  \node[vertex, label=
{[label distance=-5pt]180:{\tiny $1$}}] (v1) at (0,1) {$\bullet$};
\node[vertex, label=
{[label distance=-5pt]-90:{\tiny $2$}}] (v2) at (1,0) {$\bullet$};
\node[vertex, label=
{[label distance=-5pt]0:{\tiny $3$}}] (v3) at (2,1) {$\bullet$};
\node[vertex, label=
{[label distance=-5pt]90:{\tiny $4$}}] (v4) at (1,2) {$\bullet$};
\node[vertex, red, label=
{[label distance=-5pt]60:{\tiny $5$}}] (v5) at (1,1) {$\bullet$};
  \draw[quiverarrow] (v5)--(v1);
  \draw[quiverarrow] (v4)--(v5);
  \draw[quiverarrow] (v1)--(v2);
  \draw[quiverarrow] (v5)--(v3);
  \draw[quiverarrow] (v2)--(v5);
  \draw[quiverarrow] (v3)--(v4);
  \end{scope}
 \end{scope}
  \end{tikzpicture}
  \caption{Flip of an arc: type $A$ situation.}
    \label{fig:typeA_flip_braid}
  \end{figure}
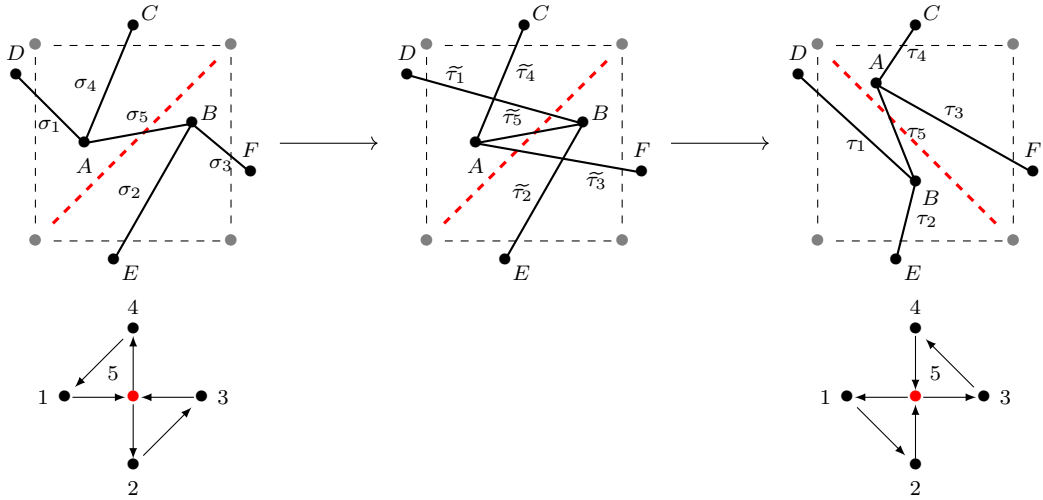

Consider first the flip in Figure~\ref{fig:typeAmutation}, that is a flip far from the cone point. The left hand side of Figure~\ref{fig:typeA_flip_braid} shows an embedding of  $D_T$ which is good at $5$. Applying Lemma~\ref{lemma_sergiescu}, the middle figure shows the paths corresponding to the braids $\widetilde{\tau_i}$. Rotating the vertices $A$ and $B$ clockwise, we get the diagram on the right of Figure~\ref{fig:typeA_flip_braid}, where we used the isomorphism  $\phi:B_{T'}\rightarrow B_T$ given by $\tau_i\mapsto\widetilde{\tau_i}$ from \cite[Remark~3.2]{GM}. Consider the composition of isomorphisms $\varphi_5\circ\psi_T\circ\phi$, where $\varphi_5$ is the isomorphism from Theorem~\ref{thm:mutation_invariance}. If $i\rightarrow 5$ in $Q_T$, then
\begin{align*}
  \varphi_5\circ\psi_T\circ\phi(\tau_i)=  \varphi_5\circ\psi_T(\sigma_5^{-1}\sigma_i\sigma_5)=
  \varphi_5(s_5^{-1}s_is_5)=t_5^{-1}t_5t_it_5^{-1}t_5=t_i,
\end{align*}
and otherwise
\begin{align*}
    \varphi_5\circ\psi_T\circ\phi(\tau_i)=\varphi_5\circ\psi_T(\sigma_i)=
    \varphi_5(s_i)=t_i.
\end{align*}
Hence we obtain an isomorphism $B_{T'}\rightarrow G_{Q_T'}$ sending $\tau_i$ to $t_i$ as required.
A symmetric argument works if we start with the triangulation on the right, with the shown embedding of the dual graph good at $5$, and flip it to the triangulation on the left of Figure~\ref{fig:typeA_flip_braid}.

For the flips in Figures~\ref{fig:cornercase} and~\ref{fig:mutationnearcone} (from left to right or right to left or with all the tags flipped), the result follows by arguments very similar to the above case, hence we omit the details. Since flipping the tags does not affect the arguments, it only remains to show that the result holds for the flips in Figures~\ref{fig:mutation_tagged} and~\ref{fig:mutation_untagged}.

\begin{figure}[ht]\scalebox{1}{
    \centering
 \begin{tikzpicture}[scale=2,
  quiverarrow/.style={black, -latex},
  mutationarc/.style={dashed, red, very thick},
  arc/.style={dashed, black},
  point/.style={gray},
  vertex/.style={black},
  line/.style={black, thick, shorten <=-8pt, shorten >=-8pt},
  conepoint/.style={gray, circle, draw=gray!100, fill=white!100, thick, inner sep=1.5pt},
  db/.style={thick, double, double distance=1.3pt, shorten <=-6pt}
  ]
 \begin{scope}
 \draw[dashed] (2,2) arc
	[start angle=90,
		end angle=180,
		x radius=2cm,
		y radius =2cm
	] ;
    \draw[dashed] (0,0) arc
	[start angle=-90,
		end angle=0,
		x radius=2cm,
		y radius =2cm
	] ;
    \draw[dashed] (2,2) arc
	[start angle=100,
		end angle=150,
		x radius=1.45cm,
		y radius =2cm
	] ;
 \node[point] (p0) at (0,0) {$\bullet$};
 \node[point] (p1) at (2,2) {$\bullet$};
 \node[conepoint] (p3) at (1,1) {\scriptsize $d$};
  \draw [->] (2.2,1.2) to (2.7,1.2);
\draw[dashed,red, thick, shorten <=3pt, shorten >=-1pt] (p3) .. controls +(10:0.6) and +(250:0.6) .. coordinate[pos=0.12](t1) coordinate[midway](w2) (p1);
\node[vertex,rotate=-63, red](tt1) at (t1) {$\bowtie$};
\node[vertex, label=
{[label distance=-5pt]180:{\tiny $A$}}] (A) at (1.3,1.2) {$\bullet$};
\node[vertex, label=
{[label distance=-10pt]80:{\tiny $B$}}] (B) at (1.8,1.3) {$\bullet$};
\node[vertex, label=
{[label distance=-5pt]90:{\tiny $C$}}] (C) at (-0.1,1.2) {$\bullet$};
\node[vertex, label=
{[label distance=-10pt]70:{\tiny $D$}}] (D) at (1.5,0.3) {$\bullet$};
\draw[line] (A)--(B);
\draw[line] (B)--(D);
\draw[thick] (-0.1,1.2) arc
	[start angle=-150,
		end angle=-25,
		x radius=1.06cm,
		y radius =1.3cm
	] ;
\draw[thick] (1.3,1.2) arc
	[start angle=30,
		end angle=300,
		x radius=0.3cm,
		y radius =0.3cm
	] ;
\draw[line] (1.28,0.87)--(B);
   \node[above] at  ($(A)! 0.5!(B)$) {\tiny $\sigma_2$};
  \node[right] at  ($(B)! 0.6!(D)$) {\tiny $\sigma_1$};
  \node[below] at  ($(A)! 0.5!(C)$) {\tiny $\sigma_0$};
  \node[left] at  (0.5,0.5) {\tiny $\sigma_3$};
\begin{scope}[yshift=-2cm]
 \node[vertex, label=
{[label distance=-22pt]-90:{\tiny $3$}}] (v3) at (0.55,1.35) {$\bullet$};
 \node[vertex, label=
{[label distance=-20pt]90:{\tiny $1$}}] (v1) at (1.25,0.45) {$\bullet$};
 \node[vertex, label=
{[label distance=-10pt]175:{\tiny $0$}}] (v0) at (1.38,1.63) {$\bullet$};
 \node[vertex, red, label=
{[label distance=-7pt]270:{\tiny $2$}}] (v2) at (1.64,1.32) {$\bullet$};
  \draw[quiverarrow] (v0)--(v3);
  \draw[quiverarrow] (v2)--(v3);
  \draw[quiverarrow] (v1)--(v0);
  \draw[quiverarrow] (v1)--(v2);
  \draw [-, shorten <=-2pt, shorten >=-2pt] (v2) to node[above right,xshift=-0.1cm] {\tiny $d$} (v0);
  \draw[quiverarrow] (0.5,1.25) arc
	[start angle=170,
		end angle=230,
		x radius=1.7cm,
		y radius =0.88cm
	] ;
 \draw[thick, double, double distance=1.3pt] (0.6,1.29)--(0.8,1.1);
 \draw[thick, double, double distance=1.3pt] (1.2,0.55)--(1.1,0.8);
\end{scope}
\end{scope}
 \begin{scope}[xshift=3cm]
 \draw[dashed] (2,2) arc
	[start angle=90,
		end angle=180,
		x radius=2cm,
		y radius =2cm
	] ;
    \draw[dashed] (0,0) arc
	[start angle=-90,
		end angle=0,
		x radius=2cm,
		y radius =2cm
	] ;
    \draw[dashed] (2,2) arc
	[start angle=100,
		end angle=150,
		x radius=1.45cm,
		y radius =2cm
	] ;
 \node[point] (p0) at (0,0) {$\bullet$};
 \node[point] (p1) at (2,2) {$\bullet$};
 \node[conepoint] (p3) at (1,1) {\scriptsize $d$};

\draw[dashed,red, thick, shorten <=3pt, shorten >=-1pt] (p3) .. controls +(10:0.6) and +(250:0.6) .. coordinate[pos=0.12](t1) coordinate[midway](w2) (p1);
\node[vertex,rotate=-63, red](tt1) at (t1) {$\bowtie$};
\node[vertex, label=
{[label distance=-5pt]180:{\tiny $A$}}] (A) at (1.3,1.2) {$\bullet$};
\node[vertex, label=
{[label distance=-10pt]80:{\tiny $B$}}] (B) at (1.8,1.3) {$\bullet$};
\node[vertex, label=
{[label distance=-5pt]90:{\tiny $C$}}] (C) at (-0.1,1.2) {$\bullet$};
\node[vertex, label=
{[label distance=-10pt]70:{\tiny $D$}}] (D) at (1.5,0.3) {$\bullet$};
\draw[line] (A)--(B);
\draw[line] (A)--(D);
\draw[thick] (-0.1,1.2) arc
	[start angle=-150,
		end angle=-25,
		x radius=1.06cm,
		y radius =1.3cm
	] ;
\draw[thick] (1.3,1.2) arc
	[start angle=30,
		end angle=300,
		x radius=0.3cm,
		y radius =0.3cm
	] ;
\draw[line] (1.28,0.87)--(B);
   \node[above] at  ($(A)! 0.5!(B)$) {\tiny $\widetilde{\tau_2}$};
  \node[right] at  ($(A)! 0.6!(D)$) {\tiny $\widetilde{\tau_1}$};
  \node[below] at  ($(A)! 0.5!(C)$) {\tiny $\widetilde{\tau_0}$};
  \node[left] at  (0.5,0.5) {\tiny $\widetilde{\tau_3}$};
  \draw [->] (2.2,1.2) to (2.7,1.2);
 \end{scope}
  \begin{scope}[xshift=6cm]
  \draw[dashed] (2,2) arc
	[start angle=90,
		end angle=180,
		x radius=2cm,
		y radius =2cm
	] ;
    \draw[dashed] (0,0) arc
	[start angle=-90,
		end angle=0,
		x radius=2cm,
		y radius =2cm
	] ;
    \draw[dashed] (2,2) arc
	[start angle=100,
		end angle=150,
		x radius=1.45cm,
		y radius =2cm
	] ;
 \draw[dashed, red, thick] (0,0) arc
	[start angle=-80,
		end angle=-30,
		x radius=1.45cm,
		y radius =2cm
	] ;
  \node[point] (p0) at (0,0) {$\bullet$};
 \node[point] (p1) at (2,2) {$\bullet$};
 \node[conepoint] (p3) at (1,1) {\scriptsize $d$};
\node[vertex, label=
{[label distance=-5pt]90:{\tiny $C$}}] (C) at (-0.1,1.2) {$\bullet$};
\node[vertex, label=
{[label distance=-10pt]70:{\tiny $D$}}] (D) at (1.5,0.3) {$\bullet$};
\node[vertex, label=
{[label distance=-10pt]80:{\tiny $A$}}] (A) at (1.4,1.2) {$\bullet$};
\node[vertex, label=
{[label distance=-10pt]200:{\tiny $B$}}] (B) at (0.6,0.8) {$\bullet$};
\draw[line] (C)--(B);
\draw[line] (A)--(D);
\draw[thick] (1.4,1.2) arc
	[start angle=40,
		end angle=200,
		x radius=0.48cm,
		y radius =0.4cm
	] ;
  \draw[thick] (0.6,0.8) arc
	[start angle=240,
		end angle=370,
		x radius=0.53cm,
		y radius =0.35cm
	] ;
     \node[left] at  ($(C)! 0.5!(B)$) {\tiny $\tau_3$};
  \node[right] at  ($(A)! 0.6!(D)$) {\tiny $\tau_1$};
  \node[below] at  (1,1.55) {\tiny $\tau_0$};
  \node[left] at  (1.2,0.7) {\tiny $\tau_2$};
 \begin{scope}[yshift=-2cm]
 \node[vertex, label=
{[label distance=-22pt]-90:{\tiny $3$}}] (v3) at (0.55,1.35) {$\bullet$};
 \node[vertex, label=
{[label distance=-20pt]90:{\tiny $1$}}] (v1) at (1.25,0.45) {$\bullet$};
 \node[vertex, label=
{[label distance=-10pt]330:{\tiny $0$}}] (v0) at (1.38,1.63) {$\bullet$};
 \node[vertex, red, label=
{[label distance=-5pt]180:{\tiny $2$}}] (v2) at (0.6,0.35) {$\bullet$};
  \draw[quiverarrow] (v0)--(v3);
  \draw[quiverarrow] (v1)--(v0);
  \draw[quiverarrow] (0.7,0.35)--(1.15,0.45);
  \draw[quiverarrow] (v0)--(v3);
  \draw (0.9,1.1) node{$\scriptstyle d$};
  \draw (v0)--(v2);
 \draw[quiverarrow] (0.5,1.25) arc
	[start angle=170,
		end angle=198,
		x radius=1.4cm,
		y radius =1.7cm
	] ;
 \draw[thick, double, double distance=1.3pt] (0.6,1.29)--(0.8,1.1);
 \draw[thick, double, double distance=1.3pt] (1.2,0.55)--(1.1,0.8);
 \end{scope}
 \end{scope}
 \end{tikzpicture}}
 \caption{Mutation of a tagged to an untagged arc or viceversa, case $1$.}
\label{fig:flip_braid_case1}
\end{figure}
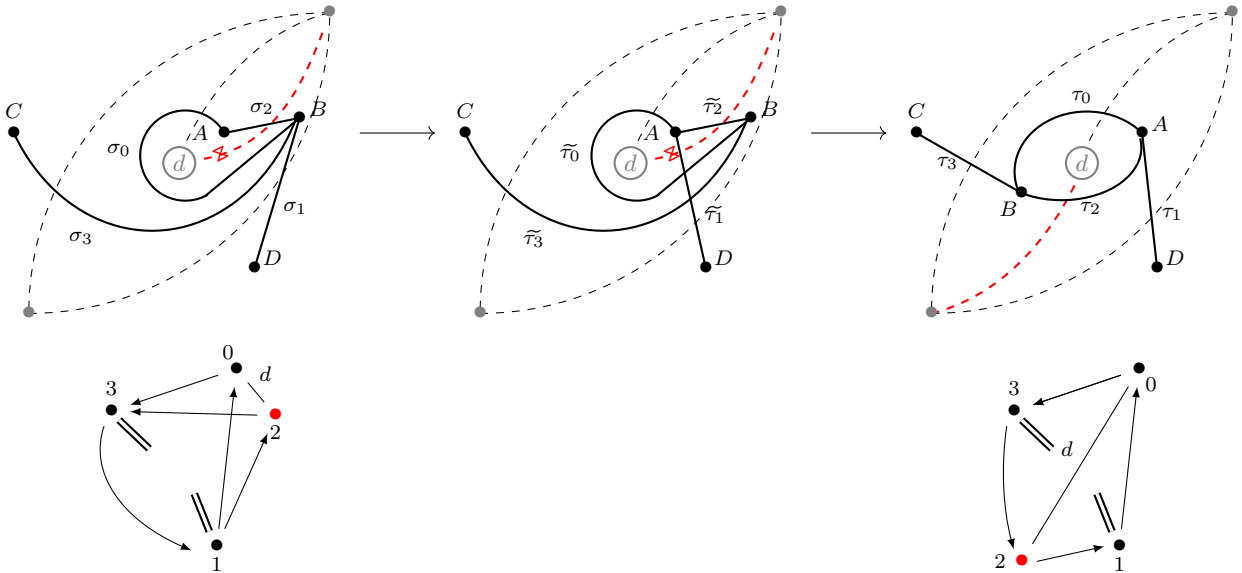

Consider the flip in Figure~\ref{fig:mutation_tagged} with $T$ on the left. 
The left hand side of Figure~\ref{fig:flip_braid_case1} shows an embedding of $D_T$ which is good at $2$.
Applying Lemma~\ref{lemma_sergiescu}, the middle figure shows the paths corresponding to the braids $\widetilde{\tau_i}$. Note that, following the definition of $\widetilde{\tau_i}$, the only conjugated element is $\sigma_1$ in this case as the mutated arc $\alpha_2$ is not rotated anti-clockwise to the flipped arc and so $\sigma_0$ is not conjugated.
Rotating the vertices $A$ and $B$ clockwise, we get the diagram on the right of Figure~\ref{fig:flip_braid_case1}, where we used the isomorphism  $B_{T'}\rightarrow B_T$ given by $\tau_i\mapsto\widetilde{\tau_i}$ from \cite[Remark~3.2]{GM}. Composing this with the isomorphism $\varphi_2\circ\psi_T$, where $\varphi_2$ is the isomorphism from Theorem~\ref{thm:mutation_invariance}, we obtain an isomorphism $B_{T'}\rightarrow G_{Q_T'}$ sending $\tau_i$ to $t_i$ as required.

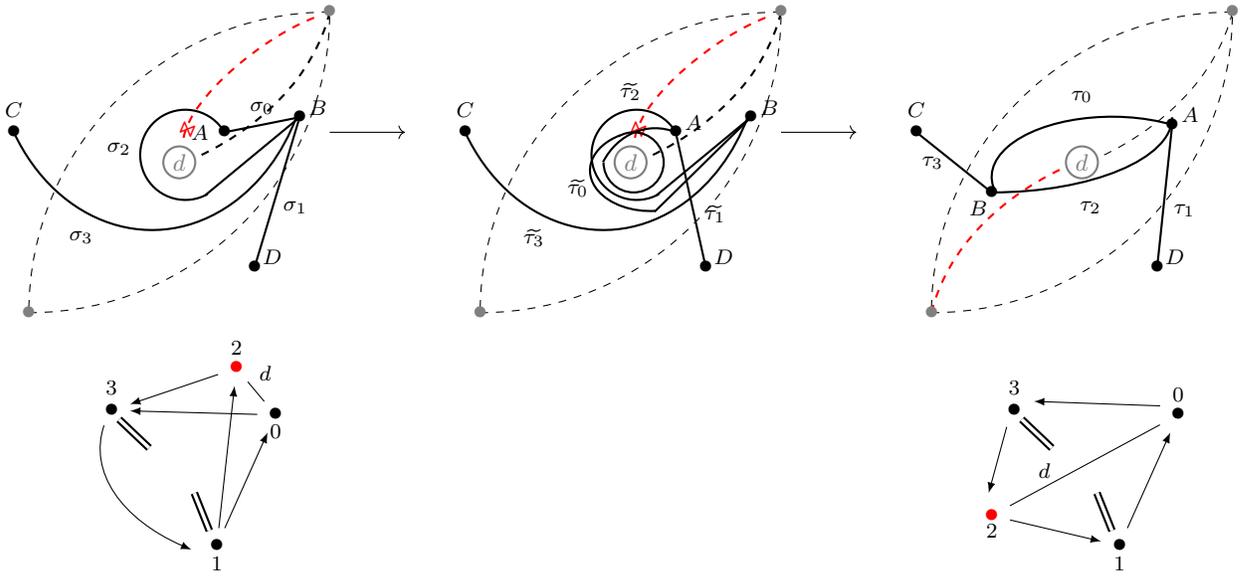
\begin{figure}[ht]\scalebox{1}{
    \centering
 \begin{tikzpicture}[scale=2,
  quiverarrow/.style={black, -latex},
  iquiverarrow/.style={black, -latex, <-},
  mutationarc/.style={dashed, red, very thick},
  arc/.style={dashed, black},
  point/.style={gray},
  vertex/.style={black},
  line/.style={black, thick, shorten <=-8pt, shorten >=-8pt},
  conepoint/.style={gray, circle, draw=gray!100, fill=white!100, thick, inner sep=1.5pt},
  db/.style={thick, double, double distance=1.3pt, shorten <=-6pt}
  ]
 \begin{scope}
 \draw[dashed] (2,2) arc
	[start angle=90,
		end angle=180,
		x radius=2cm,
		y radius =2cm
	] ;
    \draw[dashed] (0,0) arc
	[start angle=-90,
		end angle=0,
		x radius=2cm,
		y radius =2cm
	] ;
    \draw[dashed, thick] (2,2) arc
	[start angle=-10,
		end angle=-60,
		x radius=2cm,
		y radius =1.45cm
	] ;
 \node[point] (p0) at (0,0) {$\bullet$};
 \node[point] (p1) at (2,2) {$\bullet$};
 \node[conepoint] (p3) at (1,1) {\scriptsize $d$};
\draw[dashed,red, thick, shorten <=3pt, shorten >=-2pt] (p3) .. controls +(80:0.45) and +(200:0.45) .. coordinate[pos=0.11](t1) coordinate[midway](w2) (p1);
\node[vertex,rotate=-23, red](tt1) at (t1) {$\bowtie$};
\node[vertex, label=
{[label distance=-5pt]180:{\tiny $A$}}] (A) at (1.3,1.2) {$\bullet$};
\node[vertex, label=
{[label distance=-10pt]80:{\tiny $B$}}] (B) at (1.8,1.3) {$\bullet$};
\node[vertex, label=
{[label distance=-5pt]90:{\tiny $C$}}] (C) at (-0.1,1.2) {$\bullet$};
\node[vertex, label=
{[label distance=-10pt]70:{\tiny $D$}}] (D) at (1.5,0.3) {$\bullet$};
\draw[line] (A)--(B);
\draw[line] (B)--(D);
\draw[thick] (-0.1,1.2) arc
	[start angle=-150,
		end angle=-25,
		x radius=1.06cm,
		y radius =1.3cm
	] ;
\draw[thick] (1.3,1.2) arc
	[start angle=30,
		end angle=300,
		x radius=0.3cm,
		y radius =0.3cm
	] ;
\draw[line] (1.28,0.87)--(B);
   \node[above] at  ($(A)! 0.5!(B)$) {\tiny $\sigma_0$};
  \node[right] at  ($(B)! 0.6!(D)$) {\tiny $\sigma_1$};
  \node[below] at  ($(A)! 0.5!(C)$) {\tiny $\sigma_2$};
  \node[left] at  (0.5,0.5) {\tiny $\sigma_3$};
 \begin{scope}[yshift=-2cm]
 \node[vertex, label=
{[label distance=-22pt]-90:{\tiny $3$}}] (v3) at (0.55,1.35) {$\bullet$};
 \node[vertex, label=
{[label distance=-20pt]90:{\tiny $1$}}] (v1) at (1.25,0.45) {$\bullet$};
 \node[vertex, red, label=
{[label distance=-21pt]-90:{\tiny $2$}}] (v0) at (1.38,1.63) {$\bullet$};
 \node[vertex, label=
{[label distance=-7pt]270:{\tiny $0$}}] (v2) at (1.64,1.32) {$\bullet$};
  \draw[quiverarrow] (v0)--(v3);
  \draw[quiverarrow] (v2)--(v3);
  \draw[quiverarrow] (v1)--(v0);
  \draw[quiverarrow] (v1)--(v2);
  \draw [-, shorten <=-2pt, shorten >=-2pt] (v2) to node[above right,xshift=-0.1cm] {\tiny $d$} (v0);
  \draw[quiverarrow] (0.5,1.25) arc
	[start angle=170,
		end angle=230,
		x radius=1.7cm,
		y radius =0.88cm
	] ;
 \draw[thick, double, double distance=1.3pt] (0.6,1.29)--(0.8,1.1);
 \draw[thick, double, double distance=1.3pt] (1.2,0.55)--(1.1,0.8);
 \end{scope}
\draw [->] (2,1.2) to (2.5,1.2);
 \end{scope}
 \begin{scope}[xshift=3cm]
  \draw[dashed] (2,2) arc
	[start angle=90,
		end angle=180,
		x radius=2cm,
		y radius =2cm
	] ;
    \draw[dashed] (0,0) arc
	[start angle=-90,
		end angle=0,
		x radius=2cm,
		y radius =2cm
	] ;
 \draw[dashed, thick] (2,2) arc
	[start angle=-10,
		end angle=-60,
		x radius=2cm,
		y radius =1.45cm
	] ;
 \node[point] (p0) at (0,0) {$\bullet$};
 \node[point] (p1) at (2,2) {$\bullet$};
 \node[conepoint] (p3) at (1,1) {\scriptsize $d$};

\draw[dashed,red, thick, shorten <=3pt, shorten >=-2pt] (p3) .. controls +(80:0.45) and +(200:0.45) .. coordinate[pos=0.11](t1) coordinate[midway](w2) (p1);
\node[vertex,rotate=-23, red](tt1) at (t1) {$\bowtie$};
\node[vertex, label=
{[label distance=-10pt]80:{\tiny $A$}}] (A) at (1.3,1.2) {$\bullet$};
\node[vertex, label=
{[label distance=-10pt]80:{\tiny $B$}}] (B) at (1.8,1.3) {$\bullet$};
\node[vertex, label=
{[label distance=-5pt]90:{\tiny $C$}}] (C) at (-0.1,1.2) {$\bullet$};
\node[vertex, label=
{[label distance=-10pt]70:{\tiny $D$}}] (D) at (1.5,0.3) {$\bullet$};
\draw[line] (A)--(D);
\draw[thick] (-0.1,1.2) arc
	[start angle=-150,
		end angle=-25,
		x radius=1.06cm,
		y radius =1.3cm
	] ;
\draw[thick] (1.3,1.2) arc
	[start angle=30,
		end angle=300,
		x radius=0.3cm,
		y radius =0.3cm
	] ;
\draw[line] (1.28,0.87)--(B);
  \node[above] at  (0.65,0.7) {\tiny $\widetilde{\tau_0}$};
  \node[right] at  ($(A)! 0.6!(D)$) {\tiny $\widetilde{\tau_1}$};
  \node[below] at  (1,1.6) {\tiny $\widetilde{\tau_2}$};
  \node[left] at  (0.5,0.5) {\tiny $\widetilde{\tau_3}$};
\draw[thick] (1.3,1.2) arc
	[start angle=70,
		end angle=150,
		x radius=0.4cm,
		y radius =0.45cm
	] ;
  \draw[thick] (0.82,1) arc
	[start angle=180,
		end angle=480,
		x radius=0.2cm,
		y radius =0.2cm
	] ;
  \draw[thick] (0.93,1.18) arc
	[start angle=120,
		end angle=275,
		x radius=0.4cm,
		y radius =0.27cm
	] ;
   \draw[line] (B)--(1.26,0.77);
  \draw [->] (2,1.2) to (2.5,1.2);
 \end{scope}
  \begin{scope}[xshift=6cm]
  \draw[dashed] (2,2) arc
	[start angle=90,
		end angle=180,
		x radius=2cm,
		y radius =2cm
	] ;
    \draw[dashed] (0,0) arc
	[start angle=-90,
		end angle=0,
		x radius=2cm,
		y radius =2cm
	] ;
 \draw[dashed] (2,2) arc
	[start angle=-10,
		end angle=-60,
		x radius=2cm,
		y radius =1.45cm
	] ;
  \draw[dashed, red, thick] (0,0) arc
	[start angle=170,
		end angle=120,
		x radius=2cm,
		y radius =1.45cm
	] ;
  \node[point] (p0) at (0,0) {$\bullet$};
 \node[point] (p1) at (2,2) {$\bullet$};
 \node[conepoint] (p3) at (1,1) {\scriptsize $d$};
\node[vertex, label=
{[label distance=-5pt]90:{\tiny $C$}}] (C) at (-0.1,1.2) {$\bullet$};
\node[vertex, label=
{[label distance=-10pt]70:{\tiny $D$}}] (D) at (1.5,0.3) {$\bullet$};
\node[vertex, label=
{[label distance=-10pt]80:{\tiny $A$}}] (A) at (1.6,1.25) {$\bullet$};
\node[vertex, label=
{[label distance=-10pt]200:{\tiny $B$}}] (B) at (0.4,0.8) {$\bullet$};
\draw[line] (C)--(B);
\draw[line] (A)--(D);
\draw[thick] (1.6,1.25) arc
	[start angle=60,
		end angle=195,
		x radius=0.8cm,
		y radius =0.4cm
	] ;
  \draw[thick] (0.4,0.8) arc
	[start angle=270,
		end angle=360,
		x radius=1.2cm,
		y radius =0.5cm
	] ;
     \node[left] at  ($(C)! 0.5!(B)$) {\tiny $\tau_3$};
  \node[right] at  ($(A)! 0.6!(D)$) {\tiny $\tau_1$};
  \node[below] at  (1,1.55) {\tiny $\tau_0$};
  \node[left] at  (1.2,0.7) {\tiny $\tau_2$};
 \begin{scope}[yshift=-2cm]
 \node[vertex, label=
{[label distance=-22pt]-90:{\tiny $3$}}] (v3) at (0.55,1.35) {$\bullet$};
 \node[vertex, label=
{[label distance=-20pt]90:{\tiny $1$}}] (v1) at (1.25,0.45) {$\bullet$};
 \node[vertex, label=
{[label distance=-21pt]-90:{\tiny $0$}}] (v2) at (1.64,1.32) {$\bullet$};
 \node[vertex, red, label=
{[label distance=-19pt]90:{\tiny $2$}}] (v0) at (0.4,0.65) {$\bullet$};
  \draw[quiverarrow] (0.5,1.24)--(0.38,0.8);
  \draw[quiverarrow] (v0)--(v1);
  \draw[quiverarrow] (v1)--(v2);
  \draw[quiverarrow] (1.52,1.38)--(0.68,1.41);
  \draw (0.75,0.95) node{$\scriptstyle d$};
   \draw (v0)--(v2);
    \draw[thick, double, double distance=1.3pt] (0.6,1.29)--(0.8,1.1);
    \draw[thick, double, double distance=1.3pt] (1.2,0.55)--(1.1,0.8);
 \end{scope}
 \end{scope}
 \end{tikzpicture}}
 \caption{Mutation of a tagged to an untagged arc or viceversa, case $2$.}
\label{fig:flipbraid_case_2}
\end{figure}

Consider now the flip in Figure~\ref{fig:mutation_untagged} with $T$ on the left. The left hand side of Figures~\ref{fig:flipbraid_case_2} show an embedding of the braid graph $D_T$ which is good at $2$.
Applying Lemma~\ref{lemma_sergiescu}, the middle figure shows the paths corresponding to the braids $\widetilde{\tau_i}$. Note that this time, the conjugated elements are $\sigma_1$, as there is an arrow $1\rightarrow 2$, and $\sigma_0$, as the mutated arc $\alpha_2$ is rotated anti-clockwise to the flipped arc and $\alpha_0$ is also an arc at the cone point. Rotating the vertices $A$ and $B$ clockwise (with $A$ moving anti-ckockwise around the cone point) we obtain the diagram on the right of Figure~\ref{fig:flipbraid_case_2}, where we used the isomorphism $B_{T'}\rightarrow B_T$ given by $\tau_i\mapsto \widetilde{\tau_i}$ from \cite[Remark 3.2]{GM}. Composing this with the isomorphism $\varphi_2\circ\psi_T$, where $\varphi_2$ is the isomorphism from Theorem~\ref{thm:mutation_invariance}, we obtain an isomorphism $B_{T'}\rightarrow G_{Q_T'}$ sending $\tau_i$ to $t_i$ as required.
Note that not conjugating $\sigma_0$ would result in a final braid $\tau'_0$ swirling around the conepoint. In the case $d=2$, $\tau'_0$ is isotopic to the braid $\tau_0$ and hence conjugating $\sigma_0$ or not does not make a difference, see \cite[proof of Thm~3.6]{GM}. However, for $d>2$, $\tau'_0$ is not isotopic to the braid $\tau_0$ appearing in $B_{T'}$ and hence it is necessary to conjugate $\sigma_0$.

Note that the right hand side diagrams in Figures~\ref{fig:flip_braid_case1} and~\ref{fig:flipbraid_case_2} coincide. Consider this as $T$ and mutate the red arc. There is now a choice on whether rotating this arc anti-clockwise and obtain the picture on the left hand side of Figure~\ref{fig:flip_braid_case1}, or clockwise and obtain the picture on the left hand side of Figure~\ref{fig:flipbraid_case_2}. The two options correspond respectively to conjugating or not conjugating the braid corresponding to vertex $0$ in the associated quiver. Following arguments similar to the above two cases, one can check that in both cases the result holds.

As the above covers all the possible mutations, and the theorem follows.
\end{proof}

We now have the ingredients we need to complete Remark~\ref{rem:tworelations}.
\begin{remark} \label{rem:tworelationsB}
In the situation of Remark~\ref{rem:tworelations} with $d>2$, we have $s_is_k\not=s_ks_i$. The situation described there is the right hand diagram of Figure~\ref{fig:flip_braid_case1} with $i=0$ and $k=2$. The element $\sigma=\tau_2^{-1}\tau_0^{-1}\tau_2\tau_0$ is a pure braid and $\xi(\sigma)\in\pi_1(\MO_d)$ is a single strand winding around the pole twice, where
$\xi$ is the map from the proof of Lemma~\ref{lem:lorderd}.
Arguing as in the proof of Lemma~\ref{lem:lorderd}, we have that $\sigma$ is not equal to the identity, so $\tau_0\tau_2\not=\tau_0\tau_2$ and hence $s_0s_2\neq s_2s_0$ by Theorem~\ref{thm_braid_interpretation}.
\end{remark}

\section{Presentations of \texorpdfstring{$G(d,d,n)$}{G(d,d,n)}}
Let $T$ be a tagged triangulation of $(X,M)$, and $Q_T$ the associated quiver. Let $G'_{Q_T}$ be the group defined in the same way as $G_{Q_T}$ (see Definition~\ref{defn_GQ_associated_group}) with the additional relations $s_i^2=e$ for all $i$. Then we have:

\begin{theorem}
\label{thm:Gddnpresentation}
Let $T$ be a tagged triangulation of $(X,M)$. Then $G'_{Q_T}$ is isomorphic to $G(d,d,n)$, thus giving a presentation of $G(d,d,n)$.   
\end{theorem}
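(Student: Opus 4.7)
The plan is to reduce to the analogous statement for $B(d,d,n)$ established in Theorem~\ref{thm:Bddnpresentation}, and then exploit the mutation invariance given by Theorem~\ref{thm:mutation_invariance} at the level of the quotients $G'_{Q_T}$. The base case is the initial triangulation $T_0$ of Figure~\ref{fig:BMR_embedded}: here $G_{Q_{T_0}}$ coincides with the Brou\'{e}--Malle--Rouquier presentation of $B(d,d,n)$ from \cite[Table~5]{BMR98}, and the result cited from \cite{ariki} (see also Proposition~\ref{prop_BMR_Bddn_presentation}) asserts that imposing in addition that each generator squares to the identity recovers the presentation of $G(d,d,n)$; this yields $G'_{Q_{T_0}}\cong G(d,d,n)$.

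For the inductive step, let $T$ be a tagged triangulation with quiver $Q=Q_T$, and let $T'$ be its flip at the arc corresponding to a vertex $k$, so that $Q'=Q_{T'}=\mu_k(Q)$. By Theorem~\ref{thm:mutation_invariance} the isomorphism $\varphi^Q_k:G_Q\to G_{Q'}$ sends each generator $s_i$ either to $t_i$ or to $t_kt_it_k^{-1}$, according to the rule stated there. In the quotient $G'_{Q'}$ we have $(t_kt_it_k^{-1})^2=t_kt_i^2t_k^{-1}=e$ because $t_i^2=e$, so the composition of $\varphi^Q_k$ with the quotient map $G_{Q'}\twoheadrightarrow G'_{Q'}$ sends every generator of $G_Q$ to an involution and therefore factors through an induced homomorphism $\overline{\varphi^Q_k}:G'_Q\to G'_{Q'}$. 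The symmetric reasoning produces $\overline{\varphi^{Q'}_k}:G'_{Q'}\to G'_Q$, and the final paragraph of the proof of Theorem~\ref{thm:mutation_invariance} shows that the two compositions $\varphi^{Q'}_k\circ\varphi^Q_k$ and $\varphi^Q_k\circ\varphi^{Q'}_k$ are the inner automorphisms $s_i\mapsto s_k^{-1}s_is_k$ and $t_i\mapsto t_k^{-1}t_it_k$; these remain well-defined automorphisms after quotienting, since the normal subgroup generated by the squares of the generators is invariant under conjugation. Hence $\overline{\varphi^Q_k}$ is an isomorphism.

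Combining the base case with the inductive step and the connectivity of the flip graph of tagged triangulations of $(X,M)$ \cite[Prop.\ 7.10]{FST}, we obtain $G'_{Q_T}\cong G'_{Q_{T_0}}\cong G(d,d,n)$ for every $T$. The only genuinely non-formal input is the base case identification $G'_{Q_{T_0}}\cong G(d,d,n)$; once that is imported from \cite{ariki}, the propagation along flips is automatic, because the mutation isomorphisms are built out of conjugations, which preserve the property of having order dividing two. Thus the main obstacle lies entirely in locating and applying the correct statement from \cite{ariki}, rather than in the group-theoretic machinery developed in this paper.
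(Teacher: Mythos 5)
Your proof is correct and takes essentially the same route as the paper: the paper's own proof is a short deduction from Theorem~\ref{thm:Bddnpresentation} together with the presentation of $G(d,d,n)$ from~\cite{ariki} (via~\cite[Prop.\ 3.2]{BMR98}), and it implicitly relies on exactly the point you make explicit, namely that the mutation isomorphisms of Theorem~\ref{thm:mutation_invariance} send generators to conjugates of generators and therefore descend to isomorphisms of the quotients by the squares. The only detail the paper records that you omit is that the diagram giving the $B(d,d,n)$ presentation in~\cite[Table 5]{BMR98} is the opposite of the one giving the $G(d,d,n)$ presentation in~\cite[Table 2]{BMR98}, which is harmless since passing to the opposite diagram is merely a relabelling.
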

\begin{proof}
This follows from Theorem~\ref{thm:Bddnpresentation} and the presentation of $G(d,d,n)$ given in~\cite{ariki} (see~\cite[Prop.\ 3.2]{BMR98}).
Note that the braid diagram that gives the presentation of $B(d,d,n)$ in~\cite[Table 5]{BMR98} is the opposite of the diagram that gives the presentation of $G(d,d,n)$ in~\cite[Table 2]{BMR98} (see~\cite[Thm.\ 2.27]{BMR98}), but the presentation corresponding to the opposite diagram in this case is equivalent to that corresponding to the original diagram: passing to the opposite diagram amounts only to a relabelling, and therefore does not change the isomorphism class of the presented group.
\end{proof}

Finally, we will show that, by applying a result from~\cite{shi2005}, the generators can be regarded as reflections in $G(d,d,n)$, and explain how this can be done explicitly.

We first recall the result of Shi~\cite{shi2005} that we need. For this we need the following definition of a graph from the paper (slightly modified in our discussion here).

\begin{defn} \label{def:reflectiongraph}
Let $R$ be a set of reflections in $G(d,d,n)$.
Then $\overline{\Gamma}_R$ is the edge-labelled directed (multi-)graph with vertex set $\{1,2,\ldots ,n\}$.
We take $R$ as the set of directed edges. An element $r=s(a,b;c)$ in $R$ (see Section~\ref{s:buildingbeta}) has start vertex $a$, end vertex $b$ and is labelled $c$. We adopt the convention that such a directed edge is equivalent to a directed edge from $b$ to $a$ labelled $-c$. This convention ensures that $\overline{\Gamma}_R$ is well-defined, noting that $s(a,b;c)=s(b,a;-c)$.
We define $\Gamma_R$ to be the underlying unoriented graph of $\overline{\Gamma}_R$ with the labels removed: note that this graph is also well-defined.
\end{defn}

Suppose that $\Gamma_R$ is connected and contains precisely one cycle. By reversing some directed edges in $\overline{\Gamma}_R$ if necessary (and thus also negating their labels), we may assume that the corresponding directed edges in $\overline{\Gamma}_R$ form an oriented cycle, $C$.
Set $\delta(R)$ to be the absolute value of the sum of the labels on the directed edges in $C$. 
Note that by taking the absolute value here, we ensure that $\delta(R)$ is well-defined. Then, we have the following:
                            
\begin{theorem}
[{\cite[Thm.\ 2.19]{shi2005}}]\label{thm_shi_generating}
    Let $R$ be a set of reflections in $G(d,d,n)$ such that $\Gamma_R$ is connected and contains precisely one cycle. Then $R$ generates $G(d,d,n)$ if and only if $\delta(R)$ and $d$ are coprime.
\end{theorem}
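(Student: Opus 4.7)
The plan is to use the semidirect decomposition $G(d,d,n) = A \rtimes S_n$, where $\pi : G(d,d,n) \to S_n$ is the projection sending $s(a,b;c)$ to $(a\;b)$, and $A := \ker(\pi)$ is the abelian group of diagonal matrices $[(\omega_d^{a_1},\dots,\omega_d^{a_n}) \mid \mathrm{id}]$ with $\sum a_i \equiv 0 \pmod d$. Additively $A \cong L := \{(a_1,\dots,a_n)\in(\mathbb{Z}/d)^n : \sum a_i \equiv 0\} \cong (\mathbb{Z}/d)^{n-1}$. Set $H := \langle R\rangle$. Since $\Gamma_R$ is connected, its transpositions generate $S_n$, so $\pi(H)=S_n$ and the question reduces to determining when $H \cap A = A$.

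The first step is to normalise $R$. A direct matrix computation gives $D\,s(a,b;c)\,D^{-1} = s(a,b;c+m_b-m_a)$ for any diagonal $D = \mathrm{diag}(\omega_d^{m_1},\dots,\omega_d^{m_n}) \in \Gamma(d,n)$. Fix a spanning tree $T$ of $\Gamma_R$ and let $e_0 = s(u,v;c_0)$ be the unique non-tree edge. Rooting $T$ at vertex $1$ and propagating outwards, one can solve $m_{b_i} - m_{a_i} \equiv -c(e_i) \pmod d$ along every tree edge. Conjugating $R$ by the resulting $D$ yields a set generating $DHD^{-1}$, so it suffices to treat the conjugate. Because $m_{b} - m_{a}$ telescopes around any cycle, $\delta(R)$ is unchanged by this conjugation, and after it every tree edge is labelled $0$ while $e_0$ acquires a new label $c_0'$ with $c_0' \equiv \pm\delta(R)\pmod d$.

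In this normal form the tree generators are the standard permutation-matrix transpositions $s(a_i,b_i;0)$, so $H_T := \langle T\rangle$ is the standard copy of $S_n$ inside $G(d,d,n)$ and $H_T \cap A = \{e\}$. Write $\sigma_{u,v} = s(u,v;0) \in H_T$ and put $z_0 := e_0\sigma_{u,v} \in A$; additively $z_0$ corresponds to $c_0'(e_v - e_u) \in L$. Since $e_0 = z_0\sigma_{u,v}$, we have $H = \langle H_T, z_0\rangle$. Let $N$ be the $H$-normal closure of $z_0$; as $A$ is abelian and normal, $N \subseteq A$. Conjugation by $e_0$ acts on $A$ as the coordinate swap $(u\;v)$ (the scalar twist cancels), so $e_0 z_0 e_0^{-1} = z_0^{-1}$, while $H_T$-conjugation permutes coordinates by $S_n$; hence $N$ is generated additively by $\{c_0'(e_j - e_i) : i \ne j\}$, which is exactly $c_0' \cdot L$. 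A standard semidirect argument ($H_T \cap N = \{e\}$ and $H = H_T N$) gives $H/N \cong H_T \cong S_n$, so $|H| = n!\,|N|$, and comparing with $|H| = n!\,|H \cap A|$ forces $H \cap A = N = c_0' L$. Therefore $H = G(d,d,n)$ iff $c_0' L = L$, iff multiplication by $c_0'$ is a bijection on $(\mathbb{Z}/d)^{n-1}$, iff $\gcd(c_0', d) = \gcd(\delta(R),d) = 1$.

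The main obstacle is the index computation showing $H\cap A$ is no larger than $c_0' L$. The easy containment $c_0' L \subseteq H\cap A$ is visible from the $S_n$-orbit of $z_0$; the reverse requires a genuinely global argument, and the cleanest route is the semidirect-product statement above: one must verify both that $z_0$ suffices to see every relation $e_0^2 = e$, $e_0$-conjugation of $H_T$, etc., modulo $N$, so that $H/N$ really equals $H_T$, and that $N$ lies in $A$, which relies on $A$ being abelian and normal. Once these are in place, the numerical conclusion reduces to the elementary fact that $c_0'$ acts invertibly on $(\mathbb{Z}/d)^{n-1}$ precisely when it is a unit modulo $d$.
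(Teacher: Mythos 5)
Your proof is correct. Note first that the paper itself offers no proof of this statement: it is imported verbatim from Shi (cited as [Thm.\ 2.19] of that reference), so there is no internal argument to compare against. Your derivation is a valid self-contained one. The two pillars both check out: (i) the normalisation step --- conjugating by a diagonal $D\in\Gamma(d,n)$ sends $s(a,b;c)$ to $s(a,b;c\pm(m_b-m_a))$, preserves $G(d,d,n)$ (which is normal in $\Gamma(d,n)$) and preserves $\delta(R)$ by telescoping, and the required $m_i$ exist because one can propagate from a root of the spanning tree; and (ii) the order count --- $N=c_0'L$ is normal in $H$ and contained in $A$, $H=H_TN$ with $H_T\cap N=\{e\}$ gives $|H|=n!\,|N|$, while $\pi(H)=S_n$ gives $|H|=n!\,|H\cap A|$, forcing $H\cap A=c_0'L$. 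The identification of the $H$-normal closure of $z_0$ with the additive span of $\{c_0'(e_j-e_i)\}$ is also right, since conjugation by $e_0$ acts on $A$ as the swap $(u\;v)$ and the $S_n$-orbit of $e_v-e_u$ generates $L$. The argument also covers the multigraph case (a $2$-cycle formed by parallel edges), which is needed for the application in the paper. Your closing paragraph slightly undersells your own argument: the semidirect/index computation you already gave is complete and needs no further verification of relations modulo $N$. One cosmetic point: your conjugation formula has the opposite sign to the one I compute ($c\mapsto c+(m_a-m_b)$ rather than $c+(m_b-m_a)$), but since $\delta(R)$ is defined via an absolute value and you only need $c_0'\equiv\pm\delta(R)\pmod d$, this does not affect anything.
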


\begin{lemma}\label{lemma_braidgraph_onecycle}
Let $T$ be a tagged triangulation of $(X,M)$, and let $D_T$ be the corresponding braid graph (see Definition~\ref{def:braidgraph}). Then $D_T$ contains a unique cycle. In fact, it can be obtained from a cycle by adjoining a binary tree (possibly consisting of a single vertex) to each of its vertices.
\end{lemma}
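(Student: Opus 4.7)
The plan is to combine an Euler-characteristic count with the inclusion-maximality of tagged triangulations. Viewing $T$ together with the $n$ boundary segments of $X$ as a connected planar graph in the closed disk, we have $n+1$ vertices (the $n$ boundary marked points plus the cone point) and $2n$ edges (the $n$ arcs of $T$ plus the $n$ boundary segments); Euler's formula then yields $n+1$ faces, exactly one of them unbounded. Hence the interior of $X$ is divided into $n$ regions, and $D_T$ has $n$ vertices (one per region) together with $n$ edges (one per arc of $T$, since each arc separates a pair of distinct regions).

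Since any two regions can be joined by a path in the interior of $X$ crossing arcs only transversally and avoiding marked points, $D_T$ is connected. A connected graph with the same number of vertices and edges has first Betti number equal to $1$, so $D_T$ contains a unique independent cycle $C$, and the connected components of $D_T$ obtained by removing the edges of $C$ are trees attached at the vertices of $C$.

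It remains to show that these trees are binary, i.e.\ that every vertex of $D_T$ has degree at most $3$. The degree of a vertex of $D_T$ equals the number of arcs of $T$ on the boundary of the corresponding region $R$; since the arcs of $T$ have distinct endpoints (no loops appear in our setting), the boundary of the simply connected region $R$ is a simple polygon with pairwise distinct marked-point vertices. If $R$ had four or more boundary vertices, drawing a diagonal inside $R$ between two non-adjacent ones would yield a new tagged arc compatible with every arc of $T$, contradicting the inclusion-maximality of $T$; hence $R$ has at most three boundary edges, so $D_T$ has maximum degree $3$. The main subtlety here is this degree bound, which can alternatively be checked directly from Remark~\ref{rem:puzzlepieces} by inspecting each of the puzzle pieces in Figure~\ref{fig:puzzlepieces} and noting that every resulting region is either an ordinary triangle or a tagged bigon around the cone point.
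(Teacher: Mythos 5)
Your Euler-characteristic argument for the existence and uniqueness of the cycle is correct and genuinely different from the paper's proof: the paper constructs the cycle explicitly as the dual of the set of regions incident with the cone point (via the puzzle-piece decomposition of Remark~\ref{rem:puzzlepieces}) and then observes that everything else is a triangulated polygon glued along a single edge, whereas you count $V(D_T)=E(D_T)=n$, note connectedness, and conclude that the first Betti number is $1$. Your count is valid (the cone point is incident with at least two tagged arcs by maximality, so the graph $T\cup\partial X$ is connected and no arc is a bridge), and it is arguably cleaner for the uniqueness claim, though it gives less information: it does not locate the cycle, which the paper's argument does and which is used later (e.g.\ in Proposition~\ref{prop:reflections_gen_relns}).

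However, the degree bound in your final paragraph is a genuine gap: it is false that every vertex of $D_T$ has degree at most $3$. Take the configuration in which exactly two tagged arcs $\gamma,\gamma^{\bowtie}$ (the two tagged versions of one plain arc) join a boundary point $p$ to the cone point $c$, and the enclosing digon is bounded by two further \emph{internal} arcs from $p$ to another boundary point $q$ --- this is the second picture in Figure~\ref{fig:puzzlepiecesquiver} with both outer sides being arcs of $T$, and it occurs already for $n=4$. The complement of $T$ then has a region $R$ whose boundary walk is $p\to q\to p\to c\to p$: it carries all four of these arcs, it is \emph{not} a simple polygon (the vertex $p$ is repeated), and the corresponding vertex of $D_T$ has degree $4$. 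Your maximality argument does not rule this out, because the only candidate diagonal of $R$ is an arc from $q$ to $c$, and no choice of tag for such an arc is compatible with both $\gamma$ and $\gamma^{\bowtie}$; so $R$ is a legitimate face of a maximal tagged triangulation. Your fallback classification (``every region is an ordinary triangle or a tagged bigon around the cone point'') fails for the same region $R$. The conclusion of the lemma nevertheless survives, because this degree-$4$ vertex lies on the unique ($2$-)cycle and carries only two non-cycle edges, so the adjoined tree is still binary, with a root having two children. To repair your proof you must replace the uniform bound ``degree $\leq 3$'' by the two separate statements that every region not incident with the cone point is a triangle (so non-cycle vertices have degree at most $3$) and that every region incident with the cone point has at most two sides that are not arcs at the cone point --- which in effect returns you to the paper's case analysis of the regions around the cone point.
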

\begin{proof}
By Remark~\ref{rem:conepointportion}, the induced subgraph of the braid graph on the vertices associated to
the connected components of the complement of $T$ incident with the cone point will be a cycle of length at least two. The vertices on the boundary of this union $U$ of connected components must be on the boundary of $X$ (since they are not the cone point), so the tagged triangulation must be built up from $U$ by adding a triangulated polygon to each boundary edge of $U$ intersecting $U$ only in that edge (where we allow a degenerate case consisting of an edge only, i.e. where no polygon is attached).

It follows that the braid graph can be obtained from an oriented cycle by adjoining a binary tree (possibly consisting only of a single vertex) to each of the vertices of the cycle. In particular, it contains a unique cycle as claimed.
\end{proof}

For the rest of this section, we will work in the following setup.

\begin{setup}\label{setup_reflections}
Let $T$ be a tagged triangulation of $(X,M)$, and let $D_T$ be the corresponding braid graph (see Definition~\ref{def:braidgraph}). Fix a numbering $1,2,\dots, n$ of the $n$ vertices of $D_T$. Let $\overline{D}_T$ be a directed graph with underlying unoriented graph $D_T$, chosen so that the unique cycle in $D_T$ is an oriented cycle in $\overline{D}_T$.

We then associate a reflection $s(e)=s(a,b;c(e))$ to each edge $e$ of $D_T$, where the corresponding directed edge in $\overline{D}_T$ has initial vertex $a$ and end vertex $b$ and $c(e)$ is an integer. We do this in such a way that, if the unique oriented cycle in $\overline{D}_T$ consists of vertices
$a_1,a_2,\dots, a_r$ with a directed edge $f_m$ from $a_m$ to $a_{m+1}$ for all $m$ (with $a_{r+1}$ interpreted as $a_1$), then
$\left| \sum_{m=1}^r c_m \right|$ and $d$ are coprime, where $c_m=c(f_m)$.
We define $R$ to be the set of all reflections $s(e)$ for $e$ an edge of $D_T$. Note that, by construction, $\Gamma_{R_T}$ can be identified with $D_T$.
\end{setup}

\begin{proposition}\label{prop:reflections_gen_relns}
Let $T$ be a tagged triangulation of $(X,M)$. Then the set $R$ of reflections satisfies the defining relations of $G'_{Q_T}$, where each $s(e)$ is identified with the generator $s_i$ associated to the vertex $i$ of $Q_T$ corresponding to the edge $e$ of $D_T$.
\end{proposition}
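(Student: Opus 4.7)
The plan is to verify each family of defining relations of $G'_{Q_T}$ listed in Definition~\ref{defn_GQ_associated_group}, together with the added relations $s(e)^2=1$, directly in $G(d,d,n)$. The square relations are immediate, since every reflection $s(a,b;c)$ has order two. For the remaining relations, I would translate the combinatorial conditions on $Q_T$ into geometric conditions on the dual graph $D_T$, exploiting the fact that $Q_T$ and $D_T$ are assembled from the compatible local pieces in Figure~\ref{fig:puzzlepiecesquiver}: an arrow in $Q_T$ corresponds to a pair of edges of $D_T$ sharing a vertex, an unoriented $3$-cycle in $Q_T$ corresponds to a triangle in $D_T$, and the cone-point decorations on $Q_T$ correspond to cycles in $D_T$ that enclose the cone point.

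The \emph{local} relations (commutation, the braid relation for adjacent arrows, and the unoriented $3$-cycle relation (3)) depend only on whether two edges of $D_T$ share a vertex and not on the labels $c(e)$. For these, if the edges corresponding to $i$ and $j$ are vertex-disjoint in $D_T$, then $s(e_i)$ and $s(e_j)$ act on disjoint coordinate pairs and hence commute; if they share exactly one vertex $b$, then the reflections are of the form $s(a,b;c_1)$ and $s(b,c';c_2)$, for which a short direct matrix computation yields the braid identity; and the triangle case is treated similarly using pairwise-adjacent edges, where one may first conjugate to reduce to the $S_n$-case with all labels zero.

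The hard part will be the relations (4)--(7) involving the cone point, where the labels $c(e)$ and the sum $\delta(R)$ genuinely enter. For relation (4), the two arcs incident with the cone point give reflections of the form $s(a,b;c_1)$ and $s(a,b;c_2)$ sharing both indices; the length-$d$ identity $\underbrace{s(a,b;c_1)s(a,b;c_2)\cdots}_{d\text{ terms}} = \underbrace{s(a,b;c_2)s(a,b;c_1)\cdots}_{d\text{ terms}}$ reduces to a computation in the dihedral-like subgroup generated by these two reflections, and the coprimality condition on $\delta(R)$ in Setup~\ref{setup_reflections} ensures we are in the correct congruence class modulo $d$. For relation (5), the product of the $r$ reflections going once around a labelled $r$-cycle computes to a pseudo-reflection whose $L$-eigenvalue is a $d$th root of unity determined by $\sum c_m$, so that the cyclic-shift identities of length $d(r-1)$ express exactly the order of this element. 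Relations (6) and (7) can then be deduced from (1)--(5) by algebraic manipulations of the same flavour as those carried out in the proofs of Lemmas~\ref{lem:mutationnearcone} and~\ref{lem:cornercasen2}.

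An alternative, potentially cleaner, route is to proceed by induction on the flip distance from a reference triangulation: one verifies the relations by hand for the initial triangulation of Figure~\ref{fig:BMR_embedded}, where they reduce to the Brou\'{e}-Malle-Rouquier presentation of $G(d,d,n)$, and then shows that under any flip the new assignment of reflections to edges of $D_{T'}$ is obtained from the old one by a controlled conjugation by a reflection (mirroring Theorem~\ref{thm:mutation_invariance}), so that the relations transport. Connectivity of the flip graph via \cite[Prop.\ 7.10]{FST} then concludes the argument.
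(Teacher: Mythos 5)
Your overall strategy (verify each family of relations from Definition~\ref{defn_GQ_associated_group} directly for the explicit reflections in $G(d,d,n)$) is the same as the paper's, but your translation dictionary between $Q_T$ and $D_T$ contains an error that would derail the verification of relation (3), and two further steps are asserted without support. First, an unlabelled oriented $3$-cycle in $Q_T$ does \emph{not} correspond to a triangle in $D_T$: it comes from an ordinary triangle of $T$, whose dual is three edges of $D_T$ meeting at a single common vertex (see the left picture in Figure~\ref{fig:puzzlepiecesbraids}), so the three reflections all share one index, $s(a,b;\ast)$, $s(a,c;\ast)$, $s(a,e;\ast)$. An actual triangle in $D_T$ occurs only as the cycle around the cone point, which is governed by the longer relation (5), not by (3); for a genuine triangle $s(a,b;c_1)$, $s(b,c;c_2)$, $s(c,a;c_3)$ with $c_1+c_2+c_3$ coprime to $d$ one \emph{cannot} ``conjugate to reduce to the $S_n$-case with all labels zero'' (conjugation by diagonal matrices preserves the label sum around a cycle), and relation (3) fails there. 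For the correct star configuration your normalisation argument does work, since the three edges form a tree, but as written you would be checking the wrong relation on the wrong configuration.

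Second, your treatment of relations (5)--(7) has gaps. For (5), the cyclic-shift identity among words of length $d(r-1)$ is not equivalent to a statement about ``the order'' of the product of one turn around the cycle; the paper instead computes the image of each basis vector $e_{a_p}$ under each of the $r$ shifted products and checks they give the same monomial matrix, and some such explicit computation is unavoidable. For (6) and (7), the claim that they ``can be deduced from (1)--(5) by algebraic manipulations'' is unsupported and doubtful for $d>2$: the manipulations in the mutation lemmas you cite always \emph{use} a double-edge relation from the mutated quiver rather than deriving one from braid and cycle relations alone, and Remarks~\ref{remark_d2_muchsimpler} and~\ref{rem:tworelations} show these relations are only redundant when $d=2$. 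The paper verifies (6) and (7) by direct computation of the action on $e_{a_1},e_{a_2},e_j$, and you need to do likewise. Finally, a smaller point: the coprimality of $\delta(R)$ with $d$ plays no role in making the relations hold (relations (4) and (5) hold for arbitrary labels); it is only needed later to show that $R$ generates $G(d,d,n)$, so invoking it in your verification of (4) is a red herring. Your alternative flip-induction route would also only establish the statement for the particular reflection assignments reachable by conjugation from the base triangulation, not for an arbitrary choice satisfying Setup~\ref{setup_reflections}.
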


\begin{proof}
Note first that each $s(e)$ squares to the identity by definition. The commuting and braid relations from Definition~\ref{defn_GQ_associated_group}, parts (\ref{item_commutation}) and (\ref{item_braid}) respectively, are then satisfied by \cite[Sec.~3.3(2),(3)]{shi2005}. 
Moreover, if the unique cycle in $D_T$ has length $r=2$ then, by \cite[Sec.~3.3(4)]{shi2005} we have that

$$
\underbrace{s(a_1,a_2;c_1)s(a_2,a_1; c_2)s(a_1,a_2;c_1)\cdots\ }_{\text{$d$ terms}} = \underbrace{s(a_2,a_1;c_2)s(a_1,a_2;c_1)s(a_2,a_1;c_2)\cdots\ }_{\text{$d$ terms}}.
$$
Hence, the relation from Definition~\ref{defn_GQ_associated_group}(\ref{item_dreln}) is satisfied.
    
    Consider the situation of the left drawing in Figure~\ref{fig:puzzlepiecesbraids}, where $s,\, t$ and $u$ are the associated reflections. Then, by \cite[Sec.~4.5(v)]{shi2005}, we have $sutu=utus$. Using the braid relations and the fact that the reflections square to the identity, we have that
    $$
    sutu=utus \iff stut=tuts \iff stutts=tutsts \iff stus=tuttst \iff stus=tust.
    $$
    Similarly, one can see that the third equality in the cycle relation is true and $stus=tust=ustu$, i.e. the relations from Definition~\ref{defn_GQ_associated_group}(\ref{item_cycle}) are satisfied.

    As pointed out before, double edges in $Q_T$ only appear if there are exactly two arcs at the conepoint in $T$, or equivalently the only cycle in $D_T$ has length $2$. Suppose that this is the case.
    Let $j$ be a vertex connected to one of two vertices $a_1$ and $a_2$ in the $2$-cycle, say $a_2$ via an edge $g$. In other words, there is a double edge at the vertex in $Q_T$ corresponding to the edge $g$ between $a_2$ and $j$ in $D_T$.
    Denote the vector with a $1$ in the $i^{th}$-entry and $0$ everywhere else by $e_i$ and recall that $\omega_d=e^{2\pi i/d}$.
    By direct computation, one can check that 
\begin{align*}(s(a_2,j;c_g)s(a_1,a_2;c_1)s(a_2,a_1;c_2))^2=
(s(a_1,a_2;c_1)s(a_2,a_1;c_2) s(a_2,j;c_g))^2,
\end{align*}

that is, the expected double edge relation from Definition~\ref{defn_GQ_associated_group}(\ref{item_doubleedge}) holds. In fact, both products clearly only affect the entries in $a_1,\,a_2$ and $j$ positions and we compute that under both the left and the right hand side products of reflections, we have
\begin{align*}
    e_{a_1}\mapsto \omega_d^{-2c_2-2c_1}e_{a_1},\,\,
    e_{a_2}\mapsto \omega_d^{c_2+c_1}e_{a_2},\,\,
    e_{j}\mapsto \omega_d^{c_2+c_1}e_{j}.
\end{align*}

    Similarly, it is easy to see by direct computation that when there are exactly two arcs at the conepoint, incident with two different vertices on the boundary, then the corresponding reflections satisfy the relations from Definition~\ref{defn_GQ_associated_group}(\ref{item_broken4cycle}).
    
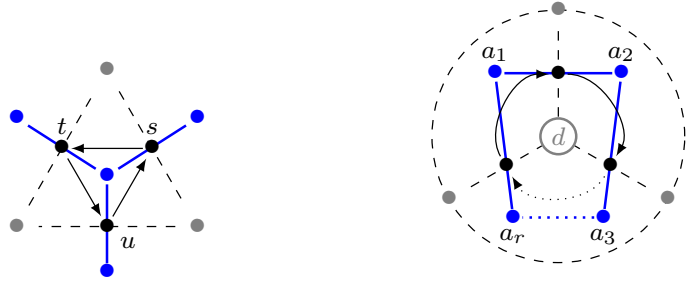
\begin{figure}[ht]
\scalebox{1.2}{
    \centering
 \begin{tikzpicture}[scale=2,
  quiverarrow/.style={black,-latex,shorten <=-4pt, shorten >=-4pt},
  mutationarc/.style={dashed, red, very thick},
  arc/.style={dashed, black},
  bluearc/.style={thick, blue,shorten <=-4pt, shorten >=-4pt},
  point/.style={gray},
  vertex/.style={black},
  conepoint/.style={gray, circle, draw=gray!100, fill=white!100, thick, inner sep=1.5pt},
  db/.style={thick, double, double distance=1.3pt, shorten <=-6pt}
  ]
 \begin{scope}
  \node[point] (p0) at (0,0) {$\bullet$};
  \node[point] (p1) at (0.5,0.866) {$\bullet$};
  \node[point] (p2) at (1,0) {$\bullet$};
  \node[vertex,blue] (b0) at (0.5,0.28) {$\bullet$};
  \node[vertex,blue] (b1) at (1,0.6) {$\bullet$};
  \node[vertex,blue] (b2) at (0,0.6) {$\bullet$};
  \node[vertex,blue] (b3) at (0.5,-0.25) {$\bullet$};
  \draw[bluearc](b0) -- (b1);
  \draw[bluearc](b0) -- (b2);
  \draw[bluearc](b0) -- (b3);
  \draw[arc](p0) -- (p1);
  \draw[arc](p1) -- (p2);
  \draw[arc](p2) -- (p0); 
  \node[vertex] (v0) at (0.25,0.433) {$\bullet$};
  \node[vertex] (v1) at (0.75,0.433) {$\bullet$};
  \node[vertex] (v2) at (0.5,0) {$\bullet$};
  \draw[quiverarrow](v0) -- (v2);
  \draw[quiverarrow](v2) -- (v1);
  \draw[quiverarrow](v1) -- (v0);
  \node[above] at  ($(b0)! 0.5!(b1)$) {\tiny $s$};
  \node[above] at  ($(b0)! 0.5!(b2)$) {\tiny $t$};
  \node[below right] at  ($(b0)! 0.5!(b3)$) {\tiny $u$};
\end{scope}
\begin{scope}[xshift=3cm, yshift=-0.2cm]
\draw[arc] (0,0.7) circle (0.7);
\node[point](q1) at (0,1.4) {$\bullet$};
\node[point](q2) at (0.606,0.35) {$\bullet$};
\node[point](q3) at (-0.606,0.35) {$\bullet$};
\node[conepoint](c) at (0,0.7) {\tiny $d$};
\draw[arc](q1) -- coordinate[midway](m1) (c);
\draw[arc](q2) -- coordinate[midway](m2) (c);
\draw[arc](q3) -- coordinate[midway](m3) (c);
\node[vertex, blue, label=
{[label distance=-7pt]90:{\tiny $a_1$}}] (b0) at (-0.35,1.05) {$\bullet$};
\node[vertex, blue, label=
{[label distance=-7pt]90:{\tiny $a_2$}}] (b1) at (0.35,1.05) {$\bullet$};
\node[vertex, blue, label=
{[label distance=-7pt]-90:{\tiny $a_3$}}] (b2) at (0.25,0.25) {$\bullet$};
\node[vertex, blue, label=
{[label distance=-7pt]-90:{\tiny $a_r$}}] (b3) at (-0.25,0.25) {$\bullet$};
\draw[bluearc](b0) -- (b1);
\draw[bluearc](b2) -- (b1);
\draw[bluearc](b0) -- (b3);
\draw[bluearc,dotted](b2) -- (b3);
\node[vertex](z1) at (m1) {$\bullet$};
\node[vertex](z2) at (m2) {$\bullet$};
\node[vertex](z3) at (m3) {$\bullet$};
\draw[quiverarrow](z1) ..
controls +(0:0.25) and +(60:0.25) .. (z2);
\draw[quiverarrow,dotted](z2) ..
controls +(240:0.25) and +(300:0.25) .. (z3);
\draw[quiverarrow](z3) ..
controls +(120:0.25) and +(180:0.25) .. (z1);
\end{scope}
\end{tikzpicture}}
\caption{Two possible local behaviours appearing in $T$. The dashed arcs are arcs in $T$ (in the right hand picture the arcs shown on the boundary are also allowed to be boundary segments). The blue edges are edges in $D_T$ and $Q_T$ is indicated in black.}
\label{fig:puzzlepiecesbraids}
\end{figure}

Consider now the situation on the right hand side of Figure~\ref{fig:puzzlepiecesbraids}, showing the unique cycle in $D_T$ of length $r$, where $r\geq 3$. We show that the reflections satisfy the relations from Definition~\ref{defn_GQ_associated_group}(\ref{item_cycled}).
In the following computations, subscripts are taken modulo $r$ and addition of the elements $c_j$ is carried out modulo $d$.
For $1\leq m\leq r$, consider the product of $d(r-1)$ reflections
\begin{align}
\label{reln_cycled_reflections}
s(a_m,a_{m+1};c_m)s(a_{m+1},a_{m+2};c_{m+1})\,\,\cdots\,\, s(a_{m+d(r-1)-1},a_{m+d(r-1)};c_{m+d(r-1)-1}).
\end{align}

Fix $j\in \{0,1,\ldots ,r-2\}$.
When computing the image of $e_{a_{m+d(r-1)-1-j}}$ under the product~\eqref{reln_cycled_reflections}, only the reflections $s(a_{m+d(r-1)-i(r-1)}, a_{m+d(r-1)-i(r-1)}; c_{m+d(r-1)-i(r-1)})$ for $i=0,1,\ldots d-1$ act non-trivially. We compute that:
$$
    e_{a_{m+d(r-1)-1-j}}\mapsto \omega_d^{\sum_{i=0}^{d-1}c_{m+d(r-1)-1-j-i(r-1)}} e_{a_{m-1-j}}= \omega_d^{\sum_{i=0}^{d-1}c_{m+d(r-1)-1-j+i}} e_{a_{m-1-j}}.
$$

When computing the image of $e_{a_{m+d(r-1)}}$ under the product~\eqref{reln_cycled_reflections}, every reflection in the product acts non-trivially, and we compute that:
\begin{align*}
    e_{a_{m+d(r-1)}}\mapsto \omega_d^{-\sum_{i=m}^{m+d(r-1)-1}c_i} e_{a_{m}}.
\end{align*}
Note that
$$
    -\sum_{i=m}^{m+dr-1}c_i\equiv 0\mod d,
$$
since this involves adding up a multiple of $d$ copies of each $c_i$.
Hence, we have:
\begin{align*}
    0\equiv -\sum_{i=m}^{m+dr-1}c_i
    \iff
    -\sum_{i=m}^{m+d(r-1)-1}c_i\equiv \sum_{i=m+dr-d}^{m+dr-1}c_i
    \iff -\sum_{i=m}^{m+d(r-1)-1}c_i\equiv \sum_{i=0}^{d-1}c_{m+d(r-1)+i} \mod d.
\end{align*}
It follows that
$$e_{a_{m+d(r-1)}}\mapsto \omega_d^{\sum_{i=0}^{d-1}c_{m+d(r-1)+i}} e_{a_{m}}.$$

Hence, for $p=1,2, \ldots ,r$ we have:
$$e_{a_p}\mapsto 
\omega_d^{\sum_{i=0}^{d-1} c_{p+i}} e_{a_{p-d(r-1)}},$$
and we see that the products (\ref{reln_cycled_reflections}) are equal for all $1\leq m\leq r$ and hence the reflections satisfy the relations from Definition~\ref{defn_GQ_associated_group}(\ref{item_cycled}). This completes the proof of the result.
\end{proof}

\begin{theorem}\label{thm_reflection_interpretation}
    In the situation of Setup~\ref{setup_reflections}, there is an isomorphism of groups $\nu: G'_{Q_T}\rightarrow G(d,d,n)$ sending the generator of $G'_{Q_T}$ associated to vertex $v$ in $Q_T$ to the reflection associated to the edge in $D_T$ that is the dual of $v$.
\end{theorem}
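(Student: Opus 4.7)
The plan is to build the homomorphism $\nu$ first, then check surjectivity via Shi's theorem, and finally upgrade surjectivity to bijectivity by a cardinality argument relying on Theorem~\ref{thm:Gddnpresentation}.

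First, I would invoke Proposition~\ref{prop:reflections_gen_relns}, which already verifies that the reflections $s(e) \in R$ assigned in Setup~\ref{setup_reflections} satisfy every defining relation of $G'_{Q_T}$: the involution relations $s(e)^2 = e$ hold by construction, the commutation, braid, and three-cycle relations follow from the corresponding identities in Shi's paper, the $d$-term braid-like relation of Definition~\ref{defn_GQ_associated_group}(\ref{item_dreln}) and the $r$-cycle relation of Definition~\ref{defn_GQ_associated_group}(\ref{item_cycled}) are checked by a direct coordinate calculation tracking the action on $e_{a_p}$, and the double edge and broken four-cycle relations are handled by the same type of monomial-matrix computation. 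Consequently, sending each generator $s_v$ of $G'_{Q_T}$ associated to a vertex $v$ of $Q_T$ to the reflection $s(e) \in R$ attached to the dual edge $e$ of $D_T$ extends to a well-defined group homomorphism $\nu: G'_{Q_T} \to G(d,d,n)$.

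Next, I would establish that $\nu$ is surjective by applying Theorem~\ref{thm_shi_generating}. By Lemma~\ref{lemma_braidgraph_onecycle}, the graph $D_T$ (which equals $\Gamma_R$ after identification) is connected and contains exactly one cycle, so the hypotheses of Shi's theorem apply. The condition that $\delta(R) = |\sum_{m=1}^r c_m|$ be coprime to $d$ is imposed directly in Setup~\ref{setup_reflections}. Therefore $R$ generates $G(d,d,n)$, and since every generator of $G(d,d,n)$ in $R$ lies in the image of $\nu$, the map $\nu$ is surjective.

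Finally, to pass from surjectivity to injectivity, I would use Theorem~\ref{thm:Gddnpresentation}, which tells us that $G'_{Q_T}$ is abstractly isomorphic to $G(d,d,n)$ and in particular is a finite group of order $|G(d,d,n)|$. A surjective homomorphism between two finite groups of equal cardinality is automatically a bijection, so $\nu$ is an isomorphism, completing the proof.

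The main (and essentially only) obstacle is really the verification carried out in Proposition~\ref{prop:reflections_gen_relns}; once that is in place the argument above is quite short. No step should require further nontrivial calculation, since Shi's generating criterion and the presentation result from Theorem~\ref{thm:Gddnpresentation} combine cleanly to give the isomorphism.
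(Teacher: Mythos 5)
Your proposal is correct and follows essentially the same route as the paper: well-definedness via Proposition~\ref{prop:reflections_gen_relns}, surjectivity via Theorem~\ref{thm_shi_generating} combined with Lemma~\ref{lemma_braidgraph_onecycle}, and injectivity from finiteness. If anything, your final step is slightly more careful than the paper's, which only remarks that $G(d,d,n)$ is finite, whereas you explicitly invoke Theorem~\ref{thm:Gddnpresentation} to get $|G'_{Q_T}|=|G(d,d,n)|<\infty$ before concluding that a surjection between finite groups of equal order is an isomorphism.
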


\begin{proof}
    Note first that $\nu$ is a group homomorphism as the images of the generators of $G'_{Q_T}$ satisfy the generating relations of $G'_{Q_T}$ by Proposition~\ref{prop:reflections_gen_relns}. Moreover, by combining Theorem~\ref{thm_shi_generating} and Lemma~\ref{lemma_braidgraph_onecycle}, we conclude that the images of the generators generate $G(d,d,n)$ and hence $\nu$ is surjective. As $G(d,d,n)$ is a finite group, we conclude that $\nu$ is a group isomorphism.
\end{proof}

\begin{remark}
    Note that, in contrast to the braid group case, in the complex reflection group presentation the double edge relations can be made more symmetric.
    In fact, for reflections chosen as in Setup~\ref{setup_reflections}, by direct computations one can check that in the situation of Definition~\ref{defn_GQ_associated_group}(\ref{item_doubleedge}), we have
    \begin{align*}
        s_ks_is_js_ks_j=s_is_js_ks_is_js_k=s_js_ks_is_js_ks_i.
    \end{align*}

    On the other hand, in the situation of Definition~\ref{defn_GQ_associated_group}(\ref{item_broken4cycle}), for $d>2$, we still get two separate relations also in the complex reflection group.
    To see this, let $s_i=s(a_1,a_2;c_1)$, $s_j=s(a_1,j;c(g))$, $s_k=s(a_2,a_1;c_2)$ and $s_l=s(a_2,l;c(h))$
    be reflections satisfying Setup~\ref{setup_reflections} and corresponding respectively to the vertices $i,j,k$ and $l$ in Definition~\ref{defn_GQ_associated_group}(\ref{item_broken4cycle}). Then, it is easy to compute that
    \begin{align*}
        &s_is_js_ks_ls_is_j: e_{a_1}\mapsto \omega_d^{c_1}e_{a_2},\\
        &s_js_ks_ls_is_js_k: e_{a_1}\mapsto \omega_d^{-2c_2-c_1}e_{a_2}.
    \end{align*}   
    As, by assumption,
    $c_1+c_2$ is coprime to $d$, and hence invertible modulo $d$, we have that
    \begin{align*}
        -2c_2-c_1= c_1 \text{ mod } d\iff 2(c_1 +c_2)=0 \text{ mod } d\iff d=2.
    \end{align*}
    Hence, for $d>2$ we have that $s_is_js_ks_ls_is_j\neq s_js_ks_ls_is_js_k$.
\end{remark}

\bibliographystyle{alphaurl}
\bibliography{local.bib}
\end{document}